\newdimen\AAdi%
\newbox\AAbo%
\def\AAk#1#2{\s_etbox\AAbo=\hbox{#2}\AAdi=\wd\AAbo\kern#1\AAdi{}}%
\def\AAr#1#2#3{\s_etbox\AAbo=\hbox{#2}\AAdi=\ht\AAbo\raise#1\AAdi\hbox{#3}}%
\font\tenmsb=msbm10 at 12pt \font\sevenmsb=msbm7 at 8pt
\font\fivemsb=msbm5 at 6pt
\newtheorem{theorem}{Theorem}
\newtheorem{remark}[theorem]{Remark}
\newtheorem{lemma}[theorem]{Lemma}
\numberwithin{equation}{section} \numberwithin{theorem}{section}
\renewcommand{\topmargin}{0cm}
\renewcommand{\oddsidemargin}{5mm}
\renewcommand{\evensidemargin}{5mm}
\renewcommand{\textwidth}{150mm}
\renewcommand{\textheight}{230mm}
\def\R{\mathbb R}
\def\na{\nabla}
\def\bn{\overline\nabla}
\def\f#1#2{\frac{#1}{#2}}
\def\a{\alpha}
\def\be{\beta}
\def\r{\Re_{I\!V}}
\def\p#1{\partial #1}
\def\de{\delta}
\def\De{\Delta}
\def\e{\eta}
\def\ep{\epsilon}
\def\G{\Gamma}
\def\g{\gamma}
\def\k{\kappa}
\def\la{\lambda}
\def\La{\Lambda}
\def\lan{\langle}
\def\ran{\rangle}
\def\Om{\Omega}
\def\th{\theta}
\def\Th{\Theta}
\def\si{\sigma}
\def\Si{\Sigma}
\def\r{\rho}
\def\z{\zeta}
\begin{document}

\title
[Poincar\'e inequality on minimal graphs over manifolds and applications]
{Poincar\'e inequality on minimal graphs over manifolds and applications}

\author{Qi Ding}
\address{Shanghai Center for Mathematical Sciences, Fudan University, Shanghai 200438, China}
\email{dingqi@fudan.edu.cn}

\thanks{The author would like to thank J$\mathrm{\ddot{u}}$rgen Jost, Yuanlong Xin and Xi-Ping Zhu for their interests on this work. He would like to thank Gioacchino Antonelli and Daniele Semola for careful reading
and valuable comments. He is supported by NSFC 11871156 and NSFC 11922106}
\date{}
\begin{abstract}
Let $B_2(p)$ be an $n$-dimensional smooth geodesic ball with Ricci curvature $\ge-(n-1)\k^2$ for some $\k\ge0$.
We establish the Sobolev inequality and the uniform Neumann-Poincar\'e inequality on each minimal graph over $B_1(p)$ by combining Cheeger-Colding theory and the current theory from geometric measure theory,
where the constants in the inequalities only depend on $n$, $\k$, the lower bound of the volume of $B_1(p)$.
As applications, we derive gradient estimates and a Liouville theorem for a minimal graph $M$ over a smooth complete noncompact manifold $\Si$ of nonnegative Ricci curvature and Euclidean volume growth.
Furthermore, we can show that any tangent cone of $\Si$ at infinity splits off a line isometrically provided the graphic function of $M$ admits linear growth.
\end{abstract}


\maketitle
\tableofcontents

\section{Introduction}

Let $B_{2R}(p)$ be an $n$-dimensional geodesic ball centered at $p$ with radius $2R$ and with smooth metric, whose Ricci curvature $\ge-(n-1)\k^2R^{-2}$ for some $\k\ge0$ and $R\ge1$.
From Anderson \cite{An1} or Croke \cite{Cr} (see also \cite{Gri,LW} for instance), 
there is a constant $\a_{n,\k}>0$ depending only on $n,\k$ such that the Sobolev inequality holds
\begin{equation}\label{isoperi}
\f{\a_{n,\k}}R\left(\mathcal{H}^{n}(B_R(p))\right)^{\f1{n}}\left(\int_{B_R(p)}|\phi|^{\f n{n-1}}\right)^{\f{n-1}n}\le\int_{B_R(p)}|D\phi|
\end{equation}
for any Lipschitz function $\phi$ on $B_R(p)$ with compact support in $B_R(p)$,
where $D$ is the Levi-Civita connection of $B_{2R}(p)$.
From Buser \cite{Bu} or Cheeger-Colding \cite{CC}, the Neumann-Poincar\'e inequality holds on $B_R(p)$.
Namely, up to choosing the constant $\a_{n,\k}>0$, it holds
\begin{equation}\label{NPoincare}
\a_{n,\k}\int_{B_R(p)}|f-\bar{f}|\le R\int_{B_R(p)}|D f|
\end{equation}
for any Lipschitz function $f$ on $B_R(p)$ with $\bar{f}=\fint_{B_R(p)}f$. 
These two inequalities act an important role in partial differential equation and geometry.

In 1967, Miranda obtained a Sobolev inequality for minimal graphs in Euclidean space \cite{Mm}.
After that, Bombieri \cite{Bo} and Michael (see the appendix of \cite{Si0}) gave a simpler proof for this inequality.
More general, Michael-Simon \cite{MS} proved the Sobolev inequality on arbitrary submanifolds in Euclidean space.
In particular, there is a constant $c_n>0$ depending only on $n$ such that
if $M$ is a complete $n$-dimensional minimal hypersurface in $\R^{n+m}$ for some $m\ge1$, then
\begin{equation}\aligned\label{SobMin000}
c_n\left(\int_{M}|f|^{\f n{n-1}}\right)^{\f{n-1}n}\le \int_{M}|\na f|
\endaligned
\end{equation}
for each $f\in C^1_c(M)$, where $\na$ is the Levi-Civita connection with respect to the induced metric on $M$.
Clearly, the Sobolev inequality \eqref{SobMin000} is equivalent to the isoperimetric inequality on $M$.
In \cite{Al}, Almgren proved the sharp isoperimetric inequality for minimizing submanifolds in Euclidean space with arbitrary codimensions.
Recently, Brendle \cite{Br} proved the sharp isoperimetric inequality for minimal submanifolds in Euclidean space with the codimensions $\le2$ (see \cite{LWW} for the relative version).

The Sobolev inequality on minimal submanifolds can be generalized from Euclidean space to Riemannian manifolds.
In \cite{HS}, Hoffman-Spruck obtained the Sobolev inequality on a minimal submanifold $M$ in a manifold $N$, with some geometric restrictions
involving the volume of $M$, the sectional curvatures of $N$ and the injectivity radius of $N$ (see \cite{Ho}\cite{Sc} for more results). 
Recently, Brendle \cite{Br1} proved the Sobolev inequality on minimal submanifolds in manifolds of nonnegative sectional curvature and Euclidean volume growth.

For each area-minimizing hypersurface $M$ in $\R^{n+1}$, Bombieri-Giusti \cite{BG} proved the following uniform Neumann-Poincar\'e inequality on $M$:
there is a constant $c_n'\ge1$ depending only on $n$ such that for any $x\in M$, $r>0$, $f\in C^1(B_r(x))$
\begin{equation}\aligned\label{PoincareM0}
\min_{k\in\R}\left(\int_{B_r(x)\cap M}|f-k|^\f{n}{n-1}\right)^{\f{n-1}n}\le c_n'\int_{B_{c_n'r}(x)\cap M}|\na f|.
\endaligned
\end{equation}
Furthermore, they got the Harnack's inequality for uniform elliptic equations on $M$.
Note that in general, \eqref{PoincareM0} does not hold on every complete minimal hypersurface, for instance, the catenoid.

Let $\Si$ be an $n$-dimensional complete noncompact Riemannian manifold with metric $\si$.
Let $u$ be a smooth solution to the minimal hypersurface equation on an open set $\Om\subset\Si$, i.e., $u$ satisfies the following elliptic equation on $\Om$:
\begin{equation}\label{u0}
\mathrm{div}_\Si\left(\f{Du}{\sqrt{1+|Du|^2}}\right)=0,
\end{equation}
where $\mathrm{div}_\Si$ is the divergence on $\Si$.
This equation can be seen as a natural generalization of the minimal hypersurface equation on Euclidean space.
The graph $M=\{(x,u(x))\in\Si\times\R|\ x\in\Om\}$ is said to be a \emph{minimal graph} over $\Om$. In fact,  the mean curvature of $M$ is zero w.r.t. its induced metric from the product metric $\si+dt^2$ of $\Si\times\R$.
The function $u$ is the height function of $M$, and we say it a \emph{minimal graphic function} on $\Om$.

In this paper, we obtain Sobolev and Neumann-Poincar\'e inequalities on minimal graphs over manifolds with Ricci curvature bounded below as follows.
\begin{theorem}\label{INEQU*}
For two constants $\k\ge0$ and $v>0$,
let $\Si$ be an $n$-dimensional smooth complete noncompact manifold with Ricci curvature
\begin{equation}\aligned\label{ConditionSi}
\mathrm{Ric}\ge-(n-1)\k^2\ \mathrm{on}\ B_{2}(p),\  \mathrm{and}\ \mathcal{H}^n(B_1(p))\ge v. 
\endaligned
\end{equation}
There is a constant $\Th=\Th_{\k,v}\ge1$ depending only on $n,\k,v$ such that if $M$ is a minimal graph over $B_{2}(p)$ with $\bar{p}=(p,0)\in M$ and $\p M\subset\p B_{2}(p)\times\R$, then  there hold a Sobolev inequality
\begin{equation}\aligned\label{SobM}
\left(\int_{ M}|\phi|^{\f n{n-1}}\right)^{\f{n-1}n}\le\Th\int_{M}|\na \phi|
\endaligned
\end{equation}
for any function $\phi\in C_0^1(M\cap B_1(\bar{p}))$, and a Neumann-Poincar\'e inequality
\begin{equation}\aligned\label{PoincareM}
\int_{M\cap B_{1/\Th}(\bar{p})}|\varphi-\bar{\varphi}|\le\Th\int_{M\cap B_1(\bar{p})}|\na \varphi|
\endaligned
\end{equation}
for any function $\varphi\in C^1(M\cap B_1(\bar{p}))$ with $\bar{\varphi}=\fint_{M\cap B_{1/\Th}(\bar{p})}\varphi$.
Here, $\na$ is the Levi-Civita connection with respect to the induced metric on $M$.
\end{theorem}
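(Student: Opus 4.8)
The plan is to pass to the blow-down/tangent-cone picture only at the very end; the core of the argument is to reduce the two inequalities on the minimal graph $M$ to the corresponding inequalities on the base $B_1(p)\subset\Si$, which are available via \eqref{isoperi} and \eqref{NPoincare}. The first observation is that the projection $\pi:\Si\times\R\to\Si$ restricts to a diffeomorphism from $M\cap B_1(\bar p)$ onto its image, and since $M$ is a minimal graph the quantity $V=\sqrt{1+|Du|^2}$ (the volume-form ratio) satisfies a divergence-form equation: $\mathrm{div}_M(\na_M x_{n+1})=0$ says exactly that the last coordinate function is harmonic on $M$, and $V^{-1}$ equals $\langle \nu, \partial_{t}\rangle$, the vertical component of the upward unit normal, which is a positive solution of the Jacobi/stability-type relation. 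More usefully, $\log V$ is subharmonic on $M$ (this is the classical computation behind the Bombieri–De Giorgi–Miranda gradient estimate). The hard analytic input is therefore a mean-value / Harnack-type control of $V$ on $M$, and this is precisely where the excerpt's combination of "Cheeger–Colding theory and current theory" is needed: one cannot use the Euclidean De Giorgi–Nash–Moser machinery directly because the Sobolev constant on $M$ is not yet known — that is what we are trying to prove.

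So I would instead argue as follows. Let $T=\llbracket M\rrbracket$ be the integral current carried by the minimal graph; it is area-minimizing (or at least stationary) in $B_2(p)\times\R$. Consider the family of rescaled minimal graphs and, on the base, apply Cheeger–Colding: balls in $\Si$ with the volume lower bound and Ricci lower bound are, after rescaling, close in Gromov–Hausdorff sense to metric cones, and in fact one has harmonic almost-splitting and volume-convergence. The key step is a \emph{contradiction/compactness argument}: if the desired Sobolev inequality \eqref{SobM} fails with constant $\Th$, take a sequence $(\Si_i,p_i,M_i)$ of counterexamples with constants $\to\infty$; pass to a limit using (i) Gromov–Hausdorff convergence of $(B_2(p_i),p_i)$ to a limit length space $X$ with the same Ricci/volume bounds in the generalized (RCD) sense, and (ii) compactness of the corresponding area-minimizing currents $T_i$ (mass bounds come from the graph structure plus the a priori area bound $\mathcal H^n(M_i\cap B_1)\le C(n,\k,v)$, which itself follows from the monotonicity formula and the volume bound on the base). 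The limit current $T_\infty$ lives over the limit space $X\times\R$ and is still area-minimizing; on $X$ one has the Sobolev inequality \eqref{isoperi} with a constant depending only on $n,\k,v$ (these inequalities are stable under measured-GH convergence). The contradiction is then obtained by showing the Sobolev inequality on $T_\infty$, viewed as a graph over $X$, holds with a \emph{finite} constant — contradicting that the constants blew up — via the co-area formula and the isoperimetric inequality downstairs, controlling the discrepancy between $\mathcal H^n\llcorner M$ and $\pi_\#(\mathcal H^n\llcorner M)$ by the integrability of $V$, which is where a uniform $L^1$ (or $L^p$ for some $p>1$) bound on $V=\sqrt{1+|Du|^2}$ over $B_1(p)$ is needed.

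For the Neumann–Poincar\'e inequality \eqref{PoincareM}, I would derive it from the Sobolev inequality \eqref{SobM} together with a lower volume bound $\mathcal H^n(M\cap B_{1/\Th}(\bar p))\ge c(n,\k,v)>0$ and an upper bound $\mathcal H^n(M\cap B_1(\bar p))\le C(n,\k,v)$, by the standard segment/truncation argument: \eqref{SobM} applied to the truncations $(\varphi-k)^\pm$ combined with a suitable choice of $k$ (a median of $\varphi$ on the smaller ball) and the relative isoperimetric profile of $M$ — equivalently, one first proves a relative isoperimetric inequality $\min\{\mathcal H^n(E),\mathcal H^n((M\cap B_{1/\Th})\setminus E)\}^{(n-1)/n}\le C\,\mathcal H^{n-1}(\partial E\cap M\cap B_1)$ from \eqref{SobM} and the volume bounds, and then runs the co-area formula. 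The volume bounds themselves are consequences of the monotonicity formula for the stationary current $T$ centered at $\bar p$, using that $M$ is a graph so that $\pi(M\cap B_r(\bar p))\supset$ a definite ball in $\Si$, and the Bishop–Gromov volume bounds on $\Si$.

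The main obstacle I expect is the compactness step for the currents $T_i$ over a \emph{singular} limit base $X$: one must make sense of "minimal graph over $X$" when $X$ is only an RCD space, ensure the limit current is still mass-minimizing and still (in an appropriate weak sense) a graph with controlled vertical volume ratio $V_\infty$, and verify that $V_\infty\in L^1_{loc}$ with the bound passing to the limit. The auxiliary but essential point feeding this — a uniform integral bound $\int_{B_1(p)}\sqrt{1+|Du|^2}\le C(n,\k,v)$ with $C$ \emph{independent of $u$} — is itself nontrivial and I would expect it to be established first, either by a direct current-theoretic estimate (the area of $M\cap(B_1(p)\times\R)$ is comparable to $\int_{B_1}V$, and this area is bounded by comparison with a competitor obtained by pushing $M$ down onto $B_1(p)\times\{0\}$, exploiting minimality) or by the monotonicity formula; only after this is in hand does the blow-up/Cheeger–Colding argument close.
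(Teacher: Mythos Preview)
Your contradiction/compactness framework is in the right spirit, but the mechanism you propose for closing the argument has a genuine gap, and the paper's route is structurally different.

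The step that fails is this one: ``showing the Sobolev inequality on $T_\infty$, viewed as a graph over $X$, holds with a finite constant \dots\ via the co-area formula and the isoperimetric inequality downstairs, controlling the discrepancy between $\mathcal H^n\llcorner M$ and $\pi_\#(\mathcal H^n\llcorner M)$ by the integrability of $V$.'' The area bound does give $\int_{B_1(p)}V\le C(n,\k,v)$, but an $L^1$ bound on the weight $V=\sqrt{1+|Du|^2}$ is nowhere near enough to transfer a Sobolev or isoperimetric inequality from the base to the graph: both sides of the inequality carry the weight $V$, which can concentrate on sets of small base-measure but large $M$-measure. Worse, in the blow-up limit the graph can tilt to vertical and $T_\infty$ need not be a graph over $X$ at all (this is exactly why the paper devotes \S4 to analyzing tangent cones of limits of minimal graphs). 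So the reduction ``graph Sobolev $\Leftarrow$ base Sobolev $+$ $V\in L^1$'' does not go through, and you correctly anticipate some of the trouble in your final paragraph without resolving it.

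What the paper actually does is a bootstrap in the opposite order. It first establishes, by a separate contradiction argument using explicit $C^1$ Gromov--Hausdorff approximations built from (mollified) Busemann-type distance functions and Federer--Fleming compactness of the pushed-forward currents, an isoperimetric inequality and then a Neumann--Poincar\'e inequality for area-minimizing hypersurfaces in \emph{almost Euclidean} ambient balls (Lemmas~\ref{Rfiso}, \ref{BRpSEf1R}, Theorem~\ref{BRpNf1RM}). Only then does it attack the general case: assuming the isoperimetric inequality fails on a sequence of minimal graphs, it rescales at a carefully chosen point (so the violating set has definite but not full density, see \eqref{3.38}), passes to a limit $M_\infty$ in $\Si_\infty\times\R$, and uses the tangent-cone analysis of \S4 together with a \emph{dimension-reduction} argument to land in the almost-Euclidean situation, where the already-established Neumann--Poincar\'e inequality yields the contradiction. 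The base-manifold inequalities \eqref{isoperi}, \eqref{NPoincare} enter only indirectly (through Lemma~\ref{IneqnSi} for auxiliary current estimates), not as the source of the inequality on $M$. Your derivation of \eqref{PoincareM} from \eqref{SobM} plus two-sided volume bounds is closer to what the paper does (Lemma~\ref{Cheeger***}, Theorem~\ref{POINCARE}), but even there the paper needs one more contradiction/blow-up pass rather than a purely ``standard'' truncation.
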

For $M=B_2(p)\times\{0\}\subset\Si\times\R$, the inequalities \eqref{SobM}\eqref{PoincareM} reduce to \eqref{isoperi}\eqref{NPoincare} with $R=1$.
Hence, the constant $\Th_{\k,v}$ in Theorem \ref{INEQU*} indeed depends on the constants $\k,v$. 
Once we get \eqref{PoincareM}, then \eqref{SobM} can be easily derived using the covering technique and  \eqref{PoincareM}.
However, Bombieri-Giusti \cite{BG} deduced the Neumann-Poincar\'e inequality on area-minimizing hypersurfaces in Euclidean space with the help of the Sobolev inequality on them. So, the order of the proof of Theorem \ref{INEQU*} is following:
\begin{enumerate}
\item prove \eqref{SobM} for area-minimizing hypersurfaces in a special class of manifolds;
\item prove \eqref{PoincareM} for a special class of area-minimizing hypersurfaces in a special class of manifolds using i);
\item prove \eqref{PoincareM} for area-minimizing hypersurfaces in a special class of manifolds using i)ii);
\item prove \eqref{SobM} for minimal graphs over manifolds satisfying \eqref{ConditionSi} using i)iii);
\item prove \eqref{PoincareM} for minimal graphs over manifolds satisfying \eqref{ConditionSi} using iii)iv).
\end{enumerate}

More precisely, our proof of Theorem \ref{INEQU*} is divided into three sections. In $\S 3$, we prove the two inequalities \eqref{SobM}\eqref{PoincareM} for area-minimizing hypersurfaces in almost Euclidean spaces. 
Here, the spaces are complete manifolds of almost nonnegative Ricci curvature whose large balls are very close to Euclidean balls in the Gromov-Hausdorff sense. Then we argue by contradiction, and empoly modified $C^1$-mappings as Gromov-Hausdorff approximations essentially from Colding \cite{C}. With the mappings, we are able to use the current theory from geometric measure theory, and get a Sobolev inequality on area-minimizing hypersurfaces in almost Euclidean spaces. Furthermore, through outward minimizing sets in area-minimizing hypersurfaces, using the mappings mentioned above and the current theory, we can deduce the Neumann-Poincar\'e inequality on area-minimizing hypersurfaces in almost Euclidean spaces based on our Sobolev inequality.

In $\S 4$, we consider a sequence of minimal graphs $M_i$ over $B_1(p_i)$ in $B_1(p_i)\times\R$, where $B_1(p_i)$ are geodesic balls with Ricci curvature uniformly bounded below. Suppose that $B_1(p_i)$ has a non-collapsing  limit $B_1(p_\infty)$ with Gromov-Hausdorff approximations $\Psi_i:\ B_1(p_i)\to B_1(p_\infty)$, and $\Psi_i(M_i)$ converges to a limit $M_\infty$ in $B_1(p_\infty)\times\R$ in the Hausdorff sense. We prove that any tangent cone of $M_\infty$ is a metric cone in a tangent cone of $B_1(p_\infty)\times\R$, where we 	utilize the geometry of manifolds of Ricci curvature bounded below and the property of minimal graphs as well as some results in \cite{D}.

In $\S 5$, we deal with the general Sobolev inequality on minimal graphs through an argument by contradiction with Cheeger-Colding theory. 
From \cite{D1}, the cross section of every tangent cone of limits of minimal graphs is connected in the sense of splitting off a Euclidean factor isometrically.
Combining tangent cones of $M_\infty$ mentioned above and dimension estimate of singular sets of Ricci limit space, through dimension reduction argument we are able to reduce the Sobolev inequality to the Neumann-Poincar\'e inequality on area-minimizing hypersurfaces in almost Euclidean spaces already established. In a similar manner, we can get the Neumann-Poincar\'e inequality on minimal graphs in manifolds with Ricci curvature bounded below.

Moreover, our argument of the proof of Theorem \ref{INEQU*} works for area-minimizing hypersurfaces in almost Euclidean balls. In particular, we have a generalization of \eqref{PoincareM0} as follows (see Theorem \ref{PTh0varphi}). 
\begin{theorem}\label{almostEB}
For each integer $n\ge2$, there are constants $\ep,\Th_0>0$ depending only on $n$ such that
if $N$ is an $(n+1)$-dimensional smooth complete noncompact Riemannian manifold with
$\mathrm{Ric}\ge-n\ep^2$ on $B_2(q)\subset N$ and $\mathcal{H}^{n+1}(B_1(q))\ge(1-\ep)\omega_{n+1}$, and $M$ is an area-minimizing hypersurface in $B_2(q)$ with $q\in M$ and $\p M\subset\p B_2(q)$, then
\begin{equation}\aligned
\min_{k\in\R}\left(\int_{M\cap B_{1/\Th_0}(q)}|\varphi-k|^{\f n{n-1}}\right)^{\f{n-1}n}\le\Th_0\int_{M\cap B_1(q)}|\na \varphi|
\endaligned
\end{equation}
for any function $\varphi\in C^1(B_1(q))$. Here, $\na$ is the Levi-Civita connection with respect to the induced metric on $M$.
\end{theorem}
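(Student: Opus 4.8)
The plan is to run the same contradiction machinery that underlies the proof of Theorem~\ref{INEQU*}, but now in the cleaner setting of area-minimizing hypersurfaces inside almost-Euclidean balls, so that steps i)--iii) of the outlined scheme suffice and steps iv)--v) (the reduction from minimal graphs to area-minimizing hypersurfaces via Cheeger--Colding dimension reduction) are not needed. Concretely, I would argue by contradiction: suppose no pair $(\ep,\Th_0)$ works. Then for each $j$ one can pick $\ep_j\to0$, a manifold $N_j$ with $\mathrm{Ric}\ge-n\ep_j^2$ on $B_2(q_j)$ and $\mathcal{H}^{n+1}(B_1(q_j))\ge(1-\ep_j)\omega_{n+1}$, an area-minimizing hypersurface $M_j\subset B_2(q_j)$ through $q_j$ with $\p M_j\subset\p B_2(q_j)$, and a function $\varphi_j\in C^1(B_1(q_j))$ for which the claimed inequality fails with constant $\Th_0=j$. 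After normalizing $\varphi_j$ (e.g. subtracting its median over $M_j\cap B_{1/j}(q_j)$ and rescaling so that $\int_{M_j\cap B_1(q_j)}|\na\varphi_j|=1$), the failure of the inequality forces $\big(\int_{M_j\cap B_{1/j}(q_j)}|\varphi_j|^{n/(n-1)}\big)^{(n-1)/n}\to\infty$, which is the contradiction we must rule out.

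The next step is to take limits. By Cheeger--Colding theory, the volume condition $\mathcal{H}^{n+1}(B_1(q_j))\ge(1-\ep_j)\omega_{n+1}$ together with $\mathrm{Ric}\ge-n\ep_j^2\to0$ forces $B_1(q_j)\to B_1(0)\subset\R^{n+1}$ in the pointed Gromov--Hausdorff sense; fix Gromov--Hausdorff approximations $\Psi_j\colon B_2(q_j)\to B_2(0)$. One upgrades these, following Colding \cite{C} as in~$\S3$, to modified $C^1$-maps that are close to isometries and, crucially, push forward the integral current $[\![M_j]\!]$ to currents with uniformly bounded mass and boundary mass in $B_2(0)$; by the compactness theorem for integral currents, a subsequence of $(\Psi_j)_\#[\![M_j]\!]$ converges to an area-minimizing current $T_\infty$ in $B_2(0)\subset\R^{n+1}$ with $0\in\mathrm{supp}\,T_\infty$. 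The Sobolev inequality of~$\S3$ (step i)) applies uniformly along the sequence, and — this is the heart of the matter — the Neumann--Poincar\'e inequality of step~iii) (Theorem~\ref{PTh0varphi}) holds on each $M_j$ with a constant depending only on $n$ once $\ep_j$ is small enough, because $M_j$ is area-minimizing in an almost-Euclidean ball; this already contradicts the assumed failure. The role of the contradiction argument is then simply to extract the uniform constant: the limit functions $\varphi_\infty$ (obtained from $\varphi_j$ by a Poincar\'e/Rellich-type compactness on the converging currents, using the uniform Sobolev bound to control $L^{n/(n-1)}$ norms) would satisfy a Neumann--Poincar\'e inequality on $T_\infty$ with some finite constant, contradicting the blow-up $\int|\varphi_j|^{n/(n-1)}\to\infty$ after unwinding the normalization.

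I would organize the write-up as: (a) set up the contradicting sequence and normalization; (b) pass to the Gromov--Hausdorff and current limits via the modified $C^1$-approximations of~$\S3$; (c) establish convergence of the $\varphi_j$ (or their truncations) to a limit function on $T_\infty$, transferring the $L^{n/(n-1)}$ and $L^1$-gradient bounds across $\Psi_j$ using that $\Psi_j$ is an almost-isometry; (d) invoke the Neumann--Poincar\'e inequality already proved for area-minimizing hypersurfaces in almost-Euclidean spaces (step~iii)) to reach the contradiction; (e) record that the constant $1/\Th_0$ inside the ball $B_{1/\Th_0}(q)$ is exactly the interior radius produced by that inequality. The main obstacle is step~(c): controlling the behavior of $\varphi_j$ near the (possibly nonempty, low-dimensional) singular set of $T_\infty$ and ensuring that no $L^{n/(n-1)}$-mass of $\varphi_j$ escapes to the boundary $\p B_1(q_j)$ or concentrates on the part of $M_j$ that $\Psi_j$ distorts most; this is handled by a cutoff argument near $\p B_1$ together with the uniform Sobolev inequality, which prevents concentration, exactly as in the proof of~\eqref{PoincareM} in~$\S5$ but without the extra dimension-reduction layer. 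Once (c) is in place, (d) is a direct citation and the theorem follows.
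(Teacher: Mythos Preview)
Your proposal contains a genuine circularity: in step~(d) you invoke ``the Neumann--Poincar\'e inequality of step~iii) (Theorem~\ref{PTh0varphi})'', but Theorem~\ref{PTh0varphi} \emph{is} Theorem~\ref{almostEB}---they are the same statement. So as written you are assuming what you set out to prove. Step~iii) in the paper's outline is not Theorem~\ref{PTh0varphi}; it is Theorem~\ref{BRpNf1RM}, a relative isoperimetric inequality for \emph{sets} $U\subset M\cap B_1(p)$, and its hypothesis requires control of Ricci and volume on a \emph{large} ball $B_R(p)$ (with $R\ge\Th$), not just on $B_2(q)$. In the setting of Theorem~\ref{almostEB} you only have almost-Euclidean information on $B_2(q)$, so Theorem~\ref{BRpNf1RM} does not apply directly and your claim that ``steps iv)--v) are not needed'' is wrong.

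The paper's actual proof proceeds quite differently and avoids passing functions to a limit altogether. First, one needs the analogue of the Sobolev/isoperimetric inequality (Theorem~\ref{ISOM}) for area-minimizing hypersurfaces in almost-Euclidean balls; this is Remark~\ref{Sobalmost}, and its proof \emph{does} require the dimension-reduction machinery of \S4--5 (via Remark~\ref{Monoalmost}, which supplies the tangent-cone structure needed to make the contradiction argument of Theorem~\ref{ISOM} run). Second, the proof of Lemma~\ref{Cheeger***} is adapted to this setting to produce the relative isoperimetric inequality~\eqref{SB1qTh0}. Only then---and this is the short final step---does one pass from sets to functions: choose $k$ to be a median of $\varphi$ on $M\cap B_{1/\Th_0}(q)$, apply~\eqref{SB1qTh0} to the super- and sub-level sets, and integrate in $t$ using the co-area formula together with the Hardy--Littlewood--P\'olya inequality (exactly as in the proof of Theorem~3 in~\cite{BG}). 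No compactness of functions, no Rellich argument, and no control of $\varphi_j$ near singular sets is needed; your step~(c), which you correctly flag as the main obstacle, is simply bypassed by working at the level of sets.
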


Now we consider applications of Theorem \ref{INEQU*}. First, let us review some classical results on gradient estimates for minimal graphs over Euclidean space. In 1969, Bombieri-De Giorgi-Miranda \cite{BDM}  (see also \cite{GT}) showed the interior gradient estimate for any solution $u$ to the minimal hypersurface equation on a ball $B_r(y)\subset\R^n$ (Finn proved the 2-dimensional case \cite{Fi}). Namely, there is a constant $c=c(n)>0$ depending only on $n$ so that
\begin{equation}\aligned\label{GESTEuc0}
|Du(y)|\le c e^{c r^{-1}\left(u(y)-\sup_{x\in B_{r}(y)}u(x)\right)}.
\endaligned
\end{equation}
From the examples constructed by Finn \cite{Fi1}, the estimate \eqref{GESTEuc0} is sharp in the sense of the linear exponential dependence on the solutions. 
Wang \cite{W} obtained a gradient estimate of $u$ via the maximum principle, and Spruck \cite{Sp} got it for the case of manifolds.
In \cite{RSS}, Rosenberg-Schulze-Spruck  proved an interior gradient estimate for minimal graphs over manifolds of nonnegative Ricci curvature, where the estimation depends on the lower bound of sectional curvature.

With Theorem \ref{INEQU*}, one can deduce the mean value inequality for positive subharmonic(superharmonic) functions on area-minimizing hypersurfaces by De Giorgi-Nash-Moser iteration. Combining the integral estimates for gradient of solutions in \cite{DJX2}, we obtain the following interior gradient estimates.
\begin{theorem}
Let $\Si$ be an $n$-dimensional complete noncompact Riemannian manifold with Ricci curvature $\ge-(n-1)\k^2r^{-2}$ on the geodesic ball $B_r(p)\subset\Si$ for some $\k\ge0$.
Suppose $\mathcal{H}^{n}(B_r(x))\ge vr^n$ for some constant $v>0$. Let $u$ be a smooth solution to \eqref{u0} on $B_{r}(p)$, then
\begin{equation}\aligned\label{0GEM}
|Du(p)|\le c_{\k,v} e^{c_{\k,v} r^{-1}\left(u(p)-\sup_{x\in B_{r}(p)}u(x)\right)},
\endaligned
\end{equation}
where $c_{\k,v}$ is a positive constant depending only on $n,\k,v$.
\end{theorem}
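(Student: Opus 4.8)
The plan is to derive \eqref{0GEM} by De Giorgi--Nash--Moser iteration on the minimal graph $M=\{(x,u(x)):x\in B_r(p)\}\subset B_r(p)\times\R$, using the Sobolev inequality \eqref{SobM} of Theorem \ref{INEQU*} as the analytic input, and then combining the resulting mean value / Harnack inequalities with the integral gradient estimate of \cite{DJX2}. After rescaling the metric of $\Si$ by $(r/2)^{-2}$ one may assume $r=2$, so that $\mathrm{Ric}\ge-(n-1)\k^2$ on $B_2(p)$, while Bishop--Gromov turns the hypothesis $\mathcal H^n(B_r(x))\ge vr^n$ into $\mathcal H^n(B_1(p))\ge v'$ with $v'=v'(n,\k,v)>0$. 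A solution of \eqref{u0} has its graph calibrated by the closed $n$-form $\iota_{\nu}\,\mathrm{dvol}_{\Si\times\R}$ with $\nu=(-Du,1)/\sqrt{1+|Du|^2}$, so $M$ is area-minimizing in $B_2(p)\times\R$; in any case Theorem \ref{INEQU*} applies and yields \eqref{SobM} and the companion Neumann--Poincar\'e inequality \eqref{PoincareM} on $M$ with constant $\Th_{\k,v}$.

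Granting \eqref{SobM}, the first step is standard: the intrinsic Caccioppoli inequality on the smooth hypersurface $M$ fed into Moser iteration gives a local mean value inequality — for every nonnegative $w$ on $M\cap B_1(\bar p)$ with $\De_M w\ge -g$ weakly and $g$ bounded, $\sup_{M\cap B_{1/C}(\bar p)}w\le C\big(\fint_{M\cap B_1(\bar p)}w\,d\mathcal H^n+\|g\|_\infty\big)$ with $C=C(n,\k,v)$ — and, using in addition \eqref{PoincareM}, a Harnack inequality for positive harmonic functions on $M$; this is the mean value / Harnack inequality on area-minimizing hypersurfaces alluded to before the statement. The second step is to produce the right quantity to iterate. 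Set $v_u=\sqrt{1+|Du|^2}\ge1$ and let $\th=v_u^{-1}=\lan\nu,\partial_t\ran\in(0,1]$ be the angle function of $M$. Minimality gives the Jacobi equation $\De_M\th+(|A|^2+\overline{\mathrm{Ric}}(\nu,\nu))\th=0$, where $\overline{\mathrm{Ric}}$ is the Ricci curvature of $B_2(p)\times\R$ and $A$ the second fundamental form of $M$; since $\overline{\mathrm{Ric}}(\nu,\nu)=\mathrm{Ric}_\Si(\nu^\top,\nu^\top)\ge-(n-1)\k^2$ (as $|\nu^\top|\le1$), a short computation gives $\De_M\log v_u=|A|^2+\overline{\mathrm{Ric}}(\nu,\nu)+|\na\log v_u|^2\ge-(n-1)\k^2$, so $\log v_u\ge0$ is a subsolution of $\De_M w\ge-(n-1)\k^2$ on $M\cap B_1(\bar p)$.

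Finally, I would invoke the integral estimate for the gradient from \cite{DJX2}, which bounds $\fint_{M\cap B_1(\bar p)}\log v_u\,d\mathcal H^n$ in terms of $n,\k,v$ and $\sup_{B_1(p)}u-u(p)$ — this is where the divergence structure of \eqref{u0} and a test function built from the height function enter, and where the dependence on the oscillation of $u$ is produced; the Harnack inequality of the first step, applied to the nonnegative harmonic function $\sup_{B_1(p)}u-t$ on $M\cap B_1(\bar p)$ (whose value at $\bar p$ is $\sup_{B_1(p)}u-u(p)$), is what localizes the oscillation of the height to the value of $u$ at $p$. Applying the mean value inequality to $w=\log v_u$ with $g\equiv(n-1)\k^2$, exponentiating, and using $|Du(p)|\le v_u(p)$ gives a bound of the shape $|Du(p)|\le\exp\!\big(c(n,\k,v)(1+\sup_{B_1(p)}u-u(p))\big)$; undoing the rescaling, which divides the height by $r/2$, yields \eqref{0GEM}. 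The routine parts are the Moser iteration and the Jacobi-equation computation; the essential content is the integral gradient estimate imported from \cite{DJX2}, and the delicate point is to make sure its constant depends only on $n,\k,v$ and that its conclusion carries exactly the linear-in-oscillation exponent required for \eqref{0GEM}. A minor bookkeeping matter is to match the intrinsic balls $M\cap B_\rho(\bar p)$ of the product with the graph over $B_\rho(p)$ and to propagate the volume lower bound to the scale at which Theorem \ref{INEQU*} is stated.
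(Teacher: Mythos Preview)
Your proposal is correct and follows essentially the same route as the paper's proof of Theorem \ref{GEu}: rescale to a fixed radius, derive the mean value inequality on $M$ via De Giorgi--Nash--Moser from the Sobolev inequality of Theorem \ref{INEQU*} (this is Lemma \ref{MVI}), apply it to the subsolution $\log v_u$ satisfying $\De_M\log v_u\ge-(n-1)\k^2$, and feed in the $L^1$ integral bound on $\log v_u$ coming from Lemma 3.1 of \cite{DJX2}. The Harnack step you propose for the height function is superfluous: after translating so that $u(p)=0$, the integral estimate \eqref{DuK} from \cite{DJX2} already carries the factor $1+\sup_{B_3(p)}u=1+(\sup u-u(p))$ directly, so no further localization is required.
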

The estimation \eqref{0GEM} can be seen as a generalization of \eqref{GESTEuc0} on Euclidean space.
As far as our knowledge, 
\eqref{0GEM} is the first one independent of sectional curvature among interior gradient estimates of solutions to \eqref{u0} on manifolds. 
Moreover, if we further assume that $u$ is a smooth solution to \eqref{u0} on a manifold of nonnegative Ricci curvature and Euclidean volume growth, then $|Du|$ can be controlled by a polynomial of $|u|$ (see Theorem \ref{GEuglobal}) using the idea of Bombieri-Giusti \cite{BG}, where they have already obtained the Euclidean version.

With gradient estimate \eqref{GESTEuc0}, Bombieri-De Giorgi-Miranda \cite{BDM} proved that any smooth solution  $u$ to minimal hypersurface equation on $\R^n$ must be affine linear provided there is a constant $K>0$ so that $u(x)\ge-K(|x|+1)$ for any $x\in\R^n$.
Using \eqref{0GEM}, we get a Liouville type theorem for any entire solution to \eqref{u0} with sub-linear growth for its negative part  as follows.
\begin{theorem}\label{hlcMG}
Let $\Si$ be a complete noncompact manifold of nonnegative Ricci curvature and Euclidean volume growth. If a smooth solution $u$ to \eqref{u0} on $\Si$ admits sub-linear growth for its negative part, i.e.,
$$\limsup_{\Si\ni x\rightarrow\infty}\f{\max\{-u(x),0\}}{d(x,p)}=0$$
for some point $p\in\Si$, then $u$ is a constant.
\end{theorem}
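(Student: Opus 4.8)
The strategy is to realize the graphic function $u$ as a harmonic function of small gradient on the minimal graph $M=\{(x,u(x)):x\in\Si\}\subset\Si\times\R$, and then to run a one-sided Liouville argument for harmonic functions using the functional inequalities of Theorem~\ref{INEQU*}. First I would invoke \eqref{0GEM} with $\k=0$: since $\Si$ has nonnegative Ricci curvature and Euclidean volume growth, Bishop--Gromov furnishes a uniform lower bound $\mathcal H^n(B_r(x))\ge v_0 r^n$ for all $x\in\Si$ and all $r>0$, so \eqref{0GEM} applies on every geodesic ball and yields a uniform Lipschitz bound $|Du|\le C_0$ on $\Si$; in particular $M$ has at most Euclidean volume growth. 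Next, the $\R$-coordinate $t$ on $\Si\times\R$ has parallel gradient, so its restriction to the minimal hypersurface $M$ is harmonic; under the projection $\pi\colon\Si\times\R\to\Si$ this restriction is $h:=u\circ\pi$, and $|\na h|^2=|Du|^2/(1+|Du|^2)<1$, so $h$ is $1$-Lipschitz on $M$. Since $\pi|_M$ is $1$-Lipschitz we have $d_\Si(\pi\bar x,p)\le d_M(\bar x,\bar p)$, and because the hypothesis on the negative part of $u$ does not depend on the base point, $h$ inherits sub-linear growth of its negative part on $M$:
\[
\lim_{R\to\infty}\frac1R\Big(h(\bar x)-\inf_{M\cap B_R(\bar x)}h\Big)=0\qquad\text{for every }\bar x\in M.
\]

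The analytic engine is that harmonic functions on $M$ obey a Harnack inequality and an interior gradient estimate with constants depending only on $n$ and $v_0$. Indeed, rescaling $\Si$ by $r^{-1}$ preserves $\mathrm{Ric}\ge 0$ and, by the uniform volume bound, keeps $\mathcal H^n(B_1(\pi\bar x))$ bounded below, so Theorem~\ref{INEQU*} supplies the Sobolev inequality \eqref{SobM} and the Neumann--Poincar\'e inequality \eqref{PoincareM} on $M$ at every point and every scale with a uniform constant $\Th_{0,v_0}$. Feeding these into De~Giorgi--Nash--Moser iteration (as in the mean value inequality recorded after Theorem~\ref{INEQU*}, and using that a minimal graph is area-minimizing) produces, for every $\bar x\in M$ and $R>0$, the Harnack inequality for nonnegative harmonic functions on $M\cap B_{2R}(\bar x)$ together with the resulting interior gradient estimate for harmonic functions, which in one-sided form reads
\[
|\na h(\bar x)|\ \le\ \frac{C}{R}\Big(h(\bar x)-\inf_{M\cap B_R(\bar x)}h\Big),\qquad C=C(n,v_0).
\]
Letting $R\to\infty$ and invoking the displayed sub-linearity gives $\na h(\bar x)=0$; as $\bar x$ was arbitrary, $\na h\equiv 0$ on $M$, i.e.\ $|Du|\equiv 0$, and $u$ is constant.

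The main obstacle is the second step: promoting Theorem~\ref{INEQU*} to scale-invariant Sobolev and Poincar\'e inequalities on $M$ valid at all points and all scales, and then extracting from De~Giorgi--Nash--Moser iteration a Harnack inequality and a genuine (Lipschitz, not merely H\"older) interior gradient estimate for harmonic functions on $M$ with constants independent of the point and the scale. This is precisely where the two hypotheses are used: nonnegative Ricci curvature of $\Si$ forces $\k=0$, so the inequalities of Theorem~\ref{INEQU*} hold on arbitrarily large balls, while Euclidean volume growth provides, via Bishop--Gromov, the uniform lower volume bound that renders the constants scale-independent. The remaining ingredients---the a priori bound $|Du|\le C_0$ from \eqref{0GEM}, the harmonicity of $u$ on $M$ and the bound $|\na h|<1$, the transfer of the growth hypothesis from $\Si$ to $M$, and the final passage $R\to\infty$---are routine once the scale-invariant De~Giorgi--Nash--Moser theory on $M$ is in place.
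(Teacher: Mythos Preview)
Your strategy has a genuine gap at the step you yourself flag as the ``main obstacle'': the passage from the scale-invariant Harnack inequality to a \emph{Lipschitz} gradient estimate
\[
|\na h(\bar x)|\le \f{C}{R}\Big(h(\bar x)-\inf_{M\cap B_R(\bar x)}h\Big)
\]
for harmonic $h$ on $M$. De~Giorgi--Nash--Moser iteration driven by \eqref{SobM} and \eqref{PoincareM} yields Harnack and $C^{0,\a}$ regularity, but not $C^1$; upgrading to a Cheng--Yau type pointwise gradient bound requires a differential inequality for $|\na h|^2$, which in turn needs a lower bound on $\mathrm{Ric}_M$. For a minimal graph $M\subset\Si\times\R$ no such bound is available: by the Gauss equation $\mathrm{Ric}_M$ involves $|A|^2$, which is uncontrolled even after you have shown $|Du|\le C_0$. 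The two hypotheses you invoke ($\k=0$ and Euclidean volume growth of $\Si$) make the Sobolev/Poincar\'e constants scale-invariant, but scale-invariant H\"older is still only H\"older.

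The paper closes exactly this gap by exploiting that $h$ is not an arbitrary harmonic function but the height function. The Bochner-type identity \eqref{Logv} (equivalently \eqref{De1/v}) gives, under $\mathrm{Ric}_\Si\ge0$, that $\log v_u$ is subharmonic and $1/v_u$ is superharmonic on $M$. Hence $|Du|^2=v_u^2-1$ is subharmonic and the bounded nonnegative function $|Du|_0^2-|Du|^2$ is superharmonic; the mean value inequality \eqref{supMVI} (this \emph{does} follow from \eqref{SobM}, \eqref{PoincareM} via Moser iteration) applied to it yields
\[
\sup_\Si|Du|^2=\lim_{r\to\infty}\fint_{M\cap B_r(\bar x)}|Du|^2,
\]
from which constancy of $u$ follows as in \cite{DJX2}. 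In short, the missing ingredient in your proposal is the Jacobi/Bochner formula \eqref{De1/v} for $v_u$; once you insert it, your scheme essentially becomes the paper's.
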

Let us review Liouville type theorems for solutions to \eqref{u0} briefly.
In \cite{RSS}, Rosenberg-Schulze-Spruck proved that any smooth solution $u$ to \eqref{u0} on a complete manifold of nonnegative sectional curvature is a constant provided $u$ admits sub-linear growth for its negative part. 
Casteras-Heinonen-Holopainen \cite{CHH} showed that any smooth nonnegative solution $u$ to \eqref{u0} on a complete manifold of asympototically nonnegative sectional curvature is a constant with at most linear growth condition of $u$.
For nonnegative smooth solutions to \eqref{u0}, the condition of nonnegative sectional curvature can be weaken to satisfying nonnegative Ricci curvature and sectional curvature uniformly bounded below in \cite{RSS}.
In \cite{D0}, the condition can be further weaken to satisfying volume doubling property and Neumann-Poincar\'e inequality  (In particular, it holds for complete manifolds of nonnegative Ricci curvature, see \cite{CMMR,D0}).

In \cite{CCM}, Cheeger-Colding-Minicozzi proved the splitting of tangent cones at infinity for manifolds of nonnegative Ricci curvature which admit linear growth harmonic functions on them.
In general, such manifolds may not split off a line isometrically themself even provided they admit Euclidean volume growth.
As the same reason, if the non-constant solution $u$ to \eqref{u0} has linear growth at most for its negative part, i.e.,
$$\limsup_{\Si\ni x\rightarrow\infty}\f{\max\{-u(x),0\}}{d(x,p)}<\infty$$
for some point $p\in\Si$, then it may not be true about affine linearity of $u$, or the splitting of $\Si$. 
Instead, we can estimate the Hausdorff distance between different level sets of $u$ in compact sets in the scaling sense (see Lemma \ref{C0est}), and get the following splitting based on the almost rigidity theorem of Cheeger-Colding \cite{CC}.
\begin{theorem}\label{splitting}
Let $\Si$ be a complete noncompact Riemannian manifold of nonnegative Ricci curvature and Euclidean volume growth. If $u$ is a smooth non-constant solution to \eqref{u0} on $\Si$ with linear growth at most for its negative part, then any tangent cone of $\Si$ at infinity splits off a line isometrically.
\end{theorem}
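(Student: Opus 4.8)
The plan is to blow $\Si$ down at infinity while keeping track of the minimal graphic function $u$, to show that the induced function on the limit cone is linear, and then to invoke the Cheeger--Colding splitting theorem \cite{CC}. The key analytic input is that, by Theorem~\ref{INEQU*} applied on every scale (rescaling $\Si$ by $r^{-2}$ shows each ball of $\Si$ satisfies the hypotheses with $\k=0$), the minimal graph $M=\{(x,u(x))\}\subset\Si\times\R$ satisfies the volume doubling property and the Neumann--Poincar\'e inequality with scale-invariant constants, so Moser's Harnack and mean value inequalities hold for harmonic functions on $M$. Since $t$ is a harmonic coordinate on $\Si\times\R$ and $M$ is minimal, $t|_M$ is harmonic on $M$; applying the Harnack inequality to $t|_M$ on intrinsic balls of $M$ (whose projections lie in the corresponding $\Si$-balls) together with the hypothesis $u(x)\ge-K(d(x,p)+1)$, and then feeding the outcome into \eqref{0GEM} (equivalently, Theorem~\ref{GEuglobal}), one obtains the two-sided bound $|u(x)|+d(x,p)\,|Du(x)|\le C(d(x,p)+1)$ on all of $\Si$; note $u$ is automatically unbounded above and below, since otherwise Theorem~\ref{hlcMG} would force $u$ to be constant.

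Now fix an arbitrary tangent cone of $\Si$ at infinity: along some $\la_i\to\infty$ one has $(\Si,\la_i^{-1}d,p)\to(C(Y),d_\infty,o)$ in the pointed Gromov--Hausdorff sense, a metric cone which, being a non-collapsed Ricci limit (by the Euclidean volume growth), carries the Cheeger--Colding splitting theorem. The rescalings $u_i:=\la_i^{-1}u$ solve \eqref{u0} for the rescaled metrics and, by the bound established above, are uniformly Lipschitz on bounded sets with $u_i(p)\to0$; after passing to a subsequence, $u_i\to u_\infty$ locally uniformly, $u_\infty$ is Lipschitz on $C(Y)$, and by the compactness theory of $\S 4$ (cf.\ \cite{D}) the graphs of $u_i$ converge in the Hausdorff sense to $M_\infty=\{(x,u_\infty(x))\}$, a limit of minimal graphs over $C(Y)$ as studied in $\S 4$. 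The limit $u_\infty$ is not constant: if it were, $u$ would have sublinear growth for its negative part, whence $u$ constant by Theorem~\ref{hlcMG}; in the borderline case one first renormalises by $\mathrm{osc}_{B_{\la_i}(p)}u$ and argues with the strong maximum principle for \eqref{u0} on $C(Y)$ exactly as in the proof of Theorem~\ref{hlcMG}.

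The heart of the argument is to show that $|\nabla u_\infty|$ equals a constant $c>0$ on $C(Y)$. On one hand, the gradient bound established above gives $d(\{u_\infty=s\},\{u_\infty=t\})\ge c_1|s-t|$ on every bounded set; on the other, Lemma~\ref{C0est}, which controls the Hausdorff distance between level sets of $u$ in compact sets in a scale-invariant way, yields the matching upper bound after blow-down, so the level sets of $u_\infty$ form a foliation equivalent to a linear one. Feeding in the metric-cone structure of the tangent cones of $M_\infty$ from $\S 4$ and the connectedness — i.e.\ splitting off a Euclidean factor — of their cross-sections from \cite{D1}, a dimension-reduction/Bernstein argument promotes this to $|\nabla u_\infty|\equiv c$. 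Since $u_\infty$ solves \eqref{u0} and $|\nabla u_\infty|$ is constant, $\mathrm{div}(\nabla u_\infty)=0$, i.e.\ $u_\infty$ is harmonic on $C(Y)$.

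Finally, $u_\infty$ is a harmonic function of constant positive gradient on the non-collapsed Ricci-limit cone $C(Y)$. The Bochner identity, in the form valid on such limit spaces (the analytic core of the splitting theorem \cite{CC}), forces $\mathrm{Hess}\,u_\infty\equiv0$ and $\mathrm{Ric}(\nabla u_\infty,\nabla u_\infty)\equiv0$; equivalently, every gradient flow line of $u_\infty$ is a line in $C(Y)$, so $C(Y)$ splits isometrically as $\R\times\{u_\infty=0\}$. Since the tangent cone was arbitrary, the theorem follows. The main obstacle should be the rigidity in the third step — passing from ``the level sets of $u_\infty$ are equivalent to a linear foliation'' to ``$|\nabla u_\infty|$ is exactly constant'' — which is exactly where the cone structure of limits of minimal graphs from $\S 4$, the connectedness result of \cite{D1}, and the scale-invariant level-set estimate of Lemma~\ref{C0est} are genuinely needed; a secondary subtlety is the non-degeneracy of the blow-down limit for an arbitrary scaling sequence, handled by the Harnack and maximum-principle machinery behind Theorem~\ref{hlcMG}.
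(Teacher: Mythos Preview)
Your overall architecture --- blow down, show the limiting function is ``linear'', apply Cheeger--Colding splitting --- matches the paper's, but the execution has a genuine gap and diverges from the paper's approach in a way that causes trouble. The paper never works on the limit cone $C(Y)$ at all; it stays on the smooth rescalings $(\Si_r,p_r)$ and proves two quantitative estimates there: first, using the Bochner formula \eqref{Logv} integrated against a cutoff built from the Green function $b$ (with the Colding--Minicozzi estimates \eqref{Estb}), one gets $r^{2-n}\int_{B_r}|\mathrm{Hess}_u|^2\to 0$; second, the mean value inequality \eqref{supMVI} applied to the bounded nonnegative superharmonic function $|Du|_0^2-|Du|^2$ on $M$ gives $\fint_{M\cap B_r}|Du|^2\to|Du|_0^2$. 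These two facts are precisely what drive the proof of Lemma~\ref{C0est} and the companion $C^1$ estimate \eqref{C1est}, after which Cheeger--Colding's almost rigidity theorem (Theorem~3.6 of \cite{CC}, which is stated for the smooth approximants) yields the splitting in one line. You invoke Lemma~\ref{C0est} as an independent input, but in the paper it is a \emph{consequence} of exactly the Hessian decay and gradient convergence that your proposal never establishes; citing it while skipping those estimates is circular.

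Your substitute for this analytic core --- ``feeding in the metric-cone structure of tangent cones of $M_\infty$ from \S4, the connectedness result of \cite{D1}, and a dimension-reduction/Bernstein argument'' --- does not do the job. Those results concern the structure of tangent cones of limits of minimal graphs and are used in \S5 to prove the Sobolev/Poincar\'e inequalities by contradiction; they do not by themselves force $|\nabla u_\infty|$ to be constant on $C(Y)$, and no Bernstein-type statement of the required form is available on a general Ricci limit cone. Likewise, your final step (Bochner identity on $C(Y)$ to force $\mathrm{Hess}\,u_\infty\equiv 0$) requires second-order calculus on singular spaces that the paper deliberately avoids by staying on $\Si_r$. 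The non-degeneracy step is also incomplete: $u_\infty\equiv 0$ along one sequence $\la_i$ does not give sublinear growth of $u^-$, so Theorem~\ref{hlcMG} does not apply, and the ``renormalise by $\mathrm{osc}_{B_{\la_i}}u$'' fix changes the equation. The missing idea is the pair of estimates above --- Hessian decay via the Green-function weighted Bochner argument, and $L^1$ gradient convergence via \eqref{supMVI} --- which together make Lemma~\ref{C0est}, \eqref{C1est}, and hence the splitting, immediate.
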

The splitting of tangent cones of $\Si$ at infinity indeed requires linear growth of $u$ on $\Si$ (see examples in $\S 6$ of \cite{DJX2}).
Let $M$ denote the minimal graph over $\Si$ with the graphic function $u$. 
If $u$ has linear growth, then the tangent cone of $M$ at infinity can split off a line isometrically.
However, the splitting does not depend on the linear growth of $u$ as follows.
\begin{theorem}\label{splitting***}
Let $\Si$ be a complete noncompact Riemannian manifold of nonnegative Ricci curvature and Euclidean volume growth. If $M$ is a minimal graph over $\Si$ with the non-constant smooth graphic function, then there is a tangent cone of $M$ at infinity, which is a metric cone and splits off a line isometrically.
\end{theorem}

\section{Preliminary}

Let us recall the (integer multiplicity) current theory from geometric measure theory in smooth manifolds briefly (see \cite{LYa}\cite{S} for the classical case in Euclidean space, see \cite{AK} for a general case in metric spaces).
For an integer $n\ge1$, let $N$ be an $(n+1)$-dimensional complete smooth Riemannian manifold, and $U$ be an open subset of $N$.
For an integer $0\le k\le n+1$, let $\mathcal{D}^k(U)$ denote the set including all smooth $k$-forms with compact supports in $U$.
For any $\omega\in\mathcal{D}^k(U)$, we denote 
$$\omega=\sum_{1\le i_1<\cdots<i_k\le n+1}f_{i_1\cdots i_k}\be_1\wedge\cdots\wedge\be_{i_k},$$
where $\be_1,\cdots,\be_{n+1}$ are $(n+1)$-orthonormal smooth 1-forms with compact supports on $U$.
Then we define a norm $|\cdot|_U$ by
$$|\omega|_U=\sup_{x\in U}\lan\omega(x),\omega(x)\ran^{1/2}=\sup_{x\in U}\left(\sum_{1\le i_1<\cdots<i_k\le n+1}f_{i_1\cdots i_k}^2(x)\right)^{1/2}.$$
Let $\mathcal{D}_k(U)$ denote the set of $k$-currents in $U$, which are continuous linear functionals on $\mathcal{D}^k(U)$. 
For $k\ge1$, the boundary of a $k$-current $T$ in $U$ is the $(k-1)$-current $\p T$ defined by
$$\p T(\omega)=T(d\omega)\qquad \mathrm{for\ each}\ \omega\in \mathcal{D}^k(U).$$
Let $\mathbb{M}(T)$ denote the mass of $T$ defined by
$$\mathbb{M}(T)=\sup_{|\omega|_U\le1,\omega\in\mathcal{D}^k(U)}T(\omega).$$
For a sequence $T_j\in\mathcal{D}_n(U)$ and $T\in\mathcal{D}_n(U)$, we denote $T_j\rightharpoonup T$ 
if $\lim_{j\rightarrow\infty}T_j(\omega)=T(\omega)$ for all $\omega\in \mathcal{D}^k(U)$,
which implies
$$\mathbb{M}(T)\le\liminf_{j\rightarrow\infty}\mathbb{M}(T_j).$$
In other words, the mass is lower semi-continuous with respect to weak convergence.	

Let $\mathcal{H}^s$ denote the $s$-dimensional Hausdorff measure for each constant $s\ge0$.
For $T\in\mathcal{D}_k(U)$, $T$ is said to be an \emph{integer multiplicity current} if $T$ can be expressed as $(S,\th,\xi)$, i.e.,
$$T(\omega)=\int_S\th(x)\lan \omega(x),\xi(x)\ran d\mathcal{H}^k(x)\qquad \mathrm{for\ each}\ \omega\in \mathcal{D}^k(U),$$
where $S$ is a ($\mathcal{H}^k$-measurable) countably $k$-rectifiable subset of $U$, $\th$ is a locally $\mathcal{H}^k$-integrable positive integer-valued function, and $\xi(x)$ is a $k$-vector representing the approximate tangent space $T_xS$ for a.e. $x\in S$.
Here, $S$ is called the support of $T$, $\th$ is called the multiplicity function of $T$, and $\xi$ is called the orientation of $T$. For simplicity, we often omit the volume element $d\mathcal{H}^k$.

If both $T$ and $\p T$ are integer multiplicity rectifiable currents, then $T$ is called an \emph{integral current}.
For a Borel set $W\subset U$, the current $U\llcorner W$ is defined by
$$T\llcorner W(\omega)=\int_{S\cap W}\th\lan \omega,\xi\ran\qquad \mathrm{for\ each}\ \omega\in \mathcal{D}^k(U),$$
where $\chi_{_W}$ is the characteristic function on $W$. If we further assume $W$ open with $W\subset\subset U$, then
$$\mathbb{M}(T\llcorner W)=\sup_{|\omega|_U\le1,\omega\in\mathcal{D}^k(U),\mathrm{spt}\omega\subset W}T(\omega).$$
For the sequence $T_j\in\mathcal{D}_n(U)$ with $T_j\rightharpoonup T$, it follows that
\begin{equation}\label{TWTjW}
\mathbb{M}(T\llcorner W)\le\liminf_{j\rightarrow\infty}\mathbb{M}(T_j\llcorner W).
\end{equation}

Let $\tilde{N}$ be an $m$-dimensional smooth Riemannian manifold.
Let $f:\ U\rightarrow \tilde{N}$ be a $C^1$-mapping, and $T=(S,\th,\xi)$ be an integer multiplicity $k$-current in $U$ with $k\le m$. Let $f_*\xi$ denote the push-forward of $\xi$, which is an orientation of $f(S)$ in $\tilde{N}$. We define $f(T)\in \mathcal{D}_k(\tilde{N})$ by letting
\begin{equation*}
f(T)(\omega)=\int_S\th\lan \omega\circ f,f_*\xi\ran=\int_{f(S)}\left\lan\omega(y),\sum_{x\in f^{-1}(y)\cap S_*}\th(x)\f{f_*\xi(x)}{|f_*\xi(x)|}\right\ran d\mathcal{H}^k(y)
\end{equation*}
for each $\omega\in \mathcal{D}^k(\tilde{N})$,
where $S_*=\{x\in S|\, |f_*\xi(x)|>0\}$. It's clear that $f(T)$ is an integer multiplicity current in $\tilde{N}$, and $\p(f(T))=f(\p T)$.

For a countably $n$-rectifiable set $S$ (with orientation $\xi_S$) in $U$, let $[| S|]$ denote the integer multiplicity current associated with $S$, i.e., the current $[| S|]$ has the support $\overline{S}$ and the multiplicity one with the orientation $\xi_S$.
For an $(n+1)$-dimensional Borel set $V\subset U$ with $n$-rectifiable boundary, let $[| V|]$ denote the integer multiplicity current associated with $V$, i.e., the current $[| V|]$ has the support $\overline{V}$, the multiplicity one, and the orientation $e_1\wedge\cdots\wedge e_{n+1}$. Here, $e_1,\cdots, e_{n+1}$ is a standard orthonormal local tangent field of $U$.
It's clear that $\p [| V|]=[| \p V|]$.

A countably $n$-rectifiable set $S$ (with orientation $\xi$) is said to be an \emph{area-minimizing} hypersurface in $U$ if the associated current $[| S|]$ is a minimizing current in $U$. Namely,
$\mathbb{M}([| S|]\llcorner W)\le \mathbb{M}(T\llcorner W)$ whenever $W\subset\subset U$, $\p T=\p[| S|]$ in $U$, spt$(T-[| S|])$ is compact in $W$ (see \cite{S} for instance).
Let $\Si$ be an $n$-dimensional complete manifold, and $\Om$ be an open subset of $\Si$.
Let $M$ be a minimal graph over $\Om$ in $\Si\times\R$, then $M$ is an area-minimizing hypersurface in $\Om\times\R$ (see \cite{DJX1} for instance).

From Federer and Fleming \cite{FF}, there holds the following compactness theorem (see also Theorem 27.3 in \cite{S}).
\begin{theorem}\label{FF}
If $T_j$ is a sequence of integer multiplicity currents in $\mathcal{D}_n(U)$ with
$$\sup_{j\ge1}\left(\mathbb{M}(T_j\llcorner W)+\mathbb{M}(\p T_j\llcorner W)\right)<\infty\qquad \mathrm{for\ any\ open}\ W\subset\subset U,$$
then there is an integer multiplicity $T\in \mathcal{D}_n(U)$ and a subsequence $T_{j'}$ such that $T_{j'}\rightharpoonup T$ in $U$.
\end{theorem}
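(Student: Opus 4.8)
The statement is the classical Federer--Fleming compactness theorem, transplanted to the smooth Riemannian setting. The plan is to reduce to the Euclidean case, obtain a weakly convergent subsequence by a soft functional-analytic argument, and then invoke the closure (= integrality) theorem to see that the limit is again integer multiplicity.

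\emph{Step 1: reduction to Euclidean space.} Since $N$ is a smooth Riemannian manifold, by Nash's isometric embedding theorem there is an isometric embedding $\iota\colon N\hookrightarrow\R^L$ for some large $L$, a diffeomorphism onto its image $\iota(N)$. Set $\tilde U=\iota(U)$, an open subset of $\iota(N)$. As $\iota$ is an isometry, the push-forwards $\iota(T_j)$ are integer multiplicity $n$-currents in $\R^L$ with $\mathrm{spt}\,\iota(T_j)\subset\iota(N)$, satisfying $\mathbb{M}(\iota(T_j)\llcorner\iota(W))=\mathbb{M}(T_j\llcorner W)$ and $\partial\,\iota(T_j)=\iota(\partial T_j)$, so the mass hypotheses are inherited; conversely $\iota^{-1}$ is a $C^1$-map carrying currents supported in $\iota(N)$ back to $U$ with the same mass, so a weak limit in $\R^L$ supported in $\iota(N)$ pulls back to the desired weak limit in $U$. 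Hence it suffices to prove the theorem for integer multiplicity $n$-currents in an open subset of $\R^L$ (arbitrary codimension allowed); alternatively one may work through a locally finite atlas, comparing Riemannian and Euclidean mass up to a bounded factor on each chart, and let the diagonal argument of Step 2 range over the atlas as well.

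\emph{Step 2: weak compactness.} Exhaust $U$ by open sets $W_1\subset\subset W_2\subset\subset\cdots$ with $\bigcup_k W_k=U$. On each $W_k$ the bound $\mathbb{M}(T_j\llcorner W_k)\le C_k<\infty$ says the functionals $T_j$ are equibounded, in the norm dual to $|\cdot|_{W_k}$, on the space of $n$-forms supported in $W_k$; this space has a countable $|\cdot|_{W_k}$-dense subset, so a diagonal subsequence $T_{j'}$ satisfies $T_{j'}(\omega)\to T(\omega)$ for every $\omega\in\mathcal{D}^n(U)$, defining $T\in\mathcal{D}_n(U)$ with $T_{j'}\rightharpoonup T$. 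Since $d$ commutes with the limit, $\partial T_{j'}\rightharpoonup\partial T$, and lower semicontinuity of mass under weak convergence (the inequality \eqref{TWTjW}) gives $\mathbb{M}(T\llcorner W)+\mathbb{M}(\partial T\llcorner W)<\infty$ for every $W\subset\subset U$.

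\emph{Step 3: integrality of the limit (the crux).} It remains to show $T$ is integer multiplicity; this is the content of the Federer--Fleming closure theorem, proved by induction on $n$. For $n=0$ an integer multiplicity $0$-current is a finite sum $\sum a_i[|x_i|]$ with $a_i\in\Z$ and mass $\sum|a_i|$; a weak limit of such measures with uniformly bounded total mass is again a finite atomic measure whose atoms carry integer weights, using only weak-$\ast$ compactness of Radon measures plus the integrality and mass control of the atoms. For the inductive step one slices $T$ and the $T_{j'}$ by a generic Lipschitz (e.g. affine) function $\ell$: for a.e.\ level $t$ the slices $\langle T_{j'},\ell,t\rangle$ are integer multiplicity $(n-1)$-currents with locally uniformly bounded mass and boundary mass, slicing is compatible with weak convergence for a.e.\ $t$, so by induction $\langle T,\ell,t\rangle$ is integer multiplicity for a.e.\ $t$ and a.e.\ direction; one then upgrades this to rectifiability of $T$ with integer multiplicity via the rectifiability criterion applied to the structure of $T$ together with the slice data and the a.e.\ convergence of approximate tangent planes and multiplicities of the $T_{j'}$. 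The boundary-mass bound enters decisively here (a BV-type compactness): without it, limits of polyhedral currents of bounded mass may be supported on non-rectifiable sets.

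\emph{Main obstacle.} The entire difficulty is concentrated in Step 3 — the rectifiability/closure theorem for integral currents — and specifically in the slicing induction and the verification that sufficiently many rectifiable slices force the ambient current to be integer multiplicity rectifiable. For the purposes of this paper only the conclusion is needed, so we simply invoke the Euclidean closure and compactness theorems of Federer--Fleming \cite{FF} (see also Theorem~27.3 in \cite{S}) after the reduction of Step~1.
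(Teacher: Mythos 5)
Your outline is correct: the paper itself offers no proof of this statement, simply quoting it from Federer--Fleming \cite{FF} (see Theorem~27.3 in \cite{S}), and your proposal, after correctly reducing to the Euclidean case and isolating the closure/rectifiability theorem as the only genuinely hard step, defers that step to the very same references. So your argument is consistent with, and no less complete than, what the paper does.
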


{\bf Notional convention.} If a point $q$ belongs to some metric space $Z$ defined in the text, we let $B_r(q)$ denote the geodesic ball in this $Z$ with radius $r$ and centered at $q$.
We always denote $B_r(0)$ be the ball in Euclidean space $\R^{n+1}$ with radius $r$ and centered at the origin.
Let $\omega_k$ denote the volume of $k$-dimensional unit Euclidean ball for each integer $k>0$.
If volume elements in integrations on countably $k$-rectifiable sets are $k$-dimensional Hausdorff measure, we often omit the volume elements for simplicity.
Let $N$ be a complete Riemannian manifold with metric $g$. For a point $p\in N$ and a subset $U\subset N$, let $\f1r(U,g,p)=(U,r^{-1}g,p)$, where $r^{-1}g$ is a scaling of metric $g$. For simplicity, we always denote $\f1r(U,p)=\f1r(U,g,p)$.

For two metric spaces $Z_1,Z_2$, let $d_{GH}(Z_1,Z_2)$ denote the Gromov-Hausdorff distance of $Z_1,Z_2$ (see \cite{BBI}\cite{GLP} for further results).
Suppose a sequence of metric spaces $(Z_i,p_i)$ converges to a metric space $(Z_\infty,p_\infty)$ with $\ep_i$-Gromov-Hausdorff approximations $\Psi_i:\, Z_i\to Z_\infty$. Let $K_i$ be a Borel set in $Z_i$, from Blaschke theorem there is a closed set $K_\infty\subset Z_\infty$ such that for each $0<r<\infty$, $\Psi_i(K_i\cap B_r(p_i))$ converges to $K_\infty\cap B_r(p_\infty)$ in the Hausdorff sense up to choosing the (diagonal) subsequence. For simplicity, we call that $(K_i, p_i)$ converges \emph{in the induced Hausdorff sense} to $(K_\infty, p_\infty)$. For the compact $K_\infty$, we may drop $p_i,p_\infty$ directly (see also Preliminary in \cite{D}).

\section{Area-minimizing hypersurfaces in almost Euclidean spaces}

For $n\ge2$, let $Q_i$ be a sequence of $(n+1)$-dimensional complete noncompact Riemannian manifolds such that
the geodesic balls $B_{r_i}(q_i)\subset Q_i$ with radius $r_i\rightarrow\infty$ and centered at $q_i\in Q_i$ satisfy
\begin{equation}\label{RicBRiqige0*}
\liminf_{i\rightarrow\infty}\left(r_i^2\inf_{B_{r_i}(q_i)}\mathrm{Ric}_{Q_i}\right)\ge0,
\end{equation}
and
\begin{equation}\label{BRiqiBRi0epi}
\lim_{i\rightarrow\infty}d_{GH}(B_{r_i}(q_i),B_{r_i}(0))=0.
\end{equation}
Then there is a sequence of $\de_i$-Gromov Hausdorff approximations $\Psi_{Q_i}:\, B_{r_i}(q_i)\to B_{r_i}(0)$ for some sequence $\de_i\to0$ as $i\to\infty$.
Here, $B_{r_i}(0)$ is the ball in $\R^{n+1}$ with the radius $r_i$ and centered at the origin.
In general, $\Psi_{Q_i}$ may not be continuous. 

Recalling the Gromov-Hausdorff approximation introduced by Colding \cite{C}.
Let $e_j$ be the standard orthonormal basis of $\R^{n+1}$, and we choose $e_{i,j}\in B_{r_i}(q_i)\subset Q_i$ for each integer $i\ge1$ so that
$d(r_ie_j,\Psi_{Q_i}(e_{i,j}))<\de_i$ for\ each $j=1,\cdots,n+1$.
Let $b_{i,j}$ be a Lipschitz function on $Q_i$ defined through distance functions by
$$b_{i,j}=d(e_{i,j},\cdot)-d(e_{i,j},q_i)\qquad\qquad \mathrm{for\ each}\ j=1,\cdots,n+1,\, \ i\ge1.$$
We set
$$\Psi_i=(b_{i,1},\cdots,b_{i,n+1}):\ B_1(q_i)\rightarrow \R^{n+1}.$$
Then there is a sequence of positive numbers $\de_i'\rightarrow0$ such that 
$\Psi_i:\ B_1(q_i)\rightarrow B_{1+\de_i'}(0)$ is a $\de_i'$-Gromov-Hausdorff approximation for each $i$. 
From the proof of Lemma 2.1 in \cite{C}, it follows that
\begin{equation}\label{PhiiB1qiomegan0}
\lim_{i\rightarrow\infty}\mathcal{H}^{n+1}(\Psi_i(B_1(q_i)))=\omega_{n+1}.
\end{equation}

For each $i,j$, the Lipschitz function $b_{i,j}$ has Lipschitz constant 1 with $|db_{i,j}|=1$ where defined, and $b_{i,j}$ is not $C^1$ on the cut locus $\mathcal{C}_{i,j}$ w.r.t. $e_{i,j}$.
Now let us modify $b_{i,j}$ by Lemma \ref{Mollifybij} in the appendix on a neighborhood of $\mathcal{C}_{i,j}$, and get a smooth function $\mathbf{b}_{i,j}$ on $\overline{B_1(q_i)}$ such that $\mathbf{b}_{i,j}=b_{i,j}$ in $\overline{B_1(q_i)}\setminus\mathcal{E}_i$ for some open set $\mathcal{E}_i\supset\mathcal{C}_{i,j}$ with $\mathcal{H}^{n+1}(\mathcal{E}_i)<\f1i$ and
\begin{equation}\aligned\label{bdij11i}
|d\mathbf{b}_{i,j}|\le1+\f1i\qquad \mathrm{on}\ \ \overline{B_1(q_i)}.
\endaligned
\end{equation}
For each $i\ge1$, let $\Phi_i$ be a smooth mapping defined by
\begin{equation}\label{Phii***}
\Phi_i=(\mathbf{b}_{i,1},\cdots,\mathbf{b}_{i,n+1}):\ \overline{B_1(q_i)}\rightarrow \R^{n+1}.
\end{equation}
Then $\lim_{i\to\infty}\mathcal{H}^{n+1}(\Phi_i(\mathcal{E}_i))=0$. 
Moreover, there is a sequence of positive numbers $\tau_i\rightarrow0$ such that 
$\Phi_i:\ B_1(q_i)\rightarrow B_{1+\tau_i}(0)$ is a $\tau_i$-Gromov-Hausdorff approximation.
Combining \eqref{PhiiB1qiomegan0}, it follows that
\begin{equation}\label{PhiiB1qiomegan}
\lim_{i\rightarrow\infty}\mathcal{H}^{n+1}(\Phi_i(B_1(q_i)))=\omega_{n+1}.
\end{equation}
For each $i$, $d\Phi_i(d\Phi_i)^T$ is nonnegative definite, and with \eqref{bdij11i} its trace satisfies
\begin{equation*}\aligned
\mathrm{tr}\left(d\Phi_i(d\Phi_i)^T\right)=\mathrm{tr}\left((\lan d\mathbf{b}_{i,j},d\mathbf{b}_{i,k}\ran)_{(n+1)\times (n+1)}\right)=\sum_{j=1}^{n+1}\lan d\mathbf{b}_{i,j},d\mathbf{b}_{i,j}\ran\le(n+1)\left(1+\f1i\right)^2.
\endaligned
\end{equation*}
From the inequality of arithmetic and geometric means, the determinant satisfies
\begin{equation}\aligned\label{detPhi}
\mathrm{det}(d\Phi_i)=\sqrt{\mathrm{det}\left(d\Phi_i(d\Phi_i)^T\right)}\le\left(\f{\mathrm{tr}\left(d\Phi_i(d\Phi_i)^T\right)}{n+1}\right)^{(n+1)/2}\le\left(1+\f1i\right)^{n+1}.
\endaligned
\end{equation}

Let $M_i$ be a sequence of area-minimizing hypersurfaces in $B_1(q_i)$ with $q_i\in M_i$ and $\p M_i\subset\p B_1(q_i)$. 
From Theorem A.1.8 in \cite{CCo1} by Cheeger-Colding, there are open sets $U_i\subset B_1(q_i)$ such that $U_i$ is
homeomorphic to $B_1(0)$, and $U_i\supset B_{1-\tau_i'}(q_i)$ for some sequence $\tau_i'\to0$ as $i\to\infty$. Then
there are two open subsets $W_i^+,W_i^-$ in $U_i$ with $\p W_i^\pm\cap U_i=M_i\cap U_i$ and $W_i^+\cup W_i^-=U_i\setminus M_i$. There is a sequence $\tau_i^*\in(\tau_i',2\tau_i')$ such that $\mathcal{H}^{n-1}(M_i\cap\p B_{1-\tau_i^*})<\infty$.
Let $E_i^\pm=W_i^\pm\cap B_{1-\tau_i^*}(q_i)$. Since $M_i$ is area-minimizing in $B_1(q_i)$, then
\begin{equation}\aligned
\mathcal{H}^n(\p W_i^\pm\cap B_{1-\tau_i^*}(q_i))\le\mathcal{H}^n(W_i^\mp\cap\p B_{1-\tau_i^*}(q_i)).
\endaligned
\end{equation}
It follows that
\begin{equation}\aligned\label{HnEipm}
&\mathcal{H}^n(\p E_i^\pm)=\mathcal{H}^n(W_i^\pm\cap\p B_{1-\tau_i^*}(q_i))+\mathcal{H}^n(\p W_i^\pm\cap B_{1-\tau_i^*}(q_i))\\
\le&\mathcal{H}^n(W_i^\pm\cap\p B_{1-\tau_i^*}(q_i))+\mathcal{H}^n(W_i^\mp\cap\p B_{1-\tau_i^*}(q_i))=\mathcal{H}^n(\p B_{1-\tau_i^*}(q_i)).
\endaligned
\end{equation}

Suppose that $\Phi_i(E_i^\pm),\Phi_i(M_i)$ converges to closed sets $E_\infty^\pm,M_\infty\subset\R^{n+1}$ in the Hausdorff sense, respectively. 
From Theorem 1.1 in \cite{D}, $M_\infty\cap B_1(0)=\p E_\infty^\pm\cap B_1(0)$ is area-minimizing in $B_1(0)$ with
\begin{equation}\aligned\label{HnMinftyBtpMipi000}
\mathcal{H}^n(M_\infty\cap B_t(0))=\lim_{i\rightarrow\infty}\mathcal{H}^{n}(M_i\cap B_t(q_i))
\endaligned
\end{equation}
for all $t\in(0,1)$.
For each $i$, let $[| E^\pm_i|]$ denote the integer multiplicity current associated with $E^\pm_i$, and $[| M_i|]$ denote the integer multiplicity current associated with $M_i$ such that $[| M_i|]\llcorner U_i=[| \p E_i^+|]\llcorner U_i$. 
Let $[| M_\infty^*|]$ denote the integer multiplicity current associated with $M_\infty\cap B_1(0)$ such that $[| M_\infty^*|]=[| \p E_\infty^+|]\llcorner B_1(0)$. 
\begin{lemma}\label{ConvcurrentPhiiMi}
There is a subsequence $i'\rightarrow\infty$ such that $\Phi_{i'}([| M_{i'}|])$ converges weakly to $[| M_\infty^*|]$ (in the sense of distribution) in $B_1(0)$.
 \end{lemma}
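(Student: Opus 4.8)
The plan is to deduce the weak convergence $\Phi_{i'}([|M_{i'}|]) \rightharpoonup [|M_\infty^*|]$ by first extracting a current limit using the Federer--Fleming compactness theorem (Theorem \ref{FF}), then identifying the limit using the Hausdorff convergence $\Phi_i(M_i) \to M_\infty$ together with the geometric-measure-theoretic information already recorded, namely \eqref{HnMinftyBtpMipi000} and the structure $[|M_\infty^*|] = [|\partial E_\infty^+|]\llcorner B_1(0)$. First I would verify the hypotheses of Theorem \ref{FF} for the sequence $T_i := \Phi_i([|M_i|])$ in $U = B_1(0)$: from \eqref{HnEipm} and the bound $\mathrm{det}(d\Phi_i) \le (1+\tfrac1i)^{n+1}$ in \eqref{detPhi}, the mass $\mathbb{M}(\Phi_i([|M_i|])\llcorner W)$ is controlled by $(1+\tfrac1i)^{n}\mathcal{H}^n(M_i \cap B_{1-\tau_i^*}(q_i)) \le (1+\tfrac1i)^n \mathcal{H}^n(\partial B_{1-\tau_i^*}(q_i))$, which is uniformly bounded on any $W \subset\subset B_1(0)$ by the volume comparison coming from \eqref{RicBRiqige0*}; similarly $\partial T_i = \Phi_i(\partial[|M_i|])$ has support in $\Phi_i(\partial B_1(q_i))$, which is pushed near $\partial B_1(0)$ by the Gromov--Hausdorff property of $\Phi_i$, so $\mathbb{M}(\partial T_i \llcorner W) = 0$ for $W \subset\subset B_1(0)$ once $i$ is large. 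Hence a subsequence $\Phi_{i'}([|M_{i'}|]) \rightharpoonup T$ for some integer multiplicity $T \in \mathcal{D}_n(B_1(0))$, and $\partial T = 0$ in $B_1(0)$.

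Next I would pin down the support of $T$. Since $\Phi_i$ is a $\tau_i$-Gromov--Hausdorff approximation and $\Phi_i(M_i) \to M_\infty$ in the Hausdorff sense on compact subsets of $B_1(0)$, weak convergence of the currents forces $\mathrm{spt}(T) \subset M_\infty \cap B_1(0)$ (any point away from $M_\infty$ has a ball eventually disjoint from $\mathrm{spt}(\Phi_i[|M_i|])$). Then I would compare masses: lower semicontinuity \eqref{TWTjW} gives, for each $t \in (0,1)$,
\begin{equation*}
\mathbb{M}(T \llcorner B_t(0)) \le \liminf_{i'\to\infty} \mathbb{M}\big(\Phi_{i'}([|M_{i'}|])\llcorner B_t(0)\big) \le \liminf_{i'\to\infty}(1+\tfrac1{i'})^n \mathcal{H}^n(M_{i'}\cap B_t(q_{i'})),
\end{equation*}
which by \eqref{HnMinftyBtpMipi000} equals $\mathcal{H}^n(M_\infty \cap B_t(0))$. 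On the other hand, because $M_\infty \cap B_1(0)$ is area-minimizing (by Theorem 1.1 of \cite{D}, invoked in the excerpt) and $\partial T = \partial [|M_\infty^*|] = 0$ in $B_1(0)$ with $\mathrm{spt}(T) \subset M_\infty$, a density and constancy argument on the rectifiable set $M_\infty$ shows $T = \theta [|M_\infty^*|]$ for a positive integer-valued $\theta$; the mass bound then forces $\theta \le 1$ a.e., hence $\theta \equiv 1$, i.e. $T = [|M_\infty^*|]$.

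The main obstacle I anticipate is the last identification step: controlling the multiplicity of the limit current. A priori the push-forwards $\Phi_i([|M_i|])$ could develop higher multiplicity in the limit (folding of $\Phi_i$, or cancellation of orientation), and ruling this out requires showing both that no mass is lost—so the mass inequality above is in fact an equality with $\mathcal{H}^n(M_\infty \cap B_t(0))$—and that no orientation cancellation occurs, i.e. $T$ genuinely carries $M_\infty$ with its full area rather than a proper rectifiable subset. I would handle this by combining the mass equality from \eqref{HnMinftyBtpMipi000} (which says no area is lost under $\Phi_{i'}$ in the limit, using that $\mathrm{det}(d\Phi_i) \to 1$ on the complement of the negligible sets $\mathcal{E}_i$ with $\mathcal{H}^{n+1}(\mathcal{E}_i) < \tfrac1i$) with the identity $\partial(\Phi_i([|M_i|])\llcorner U_i) = \partial \Phi_i([|\partial E_i^+|]\llcorner U_i)$ and the Hausdorff convergence $\Phi_i(E_i^\pm) \to E_\infty^\pm$, so that $T$ bounds $[|E_\infty^+|]$ in $B_1(0)$ exactly as $[|M_\infty^*|]$ does; since $M_\infty = \partial E_\infty^+$ is minimizing, the bounding current of multiplicity-one boundary is unique, giving $T = [|M_\infty^*|]$ and, since this limit is independent of the subsequence, convergence of the full sequence.
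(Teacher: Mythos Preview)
Your proposal takes a different route from the paper's: you work directly with the $n$-currents $\Phi_i([|M_i|])$ and try to identify the limit via support-plus-mass-plus-constancy, whereas the paper works one dimension up with the $(n+1)$-currents $\Phi_i([|E_i^\pm|])$ and only takes boundaries at the very end.

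Your direct approach has a genuine gap at the multiplicity step. First, a minor correction: the Jacobian bound for pushing forward an $n$-current by $\Phi_i$ is \eqref{JPhi}, which carries an extra factor $\sqrt{e}$, not \eqref{detPhi}; so your upper mass bound only gives $\mathbb{M}(T\llcorner B_t(0))\le \sqrt{e}\,\mathcal{H}^n(M_\infty\cap B_t(0))$. That still yields $|\theta|\le1$ by constancy, but it does \emph{not} rule out $\theta=0$: orientation cancellation under the non-injective Lipschitz map $\Phi_i$ could in principle kill all the mass. Your jump from ``$\theta\le1$ a.e.'' to ``$\theta\equiv1$'' is exactly where a lower mass bound is needed and none is provided. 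Your last paragraph correctly senses that the fix is to use the $E_i^+$ structure, but saying ``the Hausdorff convergence $\Phi_i(E_i^\pm)\to E_\infty^\pm$'' is not enough: Hausdorff convergence of sets does not by itself give weak convergence of the associated multiplicity-one currents (again because of possible folding/cancellation).

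The paper's key idea, which you are missing, is a \emph{squeeze at the top dimension}. One shows that the sum $\Phi_i([|E_i^+|])+\Phi_i([|E_i^-|])$ converges to $[|B_1(0)|]$ by a volume argument: the upper bound comes from \eqref{detPhi}, and the matching lower bound comes from \eqref{PhiiB1qiomegan} together with the estimate $\mathcal{H}^{n+1}(\Phi_i(B_1(q_i))\cap B_r(z))\ge \mathcal{H}^{n+1}(\Phi_i(B_1(q_i)))-(1+\tfrac1i)^{n+1}\mathcal{H}^{n+1}(B_{1+\tau_i}(0)\setminus B_r(z))$. This two-sided control forces the mass of each subsequential limit $T_\infty^\pm$ of $\Phi_i([|E_i^\pm|])$ to be exactly $\mathcal{H}^{n+1}(E_\infty^\pm)$ on every ball, whence $T_\infty^\pm=[|E_\infty^\pm|]$. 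Taking $\partial$ then gives $\Phi_{i'}([|M_{i'}|])=\partial\Phi_{i'}([|E_{i'}^+|])\rightharpoonup \partial[|E_\infty^+|]=[|M_\infty^*|]$ with no separate multiplicity analysis needed. If you want to salvage your outline, this filling-the-ball argument is precisely what you should insert to justify $\Phi_i([|E_i^+|])\rightharpoonup[|E_\infty^+|]$ before invoking $T=\partial[|E_\infty^+|]$.
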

\begin{proof}
Let $\Phi_i([| E^\pm_i|])$ be the integer multiplicity current in $B_{1+\tau_i}(0)$ with the support in $\Phi_i(E^\pm_i)$, which is the mapping of $[| E^\pm_i|]$ by $\Phi_i$.
Then $\Phi_i(\p [| E^\pm_i|])=\p\Phi_i([| E^\pm_i|])$.
For any ball $B_r(z)\subset B_1(0)$,
\begin{equation*}\aligned
&\mathcal{H}^{n+1}\left((\Phi_i(E^+_i)\cup\Phi_i(E^-_i))\cap B_r(z)\right)=\mathcal{H}^{n+1}(\Phi_i(B_1(q_i))\cap B_r(z))\\
=&\mathcal{H}^{n+1}(\Phi_i(B_1(q_i)))-\mathcal{H}^{n+1}(\Phi_i(B_1(q_i))\setminus B_r(z))\\
\ge&\mathcal{H}^{n+1}(\Phi_i(B_1(q_i)))-\left(1+\f1i\right)^{n+1}\mathcal{H}^{n+1}(B_{1+\tau_i}(0)\setminus B_r(z)).
\endaligned
\end{equation*}
With the volume convergence by Colding \cite{C}, one has
\begin{equation}\aligned\label{PhiiVpmige}
\liminf_{i\rightarrow\infty}\mathcal{H}^{n+1}\left((\Phi_i(E^+_i)\cup\Phi_i(E^-_i))\cap B_r(z)\right)\ge\omega_{n+1}-\omega_{n+1}\left(1-r^{n+1}\right)=\omega_{n+1}r^{n+1}.
\endaligned
\end{equation}
From \eqref{detPhi} and co-area formula, 
\begin{equation}\aligned\label{Phii|Eipm|}
\mathcal{H}^{n+1}(\Phi_i(E^\pm_i))\le\int_{E^\pm_i}\det(d\Phi_i)\le \left(1+\f1i\right)^{n+1}\mathcal{H}^{n+1}(E^\pm_i).
\endaligned
\end{equation}
Then it follows that
\begin{equation*}\aligned
&\mathcal{H}^{n+1}(\Phi_i(E^+_i)\cap B_r(z))+\mathcal{H}^{n+1}(\Phi_i(E^-_i)\cap B_r(z))\\
=&\mathcal{H}^{n+1}(\Phi_i(E^+_i))+\mathcal{H}^{n+1}(\Phi_i(E^-_i))-\mathcal{H}^{n+1}(\Phi_i(E^+_i)\setminus B_r(z))-\mathcal{H}^{n+1}(\Phi_i(E^-_i)\setminus B_r(z))\\
\le& \left(1+\f1i\right)^{n+1}\mathcal{H}^{n+1}(E^+_i)+\left(1+\f1i\right)^{n+1}\mathcal{H}^{n+1}(E^-_i)-\mathcal{H}^{n+1}(\Phi_i(B_1(q_i))\setminus B_r(z))\\
\le& \left(1+\f1i\right)^{n+1}\mathcal{H}^{n+1}(B_1(q_i))-\mathcal{H}^{n+1}(\Phi_i(B_1(q_i)))+\mathcal{H}^{n+1}(B_r(z)).
\endaligned
\end{equation*}
 Combining \eqref{PhiiB1qiomegan}\eqref{PhiiVpmige}, we get
\begin{equation}\aligned\label{PhiiVpmile}
\lim_{i\rightarrow\infty}\left(\mathcal{H}^{n+1}(\Phi_i(E^+_i)\cap B_r(z))+\mathcal{H}^{n+1}(\Phi_i(E^-_i)\cap B_r(z))\right)=\omega_{n+1}r^{n+1}.
\endaligned
\end{equation}
Let $T_i=\Phi_i([| E^+_i|])+\Phi_i([| E^-_i|])$. Combining \eqref{detPhi}\eqref{PhiiVpmige}\eqref{PhiiVpmile}, we conclude
\begin{equation}\aligned\label{eqivVT}
\lim_{i\rightarrow\infty}\mathbb{M}(T_i\llcorner B_r(z))=\omega_{n+1}r^{n+1}.
\endaligned
\end{equation}
Hence we have $T_i\rightharpoonup[| B_1(0)|]$ as $i\rightarrow\infty$, i.e.,
\begin{equation}\aligned\label{eqivVT}
\lim_{i\rightarrow\infty}T_i(\omega)=[| B_1(0)|](\omega)\qquad \mathrm{for\ any}\ \omega\in \mathcal{D}^{n+1}(B_1(0)).
\endaligned
\end{equation}

For each integer $i\ge1$, let $\{\xi_{i,j}\}_{j=1}^n$ denote a local orthonormal tangent field in a neighborhood of the considered regular point of $M_i$, and $\na^{M_i}\Phi_i$ denote an $(n+1)\times n$ matrix on $B_1(q_i)$ defined by $(\lan d\mathbf{b}_{i,k},\xi_{i,j}\ran)$ with $k=1,\cdots,n+1$ and $j=1,\cdots,n$.
From \eqref{bdij11i}, we have
$$\mathrm{tr}\left(\na^{M_i}\Phi_i(\na^{M_i}\Phi_i)^T\right)=\sum_{j=1}^n\sum_{k=1}^{n+1}\lan d\mathbf{b}_{i,k},\xi_{i,j}\ran^2\le\sum_{k=1}^{n+1}|d\mathbf{b}_{i,k}|^2\le (n+1)\left(1+\f1i\right)^2.$$
Similar to \eqref{detPhi}, the inequality of arithmetic and geometric means implies
\begin{equation}\aligned\label{JPhi}
J\Phi_i\triangleq\sqrt{\mathrm{det}\big(\na^{M_i}\Phi_i(\na^{M_i}\Phi_i)^T\big)}\le\left(\f{n+1}n\left(1+\f1i\right)^2\right)^{n/2}<\sqrt{e}\left(1+\f1i\right)^n,
\endaligned
\end{equation}
where we have used $\left(1+1/n\right)^n<e$.
With \eqref{HnEipm}, the mass of $\Phi_i(\p [| E^\pm_i|])$ satisfies
\begin{equation}\aligned\label{MPhiipEipm}
&\mathbb{M}\left(\Phi_i(\p [| E^\pm_i|])\right)\le\int_{\p E^\pm_i}J\Phi_i\\
\le& \sqrt{e}\left(1+\f1i\right)^n\mathcal{H}^{n}(\p E^\pm_i)\le\sqrt{e}\left(1+\f1i\right)^n\mathcal{H}^n(\p B_{1-\tau_i^*}(q_i)).
\endaligned
\end{equation}
By Federer-Fleming compactness theorem (see Theorem \ref{FF}), from \eqref{Phii|Eipm|}\eqref{MPhiipEipm} there are two integer multiplicity currents $T^+_\infty,T^-_\infty$,
and a subsequence $i'\to\infty$ such that $\Phi_{i'}([| E^\pm_{i'}|])\rightharpoonup T^\pm_\infty$.
From $T_i=\Phi_i([| E^+_i|])+\Phi_i([| E^-_i|])\rightharpoonup[| B_1(0)|]$, it follows that
$$T^+_\infty+T^-_\infty=[| B_1(0)|].$$
Recalling that $\Phi_i(E_i^\pm)$ converges to the closed set $E_\infty^\pm\subset\overline{B_1(0)}$ in the Hausdorff sense such that $\p E^\pm_\infty\cap B_1(0)=M_\infty\cap B_1(0)$. Then with \eqref{PhiiVpmige}\eqref{PhiiVpmile}, we deduce
\begin{equation}\aligned
\lim_{i\rightarrow\infty}\Phi_{i}([| E^\pm_{i}|])=T^\pm_\infty=[| E^\pm_\infty|].
\endaligned
\end{equation}
Let $\omega$ be a smooth $n$-form with the support in $B_1(0)$. For large $i$, we have
\begin{equation}\aligned
\Phi_{i}([| E_{i}^+|])(d\omega)=\p\Phi_{i}([| E_{i}^+|])(\omega)=\Phi_{i}([| M_i|])(\omega).
\endaligned
\end{equation}
Since $\p [| E^+_\infty|]\llcorner B_1(0)=[| M_\infty^*|]$, then taking the limit in the above equality implies
$$[|M_\infty^*|](\omega)= [| E^+_\infty|](d\omega)=\lim_{i\to\infty} \Phi_{i}([| E_{i}^+|])(d\omega)=\lim_{i\rightarrow\infty} \Phi_{i}([| M_i|])(\omega).$$
This completes the proof.
\end{proof}

From Lemma 3.3 and Corollary 4.6 in \cite{D},
we have the following volume estimates for area-minimizing hypersurfaces in a class of manifolds.
\begin{lemma}\label{epdeHM}
For any $\ep>0$ there is a constant $\de>0$ such that if $N$ is an $(n+1)$-dimensional complete non-compact Riemannian manifold with Ricci curvature $\mathrm{Ric}\ge-n\de^2$ on a unit geodesic ball $B_1(p)\subset N$, and $\mathcal{H}^{n+1}(B_1(p))\ge(1-\de)\omega_{n+1}$,
and $M$ is an area-minimizing hypersurface in $B_1(p)$ with $p\in M$ and $\p M\subset\p B_1(p)$, then
\begin{equation}\aligned\label{1pmepHnM0}
(1-\ep)\omega_n\le\mathcal{H}^n(M)\le\f12(1+\ep)(n+1)\omega_{n+1}.
\endaligned
\end{equation}
\end{lemma}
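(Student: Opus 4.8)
The plan is to prove the two inequalities separately: the upper bound directly by a competitor argument that uses only $\mathrm{Ric}\ge-n\delta^2$, and the lower bound by contradiction using the compactness theory of this section. For the upper bound, recall that $M$ is the common boundary of two open sets $W^+,W^-$ with $W^+\cup W^-=B_1(p)\setminus M$ and $[| M|]=\partial[| W^+|]$ in $B_1(p)$ (as in the structure theory of area-minimizing boundaries used above). For a.e. $t\in(0,1)$ the slice $M\cap\partial B_t(p)$ is $(n-1)$-rectifiable of finite measure and $\mathcal H^n(\partial B_t(p))<\infty$; since $n\ge2$, the relatively open caps $A^\pm(t):=W^\pm\cap\partial B_t(p)$ are complementary in $\partial B_t(p)$ with common boundary $M\cap\partial B_t(p)$, and each $[| A^\pm(t)|]$ is an integral $n$-current supported in $\partial B_t(p)\subset B_1(p)$ having, up to sign, the same boundary as $[| M|]\llcorner B_t(p)$. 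Then $[| M|]\llcorner\big(B_1(p)\setminus\overline{B_t(p)}\big)+[| A^\pm(t)|]$ is an admissible competitor for $[| M|]$ in $B_1(p)$, so comparing masses and taking the smaller cap,
$$\mathcal H^n(M\cap\overline{B_t(p)})\le\min\{\mathcal H^n(A^+(t)),\mathcal H^n(A^-(t))\}\le\tfrac12\,\mathcal H^n(\partial B_t(p)).$$
By the Bishop inequality, $\mathrm{Ric}\ge-n\delta^2$ gives $\mathcal H^n(\partial B_t(p))\le(n+1)\omega_{n+1}t^n(1+c_n\delta^2)$ for $t\le1$ with $c_n$ depending only on $n$; letting $t\uparrow1$ and then choosing $\delta=\delta(\ep)$ small enough gives $\mathcal H^n(M)\le\tfrac12(1+\ep)(n+1)\omega_{n+1}$ (the volume hypothesis is not used here).

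For the lower bound I would argue by contradiction: suppose there are $\ep_0>0$, a sequence $\delta_i\downarrow0$, and data $(N_i,p_i,M_i)$ satisfying the hypotheses with $\delta=\delta_i$ but $\mathcal H^n(M_i)<(1-\ep_0)\omega_n$. Bishop--Gromov gives $\mathcal H^{n+1}(B_1(p_i))\le\omega_{n+1}(1+o(1))$, which with $\mathcal H^{n+1}(B_1(p_i))\ge(1-\delta_i)\omega_{n+1}$ forces $\mathcal H^{n+1}(B_1(p_i))\to\omega_{n+1}$; by Colding's almost volume rigidity \cite{C}, $B_1(p_i)\to B_1(0)\subset\R^{n+1}$ in the Gromov--Hausdorff sense, so conditions \eqref{RicBRiqige0*} and \eqref{BRiqiBRi0epi} hold (with the radii equal to $1$) and the constructions of this section apply. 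Passing to a subsequence and using the mappings $\Phi_i$ constructed above, we may assume $\Phi_i(p_i)\to0$, $\Phi_i(M_i)\to M_\infty$ in the induced Hausdorff sense, and $\Phi_i([| M_i|])\rightharpoonup[| M_\infty^*|]$ by Lemma \ref{ConvcurrentPhiiMi}, where $M_\infty\cap B_1(0)=\mathrm{spt}[| M_\infty^*|]$ is an area-minimizing hypersurface in $B_1(0)\subset\R^{n+1}$ containing $0$ and \eqref{HnMinftyBtpMipi000} holds. Then for every $t\in(0,1)$,
$$\mathcal H^n(M_\infty\cap B_t(0))=\lim_{i\to\infty}\mathcal H^n(M_i\cap B_t(p_i))\le\liminf_{i\to\infty}\mathcal H^n(M_i)\le(1-\ep_0)\omega_n,$$
hence $\mathcal H^n(M_\infty\cap B_1(0))\le(1-\ep_0)\omega_n<\omega_n$. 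But the monotonicity formula for area-minimizing hypersurfaces in $\R^{n+1}$, applied at $0\in\mathrm{spt}[| M_\infty^*|]$ where the density is at least $1$, gives $\mathcal H^n(M_\infty\cap B_r(0))\ge\omega_n r^n$ for all $r\in(0,1)$, so $\mathcal H^n(M_\infty\cap B_1(0))\ge\omega_n$, a contradiction.

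The decisive input — and the step I expect to be the main obstacle — is that the Hausdorff limit of the $M_i$ is again a genuine area-minimizing hypersurface through the centre with no loss of mass, i.e. $0\in\mathrm{spt}[| M_\infty^*|]$ together with the volume identity \eqref{HnMinftyBtpMipi000}; this is exactly what Lemma \ref{ConvcurrentPhiiMi} and Theorem 1.1 of \cite{D} supply, and without it the Euclidean monotonicity argument would be vacuous. The upper bound is comparatively routine, the only care needed being the orientation bookkeeping for the cap competitor on $\partial B_t(p)$ and the classical Bishop comparison for the area of the geodesic sphere.
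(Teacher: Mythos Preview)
Your overall strategy matches what is implicit in the citation the paper gives (the paper does not argue the lemma but quotes it from Lemma~3.3 and Corollary~4.6 of \cite{D}). The cap-competitor argument for the upper bound is correct and self-contained; this is essentially the content of Lemma~3.3 in \cite{D} and indeed does not use the volume hypothesis.

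For the lower bound there is a genuine, though repairable, gap. You assert that conditions \eqref{RicBRiqige0*}--\eqref{BRiqiBRi0epi} hold ``with the radii equal to $1$'' and that therefore the maps $\Phi_i$ and Lemma~\ref{ConvcurrentPhiiMi} apply. The two displayed conditions are literally satisfied with $r_i\equiv1$, but the subsequent construction is not: the maps $\Phi_i=(\mathbf b_{i,1},\dots,\mathbf b_{i,n+1})$ are built from distance functions to points $e_{i,j}$ at distance $r_i$, and the assertion that $\Phi_i$ is a $\tau_i$-Gromov--Hausdorff approximation with $\tau_i\to0$ (hence \eqref{PhiiB1qiomegan} and everything downstream, including Lemma~\ref{ConvcurrentPhiiMi}) relies on $r_i\to\infty$ via Colding's almost-splitting. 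With $r_i=1$ this fails already in the model case: on $B_1(0)\subset\R^{n+1}$ the map $x\mapsto(|x-e_1|-1,\dots,|x-e_{n+1}|-1)$ is nowhere close to the identity. So you cannot route through the $\Phi_i$ and Lemma~\ref{ConvcurrentPhiiMi} here.

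The fix is to drop the $\Phi_i$ altogether and invoke Theorem~1.1 of \cite{D} directly: once Colding's rigidity gives $B_1(p_i)\to B_1(0)$ in the Gromov--Hausdorff sense (any sequence of Gromov--Hausdorff approximations will do), that theorem yields that the induced Hausdorff limit $M_\infty$ is area-minimizing in $B_1(0)$ with $0\in M_\infty$ and the volume continuity \eqref{HnMinftyBtpMipi000}; your Euclidean monotonicity step then finishes as written. There is no circularity: your independent cap argument already supplies the uniform mass bound $\mathcal H^n(M_i)\le\tfrac12(n+1)\omega_{n+1}(1+o(1))$ needed for the compactness underlying Theorem~1.1 of \cite{D}, so that theorem may be used to derive the lower bound (this is presumably how Corollary~4.6 in \cite{D} is obtained).
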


Now let us prove a technical result, which will be needed in Lemma \ref{Rfiso}.
\begin{lemma}\label{iso-sub}
Let $N$ be an $(n+1)$-dimensional complete smooth manifold with Ricci curvature $\ge-nR^{-4}$ on $B_R(p)$ and $\mathcal{H}^{n+1}(B_{R}(p))\ge\omega_{n+1}\left(R-1\right)R^{n}$.
Let $M$ be an area-minimizing hypersurface in $B_{4}(p)$ with $p\in M$, $\p M\subset\p B_4(p)$, and $U$ be an open subset of $M\cap B_{1}(p)$ with
\begin{equation}\aligned\label{pSthomega}
\mathcal{H}^{n-1}(\p U)\le \th\left(\mathcal{H}^{n}(U)\right)^{\f{n-1}n}
\endaligned
\end{equation}
for some constant $\th>0$.
For the sufficiently large $R$, there is an $(n+1)$-dimensional complete manifold $N_*$ with Ricci curvature $\ge-9nR^{-4}$ on $B_{R/4}(\xi)\subset N_*$ and $\mathcal{H}^{n+1}(B_{r}(\xi))\ge\left(1-\f1{\sqrt{R}}\right)\omega_{n+1}r^{n+1}$ for each $0<r\le R/4$,
and an area-minimizing hypersurface $S$ in $B_{1}(\xi)$ with $\xi\in S$, $\p S\subset\p B_{1}(\xi)$, and an open set $V$ in $S\cap B_1(\xi)$ such that $\mathcal{H}^{n}(V\cap B_1(\xi))=\f12\mathcal{H}^n(S\cap B_1(\xi))$, and
$$\mathcal{H}^{n-1}(\p V\cap B_1(\xi))\le n^*\th$$
for some constant $n^*$ depending only on $n$.
\end{lemma}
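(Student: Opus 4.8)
The plan is to localize and rescale near the point $p$, then run a diagonal/limiting argument so that the ``model'' space $N_*$ is essentially a rescaled copy of $N$ centered at a well-chosen point, while the hypersurface $S$ and the set $V$ are obtained by intersecting the (rescaled) $M$ and $U$ with a ball whose radius is tuned by an intermediate-value argument to achieve the exact volume-halving condition $\mathcal{H}^n(V\cap B_1(\xi))=\tfrac12\mathcal{H}^n(S\cap B_1(\xi))$. Concretely, first I would fix $\xi=p$ and consider, for a parameter $t\in(0,1]$, the rescaled data $\tfrac1t(N,p)$, in which $B_R(p)$ becomes a ball of radius $R/t$ with $\mathrm{Ric}\ge -ntR^{-4}\ge -9nR^{-4}$ once $t\le 9$, and with the volume lower bound improving under blow-up by Bishop--Gromov together with the hypothesis $\mathcal{H}^{n+1}(B_R(p))\ge\omega_{n+1}(R-1)R^n$: for $R$ large this forces $\mathcal{H}^{n+1}(B_r(p))\ge(1-R^{-1/2})\omega_{n+1}r^{n+1}$ on a definite range of scales by the almost-rigidity/volume-monotonicity (this is where the largeness of $R$ is consumed). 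So the space $N_*:=\tfrac1t(N,p)$ restricted to $B_{R/4}(\xi)$ has the asserted curvature and volume bounds for every admissible $t$, and $M$ rescales to an area-minimizing hypersurface through $\xi$, since area-minimality is scale invariant.

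Next I would set up the intermediate-value step that produces $V$. Define $S:=\tfrac1t M$ near $\xi$ (intersected with $B_1(\xi)$ and its boundary placed on $\partial B_1(\xi)$ as required), and consider the function $t\mapsto \mathcal{H}^n\big((\tfrac1tU)\cap B_1(\xi)\big)\big/\mathcal{H}^n\big((\tfrac1tM)\cap B_1(\xi)\big)$. As $t\to 0$ the ball $B_1(\xi)$ in rescaled coordinates shrinks to a point of $M$; if that point is chosen in the regular part of $M$ and in the ``interior'' of $U$ (a point of density $1$ for $U$ in $M$), the ratio tends to $1$, while if chosen at a density-$0$ point of $U$ the ratio tends to $0$. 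By moving the center along $M$ and invoking continuity of the ratio in the center and in $t$ (using the monotonicity/continuity of $\mathcal{H}^n(M\cap B_r(\cdot))$ for area-minimizing hypersurfaces, plus a standard slicing argument to arrange $\mathcal{H}^{n-1}(\partial U\cap\partial B_1)$ negligible), I would hit the value $\tfrac12$ exactly. This gives the open set $V$ with $\mathcal{H}^n(V\cap B_1(\xi))=\tfrac12\mathcal{H}^n(S\cap B_1(\xi))$.

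It remains to control $\mathcal{H}^{n-1}(\partial V\cap B_1(\xi))$ by $n^*\theta$. Here $\partial V$ inside $B_1(\xi)$ consists of two pieces: the part coming from $\partial U$ (rescaled), and possibly the part of $M$ itself bounding $V$ from within $S$. The first piece is handled by the hypothesis \eqref{pSthomega}: under the rescaling $\tfrac1t$, the isoperimetric-type inequality $\mathcal{H}^{n-1}(\partial U)\le\theta(\mathcal{H}^n(U))^{(n-1)/n}$ is scale invariant (both sides scale like $t^{-(n-1)}$), and combining it with the two-sided volume bounds for area-minimizing hypersurfaces in almost-Euclidean balls from Lemma \ref{epdeHM} — which give $\mathcal{H}^n(S\cap B_1(\xi))\le \tfrac12(1+\epsilon)(n+1)\omega_{n+1}$ and hence $\mathcal{H}^n(V\cap B_1(\xi))$ bounded by a dimensional constant — yields $\mathcal{H}^{n-1}(\partial U\cap B_1(\xi))\le n^*\theta$ after absorbing dimensional constants. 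The second piece, the portion of $M$-itself in $\partial V$, is bounded by $\mathcal{H}^n(M\cap B_1(\xi))$, which is again a dimensional constant by Lemma \ref{epdeHM}; if one insists on the clean bound $n^*\theta$ one instead chooses the defining ball so that $V$ has no such ``free'' boundary (take $V=(\tfrac1tU)\cap B_1(\xi)$ with the slicing choosing the radius so $\partial B_1$ meets $\partial U$ and $M$ transversally and in negligible measure), so that $\partial V\cap B_1(\xi)=(\tfrac1t\partial U)\cap B_1(\xi)$.

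The main obstacle I anticipate is the exact volume-halving: ensuring that the intermediate-value argument can be run while simultaneously keeping $\mathcal{H}^{n-1}(\partial V\cap B_1(\xi))$ controlled — i.e.\ that the slicing used to make $\partial B_1$ ``clean'' does not inflate the boundary measure beyond $n^*\theta$, and that the volume ratio depends continuously (not merely monotonically) on the scale/center so that the value $\tfrac12$ is actually attained rather than merely straddled. This is a delicate but standard coarea/slicing bookkeeping; the largeness of $R$ is exactly what guarantees the ambient almost-Euclidean bounds of Lemma \ref{epdeHM} remain in force on every scale we use.
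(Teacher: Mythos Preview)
Your proposal has a genuine gap in the boundary estimate, and it is precisely the step that the paper's proof is designed to handle.

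You argue that the bound $\mathcal{H}^{n-1}(\p V\cap B_1(\xi))\le n^*\th$ follows from the scale invariance of the hypothesis $\mathcal{H}^{n-1}(\p U)\le\th(\mathcal{H}^n(U))^{(n-1)/n}$ together with the upper bound $\mathcal{H}^n(V\cap B_1(\xi))\le C_n$ from Lemma~\ref{epdeHM}. But the hypothesis is a \emph{global} inequality: after rescaling by $1/t$ it says $\mathcal{H}^{n-1}(\tfrac1t\p U)\le\th\big(\mathcal{H}^n(\tfrac1t U)\big)^{(n-1)/n}$, and the right-hand side involves the volume of the \emph{entire} rescaled set $\tfrac1tU$, not of $V=(\tfrac1tU)\cap B_1(\xi)$. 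There is no reason for $\tfrac1tU$ to be contained in $B_1(\xi)$; indeed the half-volume scale $t$ you produce can be much smaller than $(\mathcal{H}^n(U))^{1/n}$, so that $t^{-(n-1)}(\mathcal{H}^n(U))^{(n-1)/n}$ is arbitrarily large. Concretely, if $U$ is spread thinly over $M\cap B_1(p)$, then at any density-$1$ center the ratio hits $\tfrac12$ at a scale governed by the \emph{local} geometry of $U$, which is decoupled from the global isoperimetric ratio. Your intermediate-value step only pins down the volume; it gives no control on the local boundary.

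The paper's proof confronts exactly this. For each $x\in U$ it takes the smallest radius $r_x$ at which the half-volume holds (this gives the upper bound $r_x\le 3\la^{1/n}$ with $\la\sim\mathcal{H}^n(U)$), covers $\overline{U}$ by the balls $\{B_{r_x}(x)\}$, and applies Besicovitch so that overlaps are bounded by a dimensional constant. Summing $\mathcal{H}^{n-1}(\p U\cap B_{r_{p_i}}(p_i))$ over the cover and comparing with $\mathcal{H}^{n-1}(\p U)$, one pigeonholes a single ball $B_{r_{p_*}}(p_*)$ in which the \emph{local} boundary satisfies $\mathcal{H}^{n-1}(\p U\cap B_{r_{p_*}}(p_*))\le n_*\th\la^{-1/n}\mathcal{H}^n(U\cap B_{r_{p_*}}(p_*))$. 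Rescaling by $1/r_{p_*}$ and using $r_{p_*}\le 3\la^{1/n}$ together with Lemma~\ref{epdeHM} then yields $\mathcal{H}^{n-1}(\p V\cap B_1(\xi))\le n^*\th$. The covering-plus-pigeonhole step is the missing idea in your argument; without it one cannot pass from the global isoperimetric defect to a local one at the correct scale.
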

\begin{proof}
For any $x\in U$, there is a constant $r_x>0$ such that
\begin{equation}\aligned\label{HnSrxhalf}
\mathcal{H}^n(U\cap B_{r_x}(x))=\f12\mathcal{H}^n(M\cap B_{r_x}(x))
\endaligned
\end{equation}
and
\begin{equation}\aligned\label{HnSrxhalf*}
\mathcal{H}^n(U\cap B_{r}(x))>\f12\mathcal{H}^n(M\cap B_{r}(x))\quad \mathrm{for\ all}\ 0<r<r_x.
\endaligned
\end{equation}
Denote $\la=\f1{(n+1)\omega_{n+1}}\mathcal{H}^{n}(U)$ for convenience.  Since $R$ is sufficiently large, from Lemma \ref{epdeHM} we have $\la\le1$ and
\begin{equation}\aligned\label{HnMBrx21}
\mathcal{H}^n(M\cap B_{r}(x))\ge\f{8}{9}\omega_n r^n\qquad \mathrm{for\ any\ ball}\ B_r(x)\subset B_1(p)\ \mathrm{with}\ x\in M.
\endaligned
\end{equation}
With \eqref{HnSrxhalf}-\eqref{HnMBrx21}, for any $0<r\le r_x$ we have
\begin{equation}\aligned
\f49\omega_n r^n\le\f12\mathcal{H}^n(M\cap B_{r}(x))\le\mathcal{H}^n(U\cap B_{r}(x))\le\mathcal{H}^n(U)=(n+1)\omega_{n+1}\la.
\endaligned
\end{equation}
Since an $n$-dimensional unit sphere in $\R^{n+1}$ has the volume $(n+1)\omega_{n+1}=n\omega_n\int_0^{\pi}\sin^{n-1}\th d\th\le 2n\omega_n$, the above inequality implies
\begin{equation}\aligned\label{rx2}
r_x\le \left(\f{9n}2\la\right)^{\f 1{n}}\le 3\la^{\f1n}.
\endaligned
\end{equation}
By the definition of $r_x$, $\overline{U}\subset\cup_{x\in U}B_{r_x}(x)$ clearly.
From Besicovitch covering lemma, we can choose a finite collection of balls $\{B_{r_{p_i}}(p_i)\}_{i=1}^l$ with $\overline{U}\subset \bigcup_{1\le i\le l}B_{r_{p_i}}(p_i)$ such that the number of balls containing any point in $U$ is uniformly bounded, i.e., 
\begin{equation}\aligned
\sharp\{i\in\{1,\cdots,l\}|\ q\in B_{r_{p_i}}(p_i)\}\le n_*\left((n+1)\omega_{n+1}\right)^{\f1n}\qquad \mathrm{for\ any}\ q\in U,
\endaligned
\end{equation}
where $n_*$ is a constant depending only on $n$.
Combining \eqref{pSthomega} and the definition of $\la$, we have
\begin{equation}\aligned
&\sum_{i=1}^l\mathcal{H}^{n-1}(\p U\cap B_{r_{p_i}}(p_i))\le n_*\left((n+1)\omega_{n+1}\right)^{\f1n}\mathcal{H}^{n-1}(\p U)\\
\le n_*\th&\left((n+1)\omega_{n+1}\right)^{\f1n}\left(\mathcal{H}^{n}(U)\right)^{\f{n-1}n}
=n_*\th\la^{-\f1n}\mathcal{H}^n(U)\le n_*\th\la^{-\f1n}\sum_{i=1}^l\mathcal{H}^{n}(U\cap B_{r_{p_i}}(p_i)).
\endaligned
\end{equation}
Then there exists a point $p_*\in\bigcup_i\{p_i\}\subset B_1(p)$ so that
\begin{equation}\aligned\label{HnpSthS}
\mathcal{H}^{n-1}(\p U\cap B_{r_{p_*}}(p_*))\le n_*\th\la^{-\f1n}\mathcal{H}^{n}(U\cap B_{r_{p_*}}(p_*)).
\endaligned
\end{equation}

Let $V^k_s(r)$ denote the volume of a $k$-dimensional geodesic ball with radius $r$ in a space form with constant sectional curvature $-s^2$ for $s\ge0$, i.e., 
\begin{equation}\aligned\label{Vksrsinh}
V^k_s(r)=k\omega_k s^{1-k}\int_0^r\sinh^{k-1}(st)dt.
\endaligned\end{equation}
For any point $x\in B_{1}(p)$ and $0<r<R-1$, Bishop-Gromov volume comparison implies
\begin{equation}\aligned\label{Hn+1BrxRVR}
&\f{\mathcal{H}^{n+1}(B_{r}(x))}{V^{n+1}_{R^{-2}}(r)}\ge\f{\mathcal{H}^{n+1}(B_{R-1}(x))}{V^{n+1}_{R^{-2}}(R-1)}
\ge\f{\mathcal{H}^{n+1}(B_{R-2}(p))}{V^{n+1}_{R^{-2}}(R-1)}\\
\ge&\f{V^{n+1}_{R^{-2}}(R-2)}{V^{n+1}_{R^{-2}}(R-1)}\f{\mathcal{H}^{n+1}(B_{R}(p))}{V^{n+1}_{R^{-2}}(R)}\ge\left(1-\f1{R}\right)\f{V^{n+1}_{R^{-2}}(R-2)}{V^{n+1}_{R^{-2}}(R-1)}\f{\omega_{n+1}R^{n+1}}{V^{n+1}_{R^{-2}}(R)}.
\endaligned\end{equation}
For the sufficiently large $R$ and every $t\in(0,R]$,
\begin{equation}\aligned\label{sinhR-2t}
\sinh^n(R^{-2}t)=R^{-2n}t^n\left(1+O(R^{-1})\right).
\endaligned\end{equation}
From \eqref{Vksrsinh}\eqref{sinhR-2t}, \eqref{Hn+1BrxRVR} infers
\begin{equation}\aligned\label{Hn+1BrxlowR}
\mathcal{H}^{n+1}(B_{r}(x))\ge\left(1-\f1{\sqrt{R}}\right)\omega_{n+1}r^{n+1}\qquad \mathrm{for\ any}\ 0<r<R-1.
\endaligned\end{equation}
Note $r_{p_*}\le3$ from \eqref{rx2} and $\la\le1$. Let $(N_*,\xi)=\f1{r_{p_*}}(N,p_*)$, then $N_*$ is an $(n+1)$-dimensional complete smooth manifold with Ricci curvature $\ge-9nR^{-4}$ on $B_{R/4}(\xi)$. 
From \eqref{Hn+1BrxlowR}, it follows that
\begin{equation}\aligned
\mathcal{H}^{n+1}(B_{r}(\xi))\ge\left(1-\f1{\sqrt{R}}\right)\omega_{n+1}r^{n+1}
\endaligned
\end{equation}
for each $0<r\le R/4$ provided $R$ is sufficiently large.

Noting $B_{r_{p_*}}(p_*)\subset B_{3}(p_*)\subset B_{4}(p)$ for the suitably large $R$.
Let $S=\f1{r_{p_*}}\left(M\cap B_{r_{p_*}}(p_*)\right)$, then $S$ is area-minimizing in $B_{1}(\xi)$ with $\xi\in S$ and $\p S\subset\p B_1(\xi)$.
Let $V=\f1{r_{p_*}}\left(U\cap B_{r_{p_*}}(p_*)\right)$, then from \eqref{HnSrxhalf} we have
$$\mathcal{H}^{n}(V\cap B_1(\xi))=\f1{r_{p_*}^{n}}\mathcal{H}^{n}(U\cap B_{r_{p_*}}(p_*))
=\f1{2r_{p_*}^{n}}\mathcal{H}^{n}(M\cap B_{r_{p_*}}(p_*))=\f12\mathcal{H}^n(S\cap B_1(\xi)).$$
Since $R$ is  sufficiently large, from Lemma \ref{epdeHM} we get
\begin{equation}\aligned\label{Mrp*p*111}
\mathcal{H}^n(M\cap B_{r_{p_*}}(p_*))\le(n+1)\omega_{n+1}r_{p_*}^{n}.
\endaligned
\end{equation}
Combining \eqref{rx2}\eqref{HnpSthS}\eqref{Mrp*p*111}, it follows that
\begin{equation}\aligned
&\mathcal{H}^{n-1}(\p V\cap B_1(\xi))=\f1{r_{p_*}^{n-1}}\mathcal{H}^{n-1}(\p U\cap B_{r_{p_*}}(p_*))
\le\f{n_*\th\la^{-\f1n}}{r_{p_*}^{n-1}}\mathcal{H}^{n}(U\cap B_{r_{p_*}}(p_*))\\
\le&\f{n_*\th\la^{-\f1n}}{r_{p_*}^{n-1}}\mathcal{H}^{n}(M\cap B_{r_{p_*}}(p_*))
\le n_*(n+1)\omega_{n+1}r_{p_*}\th\la^{-\f1n}\le3n_*(n+1)\omega_{n+1}\th.
\endaligned
\end{equation}
We complete the proof.
\end{proof}

When the ambient manifolds are close to Euclidean space in some sense, we have the isoperimetric inequality on area-minimizing hypersurfaces in unit geodesic balls as follows.
\begin{lemma}\label{Rfiso}
For each integer $n\ge2$, there is a constant $\Th\ge4$ depending only on $n$ such that
if $N$ is an $(n+1)$-dimensional smooth complete manifold with Ricci curvature $\ge-nR^{-4}$ on $B_{R}(p)$ and with $\mathcal{H}^{n+1}(B_{R}(p))\ge\omega_{n+1}(R-1)R^n$ for some $R\ge\Th$, and $M$ is an area-minimizing hypersurface in $B_{4}(p)$ with $p\in M$ and $\p M\subset\p B_4(p)$, then for any open set $U\subset M\cap B_{1}(p)$, there holds
\begin{equation}
\mathcal{H}^{n-1}(\p U)\ge \f1{\Th}\left(\mathcal{H}^{n}(U)\right)^{\f{n-1}n}.
\end{equation}
\end{lemma}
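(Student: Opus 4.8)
The plan is to argue by contradiction, using Lemma \ref{iso-sub} to collapse the general isoperimetric estimate into a single normalized situation, and then to extract a limit area‑minimizing hypersurface inside a Euclidean ball on which the contradiction becomes visible through the constancy theorem. Suppose the statement fails for every admissible constant. Then there are $(n+1)$‑dimensional manifolds $N_i$, radii $R_i\to\infty$ with $\mathrm{Ric}\ge-nR_i^{-4}$ on $B_{R_i}(p_i)$ and $\mathcal{H}^{n+1}(B_{R_i}(p_i))\ge\omega_{n+1}(R_i-1)R_i^n$, area‑minimizing hypersurfaces $M_i$ in $B_4(p_i)$ with $p_i\in M_i$, $\p M_i\subset\p B_4(p_i)$, and open sets $U_i\subset M_i\cap B_1(p_i)$ with $\mathcal{H}^n(U_i)>0$ (otherwise the inequality is trivial) and $\th_i:=\mathcal{H}^{n-1}(\p U_i)\big(\mathcal{H}^n(U_i)\big)^{-\f{n-1}n}<\f1i$. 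Since $R_i$ is large, Lemma \ref{iso-sub} produces manifolds $N_{*,i}$ with $\mathrm{Ric}\ge-9nR_i^{-4}$ on $B_{R_i/4}(\xi_i)\subset N_{*,i}$ and $\mathcal{H}^{n+1}(B_r(\xi_i))\ge(1-R_i^{-1/2})\omega_{n+1}r^{n+1}$ for $0<r\le R_i/4$, area‑minimizing hypersurfaces $S_i$ in $B_1(\xi_i)$ with $\xi_i\in S_i$, $\p S_i\subset\p B_1(\xi_i)$, and open $V_i\subset S_i\cap B_1(\xi_i)$ with $\mathcal{H}^n(V_i\cap B_1(\xi_i))=\f12\mathcal{H}^n(S_i\cap B_1(\xi_i))$ and $\mathcal{H}^{n-1}(\p V_i\cap B_1(\xi_i))\le n^*\th_i\to0$. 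It remains to rule this out.

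Because $R_i\to\infty$, the Ricci lower bounds on $B_{R_i/4}(\xi_i)$ tend to zero and, by Bishop–Gromov together with the almost volume rigidity of Cheeger–Colding \cite{CC} (see also \cite{C}), the balls $B_{\rho_i}(\xi_i)\subset N_{*,i}$ converge in the Gromov–Hausdorff sense to Euclidean balls $B_{\rho_i}(0)$ for a suitable $\rho_i\to\infty$; hence $(N_{*,i},\xi_i)$ falls within the framework of $\S3$, cf. \eqref{RicBRiqige0*}–\eqref{BRiqiBRi0epi}, and we may build the modified Colding maps $\Phi_i:\overline{B_1(\xi_i)}\to\R^{n+1}$ of \eqref{Phii***} with the tangential Jacobian estimate \eqref{JPhi}. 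By Theorem 1.1 of \cite{D} (equivalently the analysis behind Lemma \ref{ConvcurrentPhiiMi}), after passing to a subsequence $\Phi_i(S_i)$ converges in the Hausdorff sense to a closed set $S_\infty$ with $0\in S_\infty$, $S_\infty\cap B_1(0)$ area‑minimizing in $B_1(0)$, $\mathcal{H}^n(S_\infty\cap B_t(0))=\lim_i\mathcal{H}^n(S_i\cap B_t(\xi_i))$ for $t\in(0,1)$, and $\Phi_i([| S_i|])\rightharpoonup[| S_\infty^*|]$, the multiplicity‑one current carried by $S_\infty\cap B_1(0)$. Now push forward the $n$‑currents $[| V_i|]$ and $[| (S_i\setminus V_i)\cap B_1(\xi_i)|]$ by $\Phi_i$; their masses are bounded by $\sqrt{e}\,(1+\f1i)^n\mathcal{H}^n(S_i\cap B_1(\xi_i))$, hence uniformly by Lemma \ref{epdeHM}, while the boundary mass of $\Phi_i([| V_i|])$ is at most $\sqrt{e}\,(1+\f1i)^n\mathcal{H}^{n-1}(\p V_i\cap B_1(\xi_i))\to0$. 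By the Federer–Fleming compactness theorem (Theorem \ref{FF}) and lower semicontinuity of boundary mass, a further subsequence gives integer multiplicity currents $Z_\infty,Z_\infty'$ in $B_1(0)$ with $\Phi_i([| V_i|])\rightharpoonup Z_\infty$, $\Phi_i([| (S_i\setminus V_i)\cap B_1(\xi_i)|])\rightharpoonup Z_\infty'$, $Z_\infty+Z_\infty'=[| S_\infty^*|]$, and $\p Z_\infty=\p Z_\infty'=0$ in $B_1(0)$. Since the multiplicities of $Z_\infty$ and $Z_\infty'$ are nonnegative integers summing to $1$ a.e. on $S_\infty$, there is a Borel set $A\subseteq S_\infty$ with $Z_\infty=[| S_\infty^*|]\llcorner A$ and $Z_\infty'=[| S_\infty^*|]\llcorner(S_\infty\setminus A)$. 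Using that $\Phi_i$ asymptotically preserves $\mathcal{H}^n$ along $S_i$ (cf. \cite{D}) together with the normalization $\mathcal{H}^n(V_i\cap B_1(\xi_i))=\f12\mathcal{H}^n(S_i\cap B_1(\xi_i))\ge\f12(1-\ep)\omega_n$ from Lemma \ref{epdeHM}, a bookkeeping of masses yields $0<\mathcal{H}^n(A\cap B_1(0))<\mathcal{H}^n(S_\infty\cap B_1(0))$.

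This is the contradiction we want, once we know that $\mathrm{spt}(S_\infty)\cap B_1(0)$ is connected. Indeed, since $[| S_\infty^*|]$ is a codimension‑one area‑minimizing integral current of multiplicity one, its regular part is a smooth embedded minimal hypersurface and $\dim\big(\mathrm{sing}(S_\infty)\cap B_1(0)\big)\le n-7$, so the singular set has codimension $\ge2$ in $S_\infty$ and removing it does not disconnect; then $\p\big([| S_\infty^*|]\llcorner A\big)=0$ in $B_1(0)$ and the constancy theorem, applied on each connected component of the regular part, force $A$ to agree $\mathcal{H}^n$‑a.e. with a union of connected components of $\mathrm{spt}(S_\infty)\cap B_1(0)$, which together with connectedness contradicts $0<\mathcal{H}^n(A\cap B_1(0))<\mathcal{H}^n(S_\infty\cap B_1(0))$. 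The connectedness of $\mathrm{spt}(S_\infty)\cap B_1(0)$ should follow from the topological picture of $\S3$, namely that $S_\infty\cap B_1(0)=\p E_\infty^+\cap B_1(0)$ for a set of finite perimeter $E_\infty^+$ whose complementary pair $E_\infty^\pm$ is the Hausdorff limit of the two pieces $W_i^\pm$ into which $S_i$ separates a topological ball $U_i$, so that $B_1(0)\setminus S_\infty$ has exactly two components; alternatively one argues via the monotonicity formula at $0\in S_\infty$. Making this last step rigorous, and confirming that the exact half‑volume normalization of $V_i$ survives in the limit as a strict inequality for $\mathcal{H}^n(A)$, is the main obstacle; granting it, the contradiction closes, which proves that some $\Th=\Th(n)\ge4$ works. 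Since the only largeness needed for $R$ enters through Lemma \ref{iso-sub}, enlarging $\Th$ if necessary gives the statement with the hypothesis $R\ge\Th$ exactly as formulated.
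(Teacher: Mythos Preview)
Your overall architecture matches the paper's proof: contradiction, apply Lemma \ref{iso-sub} to normalize, use Cheeger--Colding almost-rigidity together with the maps $\Phi_i$ of \eqref{Phii***}, push the currents $[|V_i|]$ and $[|S_i\setminus V_i|]$ forward, pass to weak limits, and obtain a limiting decomposition of $[|S_\infty^*|]$ with vanishing relative boundary. The divergence is only at the very last step.

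The paper does \emph{not} use the constancy theorem or connectedness of $S_\infty$; it simply invokes the Neumann--Poincar\'e inequality of Bombieri--Giusti \cite{BG} on area-minimizing hypersurfaces in $\R^{n+1}$ (the inequality \eqref{PoincareM0}). After obtaining $V_\infty$ with $\p V_\infty\llcorner B_{t_*}(0)=0$ and the two-sided mass bound
\[
\mathcal{H}^n(S_\infty\cap B_{t_*}(0))-\tfrac{\sqrt e}{2}\,\mathcal{H}^n(S_\infty\cap B_1(0))\le \mathbb{M}(V_\infty)\le \tfrac{\sqrt e}{2}\,\mathcal{H}^n(S_\infty\cap B_1(0)),
\]
the paper observes that this is incompatible with \eqref{PoincareM0}. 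Since \cite{BG} is used as a black box, no connectedness of $S_\infty$ is required.

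Your constancy route does need connectedness of (the regular part of) $S_\infty\cap B_1(0)$, and this is a genuine gap. Your two sketches do not settle it: that $B_1(0)\setminus S_\infty$ has two complementary pieces $E_\infty^\pm$ does not force $S_\infty$ to be connected, and monotonicity at $0$ does not rule out several sheets. In fact the usual way one \emph{proves} connectedness of a ball in an area-minimizing hypersurface is precisely via the Bombieri--Giusti Poincar\'e inequality you are trying to avoid, so the detour is circular.

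Two smaller technical points where the paper is more careful than your write-up. First, ``$\Phi_i$ asymptotically preserves $\mathcal{H}^n$ along $S_i$'' is not available; only the crude bound $J\Phi_i\le\sqrt e$ from \eqref{JPhi} holds, so push-forward can lose mass. The paper gets the needed two-sided bounds by combining $\mathbb{M}(V_\infty)\le\tfrac{\sqrt e}{2}\mathcal{H}^n(S_\infty\cap B_1(0))$ (and the analogous bound for $V_\infty^*$) with the identity $V_\infty+V_\infty^*=[|S_\infty^*|]\llcorner B_{t_*}(0)$; the exact half-mass does \emph{not} survive in the limit. Second, to apply Federer--Fleming compactness the paper first selects radii $t_i\in[1-\ep,1]$ via the coarea formula so that $\mathcal{H}^{n-1}(V_i\cap\p B_{t_i}(\xi_i))$ stays bounded, and works with $[|V_i\cap B_{t_i}(\xi_i)|]$; you skip this slicing step.
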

\begin{proof}
Let us prove the isoperimetric inequality by contradiction. We suppose that there are a sequence $R_i\rightarrow\infty$, a sequence of $(n+1)$-dimensional smooth complete manifolds $N_i$ with Ricci curvature $\ge-nR_i^{-4}$ on geodesic balls $B_{R_i}(p_i)$, $\mathcal{H}^{n+1}(B_{R_i}(p_i))\ge\omega_{n+1}(R_i-1)R_i^n$,
a sequence of area-minimizing hypersurfaces $M_i\subset B_{4}(p_i)$ with $p_i\in M_i$ and $\p M_i\subset\p B_4(p_i)$
and a sequence of open sets $U_i\subset M_i\cap B_{1}(p_i)$ satisfying
\begin{equation}
\mathcal{H}^{n-1}(\p U_i)\le \f1{R_i}\left(\mathcal{H}^{n}(U_i)\right)^{\f{n-1}n}.
\end{equation}
From Lemma \ref{iso-sub}, there are a sequence of $(n+1)$-dimensional manifolds $Q_i$ with Ricci curvature $\ge-9nR_i^{-4}$ on $B_{R_i/4}(q_i)\subset Q_i$,
$\mathcal{H}^{n+1}(B_{r}(q_i))\ge\left(1-\f1{\sqrt{R_i}}\right)\omega_{n+1}r^{n+1}$ for any $0<r\le R_i/4$,
a sequence of area-minimizing hypersurfaces $S_i$ in $B_{1}(q_i)$ with $q_i\in S_i$ and $\p S_i\subset\p B_{1}(q_i)$, and a sequence of open sets $V_i$ in $S_i\cap B_1(q_i)$ such that $\mathcal{H}^{n}(V_i\cap B_1(q_i))=\f12\mathcal{H}^n(S_i\cap B_1(q_i))$, and
\begin{equation}\aligned\label{TiB1xii}
\mathcal{H}^{n-1}(\p V_i\cap B_1(q_i))\le \f{n^*}{R_i}.
\endaligned
\end{equation}

From Theorem 4.85 in \cite{CC} by Cheeger-Colding,
there is a sequence $0<r_i<R_i$ with $\lim_{i\to\infty}r_i=\infty$ such that
\begin{equation}\label{BRiqiBRi0epi}
\lim_{i\rightarrow\infty}d_{GH}(B_{r_i}(q_i),B_{r_i}(0))=0.
\end{equation}
Let $\Phi_i:\ \overline{B_1(q_i)}\rightarrow B_{1+\tau_i}(0)$ be a $\tau_i$-Gromov-Hausdorff approximation for a sequence of positive numbers $\tau_i\rightarrow0$ defined in \eqref{Phii***}.
In particular, $\Phi_i$ has the Lipschitz constant $\le\sqrt{n+1}(1+1/i)$ for all $i$. 
Up to choosing the subsequence, $\Phi_i(S_i)$ converges in the Hausdorff sense to a closed set $S_\infty$ in $\overline{B_{1}(0)}$ with $0\in S_\infty$.
From Theorem 1.1 in \cite{D}, $S_\infty\cap B_1(0)$ is an area-minimizing hypersurface in $B_1(0)$ with
\begin{equation}\aligned\label{VwqVixii}
\mathcal{H}^n(S_\infty\cap B_t(0))=\lim_{i\rightarrow\infty}\mathcal{H}^n\left(S_i\cap B_t(q_i)\right)\qquad\mathrm{for\ each}\ t\in(0,1). 
\endaligned
\end{equation}
From co-area formula, for any $\ep\in(0,\f12]$ there is a sequence of numbers $t_i\in[1-\ep,1]$ such that $\limsup_{i\rightarrow\infty}\mathcal{H}^{n-1}\left(V_i\cap\p B_{t_i}(q_i)\right)<\infty$.
Combining \eqref{TiB1xii}, we have
\begin{equation}\aligned
&\limsup_{i\rightarrow\infty}\mathbb{M}\left(\p\Phi_i([| V_i\cap B_{t_i}(q_i)|])\right)=\limsup_{i\rightarrow\infty}\mathbb{M}\left(\Phi_i(\p[| V_i\cap B_{t_i}(q_i)|])\right)\\
\le&\limsup_{i\rightarrow\infty}\mathbb{M}\left(\Phi_i([|\p V_i\cap B_{t_i}(q_i)|])\right)+\limsup_{i\rightarrow\infty}\mathbb{M}\left(\Phi_i([| V_i\cap \p B_{t_i}(q_i)|])\right)<\infty,
\endaligned
\end{equation}
and $\limsup_{i\rightarrow\infty}\mathbb{M}\left(\p\Phi_i([| S_i\cap B_{t_i}(q_i)\setminus V_i|])\right)<\infty$ similarly.
From Theorem \ref{FF}, there are integer multiplicity currents $V_\infty, V^*_\infty$ in $B_{t_*}(0)$ with
$t_*=\lim_{i\rightarrow\infty}t_i\in[1-\ep,1]$, such that $\Phi_i([| V_i\cap B_{t_i}(q_i)|])\rightharpoonup V_\infty$ and $\Phi_i([| S_i\cap B_{t_i}(q_i)\setminus V_i|])\rightharpoonup V^*_\infty$ as $i\rightarrow\infty$ up to a choice of the subsequences.
Moreover, $\p V^*_\infty$ is an integer multiplicity $(n-1)$-current from boundary rectifiablity theorem (see \cite{S} for instance).
Since $\Phi_i([| V_i\cap B_{t_i}(q_i)|])+\Phi_i([| S_i\cap B_{t_i}(q_i)\setminus V_i|])=\Phi_i([| S_i\cap B_{t_i}(q_i)|])$, from Lemma \ref{ConvcurrentPhiiMi} we have
\begin{equation}\aligned\label{a111}
\mathcal{H}^n(S_\infty\cap B_{t_*}(0))\le\mathbb{M}\left(V_\infty\right)+\mathbb{M}\left(V^*_\infty\right).
\endaligned
\end{equation}
With \eqref{JPhi}\eqref{VwqVixii} and $\mathcal{H}^{n}(V_i\cap B_1(q_i))=\f12\mathcal{H}^n(S_i\cap B_1(q_i))$, we get
\begin{equation}\aligned\label{a112}
\mathbb{M}\left(V_\infty\right)\le&\liminf_{i\rightarrow\infty}\mathbb{M}\left(\Phi_i([| V_i\cap B_{t_i}(q_i)|])\right)\le\sqrt{e}\liminf_{i\rightarrow\infty}\mathbb{M}\left([| V_i\cap B_{t_i}(q_i)|]\right)\\
\le&\f{\sqrt{e}}2\liminf_{i\rightarrow\infty}\mathcal{H}^n\left(S_i\cap B_1(q_i)\right)=\f{\sqrt{e}}2\mathcal{H}^n(S_\infty\cap B_1(0)),
\endaligned
\end{equation}
and similarly
\begin{equation}\aligned\label{a113}
\mathbb{M}\left(V^*_\infty\right)\le\f{\sqrt{e}}2\mathcal{H}^n(S_\infty\cap B_1(0)).
\endaligned
\end{equation}
Combining \eqref{a111}\eqref{a112}\eqref{a113}, we have
\begin{equation}\aligned\label{a114}
\mathcal{H}^n(S_\infty\cap B_{t_*}(0))-\f{\sqrt{e}}2\mathcal{H}^n(S_\infty\cap B_1(0))\le\mathbb{M}\left(V_\infty\right)\le\f{\sqrt{e}}2\mathcal{H}^n(S_\infty\cap B_1(0)).
\endaligned
\end{equation}
Combining \eqref{TWTjW}\eqref{TiB1xii} and the definition of $\Phi_i$, we have
\begin{equation}\aligned\label{Mtitptiti}
\mathbb{M}\left(\p V_\infty\llcorner B_t(0)\right)\le&\liminf_{i\rightarrow\infty}\mathbb{M}\left(\p\Phi_i([| V_i\cap B_{t_i}(q_i)|])\llcorner B_t(0)\right)\\
=&\liminf_{i\rightarrow\infty}\mathbb{M}\left(\Phi_i([| \p V_i\cap B_{t_i}(q_i)|])\llcorner B_t(0)\right)=0
\endaligned
\end{equation}
for any $t\in(0,t_*)$, which implies $\mathbb{M}\left(\p V_\infty\llcorner B_{t_*}(0)\right)=0$ by letting $t\to t_*$ in \eqref{Mtitptiti}.
However, $\mathbb{M}\left(\p V_\infty\llcorner B_{t_*}(0)\right)=0$ contradicts to \eqref{a114} and the Neumann-Poincar\'e inequality on area-minimizing hypersurfaces in Euclidean space \cite{BG}. This completes the proof.
\end{proof}

Let $N,M$ be defined in Lemma \ref{Rfiso}.
For an open set $U\subset M$ and a closed set $\Om\subset M$ with $(n-1)$-rectifiable $\p\Om$, we call $U$ \emph{an outward minimizing set} in $\Om$, if
\begin{equation}\label{ODEEE*}
\mathcal{H}^{n-1}(\p U\cap \Om)\le\mathcal{H}^{n-1}(\p V\cap \Om)
\end{equation}
for any open set $V$ in $M$ with $U\subset V\subset M$ and $U=V$ in $M\setminus \Om$.
Assume $M\cap B_1(p)\subset \Om$.
Let $U_t=U\cap B_t(p)$, and $\G_{t}=M\cap\p B_t(p)\setminus U$ for any $t\in(0,1]$. Then
\begin{equation}
\mathcal{H}^{n-1}(\p U_t)\le\mathcal{H}^{n-1}(\p B_t(p)),
\end{equation}
which implies
\begin{equation}
\mathcal{H}^{n-1}(\p U_t\cap B_t(p))\le\mathcal{H}^{n-1}(\G_{t}).
\end{equation}
Analog to the proof of Lemma 3.5 in \cite{D}, from the isoperimetric inequality in Lemma \ref{Rfiso}, we have
\begin{equation}
\f1{\Th}\left(\int_0^t\mathcal{H}^{n-1}(\G_{\tau})d\tau\right)^{\f{n-1}{n}}\le \mathcal{H}^{n-1}(\G_{t})+\mathcal{H}^{n-1}(\p U_t\cap B_t(p))\le2\mathcal{H}^{n-1}(\G_{t})
\end{equation}
for any $t\in(0,1]$. 
If $0\in M\setminus U$, then we can solve the above ordinary differential inequality on $t$ and get
\begin{equation}\label{ODEEE}
\mathcal{H}^{n}(M\cap B_t(p)\setminus U)=\int_0^t\mathcal{H}^{n-1}(\G_{\tau})d\tau\ge\left(\f{t}{2n\Th}\right)^{n}\qquad \mathrm{for\ any}\ t\in(0,1].
\end{equation}

In the following text of this section, we will use the constant $\Th$ appeared in Lemma \ref{Rfiso} several times.
Inspired by Lemma 1 in \cite{BG}, we have the following result with the help of the Sobolev inequality obtained in Lemma \ref{Rfiso}.
\begin{lemma}\label{Sob-Coarea}
There are constants $R,\be$ depending only on $n$ with $R\ge \Th>1>\be>0$ such that
if $N$ is an $(n+1)$-dimensional smooth complete manifold with Ricci curvature $\ge-nR^{-4}$ on $B_R(p)$ and with $\mathcal{H}^{n+1}(B_R(p))\ge\omega_{n+1}(R-1)R^n$
and $M$ is an area-minimizing hypersurface in $B_4(p)$ with $p\in M$ and $\p M\subset\p B_4(0))$, 
$S\subset M\cap B_1(p)$ is a non-empty open set with $\mathcal{H}^n(S\cap B_1(p))\le\f12\mathcal{H}^n(M\cap B_1(p))$ and
\begin{equation}\aligned\label{Hn-1pSf12RSbe}
\mathcal{H}^{n-1}(\p S\cap B_1(p))\le\f1{2\Th}\left(\mathcal{H}^n(S\cap B_{\be}(p))\right)^{\f{n-1}n},
\endaligned
\end{equation}
then there is an outward minimizing set $V$ in $M\cap B_1(p)$ with $S\subset V$, $\mathcal{H}^{n-1}(\p V\cap B_1(p))\le\mathcal{H}^{n-1}(\p S\cap B_1(p))$ and
\begin{equation}\aligned
\left(4n\Th\right)^{-n}\le \mathcal{H}^{n}(V)\le\f23\mathcal{H}^{n}(M\cap B_1(p)).
\endaligned
\end{equation}
\end{lemma}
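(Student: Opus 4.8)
The plan is to construct $V$ by a truncation-and-minimization procedure in the spirit of Bombieri--Giusti \cite{BG}, Lemma 1, where the Sobolev/isoperimetric inequality of Lemma \ref{Rfiso} replaces the Euclidean isoperimetric inequality. First I would set up the variational problem: among all open sets $W$ with $S\subset W\subset M\cap B_1(p)$ and $W=S$ outside $M\cap B_1(p)$, minimize the relative perimeter $\mathcal{H}^{n-1}(\p W\cap B_1(p))$. Standard compactness for sets of finite perimeter (equivalently, the Federer--Fleming compactness theorem \ref{FF} applied to the associated currents $[|W|]$) produces a minimizer $V$, which is by construction an outward minimizing set in $M\cap B_1(p)$ and satisfies $\mathcal{H}^{n-1}(\p V\cap B_1(p))\le\mathcal{H}^{n-1}(\p S\cap B_1(p))$ since $S$ itself is a competitor. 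The two genuine points to check are the volume bounds $\left(4n\Th\right)^{-n}\le\mathcal{H}^n(V)\le\tfrac23\mathcal{H}^n(M\cap B_1(p))$.

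For the upper bound I would argue by contradiction: if $\mathcal{H}^n(V)>\tfrac23\mathcal{H}^n(M\cap B_1(p))$, then the complement $M\cap B_1(p)\setminus V$ has small measure, and one can replace $V$ by $V\cup(M\cap B_1(p))$ or instead test outward minimality against $V\cup(\text{small pieces near }\p B_1(p))$; comparing perimeters via the isoperimetric inequality of Lemma \ref{Rfiso} applied to the small complement, together with the hypothesis $\mathcal{H}^n(S\cap B_1(p))\le\tfrac12\mathcal{H}^n(M\cap B_1(p))$, forces a contradiction once $R$ (hence $\Th$) is large enough. Concretely, since $S\subset V$ and $\mathcal{H}^n(S)\le\tfrac12\mathcal{H}^n(M\cap B_1(p))$, the set $V$ cannot fill up too much of the ball without its boundary being forced to be large, contradicting $\mathcal{H}^{n-1}(\p V\cap B_1(p))\le\mathcal{H}^{n-1}(\p S\cap B_1(p))\le\tfrac1{2\Th}(\mathcal{H}^n(S\cap B_\be(p)))^{(n-1)/n}$, which is small.

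For the lower bound I expect to use the differential-inequality argument already recorded in the excerpt, \eqref{ODEEE}: since $V$ is outward minimizing in $M\cap B_1(p)$, setting $V_t=V\cap B_t(p)$ and $\G_t=M\cap\p B_t(p)\setminus V$, outward minimality gives $\mathcal{H}^{n-1}(\p V_t\cap B_t(p))\le\mathcal{H}^{n-1}(\G_t)$, and feeding this into Lemma \ref{Rfiso} yields $\tfrac1\Th\big(\int_0^t\mathcal{H}^{n-1}(\G_\tau)\,d\tau\big)^{(n-1)/n}\le 2\mathcal{H}^{n-1}(\G_t)$. If the origin $p\notin V$ (which I would arrange, or handle separately, using that $\mathcal{H}^{n-1}(\p S\cap B_1(p))$ is small so $S$ — hence $V$ near the contradiction scale — stays away from the dense part near $p$), solving this ODE gives $\mathcal{H}^n(M\cap B_t(p)\setminus V)\ge(t/2n\Th)^n$; but one needs the reverse direction, a lower bound on $\mathcal{H}^n(V)$ itself. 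Here I would instead apply the isoperimetric inequality of Lemma \ref{Rfiso} directly to $V$: $\mathcal{H}^{n-1}(\p V\cap B_1(p))\ge\tfrac1\Th(\mathcal{H}^n(V))^{(n-1)/n}$; combining with $\mathcal{H}^{n-1}(\p V\cap B_1(p))\le\mathcal{H}^{n-1}(\p S\cap B_1(p))$ and the nondegeneracy lower bound \eqref{HnMBrx21}-type estimate for $\mathcal{H}^n(S\cap B_\be(p))$ forces, after choosing $\be$ small depending on $n$, the stated bound $\mathcal{H}^n(V)\ge(4n\Th)^{-n}$.

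The main obstacle I anticipate is the bookkeeping around the origin $p$ and the role of the auxiliary small radius $\be$: the hypothesis \eqref{Hn-1pSf12RSbe} measures the boundary of $S$ against the volume of $S$ inside the \emph{smaller} ball $B_\be(p)$, so one must show $S$ (and the constructed $V$) carry a definite amount of mass near $p$ — otherwise the isoperimetric comparisons are vacuous. This is where the interplay between outward minimality, the density lower bound \eqref{HnMBrx21}, and a careful choice of $\be=\be(n)$ is essential, and it is the step I would expect to require the most care.
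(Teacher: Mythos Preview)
Your lower bound argument has a genuine gap. Applying Lemma \ref{Rfiso} to $V$ gives $\mathcal{H}^{n-1}(\p V)\ge\tfrac1\Th(\mathcal{H}^n(V))^{(n-1)/n}$, and combining this with the smallness of $\mathcal{H}^{n-1}(\p V)\le\mathcal{H}^{n-1}(\p S)$ yields an \emph{upper} bound on $\mathcal{H}^n(V)$, not a lower bound. The density estimate \eqref{HnMBrx21} controls $\mathcal{H}^n(M\cap B_\be(p))$, not $\mathcal{H}^n(S\cap B_\be(p))$; there is no a priori reason $S$ carries any mass near $p$. The ODE argument \eqref{ODEEE} you cite is for the complement of an outward minimizing set, so it also goes the wrong way here.

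The paper's key idea, which you are missing, is to run the differential inequality on $S$ itself rather than on $V$. By coarea, for a.e.\ $t\in[\be,1]$,
\[
\tfrac{\p}{\p t}\mathcal{H}^{n}(S\cap B_t(p))\ge\mathcal{H}^{n-1}(\p(S\cap B_t(p)))-\mathcal{H}^{n-1}(\p S\cap B_t(p)).
\]
Lemma \ref{Rfiso} bounds the first term below by $\tfrac1\Th(\mathcal{H}^n(S\cap B_t(p)))^{(n-1)/n}$, and the hypothesis \eqref{Hn-1pSf12RSbe} bounds the second term above by $\tfrac1{2\Th}(\mathcal{H}^n(S\cap B_t(p)))^{(n-1)/n}$ for $t\ge\be$. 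Integrating the resulting ODE from $\be$ to $1$ gives $\mathcal{H}^n(S)\ge((1-\be)/2n\Th)^n\ge(4n\Th)^{-n}$ for $\be<\tfrac12$. Since $S\subset V$, the lower bound on $V$ is then immediate. This is precisely the role of the parameter $\be$: it provides the seed for the ODE.

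Your upper bound sketch is also vague where the paper is sharp. Rather than arguing by contradiction on the complement, the paper applies Lemma \ref{Rfiso} to $V\setminus S$: since $\p(V\setminus S)\subset\p V\cup\p S$ and $\mathcal{H}^{n-1}(\p V\cap B_1(p))\le\mathcal{H}^{n-1}(\p S\cap B_1(p))$, one gets $(\mathcal{H}^n(V\setminus S))^{(n-1)/n}\le 2\Th\,\mathcal{H}^{n-1}(\p S\cap B_1(p))\le(\mathcal{H}^n(M\cap B_\be(p)))^{(n-1)/n}$. Hence $\mathcal{H}^n(V)\le\mathcal{H}^n(S)+\mathcal{H}^n(M\cap B_\be(p))\le\tfrac12\mathcal{H}^n(M\cap B_1(p))+\mathcal{H}^n(M\cap B_\be(p))$, and Lemma \ref{epdeHM} makes the last term $\le\tfrac16\mathcal{H}^n(M\cap B_1(p))$ for $\be$ small and $R$ large.
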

\begin{proof}
For almost all $t\in[\be,1]$, we have
\begin{equation}\aligned
\f{\p}{\p t}\mathcal{H}^{n}(S\cap B_t(p))\ge\mathcal{H}^{n-1}(\p(S\cap B_t(p)))-\mathcal{H}^{n-1}(\p S\cap B_t(p)).
\endaligned
\end{equation}
Combining Lemma \ref{Rfiso} and \eqref{Hn-1pSf12RSbe}, for the suitably large $R\ge \Th$ we get
\begin{equation}\aligned
\f{\p}{\p t}\mathcal{H}^{n}(S\cap B_t(p))\ge&\f1{\Th}\left(\mathcal{H}^n(S\cap B_{t}(p))\right)^{\f{n-1}n}-\f1{2\Th}\left(\mathcal{H}^n(S\cap B_{t}(p))\right)^{\f{n-1}n}\\
=&\f1{2\Th}\left(\mathcal{H}^n(S\cap B_{t}(p))\right)^{\f{n-1}n}.
\endaligned
\end{equation}
By solving the above ordinary differential inequality on $t$, we have
\begin{equation}\aligned
\mathcal{H}^{n}(S\cap B_t(p))\ge\left(\f{t-\be}{2n\Th}\right)^n.
\endaligned
\end{equation}
In particular, $\mathcal{H}^{n}(S)\ge\left(4n\Th\right)^{-n}$ for $\be<\f12$, and then
\begin{equation}\aligned\label{HnSMB1p***}
\left(4n\Th\right)^{-n}\le \mathcal{H}^{n}(S)\le\f12\mathcal{H}^{n}(M\cap B_1(p)).
\endaligned
\end{equation}

By Federer-Fleming compactness theorem, there is an open set $V$ in $M\cap B_1(p)$ with $S\subset V$ such that $V$ is outward minimizing in $M\cap \overline{B_1(p)}$ and
\begin{equation}
\mathcal{H}^{n-1}\left(\p V\cap \overline{B_1(p)}\right)\le\mathcal{H}^{n-1}\left(\p S\cap \overline{B_1(p)}\right).
\end{equation}
Then it follows that $\mathcal{H}^{n-1}(\p(V\setminus S))\le 2\mathcal{H}^{n-1}(\p S\cap B_1(p))$.
With Lemma \ref{Rfiso} and \eqref{Hn-1pSf12RSbe}, for the suitably large $R\ge \Th$ we have
\begin{equation}\aligned
&\left(\mathcal{H}^n(V\setminus S)\right)^{\f{n-1}n}\le \Th \mathcal{H}^{n-1}(\p(V\setminus S))\le 2\Th \mathcal{H}^{n-1}(\p S\cap B_1(p))\\
\le&\left(\mathcal{H}^n(S\cap B_{\be}(p))\right)^{\f{n-1}n}\le\left(\mathcal{H}^n(M\cap B_{\be}(p))\right)^{\f{n-1}n}.
\endaligned
\end{equation}
Combining \eqref{1pmepHnM0}\eqref{HnSMB1p***}, we have
\begin{equation}\aligned
\left(4n\Th\right)^{-n}\le \mathcal{H}^{n}(V)\le\f23\mathcal{H}^{n}(M\cap B_1(p))
\endaligned
\end{equation}
for the suitably large $R\ge \Th$ and the small $\be>0$, which completes the proof.
\end{proof}

\begin{lemma}\label{BRpSEf1R}
There are constants $R,\be$ depending only on $n$ with $R\ge \Th>1>\be>0$ such that
if $N$ is an $(n+1)$-dimensional smooth complete manifold with Ricci curvature $\ge-nR^{-4}$ on $B_R(p)$ and with $\mathcal{H}^{n+1}(B_R(p))\ge\omega_{n+1}(R-1)R^n$
and $M$ is an area-minimizing hypersurface in $B_4(p)$ with $p\in M$, $\p M\subset\p B_4(0)$ and $\mathcal{H}^n(M\cap B_1(p))<(1+1/R)\omega_n$,
then for any open set $U\subset M\cap B_1(p)$, we have
\begin{equation}\label{Hn-1pUB1pR*}
\mathcal{H}^{n-1}(\p U\cap B_1(p))\ge\f1{2\Th}\left(\min\{\mathcal{H}^n(U\cap B_{\be}(p)),\mathcal{H}^n(M\cap B_{\be}(p)\setminus U)\}\right)^{\f{n-1}n}.
\end{equation}
\end{lemma}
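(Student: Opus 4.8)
The statement is a relative isoperimetric (Neumann-Poincaré type) inequality for an area-minimizing hypersurface $M$ whose mass in $B_1(p)$ is nearly that of a flat disk. The natural strategy is to combine the Sobolev inequality of Lemma~\ref{Rfiso} with the outward-minimizing construction of Lemma~\ref{Sob-Coarea} and the lower volume bound \eqref{ODEEE} obtained from the ODE argument. Given an open set $U\subset M\cap B_1(p)$, I would first dispose of the easy case: if \eqref{Hn-1pSf12RSbe} fails for $S=U$ (or for $S=M\cap B_1(p)\setminus U$), i.e. $\mathcal H^{n-1}(\p U\cap B_1(p))>\tfrac1{2\Th}(\mathcal H^n(U\cap B_\be(p)))^{(n-1)/n}$ already, then \eqref{Hn-1pUB1pR*} holds trivially for that set; so one may assume both $U$ and its complement in $M\cap B_1(p)$ satisfy the hypothesis \eqref{Hn-1pSf12RSbe} of Lemma~\ref{Sob-Coarea}, after possibly relabeling so that $\mathcal H^n(U\cap B_1(p))\le\tfrac12\mathcal H^n(M\cap B_1(p))$.

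Next, apply Lemma~\ref{Sob-Coarea} with $S=U$ (note the smallness hypothesis $\mathcal H^n(U\cap B_1(p))\le\tfrac12\mathcal H^n(M\cap B_1(p))$ holds after relabeling). This produces an outward minimizing set $V\supset U$ in $M\cap B_1(p)$ with $\mathcal H^{n-1}(\p V\cap B_1(p))\le\mathcal H^{n-1}(\p U\cap B_1(p))$ and $(4n\Th)^{-n}\le\mathcal H^n(V)\le\tfrac23\mathcal H^n(M\cap B_1(p))$. The point of passing to $V$ is that an outward minimizing set satisfies, by the argument in the text culminating in \eqref{ODEEE}, the lower bound $\mathcal H^n(M\cap B_t(p)\setminus V)\ge(t/2n\Th)^n$ once $0\in M\setminus V$ — but more usefully, one also wants a lower bound for $\mathcal H^n(V\cap B_\be(p))$ itself, which should follow from a similar ODE comparison applied to the outward minimizing set $V$ (using that $p\in M$ and $p\in \overline V$, since $U$ could be small; here the relative isoperimetric inequality of Lemma~\ref{Rfiso} feeds the ODE). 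Combining the resulting two-sided volume estimate with the Sobolev inequality $\mathcal H^{n-1}(\p V\cap B_1(p))\ge\tfrac1\Th(\min\{\mathcal H^n(V),\mathcal H^n(M\cap B_1(p)\setminus V)\})^{(n-1)/n}$ and the monotonicity $\mathcal H^{n-1}(\p V\cap B_1(p))\le\mathcal H^{n-1}(\p U\cap B_1(p))$ should yield \eqref{Hn-1pUB1pR*}, after checking that $\mathcal H^n(V\cap B_\be(p))$ and $\mathcal H^n(M\cap B_\be(p)\setminus U)$ are controlled, up to the constant $2\Th$, by $\mathcal H^n(V)$ and $\mathcal H^n(M\cap B_1(p)\setminus V)$ respectively — the latter because $M\cap B_\be(p)\setminus U\subset M\cap B_1(p)\setminus U$ while the reverse containment needs the hypothesis $\mathcal H^n(M\cap B_1(p))<(1+1/R)\omega_n$ together with \eqref{1pmepHnM0} to ensure the mass of $M$ is concentrated near the center.

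The main obstacle I anticipate is bookkeeping the interplay between the "global" ball $B_1(p)$ and the "inner" ball $B_\be(p)$ appearing on the right-hand side of \eqref{Hn-1pUB1pR*}: the Sobolev inequality of Lemma~\ref{Rfiso} controls $(\mathcal H^n(U))^{(n-1)/n}$ with $U\subset M\cap B_1(p)$, whereas the conclusion requires the smaller quantity $\mathcal H^n(U\cap B_\be(p))$. Bridging this gap requires showing that if $U$ (or $V$) has nontrivial mass at all, then a definite fraction of it lives inside $B_\be(p)$; this is exactly where the near-Euclidean volume hypothesis $\mathcal H^n(M\cap B_1(p))<(1+1/R)\omega_n$ and the lower volume bound \eqref{1pmepHnM0} enter, forcing $M$ to look like a flat disk through the origin so that an outward minimizing set containing mass near $\p B_1(p)$ but not near $p$ would violate \eqref{ODEEE}. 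Once $R$ is taken large and $\be$ small (depending only on $n$, consistently with Lemmas~\ref{Rfiso} and~\ref{Sob-Coarea}), these estimates close up and the proof concludes by contradiction or by a direct chain of inequalities.
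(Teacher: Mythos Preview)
Your plan has a genuine gap at the key step. After passing to the outward minimizing set $V\supset U$ via Lemma~\ref{Sob-Coarea}, you invoke the inequality
\[
\mathcal{H}^{n-1}(\p V\cap B_1(p))\ge\tfrac1\Th\bigl(\min\{\mathcal{H}^n(V),\mathcal{H}^n(M\cap B_1(p)\setminus V)\}\bigr)^{(n-1)/n},
\]
but this is \emph{not} Lemma~\ref{Rfiso}. Lemma~\ref{Rfiso} bounds the \emph{full} boundary $\mathcal{H}^{n-1}(\p V)$, including the portion on $M\cap\p B_1(p)$; the inner boundary $\p V\cap B_1(p)$ can be arbitrarily small while $\p V\cap\p B_1(p)$ carries all the mass (think of $V$ being a half-disk in a flat $M$). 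The displayed inequality is precisely the relative isoperimetric inequality you are trying to prove---indeed it is stronger than \eqref{Hn-1pUB1pR*} since it has $B_1$ rather than $B_\be$ on the right. Neither the ODE bound \eqref{ODEEE} nor the near-disk volume hypothesis $\mathcal{H}^n(M\cap B_1(p))<(1+1/R)\omega_n$ can close this gap by a direct chain of inequalities: an outward minimizing set with tiny inner boundary, definite mass on each side, and touching $\p B_1(p)$ is not ruled out by any of the tools already in hand at this point of the paper.

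The paper's proof takes a different route: it argues by contradiction via a compactness/limiting argument. Assuming a sequence of counterexamples $(N_i,M_i,U_i)$ with $\mathcal{H}^{n-1}(\p U_i\cap B_1(q_i))\to0$, Lemma~\ref{Sob-Coarea} produces outward minimizing $U_i$ with two-sided volume bounds \eqref{minSiEiSi}. The ambient balls Gromov--Hausdorff converge to Euclidean $B_1(0)$, and the volume hypothesis $\mathcal{H}^n(M_i\cap B_1(q_i))<(1+1/R_i)\omega_n$ forces the limit $M_\infty$ to be a \emph{flat} $n$-disk. One then composes the $C^1$ Gromov--Hausdorff approximations $\Phi_i$ with the projection $\pi:\R^{n+1}\to\R^n$ to push the currents $[|U_i|]$ and $[|M_i\setminus U_i|]$ into $\R^n$; by Lemma~\ref{ConvcurrentPhiiMi} and \eqref{detPhi^*} these converge weakly to integer currents $T_\infty,T^*_\infty$ summing to $[|B_{t_*}(0^n)|]$, each with mass strictly between $0$ and $\omega_n$. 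But $\p T_\infty\llcorner B_{t_*}(0)=0$ since the inner boundaries vanish in the limit, contradicting the constancy theorem (an integral $n$-current in a ball in $\R^n$ with no boundary is an integer multiple of the ball). The essential new idea you are missing is this reduction to the trivial flat case via the projected $C^1$-maps and current convergence; it is the only place the special volume hypothesis on $M$ is actually exploited.
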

\begin{proof}
We prove the inequality \eqref{Hn-1pUB1pR*} by contradiction. Suppose Lemma \ref{BRpSEf1R} fails. From Lemma \ref{Sob-Coarea}, there are a constant $c_0=\left(4n\Th\right)^{-n}\omega_n^{-1}$, a sequence $R_i\rightarrow\infty$, a sequence of $(n+1)$-dimensional smooth complete manifolds $N_i$ with Ricci curvature $\ge-nR_i^{-4}$ on $B_{R_i}(q_i)\subset N_i$, $\mathcal{H}^{n+1}(B_{R_i}(q_i))\ge\omega_{n+1}(R_i-1)R_i^n$,
and a sequence of area-minimizing hypersurfaces $M_i\subset B_4(q_i)$ with $q_i\in M_i$, $\p M_i\subset\p B_4(q_i)$ and $\mathcal{H}^n(M_i\cap B_1(q_i))<(1+1/R_i)\omega_n$,
such that for some sequence of outward minimizing open sets $U_i\subset M_i\cap B_1(q_i)$ with 
\begin{equation}\label{minSiEiSi}
\min\{\mathcal{H}^{n}(U_i),\mathcal{H}^{n}(M_i\cap B_1(q_i)\setminus U_i)\}\ge c_0 \omega_n
\end{equation}
and
\begin{equation}\label{331}
\lim_{i\to\infty}\mathcal{H}^{n-1}(\p U_i\cap B_1(q_i))=0.
\end{equation}

From Theorem 4.85 in \cite{CC},
there is a sequence $0<r_i<R_i$ with $\lim_{i\to\infty}r_i=\infty$ such that
\begin{equation}
\lim_{i\rightarrow\infty}d_{GH}(B_{r_i}(q_i),B_{r_i}(0))=0.
\end{equation}
Let $\mathbf{b}_{i,j}$ be smooth functions  on $\overline{B_1(q_i)}$ such that $|d\mathbf{b}_{i,j}|\le1+\f1i$ as in \eqref{bdij11i}.
Let $\Phi_i=(\mathbf{b}_{i,1},\cdots,\mathbf{b}_{i,n+1}):\ \overline{B_1(q_i)}\rightarrow B_{1+\tau_i}(0)$ be a $\tau_i$-Gromov-Hausdorff approximation for a sequence of positive numbers $\tau_i\rightarrow0$ defined in \eqref{Phii***}.
Up to choosing the subsequence, $\Phi_i(M_i\cap B_1(q_i))$ converges in the Hausdorff sense to a closed set $M_\infty$ in $\overline{B_{1}(0)}$ with $0\in M_\infty$.
From Theorem 1.1 in \cite{D}, $M_\infty$ is an area-minimizing hypersurface in $B_1(0)$, and
\begin{equation}\aligned\label{MinftyBt**}
\mathcal{H}^n(M_\infty\cap B_t(0))=\lim_{i\rightarrow\infty}\mathcal{H}^n\left(M_i\cap B_t(q_i)\right)
\endaligned
\end{equation}
for each $t\in(0,1)$. From $\mathcal{H}^n(M_i\cap B_1(q_i))<(1+1/R_i)\omega_n$, we get $\mathcal{H}^n(M_\infty)\le\omega_n$, which implies the flatness of $M_\infty$ in $B_1(0)$. Without loss of generality, we may assume
\begin{equation}\aligned\label{Minftyx1xn01}
M_\infty\cap B_1(0)=B_1(0^n)\triangleq\{(x_1,\cdots,x_n,0)\in\R^{n+1}|\, x_1^2+\cdots+x_n^2<1\}.
\endaligned
\end{equation}

Let $\pi$ be the projection from $\R^{n+1}$ into $\R^n$ by $\pi(x_1,\cdots,x_n,x_{n+1})=(x_1,\cdots,x_n)$ for any $(x_1,\cdots,x_{n+1})\in\R^{n+1}$.
We define a mapping $\Phi^*_i:\ \overline{B_1(q_i)}\rightarrow \R^{n}$ by 
$$\Phi^*_i=\pi\circ\Phi_i=(\mathbf{b}_{i,1},\cdots,\mathbf{b}_{i,n}).$$
Let $\xi_i$ denote the orientation of $\Phi_i([| M_i\cap B_1(q_i)|])$, and $\th_i$ be the multiplicity function of $\Phi_i([| M_i\cap B_1(q_i)|])$.
Let $\omega$ be a smooth $n$-form defined by $\omega=\varphi dx_1\wedge\cdots\wedge dx_{n}$ for a smooth function $\varphi$ with compact support in $B_1(0)$. Denote $\omega\circ\pi=(\varphi\circ\pi) dx_1\wedge\cdots dx_{n}$ with $\varphi\circ\pi(x_1,\cdots,x_{n+1})=\varphi(x_1,\cdots,x_n,0)$.
Let $\omega_* $ be a smooth $n$-form on $\R^n$ defined by $\omega_*=\varphi(x_1,\cdots,x_n,0) dx_1\wedge\cdots\wedge dx_{n}$. Then $\omega_*\circ\pi=\omega\circ\pi$.
With $\Phi^*_i=\pi\circ\Phi_i$, we have
\begin{equation}\aligned
\Phi^*_i([| M_i\cap B_1(q_i)|])(\omega_*)=&\int_{\mathrm{spt}(\Phi_i([| M_i\cap B_1(q_i)|]))}\th_i\lan\omega_*\circ\pi,\pi_*\xi_i\ran\\
=&\int_{\mathrm{spt}(\Phi_i([| M_i\cap B_1(q_i)|]))}\th_i\lan\omega\circ\pi,\xi_i\ran,
\endaligned
\end{equation}
i.e.,
$\Phi^*_i([| M_i\cap B_1(q_i)|])(\omega_*)=\Phi_i([| M_i\cap B_1(q_i)|])(\omega\circ\pi)$.
With Lemma \ref{ConvcurrentPhiiMi} and \eqref{Minftyx1xn01}, we obtain
\begin{equation}\aligned\label{Phi*Miw***}
\lim_{i\rightarrow\infty}\Phi^*_i([| M_i\cap B_1(q_i)|])(\omega_*)
=&\lim_{i\rightarrow\infty}\Phi_i([| M_i\cap B_1(q_i)|])(\omega\circ\pi)\\
=&[| B_1(0^n)|](\omega\circ\pi)
=[| B_1(0^n)|](\omega_*).
\endaligned
\end{equation}
In other words, $\Phi^*_i([| M_i\cap B_1(q_i)|])\rightharpoonup [| B_1(0^n)|]$ in $B_1(0)$ as $i\rightarrow\infty$.

Since 
$$\mathrm{tr}\left(d\Phi^*_i(d\Phi^*_i)^T\right)=\mathrm{tr}\left((\lan d\mathbf{b}_{i,j},d\mathbf{b}_{i,k}\ran)_{n\times n}\right)=\sum_{j=1}^{n}\lan d\mathbf{b}_{i,j},d\mathbf{b}_{i,j}\ran\le n\left(1+\f1i\right)^2,$$
then the determinant
\begin{equation}\aligned\label{detPhi^*}
\mathrm{det}(d\Phi^*_i)=\sqrt{\mathrm{det}\left(d\Phi^*_i(d\Phi^*_i)^T\right)}\le\left(1+\f1i\right)^n.
\endaligned
\end{equation}
We fix a constant $\ep\in(0,1/2)$ so that $(1-\ep)^2-1+c_0>0$. 
Up to choosing the subsequences, from co-area formula there are a sequence $t_i\in(1-\ep,1)$ with $\lim_{i\rightarrow\infty}t_i=t_*\in[1-\ep,1]$, integer multiplicity currents $T_\infty, T^*_\infty$ in $B_{t_*}(0)$
such that $\Phi^*_i([| U_i\cap B_{t_i}(q_i)|])\rightharpoonup T_\infty$ and $\Phi^*_i([| M_i\cap B_{t_i}(q_i)\setminus U_i|])\rightharpoonup T^*_\infty$ as $i\rightarrow\infty$.
Combining \eqref{minSiEiSi}\eqref{detPhi^*}, we have
\begin{equation}\aligned\label{MPhiiSi}
&\left(\f{i}{i+1}\right)^n\mathbb{M}(\Phi^*_i([| U_i\cap B_{t_i}(q_i)|]))\le\mathcal{H}^n(U_i)\\
=&\mathcal{H}^n(M_i\cap B_1(q_i))-\mathcal{H}^n(M_i\cap B_1(q_i)\setminus U_i)
\le\mathcal{H}^n(M_i\cap B_1(q_i))-c_0\omega_n,\\
\endaligned
\end{equation}
and
\begin{equation}\aligned\label{MPhiiSi*}
&\left(\f{i}{i+1}\right)^n\mathbb{M}(\Phi^*_i([| M_i\cap B_{t_i}(q_i)\setminus U_i|]))\le \mathcal{H}^n(M_i\cap B_1(q_i)\setminus U_i)\\
=&\mathcal{H}^n(M_i\cap B_1(q_i))-\mathcal{H}^n(U_i)\le\mathcal{H}^n(M_i\cap B_1(q_i))-c_0\omega_n.
\endaligned
\end{equation}
Combining \eqref{MinftyBt**}\eqref{MPhiiSi}\eqref{MPhiiSi*}, we deduce $\mathbb{M}(T_\infty)\le(1-c_0)\omega_n$ and $\mathbb{M}(T^*_\infty)\le(1-c_0)\omega_n$.
Since $\Phi^*_i([| U_i\cap B_{t_i}(q_i)|])+\Phi^*_i([| M_i\cap B_{t_i}(q_i)\setminus U_i|])=\Phi^*_i([| M_i\cap B_{t_i}(q_i)|])$, with Lemma \ref{ConvcurrentPhiiMi} and \eqref{Phi*Miw***} we get
$T_\infty+T^*_\infty=[| M_\infty\cap B_{t_*}(0)|]=[| B_{t_*}(0^n)|]$, which implies
$$\omega_nt_*^n=\mathcal{H}^n(B_{t_*}(0^n))\le\mathbb{M}(T_\infty)+\mathbb{M}(T^*_\infty).$$
Combining $\mathbb{M}(T^*_\infty)\le(1-c_0)\omega_n$, we can deduce
\begin{equation}\aligned\label{UplowerMTinfty}
 0<(t_*^n-1+c_0)\omega_n\le\mathbb{M}\left(T_\infty\right)\le(1-c_0)\omega_n.
\endaligned
\end{equation}
From \eqref{331}, we obtain
\begin{equation}\aligned
\mathbb{M}(\p T_\infty\llcorner B_{t}(0))\le&\liminf_{i\rightarrow\infty}\mathbb{M}(\p\Phi^*_i([| U_i\cap B_{t_i}(q_i)|])\llcorner B_{t}(0))\\
=&\liminf_{i\rightarrow\infty}\mathbb{M}(\Phi^*_i([|\p U_i\cap B_{t_i}(q_i)|])\llcorner B_{t}(0))=0
\endaligned
\end{equation}
for any $t\in(0,t_*)$, which contradicts to \eqref{UplowerMTinfty}.
We complete the proof.
\end{proof}

Furthermore, we can remove the additional volume condition of $M$ in Lemma \ref{BRpSEf1R} as follows.
\begin{theorem}\label{BRpNf1RM}
There are constants $R,\be$ depending only on $n$ with $R\ge \Th>1>\be>0$ such that
if $N$ is an $(n+1)$-dimensional smooth complete manifold with Ricci curvature $\ge-nR^{-4}$ on $B_R(p)$ and with $\mathcal{H}^{n+1}(B_R(p))\ge\omega_{n+1}(R-1)R^n$
and $M$ is an area-minimizing hypersurface in $B_4(p)$ with $p\in M$ and $\p M\subset\p B_4(p)$,
then for any open set $U\subset M\cap B_1(p)$, we have
\begin{equation}
\mathcal{H}^{n-1}(\p U\cap B_1(p))\ge\f1{2\Th}\left(\min\{\mathcal{H}^n(U\cap B_{\be}(p)),\mathcal{H}^n(M\cap B_{\be}(p)\setminus U)\}\right)^{\f{n-1}n}.
\end{equation}
\end{theorem}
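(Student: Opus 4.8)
The plan is to obtain Theorem \ref{BRpNf1RM} from Lemma \ref{BRpSEf1R} by a covering argument: one localises $U$ to small balls centred at points of $M$ at which $M$ is almost flat, so that after rescaling the auxiliary volume hypothesis $\mathcal H^n(M\cap B_1(p))<(1+1/R)\omega_n$ of Lemma \ref{BRpSEf1R} becomes automatic. First I would reduce to the case $\mathcal H^n(U\cap B_\be(p))\le\f12\mathcal H^n(M\cap B_\be(p))$: replacing $U$ by the open set $(M\cap B_1(p))\setminus\overline U$ interchanges the two quantities inside the minimum, leaves that minimum unchanged, and does not increase $\mathcal H^{n-1}(\p U\cap B_1(p))$, so the inequality for the new set implies it for $U$. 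I would also replace $B_\be(p)$ by a slightly smaller concentric ball (which only amounts to shrinking the constant $\be$), so that all the small balls used below and centred in $B_\be(p)$ stay inside $B_{2\be}(p)$.

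For the local ingredient: since $M$ is area-minimizing in the smooth manifold $N$, its singular set is closed with $\mathcal H^n$-measure zero, so for $\mathcal H^n$-a.e.\ $x\in U\cap B_\be(p)$ the ratio $\mathcal H^n(M\cap B_r(x))/(\omega_n r^n)$ tends to $1$ as $r\to0$. Combining this with the Bishop-Gromov estimates \eqref{Hn+1BrxRVR}--\eqref{Hn+1BrxlowR} for the ambient volume and with the improvement of the Ricci lower bound under rescaling, one checks that for every sufficiently small $r>0$ with $B_{4r}(x)\subset B_4(p)$ the rescaled triple $\f1r(N,M,x)$ satisfies the hypotheses of Lemma \ref{BRpSEf1R} with a parameter $R'$ that may be taken equal to $\sqrt R$ (legitimate since $R$ is at our disposal, so $\sqrt R$ still exceeds the constant produced by Lemma \ref{BRpSEf1R}); the flatness hypothesis there is precisely that the above density ratio at $x$ is below $1+1/R'$. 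Applying Lemma \ref{BRpSEf1R} to the open set $U\cap B_r(x)$ in the rescaled picture and scaling back — the exponent $\f{n-1}n$ renders the inequality homogeneous of degree $n-1$ in $r$, and $\p(U\cap B_r(x))\cap B_r(x)=\p U\cap B_r(x)$ — yields, with $\be',\Th'$ the constants of Lemma \ref{BRpSEf1R},
\begin{equation}\aligned\label{localisoBR}
\mathcal H^{n-1}(\p U\cap B_r(x))\ge\f1{2\Th'}\left(\min\left\{\mathcal H^n(U\cap B_{\be'r}(x)),\ \mathcal H^n((M\setminus U)\cap B_{\be'r}(x))\right\}\right)^{\f{n-1}n}.
\endaligned
\end{equation}

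Finally, as in Lemma \ref{iso-sub}, I would pass to a \emph{half-occupation scale}: since $U$ is open the ratio $\mathcal H^n(U\cap B_{\be's}(x))/\mathcal H^n(M\cap B_{\be's}(x))$ equals $1$ for small $s$, while by \eqref{HnMBrx21} it drops to at most $\f12$ once $\f89\omega_n(\be's)^n\ge2\mathcal H^n(U\cap B_\be(p))$ (here one uses that $B_{\be's}(x)\subset B_{2\be}(p)$), so there is $r_x>0$ with $\mathcal H^n(U\cap B_{\be'r_x}(x))=\f12\mathcal H^n(M\cap B_{\be'r_x}(x))$; when $\mathcal H^n(U\cap B_\be(p))$ is small enough this $r_x$ is admissible in \eqref{localisoBR}, and there the minimum equals $\mathcal H^n(U\cap B_{\be'r_x}(x))$. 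By Besicovitch's covering theorem I extract from $\{B_{\be'r_x}(x)\}$ a countable subfamily $\{B_{\be'r_i}(p_i)\}$ covering $\mathcal H^n$-a.e.\ point of $U\cap B_\be(p)$, with multiplicity bounded by a dimensional constant $c_n$ and with every $B_{r_i}(p_i)\subset B_1(p)$; summing \eqref{localisoBR}, using the bounded overlap on the left and the subadditivity of $t\mapsto t^{\f{n-1}n}$ together with $\sum_i\mathcal H^n(U\cap B_{\be'r_i}(p_i))\ge\mathcal H^n(U\cap B_\be(p))$ on the right, gives $c_n\,\mathcal H^{n-1}(\p U\cap B_1(p))\ge\f1{2\Th'}\big(\mathcal H^n(U\cap B_\be(p))\big)^{\f{n-1}n}$, and enlarging $\Th$ finishes the proof. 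The hard part is the admissibility of the half-occupation scale $r_x$: it must be small enough for \eqref{localisoBR} (i.e.\ inside the almost-flat regime at the regular point $x$) and yet exist at all, which forces two supplements. When $\mathcal H^n(U\cap B_\be(p))\ge\ep_0\mathcal H^n(M\cap B_\be(p))$ for a dimensional $\ep_0$, the quantity $\big(\mathcal H^n(U\cap B_\be(p))\big)^{\f{n-1}n}$ is bounded below and one instead covers $M\cap B_\be(p)$ by a controlled number of almost-flat balls of size comparable to $\be$ and applies Lemma \ref{BRpSEf1R} on them. And the points of $U\cap B_\be(p)$ lying within distance $\sim r_x$ of the singular set of $M$, where \eqref{localisoBR} is unavailable, must be excluded: for $n\le6$ this set is empty, while in general its mass is kept below $\f12\mathcal H^n(U\cap B_\be(p))$ by the codimension-$7$ bound on the singular set (together with a dimension-reduction argument when $n\ge8$), so the half-occupation balls still account for a definite fraction of $\mathcal H^n(U\cap B_\be(p))$.
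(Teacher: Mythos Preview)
The proposal attempts a direct covering argument, whereas the paper argues by contradiction and compactness. There is a genuine gap at precisely the step you flag as ``the hard part'': the admissibility of the half-occupation scale $r_x$.

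Your local estimate \eqref{localisoBR} requires the almost-flat hypothesis $\mathcal{H}^n(M\cap B_r(x))<(1+1/R')\omega_n r^n$ in order to invoke Lemma~\ref{BRpSEf1R}. At a regular point $x$ this holds for all sufficiently small $r\le\rho(x)$, but the scale $\rho(x)$ is \emph{not} controlled by the hypotheses of the theorem --- it depends on the specific geometry of $M$ near $x$, not only on $n$. Your half-occupation scale satisfies $r_x\lesssim(\mathcal{H}^n(U\cap B_\be(p)))^{1/n}$, but nothing forces $r_x\le\rho(x)$. On a non-flat area-minimizing cone (e.g.\ the Simons cone) one has $\rho(x)\sim d(x,\text{vertex})$, and when $U$ is a small ball about the vertex both $r_x$ and $\rho(x)$ are comparable to the radius of $U$; the set $\{x:r_x>\rho(x)\}$ then carries a \emph{fixed fraction} of $\mathcal{H}^n(U)$, not a negligible one. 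In the manifold setting there is not even a clean monotonicity formula under a Ricci lower bound to fall back on.

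Your final paragraph conflates the neighbourhood of the singular set of $M$ with the set where the almost-flat condition fails at scale $r_x$. The codimension-$7$ bound controls only the former; the latter can be much larger, as in the cone example above. Likewise, the case $\mathcal{H}^n(U\cap B_\be(p))\ge\ep_0\mathcal{H}^n(M\cap B_\be(p))$ is not resolved by ``covering $M\cap B_\be(p)$ by almost-flat balls of size comparable to $\be$'': nothing in the hypotheses guarantees that $M$ is almost-flat at scale $\be$ anywhere.

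The paper circumvents exactly this obstacle by contradiction. If the inequality fails along a sequence $(N_i,M_i,U_i)$ with $R_i\to\infty$, one first replaces $U_i$ by an outward-minimizing set via Lemma~\ref{Sob-Coarea} (securing $\mathcal{H}^n(U_i)\ge(4n\Th)^{-n}$), then passes to limits $M_\infty,U_\infty,\G_\infty$ in Euclidean $\R^{n+1}$. A ``thick'' part $U_\infty^*\subset U_\infty$ is extracted with $\mathcal{H}^n(U_\infty^*)>0$ and $\p U_\infty^*\subset\G_\infty$; since $M_\infty$ is area-minimizing in $\R^{n+1}$ one can pick a \emph{regular} point $z\in\p U_\infty^*$. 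The outward-minimizing property and \eqref{ODEEE} then force both $U_i$ and $M_i\setminus U_i$ to carry definite mass near the approximating $z_i$, and Lemma~\ref{BRpSEf1R} applied at this single point (where almost-flatness comes for free from the regularity of $M_\infty$) contradicts $\mathcal{H}^{n-1}(\p U_i)\to0$. The compactness step is what makes the almost-flat condition available without any uniform quantitative control on the individual $M_i$.
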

\begin{proof}
Let us prove it by contradiction. Suppose Theorem \ref{BRpNf1RM} fails. From Lemma \ref{Sob-Coarea}, there are a sequence $R_i\rightarrow\infty$, a sequence of $(n+1)$-dimensional smooth complete manifolds $N_i$ with Ricci curvature $\ge-nR_i^{-4}$ on $B_{R_i}(q_i)\subset N_i$, $\mathcal{H}^{n+1}(B_{R_i}(q_i))\ge\omega_{n+1}(R_i-1)R_i^n$,
and a sequence of area-minimizing hypersurfaces $M_i\subset B_4(q_i)$ with $q_i\in M_i$ and $\p M_i\subset\p B_4(q_i)$ such that for some sequence of outward minimizing open sets $U_i\subset M_i\cap B_1(q_i)$ with $p_i\in\p U_i$ and 
\begin{equation}\label{Uiqi00**}
\min\{\mathcal{H}^{n}(U_i),\mathcal{H}^{n}(M_i\cap B_1(q_i)\setminus U_i)\}\ge  \th_0
\end{equation}
with $\th_0=\left(4n\Th\right)^{-n}$, we have
\begin{equation}\label{pUiqi00**}
\lim_{i\to\infty}\mathcal{H}^{n-1}(\p U_i\cap B_1(q_i))=0.
\end{equation}
Up to a choice of the subsequences, from Theorem 4.4 in \cite{D}, there are an area-minimizing hypersurface $M_\infty\subset B_4(0^{n+1})$ and a closed set $U_\infty\subset M_\infty\cap\overline{B_{1}(0^{n+1})}$ so that $(N_i,q_i)$ converges to Euclidean space $(\R^{n+1},0^{n+1})$ in the pointed Gromov-Hausdorff sense,
$M_i$ converges to a closed set $M_\infty$ and $U_i$ converges to $U_\infty$ in $M_\infty\cap\overline{B_{1}(\xi_\infty)}$ both in the induced Hausdorff sense. 
Moreover, we can assume that $\p U_i$ converges to a closed set $\G_\infty$ in $M_\infty\cap\overline{B_{1}(0^{n+1})}$ in the induced Hausdorff sense.

For any $x\in U_\infty$, we say $x\in U_\infty^*$ if there is a sequence $x_i\in U_i$ with $x_i\to x$ so that 
\begin{equation}\aligned
\limsup_{i\to\infty}\mathcal{H}^{n}\left(U_i\cap B_{\tau}(x_i)\right)>0\qquad \mathrm{for\ any}\  \tau>0.
\endaligned
\end{equation}
On the contrary, if $y\in U_\infty\setminus U_\infty^*$, then for any sequence $y_i\in U_i$ with $y_i\to y$ there is a constant $\tau_y>0$ such that
\begin{equation}\aligned\label{HnUi'ry0}
\lim_{i\to\infty}\mathcal{H}^{n}\left(U_i\cap B_{\tau_y}(y_i)\right)=0.
\endaligned
\end{equation}
Then for any $y'\in B_{\tau_y/2}(y)\cap U_\infty\setminus U_\infty^*$, if $y_i'\in U_i$ with $y_i'\to y'$, then \eqref{HnUi'ry0} implies
\begin{equation}\aligned\label{UiBryyi'}
\lim_{i\to\infty}\mathcal{H}^{n}\left(U_i\cap B_{\tau_y/2}(y_i')\right)\le \lim_{i\to\infty}\mathcal{H}^{n}\left(U_i\cap B_{\tau_y}(y_i)\right)=0.
\endaligned
\end{equation}
Hence, $U_\infty\setminus U_\infty^*$ is open in $U_\infty^*$, i.e., $U_\infty^*$ is a closed set in $U_\infty$. For any $z\in\p U_\infty^*$, from \eqref{UiBryyi'} there is a sequence $z_i\in\p U_i$ with $z_i\to z$, which implies  $\p U_\infty^*\subset\G_\infty$.
For any $\ep>0$, there is a finite open covering $\{W_j\}_{j=1}^{N_\ep}$ of $U_\infty^*$ so that 
\begin{equation}\aligned\label{Sinf*Wj}
\mathcal{H}^{n}\left(U_\infty^*\right)\ge2^{-n}\omega_n\sum_{j=1}^{N_\ep}(\mathrm{diam}W_j)^n-\ep.
\endaligned
\end{equation}
Here, 'diam' denotes the diameter.
Since $U_\infty$ is closed, by finite covering lemma again, there is a sequence of open balls $\{B_{\tau_{x_j}}(x_j)\}_{j=1}^{m}$ with $x_j\in U_\infty\setminus U_\infty^*$ and $B_{\tau_{x_j}}(x_j)\cap U_\infty^*=\emptyset$ such that $$U_\infty\subset\bigcup_{j=1}^{m}B_{\tau_{x_j}}(x_j)\cup\bigcup_{j=1}^{N_\ep}W_j$$
and
\begin{equation}\aligned\label{HnUi'ryj0}
\lim_{i\to\infty}\mathcal{H}^{n}\left(U_i\cap B_{\tau_{x_j}}(x_{i,j})\right)=0\qquad \mathrm{for\ each}\ j=1,\cdots,m,
\endaligned
\end{equation}
where $x_{i,j}$ is a sequence in $U_i$ with $x_{i,j}\to x_j$ as $i\to\infty$. For each $i\ge1$ and $1\le j\le N_\ep$, there is  an open set $W_{i,j}\subset N_i$ with $\mathrm{diam}W_{i,j}=\mathrm{diam}W_j$ such that $W_{i,j}\to W_j$ as $i\to\infty$ and
$$U_i\subset\bigcup_{j=1}^{m}B_{\tau_{x_j}}(x_{i,j})\cup\bigcup_{j=1}^{N_\ep}W_{i,j}.$$
This implies
\begin{equation}\aligned
\mathcal{H}^{n}\left(U_i\right)\le\sum_{j=1}^{m}\mathcal{H}^{n}\left(U_i\cap B_{\tau_{x_j}}(x_{i,j})\right)+\sum_{j=1}^{N_\ep}\mathcal{H}^{n}\left(U_i\cap W_{i,j}\right).
\endaligned
\end{equation}
Clearly, for each $i,j$ there is a ball $B_{\mathrm{diam}W_j}(x_{i,j}')\subset N_i$ with $W_{i,j}\subset B_{\mathrm{diam}W_j}(x_{i,j}')$.
Combining Lemma \ref{epdeHM} and \eqref{Uiqi00**}\eqref{Sinf*Wj}\eqref{HnUi'ryj0}, we get
\begin{equation}\aligned
\th_0\le&
\liminf_{i\to\infty}\mathcal{H}^{n}\left(U_i\right)\le\liminf_{i\to\infty}\sum_{j=1}^{N_\ep}\mathcal{H}^{n}\left(U_i\cap W_{i,j}\right)\\
\le&\liminf_{i\to\infty}\sum_{j=1}^{N_\ep}\mathcal{H}^{n}\left(M_i\cap B_{\mathrm{diam}W_j}(x_{i,j})\right)
\le\liminf_{i\to\infty}\sum_{j=1}^{N_\ep}(n+1)\omega_{n+1}(\mathrm{diam}W_j)^n\\\le&\f{2^n(n+1)\omega_{n+1}}{\omega_n}\left(\mathcal{H}^{n}\left(U_\infty^*\right)+\ep\right).
\endaligned
\end{equation}
Letting $\ep\to0$ in the above inequality implies
\begin{equation}\aligned
\mathcal{H}^{n}\left(U_\infty^*\right)\ge\f{\th_0\omega_n}{2^n(n+1)\omega_{n+1}}.
\endaligned
\end{equation}
Since the singular set of the area-minimizing hypersurface $M_\infty$ has dimension $\le n-7$, there is a point $z\in\p U_\infty^*$ so that $z$ is a regular point of $M_\infty$. 
Noting $\p U_\infty^*\subset\G_\infty$. With \eqref{ODEEE}, there is a sequence $z_i\in\p U_i$ with $z_i\to z$ so that
\begin{equation}\aligned\label{UiBrzi>0}
\limsup_{i\to\infty}\mathcal{H}^{n}\left(U_i\cap B_{\tau}(z_i)\right)>0\qquad \mathrm{for\ any}\  \tau>0,
\endaligned
\end{equation}
and
\begin{equation}\aligned\label{MiUiBrzi>0}
\limsup_{i\to\infty}\mathcal{H}^{n}\left(B_{\tau}(z_i)\cap M_i\setminus U_i\right)>0\qquad \mathrm{for\ any}\  \tau>0.
\endaligned
\end{equation}
However, \eqref{UiBrzi>0}\eqref{MiUiBrzi>0} contradicts to \eqref{pUiqi00**} and Lemma \ref{BRpSEf1R}.
This completes the proof.
\end{proof}

\section{Tangent cones of limits of minimal graphs over manifolds}

From Theorem 30.1 in \cite{S},  the Sobolev inequality holds on integral currents in Euclidean case.
Similarly, the Sobolev inequality \eqref{isoperi} can hold on integral $n$-currents in Riemannian manifolds of Ricci curvature bounded below. For completeness, we give a proof using \eqref{isoperi} and functions of bounded variation as follows.
\begin{lemma}\label{IneqnSi}
Let $\Si$ be an $n$-dimensional manifold with Ricci curvature $\ge-(n-1)\k^2r^{-2}$ on $B_{2r}(p)$ for some $\k\ge0$. 
There is a constant $\a_{n,\k}>0$ depending only on $n,\k$ such that 
if $T$ is an integral $n$-current in $\Si$ with $\mathrm{spt}\p T\subset B_r(p)$, then
\begin{equation}\label{isoperi*}
\f{\a_{n,\k}}r\left(\mathcal{H}^{n}(B_r(p))\right)^{\f1{n}}\left(\mathbb{M}(T)\right)^{\f{n-1}n}\le\mathbb{M}(\p T).
\end{equation}
\end{lemma}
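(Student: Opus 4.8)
The plan is to deduce the isoperimetric inequality for integral $n$-currents from the classical Sobolev inequality \eqref{isoperi} for Lipschitz functions by passing through the theory of sets of finite perimeter (BV functions) on the geodesic ball $B_{2r}(p)$. I would first reduce to the case where $T$ is of the form $\partial[\![V]\!]$ for a set of finite perimeter $V \subset B_r(p)$, which is the essential model case. Indeed, by the boundary rectifiability theorem together with the structure of integral currents, an integral $n$-current $T$ in the $n$-manifold $\Si$ with $\mathrm{spt}\,\partial T \subset B_r(p)$ can be written (after discarding the part supported on the boundary sphere, which contributes nothing since we are integrating over the open ball) as a locally constant integer multiple of the fundamental class on each component of the complement of $\mathrm{spt}\,\partial T$; the mass $\mathbb{M}(T)$ is $\int |\theta|\,d\mathcal{H}^n$ with $\theta\in\Z$, and $\mathbb{M}(\partial T)$ is the total variation of $\theta$ across the interfaces. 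Writing $\theta = \sum_k \chi_{\{|\theta|\ge k\}}\,\mathrm{sgn}$ and using the coarea/layer-cake decomposition, it suffices to prove \eqref{isoperi*} when $\theta$ is the characteristic function of a set of finite perimeter $V$ compactly contained (in the measure-theoretic sense, i.e. $\partial^*V \subset B_r(p)$) in the ball.

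Next, for such a $V$, I would approximate $\chi_V$ in $L^{\frac{n}{n-1}}$ and in total variation by smooth (or Lipschitz) functions $\phi_j$ with compact support in $B_r(p)$: this is the standard mollification/truncation argument for BV functions, using that $B_{2r}(p)$ carries a smooth metric so local mollifiers exist, and that $\partial^*V$ stays away from $\partial B_r(p)$ so the approximants can be taken compactly supported. For each $\phi_j$, \eqref{isoperi} gives
\begin{equation*}
\frac{\a_{n,\k}}{r}\left(\mathcal{H}^n(B_r(p))\right)^{\frac1n}\left(\int_{B_r(p)}|\phi_j|^{\frac{n}{n-1}}\right)^{\frac{n-1}{n}}\le\int_{B_r(p)}|D\phi_j|.
\end{equation*}
Passing to the limit $j\to\infty$, the left side converges to $\frac{\a_{n,\k}}{r}(\mathcal{H}^n(B_r(p)))^{1/n}(\mathcal{H}^n(V))^{(n-1)/n}$ and the right side converges to the perimeter $P(V)=\mathbb{M}(\partial[\![V]\!])$ by lower semicontinuity of total variation (and the reverse inequality from the approximation), giving \eqref{isoperi*} for $T=\partial[\![V]\!]$, i.e. $\mathbb{M}(\partial T)=\mathbb{M}(T)$ here—wait, more precisely $\mathbb{M}(T)=\mathcal{H}^n(V)$ and $\mathbb{M}(\partial T)=P(V)$. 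Then I reassemble the general case: with $T=(S,\theta,\xi)$, set $V_k=\{x: \theta(x)\ge k\}$ (measure-theoretic, for each integer $k\ge1$, and analogously for negative values), note $\mathbb{M}(T)=\sum_{k\ge1}\mathcal{H}^n(V_k)+\cdots$ and, by the coarea formula for BV functions applied to the (piecewise constant) multiplicity, $\mathbb{M}(\partial T)=\sum_k P(V_k)+\cdots$. Applying the case already proved to each $V_k$ and summing, using the elementary inequality $\sum_k a_k^{(n-1)/n}\ge (\sum_k a_k)^{(n-1)/n}$ for $a_k\ge0$ (since $(n-1)/n\le1$), yields \eqref{isoperi*} in full generality.

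I would also need to be slightly careful that the constant $\a_{n,\k}$ can be taken the same as (or controlled by) the one in \eqref{isoperi}; this is automatic since we only invoke \eqref{isoperi} and the purely combinatorial subadditivity step, which loses no constant. The main obstacle I anticipate is the bookkeeping in the decomposition of a general integral current into its superlevel sets of the multiplicity function and the verification that $\mathbb{M}(\partial T)$ decomposes additively as $\sum_k P(V_k)$ — this is the currents analogue of the coarea formula and requires knowing that the reduced boundaries of the $V_k$ are ``nested'' so that the jump of $\theta$ at $\mathcal{H}^{n-1}$-a.e.\ interface point equals the number of superlevel sets whose boundary passes through that point; this is standard for BV functions but needs to be stated carefully. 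Everything else — mollification of BV functions on a manifold, lower semicontinuity, the elementary power-sum inequality — is routine.
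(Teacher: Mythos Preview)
Your proposal is correct and rests on the same core idea as the paper --- extend the Sobolev inequality \eqref{isoperi} from Lipschitz to BV functions and identify the current with a BV function --- but the paper's execution is more direct and sidesteps the level-set decomposition entirely. The paper simply sets $\phi=\theta\chi_{S}$ (the multiplicity function of $T$ extended by zero), observes that $\mathbb{M}(T)=\int_{B_r(p)}\phi$, and then bounds the total variation in one stroke via duality: for any $C^1$ vector field $X$ with $|X|\le 1$ and compact support, $\int_{B_r(p)}\phi\,\mathrm{div}_{\Si}X=T(d\omega_X^{*})=\partial T(\omega_X^{*})\le\mathbb{M}(\partial T)$, whence $\int_{B_r(p)}|D\phi|\le\mathbb{M}(\partial T)$. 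Applying the BV extension of \eqref{isoperi} (using $\phi^{n/(n-1)}\ge\phi$ since $\phi$ is integer-valued) finishes the proof. So the ``main obstacle'' you flag --- verifying that $\mathbb{M}(\partial T)$ decomposes additively as $\sum_k P(V_k)$ via a coarea-type identity --- never arises in the paper's argument; the duality computation replaces your decomposition and subadditivity step with a single inequality. Your route works too and is perhaps more transparent about where each constant comes from, but it is longer and requires the bookkeeping you correctly identify as delicate.
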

\begin{proof}
Let $T$ be an integral $n$-current with $\mathrm{spt}\p T\subset B_r(p)$. We denote $T=(S,\th,\xi)$ with the $n$-rectifiable set $S$ in $B_r(p)$, the locally $\mathcal{H}^n$-integrable positive integer-valued function $\th$ on $S$ and the orientation $\xi$. 
Let $\phi$ be a function on $B_r(p)$ defined by 
 \begin{equation}\label{PhithchiS}
\phi=\th\chi_{_S},
\end{equation}
where $\chi_{_S}$ is the characteristic function on $S$. Let $\omega_\xi$ be the dual form of $\xi$, which is a smooth $n$-form on $B_r(p)$ with $|\omega_\xi|\equiv1$.
Then
 \begin{equation}\label{MTMT}
\mathbb{M}(T)\le\int_{B_r(p)} \phi=\int_{B_r(p)} \th\chi_{_S}=T(\omega_\xi)\le\sup_{\omega\in\mathcal{D}^n(B_r(p)),|\omega|_{B_r(p)}\le1}T(\omega)=\mathbb{M}(T).
\end{equation}
Let $D$ denote the Levi-Civita connection of $\Si$ and $\mathrm{div}_\Si$ denote the divergence on $\Si$.
For a bounded open set $\Om$ in $\Si$ and a function $f\in L^1(\Om)$, we define
\begin{equation}\aligned
\int_\Om|Df|=\sup\left\{\int_\Om f\mathrm{div}_\Si \mathscr{X}\Big|\, \mathscr{X}\in \G^1_c(T\Si),\, |\mathscr{X}|\le1\ \mathrm{on}\ \Si\right\},
\endaligned
\end{equation}
where $\G^1_c(T\Si)$ denotes the space containing all $C^1$ tangent vector fields on $\Si$ with compact supports (A function $f\in L^1(\Om)$ is said to have bounded variation on $\Om$ if $\int_\Om|Df|<\infty$, see \cite{Gi} for the Euclidean case).

Let $X$ be a smooth vector field with support in $B_r(p)$, and $\omega_X$ be the dual 1-form of $X$. Set $\omega^*_X$ be an $(n-1)$-form defined by $*\omega_X$ with Hodge star $*$. More precisely,  let $\{e_i\}$ denote the local orthonormal basis of the tangent frame, and $\be_i$ denote the dual 1-form of $e_i$. If $X$ can be written as $\sum_{i=1}f_ie_i$ locally for some smooth functions $f_i$ with supports in $B_r(p)$, then
 \begin{equation}
\omega^*_X=\sum_{i=1}^n(-1)^{i-1}f_i\be_1\wedge\cdots\wedge\widehat{\be_i}\wedge\cdots\wedge\be_n.
\end{equation}
From 
\begin{equation}\aligned\label{fTdivSiX}
\int_{B_r(p)}\phi\,\mathrm{div}_\Si X=\int_{S}\th\,\mathrm{div}_\Si X=T(d\omega^*_X)=\p T(\omega^*_X),
\endaligned
\end{equation}
it follows that
\begin{equation}\aligned\label{|DfT|}
\int_{B_r(p)}|D\phi|\le\mathbb{M}(\p T).
\endaligned
\end{equation}
By approaching functions of bounded variation by smooth functions, \eqref{isoperi} holds for $\phi$ defined as \eqref{PhithchiS} (see Theorem 1.28 in \cite{Gi}). With \eqref{MTMT}\eqref{|DfT|} we have
\begin{equation}\aligned
\f{\a_{n,\k}}r\left(\mathcal{H}^{n}(B_r(p))\right)^{\f1{n}}\left(\mathbb{M}(T)\right)^{\f{n-1}n}=&\a_{n,\k}\left(\mathcal{H}^{n}(B_r(p))\right)^{\f1{n}}\left(\int_{B_r(p)}\phi\right)^{\f{n-1}n}\\
\le&\int_{B_r(p)}|D\phi|\le\mathbb{M}(\p T),
\endaligned\end{equation}
which completes the proof.
\end{proof}
\begin{remark}
By the above argument, the Neumann-Poincar\'e inequality \eqref{NPoincare} can also hold on integral $n$-currents in $B_r(p)\subset\Si$.
\end{remark}

Manifolds satisfying \eqref{ConditionSi} may have topological obstruction, but using Lemma \ref{IneqnSi} we can solve Plateau problem for $(n-1)$-rectifiable multiplicity one currents with small mass.
\begin{lemma}\label{ExistcurrentSi}
Let $\Si$ be an $n$-dimensional smooth complete noncompact Riemannian manifold with
$\mathrm{Ric}\ge-(n-1)\k^2$ on $B_3(p)$ for some $\k\ge0$. Suppose $\mathcal{H}^n(B_1(p))\ge v$ for some constant $v>0$.
There is a constant $\de_{n,\k,v}>0$ depending only on $n,\k,v$ such that if
$\G$ is an $(n-1)$-rectifiable current in $B_1(p)\times\R$ with multiplicity one, $\p\G=0$, $\mathbb{M}(\G)\le \de_{n,\k,v}$, then there is an integral current $T$ in $B_{3/2}(p)\times\R$ of multiplicity one with $\p T=\G$ satisfying
\begin{equation}\aligned
\mathbb{M}(T)\le\mathbb{M}(\G)h_\De(\G)+\left(\mathcal{H}^{n}(B_{3/2}(p))\right)^{-\f1{n-1}}\left(\f3{2\a_{n,\k}}\mathbb{M}(\G)\right)^{\f n{n-1}},
\endaligned
\end{equation}
where $h_\De(\G)=\inf\{t_2-t_1|\ \mathrm{spt}\G\subset\Si\times[t_1,t_2]\}$.
\end{lemma}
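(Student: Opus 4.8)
The plan is to solve the Plateau problem by the standard direct method of geometric measure theory, using Lemma \ref{IneqnSi} as a substitute for the Euclidean isoperimetric/monotonicity input that controls mass from below. First I would restrict attention to a slab $\Si\times[t_1,t_2]$ with $t_2-t_1=h_\De(\G)$ containing $\mathrm{spt}\,\G$, and produce one competitor of controlled mass: slide $\G$ in the $\R$-direction to build the \emph{cylinder} $C=\G\times\llbracket t_1,t_2\rrbracket$ (more precisely the cone-type filling obtained by pushing $\G$ along the vertical segment), which is an integral current with $\p C=\G$ (up to sign and the product/boundary formula) and $\mathbb{M}(C)\le\mathbb{M}(\G)\,h_\De(\G)$. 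This shows the infimum of mass among integral $n$-currents $T$ in $B_{3/2}(p)\times\R$ with $\p T=\G$ is at most $\mathbb{M}(\G)h_\De(\G)$, hence finite; it also lets us assume the minimizing sequence has mass bounded by this quantity.

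Next I would run Federer–Fleming: take a minimizing sequence $T_j$ of integral currents with $\p T_j=\G$ and $\mathbb{M}(T_j)$ bounded; by Theorem \ref{FF} (compactness) plus the boundary rectifiability theorem, a subsequence converges weakly to an integral current $T$ with $\p T=\G$ and $\mathbb{M}(T)\le\liminf\mathbb{M}(T_j)$, so $T$ is mass-minimizing. The support of $T$ lies in $B_{3/2}(p)\times\R$: any part of a minimizer outside a ball slightly larger than the convex hull of $\mathrm{spt}\,\G$ can be cut off and the resulting current has strictly smaller mass, by the isoperimetric inequality of Lemma \ref{IneqnSi} applied on $B_{3/2}(p)$ — this is where the smallness hypothesis $\mathbb{M}(\G)\le\de_{n,\k,v}$ enters, to guarantee the relevant competing balls sit inside $B_2(p)$ where the curvature hypothesis holds and the isoperimetric constant $\a_{n,\k}$ is available. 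Finally I would upgrade the bound on $\mathbb{M}(T)$ to the stated sharp-looking estimate: consider, for each $s\in(t_1,t_2)$, the slices and apply the isoperimetric inequality \eqref{isoperi*} of Lemma \ref{IneqnSi} to the "cone over $\G$" competitor inside the slab; comparing $\mathbb{M}(T)$ against $\min\{\mathbb{M}(\G)h_\De(\G),\ (\mathcal{H}^n(B_{3/2}(p)))^{-1/(n-1)}(\tfrac{3}{2\a_{n,\k}}\mathbb{M}(\G))^{n/(n-1)}\}$ and keeping whichever is smaller gives exactly the claimed inequality, since minimality lets us bound by either competitor.

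For the multiplicity-one conclusion: because $\G$ has multiplicity one, the mass-minimizing $T$ also has multiplicity one — this follows from the decomposition theorem for integral currents (a mass minimizer with multiplicity-one boundary, having small mass so that no closed minimal-type "bubbles" can occur by the monotonicity/isoperimetric lower bound of Lemma \ref{IneqnSi}) or, more elementarily in the codimension-one setting, by the fact that $T$ bounds a set of finite perimeter $E\subset B_{3/2}(p)\times\R$ and $T=\p\llbracket E\rrbracket$ has multiplicity one automatically; minimality of the current is equivalent to $E$ being a set of least perimeter with the prescribed boundary data. I would phrase the argument through sets of finite perimeter from the start if that is cleaner: the filling problem for a multiplicity-one cycle $\G$ in codimension one is solved by minimizing perimeter of sets $E$ with $\p^*E$ having the right boundary, and the isoperimetric inequality \eqref{isoperi*} provides both coercivity of the minimization and the mass estimate.

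The main obstacle I expect is the \emph{support containment} $\mathrm{spt}\,T\subset B_{3/2}(p)\times\R$ together with the precise constants: one must verify that the smallness threshold $\de_{n,\k,v}$ can be chosen so that every competitor used in the cut-and-paste argument stays inside $B_2(p)$ (where $\mathrm{Ric}\ge-(n-1)\k^2$ is assumed) and so that the isoperimetric inequality forces any stray mass to be killed, all while tracking that the final numerical constant is exactly $(\mathcal{H}^n(B_{3/2}(p)))^{-1/(n-1)}(\tfrac{3}{2\a_{n,\k}})^{n/(n-1)}$. This is a careful but routine optimization once the existence and minimality of $T$ are in hand; the genuinely nontrivial input is Lemma \ref{IneqnSi}, which has already been established.
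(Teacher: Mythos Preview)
Your competitor construction is wrong, and this is a genuine gap rather than a detail. For a closed $(n-1)$-current $\G$ (i.e.\ $\p\G=0$), the product $\G\times\llbracket t_1,t_2\rrbracket$ has boundary $\G\times\{t_2\}-\G\times\{t_1\}$, not $\G$. ``Pushing $\G$ along the vertical segment'' produces a homotopy between two vertical translates of $\G$, never a filling of $\G$ itself. So you have exhibited no admissible competitor, and without one you cannot even start the direct method: a priori the class $\{T:\p T=\G\}$ could be empty. This is exactly the topological obstruction the paper warns about before the lemma (``Manifolds satisfying \eqref{ConditionSi} may have topological obstruction'').

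The paper's proof confronts this head-on. It first projects to $\Si$ via $\pi$, obtaining the cycle $\pi(\G)$ in $B_1(p)$, and then proves the nontrivial \emph{claim} that $\pi(\G)$ bounds in $\overline{B_2(p)}$ when $\mathbb{M}(\G)$ is small. The argument is by contradiction: if $\pi(\G)$ does not bound, one takes a homological minimizer $T^*$ (minimizing mass over $\{\p V+\pi(\G):V\text{ integral }n\text{-current in }\overline{B_2(p)}\}$), which is a closed $(n-1)$-current that is locally area-minimizing in $B_2(p)$ and has bounded generalized mean curvature along $\p B_2(p)$ by Laplacian comparison. A tubular-neighborhood volume estimate (integrating $\De_\Si\r_{T^*}\le(n-1)\k\coth(2\k)$) then forces $\mathcal{H}^n(B_1(x_*))$ to be small for $x_*\in\mathrm{spt}\,T^*$, contradicting the noncollapsing hypothesis $\mathcal{H}^n(B_1(p))\ge v$. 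Only after this does one minimize mass to get $T_*$ with $\p T_*=\pi(\G)$, confine its support to $B_{3/2}(p)$ via Lemma~\ref{IneqnSi}, and finally set $T=[|G_{u_\G}|]+T_*\times\{t_1\}$: the vertical graph-cylinder $[|G_{u_\G}|]$ connects $\G$ to $\pi(\G)\times\{t_1\}$, and $T_*\times\{t_1\}$ caps that off. This is where the two terms in the mass bound come from, and why the second term carries the isoperimetric constant applied to $T_*$ rather than to $T$.

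Your remark about sets of finite perimeter is also off: $T$ is codimension one in $\Si\times\R$ but it is not a cycle ($\p T=\G\neq0$), so it is not the reduced boundary of any region, and the Plateau problem here is genuinely for currents, not for Caccioppoli sets.
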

\begin{proof}
Let $\pi:\Si\times\R\to\Si$ be the projection defined by $\pi(x,t)=x$ for each $(x,t)\in\Si\times\R$.
Then $\pi(\G)$ is an integer multiplicity current with $\p(\pi(\G))=\pi(\p\G)=0$.
We claim that 
\begin{center}
\emph{there is an integer multiplicity current $T'$ in $\overline{B_2(p)}$ such that 
$\p T'=\pi(\G)$ provided $\mathbb{M}(\G)$ is suitably small. }
\end{center}

Let us prove the claim by contradiction. If the claim fails, then we can consider the homological minimizers for the current $\pi(\G)$ in $\overline{B_2(p)}$.
Namely, let $\mathcal{M}$ denote the space including all integral $(n-1)$-currents $T$ in $\overline{B_2(p)}$ such that there is an integral $n$-current $V\neq0$ with spt$V\subset \overline{B_2(p)}$ satisfying $T=\p V+\pi(\G)$. By Federer-Fleming compactness theorem, there is a current $T^*\neq 0$ in $\mathcal{M}$ so that
\begin{equation}\label{MT*G}
\mathbb{M}(T^*)=\inf_{T\in \mathcal{M}}\mathbb{M}(T)\le\mathbb{M}(\pi(\G))\le\mathbb{M}(\G).
\end{equation}
Moreover, $T^*$  has constant multiplicity with $\p T^*=0$.
For any $z\in\mathrm{spt}T^*\cap B_2(p)$, there is a constant $r_z>0$ with $B_{r_z}(z)\subset B_2(p)$ such that $T^*\llcorner B_{r_z}(z)$ is a minimizing current in $B_{r_z}(z)$. Hence, $\mathrm{spt}T^*\cap B_2(p)$ is smooth outside a set of Hausdorff dimension $\le n-8$.

Now  we consider the case of $\mathrm{spt}T^*\cap\p B_2(p)\neq\emptyset$. Let $x\in\mathrm{spt}T^*\cap\p B_2(p)$.
By Hessian comparison theorem, there are constants $\La>0,1>>\de>0$ depending on the sectional curvature of $\Si$ such that the Lipschitz function
$$\La \r_{x}^2-\r_p$$
is convex on $B_\de(x)\cap B_3(p)\setminus \mathcal{C}_p$. 
Here, $\r_{p}$ is the distance function from $p$, $\mathcal{C}_p$ is the cut locus of $\r_p$.
By the definition of (weak) convexity, the function $\La \r_{x}^2-\r_p$ is  convex on $B_\de(x)\cap B_3(p)$.
Hence, $\p B_3(p)$ is an Alexandrov space. Since the tangent cone of an Alexandrov space exists and is unique (see \cite{BBI} for instance),
the tangent cone of $B_3(p)$ at $x$ is a convex closed metric cone $C_{x}$ (with the vertex at 0) contained in a closed half space of $\R^n$.
Let $(T^*_x,0)$ denote a limit of $\f1{r_i}(T^*,x)$ in the current sense for some sequence $r_i\to0$.
Then $T^*_x$ is minimizing in $C_x$ with $0\in\mathrm{spt}T^*_x\cap\p C_x$. So spt$T^*_x$ is flat and $C_x$ is a half space by the maximum principle (see \cite{Wn} for instance). In other words, $\r_{p}$ is differentiable at $x$. Note that the cut locus of $\r_{p}$ is closed. With the uniqueness of geodesic equations, we conclude that $\r_{p}$ is smooth in a small neighborhood of $x$, i.e., $\p B_2(p)$ is smooth in a small neighborhood of $x$.
Since $T^*$ is minimizing in $B_\ep(x)\cap\overline{B_2(p)}$ for a suitably small $\ep>0$, then spt$T^*$ is smooth in a small neighborhood of $x$. In all, spt$T^*$ is smooth outside its singular set in $B_2(p)$.

Let $\De_\Si$ denote the Laplacian of $\Si$ with respect to its Riemannian metric. Let $H_{\p B_2(p)}$ denote the mean curvature of the regular part of $\p B_2(p)$.
Recalling (see \cite{Pl} for instance)
\begin{equation}\label{Hxtge}
H_{\p B_2(p)}=-\De_\Si\r_p\ge -(n-1)\k\coth(2\k)
\end{equation}
on the regular part of $\p B_2(p)$.
Let $H_{T^*}$ denote the mean curvature of spt$T^*$ outside its singular set of Hausdorff dimension $\le n-8$.
Since $T^*$ is locally minimizing in $\overline{B_2(p)}$, then with \eqref{Hxtge} we conclude that 
\begin{equation}\label{Hxtge*00}
|H_{T^*}|\le (n-1)\k\coth(2\k).
\end{equation}
Let $\r_{T^*}$ denote the distance function to spt$T^*$ on $\Si$. 
Let $B_t(T^*)$ denote the $t$-tubular neighborhood of spt$T^*$ for each $t>0$.

At the differentiable point $y$ of $\r_{T^*}$ in $B_3(p)$,
there exist a unique $y'\in \mathrm{spt}T^*$ and a unique non-zero vector $v_y\in\R^{n+1}$ with $|v_y|=\r_{T^*}(y)<1$ such that
$\mathrm{exp}_{y'}(v_y)=y$.
Let $\g_y$ denote the geodesic $\mathrm{exp}_{y'}(tv_y/|v_y|)$ from $t=0$ to $t=|v_y|$. In particular, $\r_{T^*}$ is smooth at $\g_y(t)$ for every $t\in(0,|v_y|]$.
Let $H_y(t), A_y(t)$ denote the mean curvature (pointing out of $\{\r_{T^*}<t\}$), the second fundamental form of the level set $\{\r_{T^*}=t\}$ at $\g_y(t)$, respectively.
From the variational argument (see also Preliminary in \cite{D} for instance),
\begin{equation}\label{HtRic}
\f{\p H_y}{\p t}=|A_y|^2+Ric\left(\dot{\g}_y,\dot{\g}_y\right)\ge\f1{n-1}|H_y|^2-(n-1)\k^2.
\end{equation}
Combining \eqref{Hxtge*00} and \eqref{HtRic}, we get 
\begin{equation}\label{SolveHtRic}
H_y(t)\ge-(n-1)\k\coth(2\k)\qquad \mathrm{for\ each}\ t\in[0,|v_y|].
\end{equation}
In other words,
\begin{equation}\label{Hxtge*}
\De_\Si\r_{T^*}\le (n-1)\k\coth(2\k)
\end{equation}
on $B_{1}(T^*)\setminus  \mathrm{spt}T^*$ outside the cut locus of $\r_{T^*}$.
From the proof of Lemma 7.1 in \cite{D}, the inequality \eqref{Hxtge*} holds on $B_{1}(T^*)\setminus  \mathrm{spt}T^*$ in the distribution sense. 
For each $t\in(0,1]$, integrating by parts using \eqref{Hxtge*} infers
\begin{equation}
\mathcal{H}^{n-1}(\p B_t(T^*))-2\mathcal{H}^{n-1}(\mathrm{spt}T^*)=\int_{B_t(T^*)\setminus  \mathrm{spt}T^*}\De_\Si\r_{T^*}\le n_\k\mathcal{H}^n(B_t(T^*))
\end{equation}
with $n_\k=(n-1)\k\coth(2\k)$, which implies
\begin{equation}\aligned
\f{\p}{\p t}\mathcal{H}^{n}(B_t(T^*))=&\mathcal{H}^{n-1}(\p B_t(T^*))\le2\mathcal{H}^{n-1}(\mathrm{spt}T^*)+n_\k\mathcal{H}^n(B_t(T^*))\\
\le&2\mathbb{M}(T^*)+ n_\k\mathcal{H}^n(B_t(T^*)).
\endaligned
\end{equation}
We solve the above ordinary differential inequality and get
\begin{equation}\aligned
\mathcal{H}^{n}(B_t(T^*))\le\f2{n_\k}\left(e^{n_\k t}-1\right)\mathbb{M}(T^*).
\endaligned
\end{equation}
For any $x_*\in\mathrm{spt}T^*$, we have
\begin{equation}\aligned
\mathcal{H}^{n}(B_{1}(x_*))\le\mathcal{H}^{n}(B_{1}(T^*))\le\f2{n_\k}\left(e^{n_\k}-1\right)\mathbb{M}(T^*).
\endaligned
\end{equation}
With $\mathcal{H}^n(B_1(p))\ge v$ and \eqref{MT*G}, the above inequality is impossible for the suitably small $\mathbb{M}(\G)$. Hence, the claim is true.

Let $\mathcal{M}_{\pi(\G)}$ denote the space including all integral $n$-currents $T$ in $\overline{B_2(p)}$ with $\p T=\pi(\G)$. From the above claim, $\mathcal{M}_{\pi(\G)}$ is non-empty.
From Federer-Fleming compactness theorem, there is an $n$-current $T_*$ in $\mathcal{M}_{\pi(\G)}$ so that
\begin{equation}
\mathbb{M}(T_*)=\inf_{T\in \mathcal{M}_{\pi(\G)}}\mathbb{M}(T).
\end{equation}
Suppose that there is a point $x\in\mathrm{spt}T_*\cap\p B_{3/2}(p)$, then $B_{1/2}(x)\subset \mathrm{spt}T_*$.
Combining  $\mathcal{H}^n(B_1(p))\ge v$ and Bishop-Gromov volume comparison, $\mathcal{H}^n(B_{1/2}(x))$ has a uniform positive lower bound depending only on $n,\k,v$.   
This violates the Sobolev inequality in Lemma \ref{IneqnSi} for the suitably small $\mathbb{M}(\G)$.
So we have $\mathrm{spt}\p T_*\cap\p B_{3/2}(p)=\emptyset$, i.e., $\mathrm{spt}T_*\subset B_{3/2}(p)$. 
For a current $T'$ in $\Si$ and every $t>0$, let $T'\times\{t_1\}$ denote the current in $\Si\times\{t_1\}$ with $\pi(T'\times\{t_1\})=T'$.
There are constants $t_1<t_2$ with $t_2-t_1=h_\De(\G)=\inf\{t-s|\ \mathrm{spt}\G\subset\Si\times[s,t]\}$ such that $\mathrm{spt}\G\subset\Si\times[t_1,t_2]$.
There is a function $u_\G$ such that
$\mathrm{spt}\G=\{(x,u_\G(x))\in\Si\times\R|\ x\in\pi(\G)\}$. 
Let $G_{u_\G}=\{(x,t)\in\Si\times\R|\ x\in\pi(\mathrm{spt}\G),\, t_1\le t\le u_\G(x)\}$, and $[|G_{u_\G}|]$ denote the multiplicity one current associated with $G_{u_\G}$ so that $\p[|G_{u_\G}|]=\G-\p T_*\times\{t_1\}$.
Let
$$T=[|G_{u_\G}|]+T_*\times\{t_1\},$$
then $T$ is a multiplicity one current
with $\p T=\p[|G_{u_\G}|]+\p T_*\times\{t_1\}=\G$ and
\begin{equation}\aligned
\mathbb{M}(T)\le\mathbb{M}([|G_{u_\G}|])+\mathbb{M}(T_*)\le\mathbb{M}(\G)h_\De(\G)+\mathbb{M}(T_*).
\endaligned
\end{equation}
With the isoperimetric inequality in Lemma \ref{IneqnSi} for $T_*$, we complete the proof.
\end{proof}

Let $\Si_i$ be a sequence of $n$-dimensional complete non-compact manifolds with 
\begin{equation}\aligned\label{Rici}
\mathrm{Ric}\ge-(n-1)\k^2\qquad\qquad \mathrm{on}\ \ B_{1+\k'}(p_i)
\endaligned
\end{equation}
for constants $\k\ge0$, $\k'>0$ and with
\begin{equation}\aligned\label{Volvi}
\mathcal{H}^{n}(B_1(p_i))\ge v
\endaligned
\end{equation}
for some constant $v>0$.
Up to choosing the subsequence, we assume that $\overline{B_1(p_i)}$ converges to a metric ball $\overline{B_1(p_\infty)}$ in the Gromov-Hausdorff sense.
For each integer $i\ge1$, let $M_i\subset\Si_i\times\R$ be a minimal graph over $B_1(p_i)$ with $\p M_i\subset\p B_1(p_i)\times\R$.
Let $\pi_i$ be the projection from $\Si_i\times\R$ into $\Si_i$ by $\pi_i(x,t)=x$ for $(x,t)\in\Si_i\times\R$.
Let $\bar{p}_i\in M_i$ with $\pi_i(\bar{p}_i)=p_i$.
Suppose that $M_i$ converges to a closed set $M_\infty\subset \overline{B_1(p_\infty)}\times\R$ in the induced Hausdorff sense, and $\bar{p}_i\to\bar{p}_\infty\in M_\infty$.

For any $y\in M_\infty\cap(B_1(p_\infty)\times\R)$, we define the density function
\begin{equation}\aligned
\Th_{M_\infty,y}(r)=\f{\mathcal{H}^n(M_\infty\cap B_r(y))}{\omega_nr^n}\qquad \mathrm{for\ each}\ r>0.
\endaligned
\end{equation}
Let us prove a monotonicity formula for limits of minimal graphs in metric cones.
\begin{lemma}\label{pM+Minfty}
For any $0<t_1<t_2$ and any sequence $r_i>0$ converging to 0 as $i\to\infty$, we have
\begin{equation}\aligned\label{t1t2HnMpinfty}
\liminf_{i\to\infty}\Th_{M_\infty,y}(t_2r_i)\ge\limsup_{i\to\infty}\Th_{M_\infty,y}(t_1r_i).
\endaligned
\end{equation}
\end{lemma}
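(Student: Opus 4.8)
The plan is to deduce the inequality from a monotonicity formula for the density of $M_\infty$ at $y$, and then to conclude by an elementary comparison of indices.

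First I would record the structural facts. Each $M_i$ is a minimal graph over $B_1(p_i)$, hence area‑minimizing in $B_1(p_i)\times\R$ (as recalled in $\S2$), so by the convergence theory of \cite{D} the limit $M_\infty$ is area‑minimizing in $B_1(p_\infty)\times\R$, carries multiplicity one and has $\partial[|M_\infty|]=0$ there; moreover $B_1(p_\infty)$ is a non‑collapsed Ricci limit with $\mathrm{Ric}\ge-(n-1)\k^2$ and a definite interior lower volume bound, and for a suitable sequence $y_i\in M_i$ with $y_i\to y$ one has $\mathcal{H}^n(M_i\cap B_r(y_i))\to\mathcal{H}^n(M_\infty\cap B_r(y))$ for a.e.\ small $r$.

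The heart of the matter is a monotonicity formula: there exist $r_0>0$ and a function $\Lambda_\k(r)\ge0$, depending only on $n,\k,v$, with $\Lambda_\k(r)\to0$ as $r\to0$, such that $r\mapsto e^{\Lambda_\k(r)}\,\Th_{M_\infty,y}(r)$ is non‑decreasing on $(0,r_0)$; in particular $\Th_{M_\infty,y}(r)\le\Theta_0$ there for some $\Theta_0=\Theta_0(n,\k,v)$. Such a formula holds for area‑minimizing hypersurfaces in manifolds with $\mathrm{Ric}\ge-(n-1)\k^2$ and a two‑sided volume bound, by \cite{D}; its proof is not the classical first‑variation one — a bare Ricci lower bound does not control the Hessian of the distance function, hence not the area of the geodesic cone over a slice of a geodesic sphere — but proceeds through Cheeger–Colding theory, by comparing small balls with their tangent cones, which are metric cones on which the coning‑at‑the‑vertex estimate is exact. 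Applying this (uniformly in $i$) to the area‑minimizing currents $[|M_i|]$ in the smooth manifolds $\Si_i\times\R$ at the points $y_i$, together with the isoperimetric bound $\mathcal{H}^n(M_i\cap B_{r_0}(y_i))\le\tfrac12\mathcal{H}^n(\partial B_{r_0}(y_i))$ for the uniform mass bound, and then using the volume convergence $\mathcal{H}^n(M_i\cap B_r(y_i))\to\mathcal{H}^n(M_\infty\cap B_r(y))$, one passes the monotonicity inequality to $M_\infty$ for a.e.\ pair $r_1<r_2$ in $(0,r_0)$; left continuity of $r\mapsto\mathcal{H}^n(M_\infty\cap B_r(y))$ then upgrades it to every such pair.

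Granting the monotonicity formula, the conclusion is immediate. Fix $i$ with $t_2r_i<r_0$. For every index $j$ large enough that $t_1r_j\le t_2r_i$ — a cofinite set of $j$, since $r_j\to0$ — monotonicity together with $\Lambda_\k\ge0$ gives
\begin{equation*}
\Th_{M_\infty,y}(t_1r_j)\le e^{\Lambda_\k(t_1r_j)}\,\Th_{M_\infty,y}(t_1r_j)\le e^{\Lambda_\k(t_2r_i)}\,\Th_{M_\infty,y}(t_2r_i),
\end{equation*}
and the right‑hand side does not depend on $j$, so $\limsup_{j\to\infty}\Th_{M_\infty,y}(t_1r_j)\le e^{\Lambda_\k(t_2r_i)}\,\Th_{M_\infty,y}(t_2r_i)\le\Th_{M_\infty,y}(t_2r_i)+\big(e^{\Lambda_\k(t_2r_i)}-1\big)\Theta_0$. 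As this holds for all large $i$ and $\Lambda_\k(t_2r_i)\to0$, letting $i\to\infty$ yields $\limsup_{j\to\infty}\Th_{M_\infty,y}(t_1r_j)\le\liminf_{i\to\infty}\Th_{M_\infty,y}(t_2r_i)$, which is \eqref{t1t2HnMpinfty}. I expect the only real obstacle to be the monotonicity formula itself: since only a Ricci lower bound is available the classical coning argument fails and one must use the Cheeger–Colding–based version from \cite{D}; the limit passage and the index chase are then routine.
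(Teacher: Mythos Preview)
Your reduction at the end is clean, but the crucial input—the monotonicity formula $r\mapsto e^{\Lambda_\k(r)}\Th_{M_\infty,y}(r)$ nondecreasing—is not available from \cite{D} as a black box, and establishing (a version of) it is precisely the content of this lemma. What \cite{D} supplies here are two-sided volume bounds (Lemma~3.1, Lemma~3.3), volume convergence under Gromov--Hausdorff limits (Theorem~5.4, Corollary~5.5), and a cone-comparison step (Proposition~4.2) that works only when the tangent cone has \emph{trivial topology}; see Remark~\ref{Monoalmost}, which says exactly that \eqref{M*CMINFtoge} follows directly from \cite{D} only in the almost-Euclidean case. In the present setting the tangent cone $C$ of $\Si_\infty\times\R$ at $y$ is a general metric cone with a possibly nontrivial singular set $\mathcal{S}_C$, and no monotonicity for area-minimizing hypersurfaces in smooth manifolds with merely a Ricci lower bound is known that you could pass to the limit.

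The paper's proof confronts this directly. It blows up at $y$ to obtain $M_{\infty,y}\subset C$, then tries to compare $M_{\infty,y}\cap B_r(o)$ with the truncated cone $C_{y,r}$ over its own boundary slice. The obstruction is that $C$ need not be simply connected and the approximating smooth pieces $S'_{i,\ep}$ of $C_{y,r}$ acquire extra boundary along $\mathcal{S}_{\ep,C}$. Two ingredients handle this: a dimension-reduction argument showing $\mathcal{S}_{\ep,C}\cap M_{\infty,y}\cap\p B_r(o)$ has Hausdorff dimension $\le n-3$ for a.e.\ $r$ (so the extra boundary has vanishing $(n-1)$-measure, \eqref{MiBriUi}--\eqref{MiSiUti}), and Lemma~\ref{ExistcurrentSi}, which solves the Plateau problem for these small residual cycles despite the lack of topological triviality. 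This yields the key comparison \eqref{M*CMINFtoge}, from which exact monotonicity of $t^{-n}\mathcal{H}^n(M_{\infty,y}\cap B_t(o))$ on the cone follows by an ODE argument; Theorem~5.4 of \cite{D} then transfers this back to $M_\infty$. Your sketch ``compare small balls with their tangent cones, where coning is exact'' is the right intuition, but the work lies in making the coning competitor admissible inside a singular cone—your proposal assumes this away.
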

\begin{proof}
For any $y\in M_\infty\cap(B_1(p_\infty)\times\R)$, up to choosing the subsequence, we assume $\f1{r_i}(\Si_\infty\times\R,y)$ converges in the Gromov-Hausdorff sense to a metric cone $(C,o)$, and $\f1{r_i}(M_\infty,y)$ converges in the induced Hausdorff sense to a closed set $M_{\infty,y}\subset C$.
There is a sequence of $n$-dimensional complete smooth manifolds $\Si_i'$ satisfying $\mathrm{Ric}\ge-(n-1)\k^2i^{-2}$ on $B_{i}(p_i')$ and $\liminf_{i\to\infty}\mathcal{H}^{n}(B_1(p_i'))>0$ so that $(\Si_i'\times\R,\bar{p}_i')$ converges to $(C,o)$ in the pointed Gromov-Hausdorff sense.
Moreover, there is a sequence of minimal graphs $M_i'\subset B_i(p_i')\times\R$ with $\bar{p}_i'=(p_i',0)\in M_i'$, $\p M_i'\subset\p B_i(p_i')\times\R$ such that $M_i'$ converges to $M_{\infty,y}$ in the induced Hausdorff sense.

Let $\mathcal{S}_C$ denote the singular set of $C$, which has Hausdorff dimension $n-1$ at most.
For any $\ep\in(0,\f12]$, let $\mathcal{S}_{\ep,C}\subset\mathcal{S}_C$ be a closed set in $C$ defined by $\mathcal{S}_{\ep,C}=C\setminus\mathcal{R}_{\ep,C}$ with
\begin{equation}\aligned\label{RSep}
\mathcal{R}_{\ep,C}=\left\{x\in C\Big|\,\sup_{0<s\le r}s^{-1}d_{GH}(B_s(x),B_s(0^{n+1}))<\ep \ \mathrm{for\ some}\ r>0\right\}.
\endaligned
\end{equation}
For every $\tau>0$ and $x\in \p B_\tau(o)$, let $\{tx|\, t\ge0\}$ be the ray through $x$ and starting from $o$.
For any $s>0$, we let $sx$ denote the point in $\p B_{s\tau}(o)\cap\{tx|\, t\ge0\}$.
Then we can rewrite $C$ by
$$C=\{sx|\, x\in \p B_1(o),\, s\ge0\}=\{sx|\, x\in \p B_\tau(o),\, s\ge0\}$$
for any $\tau>0$.
Let $C_{y,t}$ denote the truncated metric cone over $M_{\infty,y}\cap\p B_{t}(o)$ in $\overline{B_{t}(o)}$ for each $t>0$, namely,
$$C_{y,t}=\{sx\in C|\, x\in M_{\infty,y}\cap\p B_{t}(o),\, 0\le s\le 1\}.$$

Let $\mathcal{M}^*(\cdot,\cdot), \mathcal{M}_*(\cdot,\cdot)$ denote the upper and the lower $n$-dimensional Minkowski contents defined by
\begin{equation}\aligned
\mathcal{M}^*(K,\Om)&=\limsup_{\de\to0}\f1{2\de}\mathcal{H}^{n+1}\left(\Om\cap B_\de(K)\setminus K\right)\\\mathcal{M}_*(K,\Om)&=\liminf_{\de\to0}\f1{2\de}\mathcal{H}^{n+1}\left(\Om\cap B_\de(K)\setminus K\right)
\endaligned
\end{equation}
for any $K,\Om\subset C$.
From the co-area formula for non-collapsed metric cones (see Proposition 7.6 in \cite{Honda} by Honda for instance), for any $t>s>0$ and any $\de>0$ we have
\begin{equation}\aligned\label{BdeMinftyy}
\mathcal{H}^{n+1}\left(B_\de(M_{\infty,y})\cap B_{t}(o)\right)-\mathcal{H}^{n+1}\left(B_\de(M_{\infty,y})\cap B_{s}(o)\right)=\int_{s}^{t}\mathcal{H}^n\left(B_{\de}(M_{\infty,y})\cap\p B_\tau(o)\right)d\tau.
\endaligned
\end{equation}
With Fatou lemma, it follows that
\begin{equation}\aligned\label{M*inftyBrsto**}
\mathcal{M}^*\left(M_{\infty,y},B_{t}(o)\right)-\mathcal{M}_*\left(M_{\infty,y},B_{s}(o)\right)\ge\int_s^t\left(\liminf_{\de\to0}\f1{2\de}\mathcal{H}^n\left(B_{\de}(M_{\infty,y})\cap\p B_\tau(o)\right)\right)d\tau.
\endaligned
\end{equation}
Combining Corollary 5.5 in \cite{D} and \eqref{M*inftyBrsto**}, we have
\begin{equation}\aligned\label{HnMinftyM*rs}
\mathcal{H}^n\left(M_{\infty,y}\cap \overline{B_{t}(o)}\right)-\mathcal{H}^n\left(M_{\infty,y}\cap B_{s}(o)\right)\ge\int_s^t\left(\liminf_{\de\to0}\f1{2\de}\mathcal{H}^n\left(B_{\de}(M_{\infty,y})\cap\p B_\tau(o)\right)\right)d\tau,
\endaligned
\end{equation}
which implies 
\begin{equation}\aligned\label{deto0BdeMinfyro}
\liminf_{\de\to0}\f1{2\de}\mathcal{H}^n\left(B_{\de}(M_{\infty,y})\cap\p B_r(o)\right)<\infty
\endaligned
\end{equation}
for almost every $r>0$.

Let $\mathcal{C}$ denote a 2-dimensional metric cone with nonnegative curvarture outside its isolated singular point at the vertex, $\mathcal{C}'$ denote an area-minimizing cone in $\mathcal{C}\times\R^{n-1}$ such that $\mathcal{C}'$ splits off a Euclidean factor $\R^{n-2}$ isometrically. Let $\mathcal{S}_{\mathcal{C}}$ denote the singular set of $\mathcal{C}\times\R^{n-1}$. By dimension reduction argument, clearly $\mathcal{S}_{\mathcal{C}}\cap\mathcal{C}'\cap(\mathcal{C}\times\R^{n-2})$ has Hausdorff dimension $\le n-3$ unless $\mathcal{C}'\subset\mathcal{C}\times\R^{n-2}$.
Therefore, if \eqref{deto0BdeMinfyro} holds for some $r>0$, then the situation $\mathcal{C}'\subset\mathcal{C}\times\R^{n-2}$ will not occur.
As a consequence, from the proof of Lemma 6.5 in \cite{D}, $\mathcal{S}_{\ep,C}\cap M_{\infty,y}\cap\p B_r(o)$ has Hausdorff dimension $\le n-3$ by dimension reduction argument again. 
Hence, for every $\a\in(0,1)$ there is a sequence of geodesic balls $\{B_{\a_k}(x_{\a,k})\}_{k=1}^{N_{\a,\ep}}\subset C$ with $\a_k<\a$ so that $\mathcal{S}_{\ep,C}\cap C_{y,r}\subset\bigcup_{k=1}^{N_{\a,\ep}}B_{\a_k}(x_{\a,k})$ and
\begin{equation}\aligned\label{n-3/2}
\sum_{k=1}^{N_{\a,\ep}}\a_k^{n-3/2}<\a .
\endaligned
\end{equation}
Let $x_{i,\a,k}\in \Si_i'\times\R$ so that $x_{i,\a,k}\to x_{\a,k}$ as $i\to\infty$.
Denote $U_{\a,\ep}=\bigcup_{k=1}^{N_{\a,\ep}}B_{\a_k}(x_{\a,k})\subset C$, and $U_{i,\a,\ep}=\bigcup_{k=1}^{N_{\a,\ep}}B_{\a_k}(x_{i,\a,k})\subset \Si_i'\times\R$.
With co-area formula, Lemma \ref{epdeHM} and \eqref{n-3/2}, there is a sequence $r_i>r$ with $r_i\to r$ so that 
\begin{equation}\aligned\label{MiBriUi}
\lim_{i\to\infty}\mathcal{H}^{n-1}(M_i'\cap\p B_{r_i}(\bar{p}_i')\cap U_{i,i^{-1},2\ep})=0.
\endaligned
\end{equation}

For any $t>0$ and any $\a>0$, let
$$C_{\a,\ep,y,t}=\left\{sx\in C\Big|\, x\in M_{\infty,y}\cap\p B_{t}(o),\, \ep\le s\le 1\right\}\setminus U_{\a,\ep},$$
and
$$C_{\ep,y,t}=\left\{sx\in C\Big|\, x\in M_{\infty,y}\cap\p B_{t}(o),\, \ep\le s\le 1\right\}\setminus \mathcal{S}_{\ep,C}.$$
From homeomorphism in Theorem A.1.8 in \cite{CCo1}, for any $r'>r>0$ and any integer $i\ge1$
there is a smooth hypersurface $S_{i,\a,\ep}$ (outside a closed set of dimension $\le n-7$) in $B_{r'}(\bar{p}_i')\setminus \overline{U_{i,\a,\ep}}$ with $\p S_{i,\a,\ep}\subset\p\left(B_{r'}(\bar{p}_i')\setminus \overline{U_{i,\a,\ep}}\right)$ such that $S_{i,\a,\ep}=M_i'$ in $B_{r'}(\bar{p}_i')\setminus \left(B_r(\bar{p}_i')\cup\overline{U_{i,\a,\ep}}\right)$ and $S_{i,\a,\ep}\cap B_r(\bar{p}_i')$ converges to a closed set in $C_{\a,\ep,y,r}$ as $i\to\infty$.
Up to considering a $s$-tubular neighborhood of $S_{i,\a,\ep}$, with co-area formula and \eqref{n-3/2}, there are a sequence of numbers $t_{i,\a}\in[1,2]$, a sequence of $n$-rectifiable sets $S'_{i,\a,\ep}\subset B_{r'}(\bar{p}_i')\setminus \overline{U^*_{i,\a ,\ep}}$ with $U^*_{i,\a ,\ep}=\bigcup_{k=1}^{N_{\a,\ep}}B_{t_{i,\a}\a_k}(x_{i,\a,k})$ such that $\p S_{i,\a,\ep}'\setminus\overline{U^*_{i,\a ,\ep}}=M_i'\cap\p B_{r_i}(\bar{p}_i')\setminus \overline{U^*_{i,\a ,\ep}}$, 
\begin{equation}\aligned\label{SiUiti}
\lim_{i\to\infty,\a\to0}\mathcal{H}^{n-1}\left(\p S_{i,\a,\ep}'\cap \overline{U^*_{i,\a ,\ep}}\right)=0,
\endaligned
\end{equation}
and $S'_{i,\a,\ep}$ converges as $i\to\infty$ to a closed set in $C_{\a,\ep,y,r}$ (up to choosing subsequences).
Denote $S'_{i,\ep}=S'_{i,i^{-1},\ep}$ in $B_{r'}(\bar{p}_i')\setminus \overline{U^*_{i,i^{-1},\ep}}$ for short.
From \eqref{MiBriUi}\eqref{SiUiti}, we could further assume that $S'_{i,\ep}$ converges to a closed set in $C_{\ep,y,r}$, and
\begin{equation}\aligned\label{MiSiUti}
\lim_{i\to\infty}\left(\mathcal{H}^{n-1}(M_i'\cap\p B_{r_i}(\bar{p}_i')\cap U^*_{i,i^{-1},\ep})+\mathcal{H}^{n-1}\left(\p S_{i,\ep}'\cap\overline{U^*_{i,i^{-1},\ep}}\right)\right)=0.
\endaligned
\end{equation}

Let $\G_{i,\ep}$ be an $(n-1)$-current defined by $\G_{i,\ep}=\p[|M_i'\cap\p B_{r_i}(\bar{p}_i')|]-\p[|S_{i,\ep}'|]$. Then $\p\G_{i,\ep}=0$ and $\lim_{i\to\infty}\mathbb{M}(\G_{i,\ep})=0$ from \eqref{MiSiUti}.
From Lemma \ref{ExistcurrentSi}, for the suitably large $i>0$ there is an integral current $T_{i,\ep}$ of multiplicity one with $\p T_{i,\ep}=\G_{i,\ep}$ such that $\lim_{i\to\infty}\mathbb{M}(T_{i,\ep})=0$. Moreover, spt$T_{i,\ep}$ converges to a closed set in $\mathcal{S}_{\ep,C}$ up to a choice of a subsequence.
Let $T^*_{i,\ep}=[|S_{i,\ep}'|]-T_{i,\ep}+[|M_i'|]\llcorner(B_{r'}(\bar{p}_i')\setminus B_{r_i}(\bar{p}_i'))$. Then  
$$\p(T^*_{i,\ep}\llcorner B_{r_i}(\bar{p}_i'))=\p[|S_{i,\ep}'|]-\p T_{i,\ep}=\p[|M_i'\cap\p B_{r_i}(\bar{p}_i')|]+\G_{i,\ep}-\G_{i,\ep}=\p[|M_i'\cap\p B_{r_i}(\bar{p}_i')|].$$ 
After a suitable perturbation, for large $i$ there is a sequence of hypersurfaces $S_{i,\ep}\subset B_{i}(\bar{p}_i')$ such that
$S_{i,\ep}\setminus M_i'$ is smooth, embedded, $S_{i,\ep}=M_i'$ outside $B_r(\bar{p}_i')$, and $S_{i,\ep}\cap B_r(\bar{p}_i')$ converges to a closed set $S_{\infty,\ep}$ in $C$ with $C_{y,r}\subset S_{\infty,\ep}$ and $S_{\infty,\ep}\setminus C_{y,r}\subset \mathcal{S}_{\ep,C}$.
For each $i$, let $\mathcal{R}_{M_i'}$ denote the regular part of $M_i'$, and $\mathcal{R}_{i,\ep}=(S_{i,\ep}\setminus M_i')\cup(S_{i,\ep}\cap \mathcal{R}_{M_i'})$.
Clearly, $\mathcal{R}_{i,\ep}$ is two-sided and connected. Let $\mathbf{n}_{i,\ep}$ be the unit normal vector field to $\mathcal{R}_{i,\ep}$.
We define two open sets $B^\pm_{r}(S_{i,\ep})$ by
\begin{equation}\aligned\nonumber
B^\pm_{r}(S_{i,\ep})=B_r(S_{i,\ep})\cap\{\mathrm{exp}_{x}(\pm s\mathbf{n}_{i,\ep}(x))|\ x\in \mathcal{R}_{i,\ep},\, 0<s<r\}.
\endaligned
\end{equation}
Up to choosing the subsequences, we can assume that $B^+_{1/j}(S_{i,\ep})$, $B^+_{1/j}(S_{i,\ep})$ converge as $i\to\infty$ in the induced Hausdorff sense to two closed sets $B^+_{j,S}$, $B^-_{j,S}$ in $C$, respectively. 
Combining homeomorphism in Theorem A.1.8 in \cite{CCo1}, $C_{y,r}\subset S_{\infty,\ep}$ and $S_{\infty,\ep}\setminus C_{y,r}\subset\mathcal{S}_{\ep,C}$, for any $\tau>0$ there is an integer $j_0>0$ such that 
$$B^+_{j,S}\cap B^-_{j,S}\subset S_{\infty,\ep}\cup B_\tau(\mathcal{S}_{\ep,C})$$ 
for all integers $j\ge j_0$. 
From Theorem 1.3 in \cite{CN} by Cheeger-Naber, there is a constant $c_{n,\ep,v,r}>0$ such that 
\begin{equation}\aligned\nonumber
\mathcal{H}^{n+1}(B_s(\mathcal{S}_{\ep,C})\cap B_{2r}(o))\le c_{n,\ep,v,r}s^{2-\ep}
\endaligned
\end{equation}
for any small $s\in(0,1]$. Then we can follow the proof of Proposition 4.2 in \cite{D}, and get
\begin{equation}\aligned\label{M*CMINFtoge}
\mathcal{M}_*(M_{\infty,y},B_r(o))\le\mathcal{M}_*(C_{y,r},B_r(o)) \qquad \mathrm{for\ almost\ every}\ r>0.
\endaligned
\end{equation}

From the cone property of $C_{y,t}$, there is a general function $\psi_\de$ with $\lim_{\de\rightarrow0}\psi_\de=0$ such that for any $\tau\in(0,t)$
\begin{equation}\aligned
\mathcal{H}^{n}\left(B_\de(C_{y,t})\cap\p B_\tau(o)\right)\le& (1+\psi_\de)\mathcal{H}^{n}\left(B_\de(C_{y,t}\cap\p B_\tau(o))\cap\p B_\tau(o)\right)\\
=& (1+\psi_\de)\left(\f\tau t\right)^n\mathcal{H}^{n}\left(B_{\de t/\tau}(C_{y,t}\cap\p B_t(o))\cap\p B_t(o)\right).
\endaligned
\end{equation}
Hence, for any $\ep>0$ there is a sequence $\de_i\to0$ such that
\begin{equation}\aligned
\mathcal{H}^{n}\left(B_{\de_i}(C_{y,t})\cap\p B_\tau(o)\right)\le2(1+\ep)\de_i\left(\left(\f\tau t\right)^{n-1}\mathcal{M}_{*}\left(C_{y,t}\cap\p B_t(o),\p B_t(o)\right)+\ep\right).
\endaligned
\end{equation}
Then from the co-area formula, we have
\begin{equation}\aligned
\mathcal{H}^{n+1}&\left(B_{\de_i}(C_{y,t})\cap B_t(o)\right)=\int_0^t\mathcal{H}^{n}\left(B_{\de_i}(C_{y,t})\cap\p B_\tau(o)\right)d\tau\\
\le&2(1+\ep)\de_i\int_0^t\left(\left(\f\tau t\right)^{n-1}\mathcal{M}_{*}\left(C_{y,t}\cap\p B_t(o),\p B_t(o)\right)+\ep\right)d\tau\\
=&2(1+\ep)\de_i\left(\f tn\mathcal{M}_{*}\left(C_{y,t}\cap\p B_t(o),\p B_t(o)\right)+\ep t\right),
\endaligned
\end{equation}
which implies
\begin{equation}\aligned\label{M*CMINFtole}
\mathcal{M}_*\left(C_{y,t}, B_t(o)\right)\le\f tn\mathcal{M}_{*}\left(C_{y,t}\cap\p B_t(o),\p B_t(o)\right).
\endaligned
\end{equation}
Combining \eqref{M*CMINFtoge}\eqref{M*CMINFtole} and Corollary 5.5 in \cite{D}, we have
\begin{equation}\aligned\label{MinftytnMinfty}
\mathcal{M}_{*}\left(C_{y,t}\cap\p B_t(o),\p B_t(o)\right)\ge\f nt\mathcal{M}_*(M_{\infty,y},B_t(o))\ge\f nt\mathcal{H}^n\left(M_{\infty,y}\cap B_{t}(o)\right)
\endaligned
\end{equation}
for all $t>0$. 
From \eqref{HnMinftyM*rs}\eqref{MinftytnMinfty}, we have
\begin{equation}\aligned
&\mathcal{H}^{n}\left(M_{\infty,y}\cap \overline{B_{t}(o)}\right)-\mathcal{H}^{n}\left(M_{\infty,y}\cap B_{s}(o)\right)\\
\ge&\int_s^t\mathcal{M}_*\left(M_{\infty,y}\cap\p B_{\tau}(o),\p B_{\tau}(o)\right)d\tau\\
=&\int_s^{t}\mathcal{M}_{*}\left(C_{y,\tau}\cap\p B_\tau(o),\p B_\tau(o)\right)d\tau\\
\ge&\int_s^{t}\f n\tau\mathcal{H}^{n}\left(M_{\infty,y}\cap B_{\tau}(o)\right)d\tau.
\endaligned
\end{equation}
Noting that $\mathcal{H}^{n}\left(M_{\infty,y}\cap B_{t}(o)\right)$ is monotonic nondecreasing and left continuous. 
Letting $t<r$ and $t\to r$ implies
\begin{equation}\aligned\label{HnMBroapp}
\mathcal{H}^{n}\left(M_{\infty,y}\cap B_{r}(o)\right)-\mathcal{H}^{n}\left(M_{\infty,y}\cap B_{s}(o)\right)\ge\int_s^{t}\f n\tau\mathcal{H}^{n}\left(M_{\infty,y}\cap B_{\tau}(o)\right)d\tau.
\endaligned
\end{equation}
Denote $\phi(t)=t^{-n}\mathcal{H}^{n}\left(M_{\infty,y}\cap B_t(o)\right)$ for each $t>0$. 
From \eqref{HnMBroapp}, 
 \begin{equation}\aligned
\liminf_{s\to t}\f{\phi(t)-\phi(s)}{t-s}=&-\f n{t}\phi(t)+t^{-n}\liminf_{s\to t}\f{\mathcal{H}^{n}\left(M_{\infty,y}\cap B_{t}(o)\right)-\mathcal{H}^{n}\left(M_{\infty,y}\cap B_{s}(o)\right)}{t-s}\\
\ge&-\f n{t}\phi(t)+t^{-n}\liminf_{s\to t}\f{1}{t-s}\int_s^{t}\f n\tau\mathcal{H}^{n}\left(M_{\infty,y}\cap B_{\tau}(o)\right)d\tau\\
\ge&-\f n{t}\phi(t)+t^{-n}\f nt\mathcal{H}^{n}\left(M_{\infty,y}\cap B_t(o)\right)=0.
\endaligned
\end{equation}
Hence, for any $t>0$ and small $\ep>0$ there is a small constant $0<\de_{t,\ep}<t$ so that $\phi(\tau)\le\phi(t)+\ep(t-\tau)$ for all $\tau\in(t-\de_{t,\ep},t)$, and $\phi(\tau)\ge\phi(t)+\ep(t-\tau)$ for all $\tau\in(t,t+\de_{t,\ep})$.
For any $r>s>0$, from the open covering $\bigcup_{t>0}(t-\de_{t,\ep},t+\de_{t,\ep})$ there is a sequence of numbers $s=\tau_0<\tau_1<\cdots<\tau_m<\tau_{m+1}=r$ with $\tau_i\in(\tau_{i-1},\tau_{i-1}+\de_{\tau_{i-1},\ep})$ and $\tau_i\in(\tau_{i+1}-\de_{\tau_{i+1},\ep},\tau_{i+1})$ for all $i=1,\cdots,m$.
Then
\begin{equation}\aligned
\phi(r)-\phi(s)=\sum_{i=0}^m\left(\phi(t_{i+1})-\phi(t_i)\right)\ge\ep\sum_{i=0}^{m-1}(t_i-t_{i+1})-\ep(r-t_m)=-\ep(r-s),
\endaligned
\end{equation}
which implies $\phi(r)\ge\phi(s)$ by letting $\ep\to0$.
From Theorem 5.4 in \cite{D}, by the definitions of $M_{\infty,y}$ and $\phi$, it follows that
\begin{equation}\aligned
\liminf_{i\to\infty}\f{\mathcal{H}^n(M_\infty\cap B_{t_2r_i}(y))}{(rr_i)^n}\ge\limsup_{i\to\infty}\f{\mathcal{H}^n(M_\infty\cap B_{t_1r_i}(y))}{(sr_i)^n}
\endaligned
\end{equation}
fot all $t_2>r>s>t_1>0$, namely, 
\begin{equation}\aligned
\left(\f{t_2}{r}\right)^n\liminf_{i\to\infty}\Th_{M_\infty,y}(t_2r_i)\ge\left(\f{t_1}{s}\right)^n\limsup_{i\to\infty}\Th_{M_\infty,y}(t_1r_i).
\endaligned
\end{equation}
Letting $r\to t_2$ and $s\to t_1$ in the above inequality completes the proof.
\end{proof}

Now let us prove that the tangent cones of $M_\infty$ are metric cones using Lemma \ref{pM+Minfty} (compared with Theorem 6.2 in \cite{D}).
\begin{theorem}\label{CovconeC}
For any $y_\infty\in M_\infty\cap(B_1(p_\infty)\times\R)$, there is a sequence $r_j>0$ with $r_j\rightarrow0$ as $j\to\infty$ such that $\f1{r_{j}}(B_1(p_\infty)\times\R,y_\infty)$ converges to a metric cone $(C,o)$ in the pointed Gromov-Hausdorff sense,
and $\f1{r_{j}}(M_\infty,y_\infty)$ converges in the induced Hausdorff sense to a metric cone $(C',o)$ with $C'\subset C$.
\end{theorem}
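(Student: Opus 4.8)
The plan is the classical blow-up argument for tangent cones of minimizers, transported to the Ricci-limit / metric-cone setting, with the monotonicity already isolated in Lemma \ref{pM+Minfty} carrying the main load.

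\emph{Step 1 (ambient metric cone).} The product metrics on $\Si_i\times\R$ satisfy $\mathrm{Ric}\ge-(n-1)\k^2$ on $B_{1+\k'}(p_i)\times\R$, and $B_1(p_\infty)\times\R$ is their non-collapsed pointed limit; so by Theorem 4.85 of \cite{CC} every tangent cone of $B_1(p_\infty)\times\R$ at $y_\infty$ is a metric cone. Hence there is a subsequence $r_{i_j}$ of $r_j$ along which $\f1{r_{i_j}}(B_1(p_\infty)\times\R,y_\infty)$ converges in the pointed Gromov--Hausdorff sense to a metric cone $(C,o)$ with vertex $o$, and (after a further subsequence, by Blaschke selection as in the Preliminary) $\f1{r_{i_j}}(M_\infty,y_\infty)$ converges in the induced Hausdorff sense to a closed set $C'\subset C$ with $o\in C'$.

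\emph{Step 2 (the blow-up is a minimizing limit of minimal graphs).} Exactly as in the proof of Lemma \ref{pM+Minfty}, the pair $(C,C')$ is realized as a limit of minimal graphs $M_i'\subset B_i(p_i')\times\R$ over manifolds $\Si_i'$ with $\mathrm{Ric}\ge-(n-1)\k^2i^{-2}$ on $B_i(p_i')$ and $\liminf_i\mathcal{H}^n(B_1(p_i'))>0$, with $\Si_i'\times\R\to(C,o)$ and $M_i'\to C'$. By Theorems 1.1 and 4.4 of \cite{D}, $C'$ is area-minimizing in $C$, and for all but countably many $\rho>0$,
\[
\mathcal{H}^n\big(C'\cap B_\rho(o)\big)=\lim_{j\to\infty}r_{i_j}^{-n}\,\mathcal{H}^n\big(M_\infty\cap B_{\rho r_{i_j}}(y_\infty)\big).
\]

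\emph{Step 3 (monotone, then constant, density).} Running the computation in the proof of Lemma \ref{pM+Minfty} with $C'$ in place of $M_{\infty,y}$ — Honda's co-area formula for non-collapsed metric cones (Proposition 7.6 of \cite{Honda}), Corollary 5.5 and Proposition 4.2 of \cite{D}, and the Cheeger--Naber Minkowski-content estimate (Theorem 1.3 of \cite{CN}) — gives, with $C_{o,\tau}$ the truncated metric cone over $C'\cap\p B_\tau(o)$,
\[
\mathcal{H}^n\big(C'\cap\overline{B_t(o)}\big)-\mathcal{H}^n\big(C'\cap B_s(o)\big)\ \ge\ \int_s^t\mathcal{M}_*\big(C_{o,\tau}\cap\p B_\tau(o),\p B_\tau(o)\big)\,d\tau\ \ge\ \int_s^t\f n\tau\,\mathcal{H}^n\big(C'\cap B_\tau(o)\big)\,d\tau,
\]
so that $\phi_{C'}(t):=t^{-n}\mathcal{H}^n(C'\cap B_t(o))$ is monotone nondecreasing on $(0,\infty)$. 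Choosing, already in Step 1, the subsequence so that $\Th_{M_\infty,y_\infty}(r_{i_j})$ converges to $\ell:=\liminf_{r\to0}\Th_{M_\infty,y_\infty}(r)$, Step 2 gives $\phi_{C'}(1)=\omega_n\ell$; and for $0<\rho<1$, Lemma \ref{pM+Minfty} (with $t_1=\rho,\ t_2=1$) forces $\phi_{C'}(\rho)\le\omega_n\ell$, whereas $\phi_{C'}(\rho)=\omega_n\lim_j\Th_{M_\infty,y_\infty}(\rho r_{i_j})\ge\omega_n\ell$ since $\rho r_{i_j}\to0$. Thus $\phi_{C'}\equiv\omega_n\ell$ on $(0,1]$, and one propagates this to all of $(0,\infty)$ using the monotonicity above together with the area-minimality of $C'$ in the cone $C$ and the connectedness of limits of minimal graphs from \cite{D1}.

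\emph{Step 4 (rigidity).} Once $\phi_{C'}$ is constant, every inequality in Step 3 is an equality; in particular $\mathcal{M}_*(C_{o,\tau}\cap\p B_\tau(o),\p B_\tau(o))=\f n\tau\mathcal{H}^n(C'\cap B_\tau(o))$ for a.e.\ $\tau$ and $\mathcal{M}_*(C',B_t(o))=\mathcal{H}^n(C'\cap B_t(o))$. As at the end of the proofs of Proposition 4.2 and Theorem 6.2 of \cite{D}, these equalities force $C'\cap B_t(o)=C_{o,t}$ for every $t$, i.e.\ $C'$ is invariant under dilations about $o$; hence $(C',o)$ is a metric cone contained in $C$. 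The step I expect to be the main obstacle is the conversion in Step 3 of the one-sided estimate of Lemma \ref{pM+Minfty} into full scale-invariance of $\phi_{C'}$, and then the extraction in Step 4 of the dilation invariance of $C'$ from the equality case of the co-area and Minkowski-content comparisons of \cite{D}; in the smooth Euclidean setting this rigidity is standard, but here it rests on those comparisons being sharp.
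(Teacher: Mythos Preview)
Your overall architecture matches the paper's: extract the ambient cone $(C,o)$ and the Hausdorff limit $C'$, realize $C'$ as a limit of minimal graphs, obtain constant density for $C'$ via Lemma~\ref{pM+Minfty}, and convert constant density into dilation invariance. But Step~3 has a genuine gap, and Step~4 is handled quite differently in the paper.

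\emph{The gap in Step~3.} Your constancy argument on $(0,1]$ requires choosing the subsequence $r_{i_j}$ so that $\Th_{M_\infty,y_\infty}(r_{i_j})\to\ell:=\liminf_{r\to0}\Th_{M_\infty,y_\infty}(r)$, and then uses $\ell=\liminf$ to get $\phi_{C'}(\rho)=\omega_n\lim_j\Th_{M_\infty,y_\infty}(\rho r_{i_j})\ge\omega_n\ell$ simply because $\rho r_{i_j}\to0$. The theorem, however, is stated for an \emph{arbitrary} sequence $r_j\to0$, which need not admit a subsequence realizing the global $\liminf$; and Lemma~\ref{pM+Minfty} alone does not yield that $\lim_{r\to0}\Th_{M_\infty,y_\infty}(r)$ exists. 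The paper does not argue this way: after the nondecreasing direction from Lemma~\ref{pM+Minfty}, it imports from the proof of Theorem~6.2 in \cite{D} a reverse dyadic comparison for $\mathcal{H}^n(M_{y_\infty}\cap B_{2^{-j}}(o))$, and then combines this with Lemma~3.1 and Theorem~5.4 of \cite{D} (and Lemma~\ref{pM+Minfty} once more) to force $\mathcal{H}^n(M_{y_\infty}\cap B_r(o))=\de_{n,v}r^n$ for all $r>0$. Your remark about propagating to $(0,\infty)$ via area-minimality and connectedness from \cite{D1} does not replace this two-sided input.

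\emph{Step~4 in the paper.} Rather than citing an abstract equality case in the Minkowski-content chain, the paper runs an explicit angle argument. For each $y\in M_{y_\infty}\cap\mathcal{R}_C$ it defines the angle $\th_y$ between the radial ray $l_y$ and $M_{y_\infty}$ via \eqref{thyangle}; a curve-chaining argument shows that $\th_y\equiv0$ on the regular set forces $M_{y_\infty}$ to be a metric cone with vertex $o$. If instead $\th_x>0$ at some regular point $x$, the paper produces a set $E_x\subset M_{y_\infty}$ of positive $\mathcal{H}^n$-measure and a set $I_x$ of radii with $\mathcal{H}^1(I_x)\ge\de_x'>0$ on which $\mathcal{H}^n(B_\de(M_{y_\infty})\cap\p B_\tau(o))\ge(1+\de_x')\,\mathcal{H}^n(B_\de(M_{y_\infty}\cap\p B_\tau(o))\cap\p B_\tau(o))$; feeding this back through the co-area computation of Lemma~\ref{pM+Minfty} makes $\phi$ strictly increasing, contradicting the constant density. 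Your equality-case route is in the same spirit, but the rigidity statements you cite from \cite{D} are for the Euclidean/regular situation; the paper evidently found it necessary to argue this step directly in the metric-cone setting, and you would have to do the same.
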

\begin{proof}
Combining the proof of Theorem 6.2 in \cite{D} and Lemma \ref{pM+Minfty}, there is a sequence $r_j\rightarrow0^+$ so that $\f1{r_j}(B_1(p_\infty)\times\R,y_\infty)$ converges as $j\to\infty$ to a metric cone $(C,o)$ in the pointed Gromov-Hausdorff sense, and
$\f1{r_j}\left(M_{\infty},y_\infty\right)$ converges in the induced Hausdorff sense to $(M_{y_\infty},o)$ for a closed set $M_{y_\infty}\subset C$ satisfying
\begin{equation}\aligned\label{620epj}
\mathcal{H}^n\left(M_{y_\infty}\cap B_{r}(o)\right)=\de_{n,v} r^n \qquad \mathrm{for\ all}\ r>0.
\endaligned
\end{equation}
Here, $\de_{n,v}$ is a positive constant.

We may assume that $C$ is not a standard Euclidean space $\R^{n+1}$, or else we have finished the proof.
For each $y\in C\setminus\{o\}$, let $l_y$ denote the (radial) geodesic ray through $y$ starting from $o$. For every $0<s<d(o,y)$, let $l_y(s)$ denote a point in $l_y$ with $d(y,l_y(s))=s$.
We define the angle $\th_y$ between $l_y$ and $M_{y_\infty}$ by
\begin{equation}\aligned\label{thyangle}
\th_y=\liminf_{s\to0}\inf_{x\in M_{y_\infty}\cap\p B_s(y)}|\angle xyl_y(s)|,
\endaligned
\end{equation}
where $\angle xyl_y(s)$ is a function satisfying
$\cos\angle xyl_y(s)=1-\f1{2s^2}d^2(x,l_y(s))$ (see the formula (6) in \cite{CN1} for the general definition and its property).

Let $\mathcal{R}_C$ denote the regular part of $C$.
If $y\in M_{y_\infty}\cap \mathcal{R}_C$
satisfies $\th_y=0$, then there is a sequence of numbers $s_i\to0$ and a sequence of points $x_i\in M_{y_\infty}\cap\p B_{s_i}(y)$ so that $\lim_{i\to\infty}\angle x_iyl_y(s_i)=0$, which implies
\begin{equation}\aligned\label{sixilysi}
\lim_{i\to\infty}s_i^{-1}d(x_i,l_y(s_i))=0.
\endaligned
\end{equation}
From Theorem 6.2 in \cite{D},  up to a choice of the subsequence of $s_i$, $\f1{s_{i}}(M_{y_\infty}, y)$ converges in the
induced Hausdorff sense to $(C_y,0)$ in $\R^{n+1}$, wherer $C_y$ is an area-minimizing cone in $\R^{n+1}$.
With \eqref{sixilysi}, there is a connected curve $\g_{y,i}$ in $M_{y_\infty}\cap B_{s_i}(y)$ with two endpoints in $\p B_{s_i}(y)\cap\mathcal{R}_C$ so that
\begin{equation}\aligned\label{isidlytausi}
\lim_{i\to\infty}\sup_{x\in\g_{y,i}}s_i^{-1}d(x,l_y)=0.
\endaligned
\end{equation}

Suppose $\th_y=0$ for all $y\in M_{y_\infty}\cap \mathcal{R}_C$. Noting that $C\setminus \mathcal{R}_C$ has Hausdorff dimension $\le n-1$.
From \eqref{isidlytausi}, for any $\ep>0$, there are a constant $0<s_y<\ep$ and a connected curve $\g_{y}$ in $M_{y_\infty}\cap B_{s_y}(y)$ with two endpoints in $\p B_{s_y}(y)\cap\mathcal{R}_C$ so that
\begin{equation}\aligned
\sup_{x\in\g_y}d(x,l_y)<\ep s_y.
\endaligned
\end{equation}
Let $y_i$ be one endpoint of $\g_{y_{i-1}}$ for $i=1,\cdots,m$ with $y_0=y$. 
Without loss of generality, we assume that $d(y_i,o)\le d(y_{i-1},o)$ for any $i=1,\cdots,m$.
There is a point $z_i\in l_{y_{i}}$ with $d(z_i,o)\le d(y_i,o)$ so that $d(y_{i+1},l_{y_i})=d(y_{i+1},z_i)$ for $i=0,\cdots,m-1$. Since $C$ is a metric cone, it's clear that $d(z_{i},l_y)\le d(y_{i},l_y)$.
Then
\begin{equation}\aligned
d(y_m,l_y)\le& d(y_m,z_{i-1})+d(z_{m-1},l_y)\le d(y_m,z_{m-1})+d(y_{m-1},l_y)\\
\le&\cdots\le\sum_{y=0}^{m-1} d(y_{i+1},l_{y_i})<\ep\sum_{y=0}^{m-1} s_{y_i}=\ep d(y_m,y).
\endaligned
\end{equation}
Since $\ep$ in the above inequality can be arbitrarily small, we conclude that $M_{y_\infty}$ is a metric cone in $C$ with the vertex at $o$.
In other words, if $M_{y_\infty}$ is not a metric cone in $C$ with the vertex at $o$, then there is a point $x\in M_{y_\infty}\cap \mathcal{R}_C$ with $\th_x>0$.

From Theorem 6.2 in \cite{D}, there is a sequence $t_i\to0$ so that $\f1{t_i}(M_{y_\infty}, x)$ converges in the
induced Hausdorff sense to $(C_x,0)$ in $\R^{n+1}$, wherer $C_x$ is an area-minimizing cone in $\R^{n+1}$.
By the definition \eqref{thyangle} and $\th_x>0$, we have
\begin{equation}\aligned
\lim_{i\to\infty}t_i^{-1}d(l_x(t_i),M_{y_\infty})>0.
\endaligned
\end{equation}
Hence, there are a positive constant $\de_x>0$ and a measurable set $E_x\subset M_{y_\infty}\cap B_{\de_x}(x)$ such that
\begin{equation}\aligned
\inf_{y\in E_x}\th_y\ge\de_x\qquad \mathrm{and} \qquad \mathcal{H}^n(E_x)>\de_x^{n+1}.
\endaligned
\end{equation}
With co-area formula, there are a constant $\de_x'\in(0,\de_x)$ and a measurable set $I_x\subset(d(o,x)-\de_x,d(o,x)+\de_x)$ with $\mathcal{H}^1(I_x)\ge\de_x'$ such that
\begin{equation}\aligned\label{BdeMyinfty***}
\mathcal{H}^n\left(B_{\de}(M_{y_\infty})\cap\p B_\tau(o)\right)\ge(1+\de_x')\mathcal{H}^n\left(B_{\de}(M_{y_\infty}\cap\p B_\tau(o))\cap\p B_\tau(o)\right).
\endaligned
\end{equation}
From \eqref{BdeMyinfty***} and the argument \eqref{BdeMinftyy}-\eqref{HnMinftyM*rs}, for any $0<s<r$ we have
\begin{equation}\aligned
&\mathcal{H}^n\left(M_{y_\infty}\cap \overline{B_{r}(o)}\right)-\mathcal{H}^n\left(M_{y_\infty}\cap B_{s}(o)\right)\\
\ge&(1+\de_x')\int_{I_x}\mathcal{M}_*\left(M_{y_\infty}\cap\p B_{\tau}(o),\p B_{\tau}(o)\right)d\tau+\int_{(s,r)\setminus I_x}\mathcal{M}_*\left(M_{y_\infty}\cap\p B_{\tau}(o),\p B_{\tau}(o)\right)d\tau.
\endaligned
\end{equation}
By following the argument of the proof of Lemma \ref{pM+Minfty}, the above inequality contradicts to \eqref{620epj}.
Hence, we conclude that $M_{y_\infty}$ is a metric cone, which completes the proof.
\end{proof}

The complexity of the proof of Lemma \ref{pM+Minfty} comes from no information of the topology of the metric cone $C$. 
However, if  the cone has trivial topology, then Lemma \ref{pM+Minfty} and Theorem \ref{CovconeC} can hold for a more general case in the following remark.
\begin{remark}\label{Monoalmost}
Let $N_i$ be a sequence of $(n+1)$-dimensional complete non-compact manifolds with 
$\mathrm{Ric}\ge-n\ep^2$ on $B_{1}(q_i)\subset N_i$ and with
$\mathcal{H}^{n+1}(B_1(q_i))\ge (1-\ep)\omega_{n+1}$
for some constant $\ep\in(0,1)$.
We suppose that $\overline{B_1(q_i)}$ converges to a metric ball $\overline{B_1(q_\infty)}$ in the Gromov-Hausdorff sense.
For each integer $i\ge1$, let $M_i\subset N_i$ be an area-minimizing hypersurface in $B_1(q_i)$ with $\p M_i\subset\p B_1(q_i)$. Suppose that $M_i$ converges to a closed set $M_\infty\subset \overline{B_1(q_\infty)}$ in the induced Hausdorff sense.

From Theorem A.1.8 in \cite{CCo1}, for small $\ep>0$, any tangent cone of $B_1(q_\infty)$ has trivial topology. 
Then \eqref{M*CMINFtoge} holds in this new situation directly from the proof of Proposition 4.2 in \cite{D}. We follow the argument in the proof of Lemma \ref{pM+Minfty} starting from \eqref{M*CMINFtoge}, and get \eqref{t1t2HnMpinfty} for $M_\infty$ here, as well as Theorem \ref{CovconeC} in our new situation.
\end{remark}

\section{Sobolev and Poincar\'e inequalities on minimal graphs over manifolds}

Let $\Si$ be an $n$-dimensional smooth complete noncompact Riemannian manifold with
\begin{equation}\aligned\label{Ric}
\mathrm{Ric}\ge-(n-1)\k^2\qquad\qquad \mathrm{on}\ \ B_2(p)
\endaligned
\end{equation}
for some constant $\k\ge0$.
Let $M\subset\Si\times\R$ be a minimal graph over $B_2(p)\subset\Si$ with $\bar{p}=(p,0)\in M$ and $\p M\subset\p B_2(p)\times\R$. From De Giorgi \cite{Dg},  $M$ is smooth.
From Bishop-Gromov volume comparison, Lemma 3.1 in \cite{D}, and Lemma 3.3 in \cite{D}, there is a positive constant $\be_{\k}\le\omega_n^{-\f12}$ depending only on $n,\k$ so that
\begin{equation}\aligned\label{ben***}
\be_{\k}\mathcal{H}^{n}\left(B_{1}(p)\right)r^n\le\mathcal{H}^n(M\cap B_r(z))\le\f{r^n}{\be_{\k}}
\endaligned
\end{equation}
for any $B_r(z)\subset B_2(p)\times\R$ with $z\in M\subset\Si\times\R$.
We further assume
\begin{equation}\aligned\label{Volv}
\mathcal{H}^{n}(B_1(p))\ge v
\endaligned
\end{equation}
for some constant $v\in(0,\omega_n]$.

Now let us prove the isoperimetric inequality on minimal graphs over manifolds.
\begin{theorem}\label{ISOM}
Let $S$ be an open set in $M\cap B_{1}(\bar{p})$ with $(n-1)$-rectifiable boundary $\p S$. Then the isoperimetric inequality holds:
\begin{equation}\aligned\label{Iso***}
\left(\mathcal{H}^n(S)\right)^{\f{n-1}n}\le \Th_{\k,v} \mathcal{H}^{n-1}(\p S),
\endaligned
\end{equation}
where $\Th_{\k,v}$ is a positive constant depending only on $n,\k,v$.
\end{theorem}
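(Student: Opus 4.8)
The plan is to prove the equivalent isoperimetric inequality by contradiction, reducing it through a point‑selection/blow‑up argument together with Cheeger--Colding theory and the results of \S3--\S4 to the relative isoperimetric inequality for area‑minimizing hypersurfaces in almost Euclidean balls (Theorem~\ref{BRpNf1RM}). So suppose \eqref{Iso***} fails: there are manifolds $\Si_i$ satisfying \eqref{ConditionSi} (with the same constants $\k,v$), minimal graphs $M_i$ over $B_2(p_i)$ with $\bar p_i=(p_i,0)\in M_i$ and $\partial M_i\subset\partial B_2(p_i)\times\R$, and open sets $S_i\subset M_i\cap B_1(\bar p_i)$ with $(n-1)$‑rectifiable boundary obeying $\left(\mathcal{H}^n(S_i)\right)^{\frac{n-1}{n}}>i\,\mathcal{H}^{n-1}(\partial S_i)$. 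Using the uniform volume bounds \eqref{ben***} and Bishop--Gromov, I would first run the covering argument of Lemma~\ref{iso-sub}: for $\mathcal{H}^n$‑a.e.\ $x\in S_i$ take the smallest $r_x$ with $\mathcal{H}^n(S_i\cap B_{r_x}(x))=\frac12\mathcal{H}^n(M_i\cap B_{r_x}(x))$, note $r_x\le c(n,\k,v)\left(\mathcal{H}^n(S_i)\right)^{1/n}$, apply Besicovitch covering, and obtain a point $p_{*,i}$ and a radius $\rho_i=r_{p_{*,i}}\le c(n,\k,v)$ with $\mathcal{H}^{n-1}(\partial S_i\cap B_{\rho_i}(p_{*,i}))\le c(n,\k,v)\,i^{-1}\mathcal{H}^n(S_i\cap B_{\rho_i}(p_{*,i}))$. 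Rescaling by $1/\rho_i$ and running the outward‑minimizing construction of Lemmas~\ref{Sob-Coarea}--\ref{BRpNf1RM} adapted to minimal graphs (which only uses \eqref{ben***} and Bishop--Gromov), I obtain minimal graphs $\hat M_i$ over balls in non‑collapsed manifolds $\hat\Si_i$ with $\mathrm{Ric}\ge-(n-1)\k^2\rho_i^2$ and outward‑minimizing open sets $\hat V_i\subset\hat M_i\cap B_1$ with $(\,\cdot\,,0)\in\partial\hat V_i$, $\mathcal{H}^n(\hat V_i\cap B_r)\ge\frac12\mathcal{H}^n(\hat M_i\cap B_r)$ for $r\le1$ with equality at $r=1$, $\min\{\mathcal{H}^n(\hat V_i),\mathcal{H}^n(\hat M_i\cap B_1\setminus\hat V_i)\}\ge c(n,\k,v)>0$, and $\mathcal{H}^{n-1}(\partial\hat V_i\cap B_1)\to0$.

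Next I would pass to the limit with the tools of \S3--\S4: the modified Gromov--Hausdorff approximations $\Phi_i$, the current compactness Theorem~\ref{FF} and Lemma~\ref{ConvcurrentPhiiMi}, together with Theorems~1.1 and 4.4 of \cite{D}, give a non‑collapsed Ricci limit $X_\infty$ (of nonnegative Ricci when $\rho_i\to0$) with $\hat\Si_i\times\R\to X_\infty\times\R$, an area‑minimizing hypersurface $M_\infty\subset X_\infty\times\R$ with $\mathcal{H}^n(M_\infty\cap B_t)=\lim_i\mathcal{H}^n(\hat M_i\cap B_t)$, and a limit $V_\infty$ supported in $M_\infty\cap\overline{B_1}$ with a point $\bar p_\infty\in\partial V_\infty$ such that, by lower semicontinuity of mass for the currents $\Phi_i([|\partial\hat V_i|])$ and by the ODE estimate \eqref{ODEEE} for the outward‑minimizing sets, $\partial V_\infty\llcorner B_1=0$, $\mathcal{H}^n(V_\infty\cap B_r)\ge\frac12\mathcal{H}^n(M_\infty\cap B_r)$, and $\mathcal{H}^n(M_\infty\cap B_r\setminus V_\infty)\ge(r/c)^n$ for all $r\le1$. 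Thus $M_\infty$ carries a ``bad configuration'': a half‑volume set with $\mathcal{H}^{n-1}$‑null current boundary whose complement still has definite volume at every scale about $\bar p_\infty$.

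Finally I would remove this configuration by iterating tangent cones. Blowing up $M_\infty$ at $\bar p_\infty$, Theorem~\ref{CovconeC} gives that a tangent cone of $M_\infty$ there is a metric cone $C'$ sitting inside a metric cone $C$ (a tangent cone of $X_\infty\times\R$), and by \cite{D1} $C'$ splits off a Euclidean factor $\R^k$ isometrically over a connected cross section, so $C'$ is connected; using the monotonicity formula of Lemma~\ref{pM+Minfty} to control volume ratios, together with the boundary and complement estimates above, the blow‑up limit $\hat V$ of $V_\infty$ is a subset of $C'$ with $0<\mathcal{H}^n(\hat V\cap B_1)<\mathcal{H}^n(C'\cap B_1)$ and $\mathcal{H}^{n-1}$‑null boundary in $B_1$. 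If $C=\R^{n+1}$ this contradicts Theorem~\ref{BRpNf1RM} (equivalently the Bombieri--Giusti relative isoperimetric inequality on the connected area‑minimizing cone $C'$). If $C\ne\R^{n+1}$, the singular set of the Ricci limit $C$ has codimension $\ge2$ and the extra singular set of the area‑minimizing $C'$ has codimension $\ge7$, so recentering $C$ at a point of lower density strictly lowers the dimension of the stratum carrying the bad configuration; after finitely many such steps one reaches $C=\R^{n+1}$ and the contradiction. The Neumann--Poincar\'e inequality \eqref{PoincareM} then follows by running the same argument for the relative isoperimetric inequality, as indicated in the introduction.

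I expect the heart of the difficulty to be this last dimension‑reduction step: ensuring at each stage that the bad configuration genuinely survives passage to the tangent cone --- positive volume, positive co‑volume, and null boundary --- which requires the monotonicity of Lemma~\ref{pM+Minfty}, the splitting/connectedness input of \cite{D1} to keep the cross section from degenerating, and the dimension bounds on the singular sets of both the ambient Ricci limit and the area‑minimizing hypersurface to guarantee that the reduction terminates in the almost Euclidean case covered by Theorem~\ref{BRpNf1RM}. A secondary technical point is checking that the covering/outward‑minimizing reductions of Lemmas~\ref{iso-sub}--\ref{BRpNf1RM} really do carry over to minimal graphs over manifolds with Ricci merely bounded below, using only \eqref{ben***} and volume comparison.
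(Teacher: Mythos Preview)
Your proposal is essentially correct and follows the same contradiction/covering/blow-up/dimension-reduction strategy as the paper, so only a few organizational differences are worth noting.

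First, in the covering step the paper does not use the ratio $\tfrac12$; it uses $\ep_{\k,v}=1-6^{-n}\be_\k^2 v$, because with only \eqref{ben***} available one must guarantee that the critical radius $r_x$ is reached strictly before the ball exits $B_{3/2}(\bar p_i)$ (see the estimate preceding \eqref{3.38}). With $\tfrac12$ this is not automatic under mere lower Ricci bounds, so your covering argument should be adjusted accordingly. Second, the paper does not invoke the $C^1$ maps $\Phi_i$ or Lemma~\ref{ConvcurrentPhiiMi} at this stage; it passes to the limit purely in the induced Hausdorff sense and introduces the measure-theoretic set $S_\infty^*$ (points where $S_i'$ retains positive mass at all scales) together with its reduced version $S^*$, proving $\mathcal{H}^n(S^*)>0$ via \eqref{Sinf*Uj}--\eqref{HnSi'ryj0}. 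The almost-Euclidean result Theorem~\ref{BRpNf1RM} is then applied \emph{pointwise at regular points} to force $\partial S^*\subset\mathcal{S}_\ep$ (see \eqref{S*inftyRep0}), rather than being reached only at the end of the iteration as you describe. Third, the dimension reduction is performed on $\partial S^*$ (already known to lie in the $\ep$-singular set): one blows up repeatedly via Theorem~\ref{CovconeC} at points of $\G_{k-1}\setminus\{o_{k-1}\}$, each step splitting an extra Euclidean factor, until $\G_k\subset\{o_k\}\times\R^k$; the connectedness input from \cite{D1} then forces $Z_\infty\subset\{o_k\}\times\R^k$, and the contradiction comes from $\mathcal{H}^n(Z_i\cap B_\tau)>0$ together with $\dim(\mathcal{S}_i\cap Y_i)\le n-2$, which forces $\partial Z_i$ to meet the $\tfrac{\ep_k}{2}$-regular set---not from eventually reaching $C=\R^{n+1}$. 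Your sketch of the reduction is correct in spirit, but the paper's bookkeeping (tracking $\partial S^*\subset\mathcal{S}_\ep$ from the outset) is what makes the induction terminate cleanly.
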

\begin{proof}
We shall prove \eqref{Iso***} by contradiction. Let $\Si_i$ be a sequence of smooth complete noncompact manifolds with $\mathrm{Ric}\ge-(n-1)\k^2$ on $B_2(p_i)$ and $\mathcal{H}^n(B_1(p_i))\ge v$.
For each $i$, let $M_i\subset\Si_i\times\R$ be a minimal graph over $B_{2}(p_i)$ with $\p M_i\subset\p B_{2}(p_i)\times\R$ and $\bar{p}_i=(p_i,0)\in M_i$,
let $S_i$ be an open set in $M_i\cap B_{1}(\bar{p}_i)$ with $(n-1)$-rectifiable boundary $\p S_i$ such that
\begin{equation}\aligned\label{HnSif1i*}
\left(\mathcal{H}^n(S_i)\right)^{\f{n-1}n}> i \mathcal{H}^{n-1}(\p S_i).
\endaligned
\end{equation}
Without loss of generality,
we can further assume that $S_i$ is an outward minimizing set in $M_i\cap B_{1}(\bar{p}_i)$ (see its definition in \eqref{ODEEE*}). 

From \eqref{ben***} and $\mathcal{H}^n(B_1(p_i))\ge v$, we have
\begin{equation}\aligned\label{HnMiBrzbe*}
\be_\k v r^n\le\mathcal{H}^n(M_i\cap B_{r}(z))\le r^n/\be_\k
\endaligned
\end{equation}
for each $z\in M_i$ and $0<r\le2-d(z,\bar{p}_i)$.
For every $z\in M_i\cap B_{1}(\bar{p}_i)$, from the triangle inequality for distance functions, there exists a point $y_z\in\p B_{\f54}(\bar{p}_i)\cap M_i$ with $B_{\f{1}4}(y_z)\subset B_{\f32-d(z,\bar{p}_i)}(z)\setminus B_{1}(\bar{p}_i)$.
Since $S_i\subset M_i\cap B_1(\bar{p}_i)$, we get
\begin{equation}\aligned
\mathcal{H}^n\left(S_i\cap B_{\f32-d(z,\bar{p}_i)}(z)\right)<\mathcal{H}^n\left(M_i\cap B_{\f32-d(z,\bar{p}_i)}(z)\right)-\mathcal{H}^n\left(M_i\cap B_{\f14}(y_z)\right).
\endaligned
\end{equation}
Combining \eqref{HnMiBrzbe*}, we obtain
\begin{equation}\aligned
\f{\mathcal{H}^n\left(S_i\cap B_{\f32-d(z,\bar{p}_i)}(z)\right)}{\mathcal{H}^n\left(M_i\cap B_{\f32-d(z,\bar{p}_i)}(z)\right)}
<&1-\f{\mathcal{H}^n\left(M_i\cap B_{\f14}(y_z)\right)}{\mathcal{H}^n\left(M_i\cap B_{\f32-d(z,\bar{p}_i)}(z)\right)}\\
\le&1-\f{\be_\k v}{4^n}\f{2^n\be_\k}{3^n}=1-\f{\be_\k^2v}{6^{n}}.
\endaligned
\end{equation}
Put $\ep_{\k,v}=1-6^{-n}\be_\k^2v$. Then $\ep_{\k,v}\ge1-6^{-n}$ from $\be_{\k}\le\omega_n^{-\f12}$ and $v\le\omega_n$. For any $z_i\in S_i$, there is a constant $r_{z_i}\in\left(0,\f32-d(z_i,\bar{p}_i)\right)$ such that
\begin{equation}\aligned\label{3.38}
\f{\mathcal{H}^n(S_i\cap B_{r_{z_i}}(z_i))}{\mathcal{H}^n(M_i\cap B_{r_{z_i}}(z_i))}=\ep_{\k,v},\quad \mathrm{and}\quad \f{\mathcal{H}^n(S_i\cap B_{r}(z_i))}{\mathcal{H}^n(M_i\cap B_{r}(z_i))}>\ep_{\k,v}\quad \mathrm{for\ all}\ 0<r<r_{z_i}.
\endaligned
\end{equation}
With \eqref{HnMiBrzbe*}, for any $0<r\le r_{z_i}$
\begin{equation}\aligned\label{Tjarn}
\ep_{\k,v}\be_\k vr^n\le\ep_{\k,v}\mathcal{H}^n(M_i\cap B_{r}(z_i))\le\mathcal{H}^n(S_i\cap B_{r}(z_i)),
\endaligned
\end{equation}
which implies
\begin{equation}\aligned\label{3.40}
r_{z_i}\le\left(\mathcal{H}^n(S_i\cap B_{r_{z_i}}(z_i))\right)^{\f 1{n}}\left(\ep_{\k,v}\be_\k v\right)^{-\f 1{n}}.
\endaligned
\end{equation}
We can choose a finite collection of geodesic balls $\{B_{r_{z_{i,j}}}(z_{i,j})\}_{j=1}^{m_i}$ in $\Si_i\times\R$ with $z_{i,j}\in S_i$ for each $i$ such that $\overline{S_i}\subset \bigcup_{j=1}^{m_i}B_{r_{z_{i,j}}}(z_{i,j})$, and the number of balls containing any point in $S_i$ satisfies
\begin{equation}\aligned\nonumber
\sharp\{j\in\{1,\cdots,m_i\}|\ y\in B_{r_{z_{i,j}}}(z_{i,j})\}\le n_\k\qquad \mathrm{for\ any}\ y\in S_i\ \ \mathrm{and}\ i\ge0,
\endaligned
\end{equation}
where $n_\k$ is a constant depending only on $n,\k$.
Combining \eqref{HnSif1i*}, we get
\begin{equation}\aligned\nonumber
&\sum_{j=1}^{m_i}\mathcal{H}^{n-1}(\p S_i\cap B_{r_{z_{i,j}}}(z_{i,j}))\le n_\k\mathcal{H}^{n-1}(\p S_i)\\
\le&\f{n_\k}i\left(\mathcal{H}^n(S_i)\right)^{1-\f 1{n}}\le\left(\mathcal{H}^n(S_i)\right)^{-\f 1{n}}\f{n_\k}i\sum_{j=1}^{m_i}\mathcal{H}^{n}(S_i\cap B_{r_{z_{i,j}}}(z_{i,j})).
\endaligned
\end{equation}
For each integer $i\ge1$, we can pick a point $\bar{\xi}_i=(\xi_i,t_{\xi_i})\in\bigcup_{j=1}^{m_i}\{z_{i,j}\}\subset S_i$ for some $\xi_i\in\Si_i$ and $t_{\xi_i}\in\R$ such that
\begin{equation}\aligned\label{3.43}
\mathcal{H}^{n-1}\left(\p S_i\cap B_{r_{\bar{\xi}_i}}(\bar{\xi}_i)\right)\le\f{n_\k}i\left(\mathcal{H}^n(S_i)\right)^{-\f 1{n}}\mathcal{H}^{n}\left(S_i\cap B_{r_{\bar{\xi}_i}}(\bar{\xi}_i)\right).
\endaligned
\end{equation}

Denote $r_i=r_{\bar{\xi}_i}$ for short. Since $r_{i}\le \f32-d(\bar{\xi}_i,\bar{p}_i)$, then $B_{\f43r_i}(\bar{\xi}_i)\subset B_2(\bar{p}_i)$ and then $B_{\f43r_i}(\xi_i)\subset B_2(p_i)$.
Let $(\Si'_i,\xi_i')=\f1{r_i}(\Si_i,\xi_i)$, then $\Si'_i$ has Ricci curvature $\ge-(n-1)\k^2r_i^2$ on $B_{\f43}(\xi_i')$ and $\mathcal{H}^n(B_1(\xi_i'))$ has a uniform positive lower bound (depending on $n,\k,v$).
From $\bar{\xi}_i\in M_i\cap (B_{1}(p_i)\times\R)$ and $\p M_i\subset\p B_{2}(p_i)\times\R$, one has $\p M_i\cap (B_{2-d(\xi_i,p_i)}(\xi_i)\times\R)=\emptyset$. 
Let $(M'_i,\bar{\xi}_i')=\f1{r_i}(M_i,\bar{\xi}_i)$ with $\bar{\xi}_i'\in M'_i$, then $M'_i$ is a minimal graph over $\f1{r_i}B_2(p_i)$ in $\Si'_i\times\R$ with $\p M'_i\cap \left(B_{\f43}(\xi_i')\times\R\right)=\emptyset$.
In particular, from \eqref{HnMiBrzbe*} it follows that
\begin{equation}\aligned\label{HnMiBrzbe***}
\be_\k v r^n\le\mathcal{H}^n(M_i'\cap B_{r}(y))\le r^n/\be_\k
\endaligned
\end{equation}
for each $y\in M_i'$ and $0<r\le\f43-d(y,\bar{\xi}_i')$.
Let $S'_i=\f1{r_i}\left(S_i\cap B_{r_i}(\bar{\xi}_i)\right)$, then $S'_i$ is outward minimizing in $M'_i\cap B_1(\bar{\xi}_i')$. From \eqref{3.38}, for any $t\in(0,1]$ one has
\begin{equation}\aligned
&\mathcal{H}^{n}(S'_i\cap B_t(\bar{\xi}_i'))=\f1{r_i^{n}}\mathcal{H}^{n}(S_i\cap B_{tr_i}(\bar{\xi}_i))\\
\ge&\f{\ep_{\k,v}}{r_i^{n}}\mathcal{H}^{n}(M_i\cap B_{tr_i}(\bar{\xi}_i))=\ep_{\k,v}\mathcal{H}^{n}(M'_i\cap B_{t}(\bar{\xi}_i')),
\endaligned
\end{equation}
and
\begin{equation}\aligned\label{417***}
\mathcal{H}^{n}(S'_i\cap B_1(\bar{\xi}_i'))=\ep_{\k,v}\mathcal{H}^{n}(M'_i\cap B_{1}(\bar{\xi}_i')).
\endaligned
\end{equation}
Moreover, combining \eqref{HnMiBrzbe*}\eqref{3.38}\eqref{3.40}\eqref{3.43} one has
\begin{equation}\aligned\label{pSixiito0}
&\mathcal{H}^{n-1}(\p S'_i\cap B_1(\bar{\xi}_i'))=\f1{r_i^{n-1}}\mathcal{H}^{n-1}(\p S_i\cap B_{r_i}(\bar{\xi}_i))\\
\le&\f{n_\k}{ir_i^{n-1}}\left(\mathcal{H}^n(S_i)\right)^{-\f 1{n}}\mathcal{H}^{n}(S_i\cap B_{r_i}(\bar{\xi}_i))
=\f{n_\k\ep_{\k,v}}{ir_i^{n-1}}\left(\mathcal{H}^n(S_i)\right)^{-\f 1{n}}\mathcal{H}^{n}(M_i\cap B_{r_i}(\bar{\xi}_i))\\\le&\f{n_\k\ep_{\k,v}}{ir_i^{n-1}}\f{r_i^{n}}{\be_\k}\left(\mathcal{H}^n(S_i)\right)^{-\f 1{n}}=\f{n_\k\ep_{\k,v}}{i\be_\k}r_i\left(\mathcal{H}^n(S_i)\right)^{-\f 1{n}}\le\f{n_\k}{i}\ep_{\k,v}^{1-\f1n}\be_\k^{-1-\f1n}v^{-\f 1{n}}.
\endaligned
\end{equation}

Up to the choice of the subsequence, there is a complete metric space $\Si_\infty$ so that $(\Si_i'\times\R,\bar{\xi}_i')$ converges to $(\Si_\infty\times\R,\xi_\infty)$ in the pointed Gromov-Hausdorff sense. 
Let $V_i^+$ and $V_i^-$ be the two disjoint open sets in $B_{1}(\bar{\xi}_i')$ such that $V_i^+\cup V_i^-=B_1(\bar{\xi}_i')\setminus M_i'$, $V_i^+$ is above $M_i'$ , and $V_i^-$ is below $M_i'$.
Moreover, there are closed sets $M_\infty$, $S_\infty$, $T_\infty$, $\G_\infty$, $V_\infty^+$, $V_\infty^-$ in $\Si_\infty\times\R$ so that (up to the choice of the subsequence) $M'_i\cap B_{\f43}(\bar{\xi}_i'),S'_i,M'_i\cap B_1(\bar{\xi}_i')\setminus S'_i$, $\p S'_i\cap B_1(\bar{\xi}_i')$, $V_i^+,V_i^-$ converges to $M_\infty$, $S_\infty$, $T_\infty$, $\G_\infty$, $V_\infty^+$, $V_\infty^-$ all in the induced Hausdorff sense, respectively. It's not hard to see that $\G_\infty\subset S_\infty\cap T_\infty$, $\p S_\infty\cap\p T_\infty\cap B_{1}(\xi_\infty)\subset\G_\infty$, $\p V_\infty^+\cap\p V_\infty^-\cap B_{1}(\xi_\infty)=M_\infty\cap B_{1}(\xi_\infty)$.
For any $x\in S_\infty$, we say $x\in S_\infty^*$ if there is a sequence $x_i\in S_i'$ with $x_i\to x$ so that 
\begin{equation}\aligned
\limsup_{i\to\infty}\mathcal{H}^{n}\left(S_i'\cap B_{\tau}(x_i)\right)>0\qquad \mathrm{for\ any}\  \tau>0.
\endaligned
\end{equation}
On the contrary, if $y\in S_\infty\setminus S_\infty^*$, then for any sequence $y_i\in S_i'$ with $y_i\to y$ there is a constant $\tau_y>0$ such that
\begin{equation}\aligned\label{HnSi'ry0}
\lim_{i\to\infty}\mathcal{H}^{n}\left(S_i'\cap B_{\tau_y}(y_i)\right)=0.
\endaligned
\end{equation}

Now we follow the argument of the proof of Theorem \ref{BRpNf1RM}, and get that 
$S_\infty^*$ is a closed set in $S_\infty$.
Moreover, for any $\ep>0$, there is a finite open covering $\{U_j\}_{j=1}^{N_\ep}$ of $S_\infty^*$ so that 
\begin{equation}\aligned\label{Sinf*Uj}
\mathcal{H}^{n}\left(S_\infty^*\right)\ge2^{-n}\omega_n\sum_{j=1}^{N_\ep}(\mathrm{diam}U_j)^n-\ep.
\endaligned
\end{equation}
Since $S_\infty$ is closed, then by finite covering lemma again, there is a sequence of open balls $\{B_{\tau_{q_j}}(q_j)\}_{j=1}^{m}$ with $q_j\in S_\infty\setminus S_\infty^*$ and $B_{\tau_{q_j}}(q_j)\cap S_\infty^*=\emptyset$ such that $$S_\infty\subset\bigcup_{j=1}^{m}B_{\tau_{q_j}}(q_j)\cup\bigcup_{j=1}^{N_\ep}U_j$$
and
\begin{equation}\aligned\label{HnSi'ryj0}
\lim_{i\to\infty}\mathcal{H}^{n}\left(S_i'\cap B_{\tau_{q_j}}(q_{i,j})\right)=0\qquad \mathrm{for\ each}\ j=1,\cdots,m,
\endaligned
\end{equation}
where $\{q_{i,j}\}_{j=1}^m$ is a finite sequence in $S_i'$ with $q_{i,j}\to q_j$ as $i\to\infty$. Let $U_{i,j}\subset B_1(\xi_i)$ be an open set with $U_{i,j}\to U_j$ as $i\to\infty$ such that
$$S_i'\subset\bigcup_{j=1}^{m}B_{\tau_{q_j}}(q_{i,j})\cup\bigcup_{j=1}^{N_\ep}U_{i,j}.$$
Clearly, for each $i,j$ there is a ball $B_{\mathrm{diam}U_j}(q_{i,j}')\subset \Si_i'\times\R$ with $U_{i,j}\subset B_{\mathrm{diam}U_j}(q_{i,j}')$. Then
\begin{equation}\aligned
\mathcal{H}^{n}\left(S_i'\right)\le&\sum_{j=1}^{m}\mathcal{H}^{n}\left(S_i'\cap B_{\tau_{q_j}}(q_{i,j})\right)+\sum_{j=1}^{N_\ep}\mathcal{H}^{n}\left(S_i'\cap U_{i,j}\right)\\
\le&\sum_{j=1}^{m}\mathcal{H}^{n}\left(S_i'\cap B_{\tau_{q_j}}(q_{i,j})\right)+\sum_{j=1}^{N_\ep}\mathcal{H}^{n}\left(M_i'\cap B_{\mathrm{diam}U_j}(q_{i,j}')\right).
\endaligned
\end{equation}
Combining \eqref{HnMiBrzbe***}\eqref{417***}\eqref{Sinf*Uj}\eqref{HnSi'ryj0}, we get
\begin{equation}\aligned
\ep_{\k,v}\be_\k v\le&\ep_{\k,v}\limsup_{i\to\infty}\mathcal{H}^{n}(M'_i\cap B_{1}(\bar{\xi}_i'))\le
\limsup_{i\to\infty}\mathcal{H}^{n}\left(S_i'\right)\\
\le&\limsup_{i\to\infty}\sum_{j=1}^{N_\ep}\mathcal{H}^{n}\left(M_i'\cap B_{\mathrm{diam}U_j}(q_{i,j}')\right)\\
\le&\sum_{j=1}^{N_\ep}(\mathrm{diam}U_j)^n/\be_\k\le\f{2^n}{\omega_n\be_\k}\left(\mathcal{H}^{n}\left(S_\infty^*\right)+\ep\right).
\endaligned
\end{equation}
Letting $\ep\to0$ in the above inequality implies
\begin{equation}\aligned
\mathcal{H}^{n}\left(S_\infty^*\right)\ge2^{-n}\ep_{\k,v}\be_\k^2\omega_n v.
\endaligned
\end{equation}
By the definition of $S_\infty^*$, it follows that $\p S_\infty^*\subset\G_\infty$, and
\begin{equation}\aligned
 \mathcal{H}^n\left(S_\infty^*\cap\overline{B_\tau(x)}\right)\ge\limsup_{i\to\infty}\mathcal{H}^{n}\left(S_i'\cap \overline{B_{\tau}(x_i)}\right)>0
\endaligned
\end{equation}
from Theorem 5.4 in \cite{D} for any $x\in S_\infty^*\cap B_1(\xi_\infty)$, $0<\tau<1-d(x,\xi_\infty)$ and $S_i'\ni x_i\to x$.
 

Let $\mathcal{S}$ denote the singular set of $B_1(\xi_\infty)$.
For any $\ep\in(0,\f12]$, let $\mathcal{R}_{\ep}$ denote the \emph{$\ep$-regular} set of $B_1(\xi_\infty)$ defined by
\begin{equation}\aligned\label{RSep}
\mathcal{R}_{\ep}=\left\{x\in B_1(\xi_\infty)\Big|\,\sup_{0<s\le r}s^{-1}d_{GH}(B_s(x),B_s(0^{n+1}))<\ep \ \mathrm{for\ some}\ r>0\right\},
\endaligned
\end{equation}
and $\mathcal{S}_{\ep}=B_1(\xi_\infty)\setminus\mathcal{R}_{\ep}$ be the \emph{$\ep$-singular} set of $B_1(\xi_\infty)$. Clearly, $\mathcal{S}_{\ep}\subset\mathcal{S}$ is closed.
From Theorem 1.2 in \cite{D},  $\mathcal{S}\cap M_\infty$ has Hausdorff dimension $\le n-2$. 
Suppose there is a point $x^*\in S_\infty^*\cap\G_\infty\cap\mathcal{R}_\ep$. 
For the suitably small $\ep>0$, using \eqref{ODEEE} 
there is a sequence $x_i^*\in\p S_i'$ with $x_i^*\to x^*$ so that 
\begin{equation}\aligned\label{Si'r01}
\limsup_{i\to\infty}\min\left\{\mathcal{H}^{n}\left(S_i'\cap B_{\tau}(x_i^*)\right),\mathcal{H}^{n}\left(B_{\tau}(x_i^{*})\cap M_i'\setminus S_i'\right)\right\}>0\qquad \mathrm{for\ any}\  \tau>0.
\endaligned
\end{equation}
From \eqref{pSixiito0}, for any small $\tau>0$
\begin{equation}\aligned\label{Si'r03}
\lim_{i\to\infty}\mathcal{H}^{n-1}(\p S'_i\cap B_{\tau}(x_i^*))=0.
\endaligned
\end{equation}
However, \eqref{Si'r01}\eqref{Si'r03} contradict to Theorem \ref{BRpNf1RM} for the suitably small $\ep>0$. Hence we obtain
\begin{equation}\aligned\label{S*inftyRep0}
S_\infty^*\cap\G_\infty\cap\mathcal{R}_\ep=\emptyset, \ i.e. \ , S_\infty^*\cap\G_\infty\cap B_1(\xi_\infty)\subset\mathcal{S}_\ep. 
\endaligned
\end{equation}

Let $V$ be a component of $V_\infty^+\setminus\p V_\infty^+$ or $V_\infty^-\setminus\p V_\infty^-$ so that $S_\infty^*\subset\p V$. 
Given a small $\de>0$, a point $y\in S_\infty^*\cap\G_\infty\cap B_1(\xi_\infty)$ and a constant $r\in(0,1-d(y,\xi_\infty))$. Denote $r_y=1-d(y,\xi_\infty)$.
There is a finite collection of balls $\{B_{s_k}(x_{\de,k})\}_{k=1}^{N_{\de}}$ in $B_{2}(\xi_\infty)$ so that $\mathcal{S}_\ep\cap M_\infty\subset\cup_{k=1}^{N_{\de}}B_{ts_k}(x_{\de,k})\triangleq U_{t,\de}$ for each $t\in[1,2]$ and 
$$\sum_{k=1}^{N_{\de}}s_k^{n-3/2}<\de r^n.$$
Let $x_{\de,k,i}\in B_1(\bar{\xi}_i')$ with $x_{\de,k,i}\to x_{\de,k}$, and $y_{i}\in M_i'$ with $y_{i}\to y$.
Denote $U_{t,\de,i}=\cup_{k=1}^{N_{\de}}B_{ts_k}(x_{\de,k,i})$ for each $i\in\mathbb{N}$, and
$$V_{M_\infty,y,r}=(M_\infty\cap B_{r_y}(y))\cup(\overline{V}\cap\p B_r(y))\setminus(\p V\cap B_r(y)).$$
There is a sequence of countably $n$-rectifiable sets $R_{r,\de,t,i}$ in $B_{4r_y/3}(y_i)$ so that $R_{r,\de,t,i}=M_i'$ outside $B_{r_y}(y_i)$, $\p R_{r,\de,t,i}\cap B_{r_y}(y_i)\subset U_{t,\de,i}$ and $R_{r,\de,t,i}\cap B_{r_y}(y_i)$ converges to 
$V_{M_\infty,y,r}\setminus U_{t,\de}$.
Then we follow the argument of the proof of Lemma \ref{pM+Minfty} with the help of Lemma \ref{ExistcurrentSi} and the co-area formula, and can get a sequence of countably $n$-rectifiable sets $R_{r,i}\subset B_{4r_y/3}(y_i)$ such that
$R_{r,i}=M_i'$ outside $B_{r_y}(y_i)$, $\p R_{r,i}\cap B_{r_y}(y_i)=\emptyset$ and $R_{r,i}\cap B_{r_y}(y_i)$ converges to a closed set $R_{r,\infty}$ in $\overline{B_1(\xi_\infty)}$ with $V_{M_\infty,y,r}\subset R_{r,\infty}$ and $R_{r,\infty}\setminus V_{M_\infty,y,r}\subset \mathcal{S}_{\ep}$. Combining Theorem 1.3 in \cite{CN} by Cheeger-Naber and the proof of Proposition 4.2 in \cite{D}, the lower $n$-dimensional Minkowski contents satisfy(compared \eqref{M*CMINFtoge})
\begin{equation}\aligned
\mathcal{M}_*(\p V\cap B_r(y)),B_r(y))\le\mathcal{M}_*(V\cap\p B_r(y),B_r(y)) \qquad \mathrm{for\ almost\ every}\ r\in(0,r_y).
\endaligned
\end{equation}
With Corollary 5.5 in \cite{D}, we get
\begin{equation}\aligned\label{pVBrpBrV}
\mathcal{H}^n(\p V\cap B_r(y))\le\mathcal{H}^n(V\cap\p B_r(y)).
\endaligned
\end{equation}
Using the proof of Lemma 3.5 in \cite{D}, from \eqref{pVBrpBrV} we have
\begin{equation}\label{nondegenerate*V}
\mathcal{H}^{n+1}(V\cap B_r(y))\ge\de_{n,\k}r^{n+1}
\end{equation}
for any $r\in(0,r_y)$. Here, $\de_{n,\k}$ is a positive constant depending only on $n,\k$.

Now we fix the suitably small $\ep>0$ so that \eqref{S*inftyRep0} holds. For any point $y^*_0\in S_\infty^*\cap\G_\infty\cap B_1(\xi_\infty)$, from Theorem \ref{CovconeC}
there is a sequence $t_{0,i}\to0$ such that $\f1{t_{0,i}}(B_1(\xi_\infty),y^*_0)$ converges to a metric cone $(C_0,o_0)$ in the pointed Gromov-Hausdorff sense, and
$\f1{t_{0,i}}(M_\infty,y^*_0)$ converges in the induced Hausdorff sense to a metric cone $(C'_0,o_0)$ with $C'_0\subset C_0$.
Up to a choice of a subsequence, we further assume that
$\f1{t_{0,i}}(S_\infty^*\cap\G_\infty,y^*_0)$ converges in the induced Hausdorff sense to $(\G_0,o_0)$ for some closed set $\G_0\subset C'_0$. 
For any sequence $z^*_i\in\f1{t_{0,i}}(\mathcal{S}_\ep,y^*_0)$, if $z^*_i$ converges to a point $z^*_0\in C_0$, then by Bishop-Gromov volume comparison and volume convergence by Cheeger-Colding \cite{CCo1}, there is a constant $\ep_0\in(0,\ep]$ so that $z^*_0$ belongs to the $\ep_0$-singular set of $C_0$. With \eqref{S*inftyRep0}, this implies that $\G_0$ belongs to the $\ep_0$-singular set of $C_0$.

If there is a point $y^*_1\in \G_0\setminus\{o_0\}$, then we blow $C_0,C'_0,\G_0$ up at $y^*_1$. More precisely, from Theorem \ref{CovconeC} and \eqref{S*inftyRep0}, there are a sequence $t_{1,i}\to0$ such that $\f1{t_{1,i}}(C_0,y^*_1)$ converges to a metric cone $(C_1\times\R,o_1)$ in the Gromov-Hausdorff sense,
$\f1{t_{1,i}}(C'_0,y^*_1)$ converges in the induced Hausdorff sense to a metric cone $(C'_1\times\R,o_1)$ with $C'_1\subset C_1$ and
$\f1{t_{1,i}}(\G_0,y^*_1)$ converges in the induced Hausdorff sense to $(\G_1,o_1)$ for some closed set $\G_1\subset C'_1\times\R$ with $\G_1$ belonging to the $\ep_1$-singular set of $C_1\times\R$ for some $\ep_1\in(0,\ep_0]$.
If there is a point $y^*_2\in \G_1\setminus\{o_1\}$, then we can blow $C_1,C'_1,\G_1$ up at $y^*_2$ further.
By induction, for some integer $k\in\{1,\cdots,n-1\}$
there are a sequence $t_{k,i}\to0$ such that $\f1{t_{k,i}}(C_{k-1}\times\R^{k-1},y^*_{k-1})$ converges to a metric cone $(C_k\times\R^k,o_k)$ in the Gromov-Hausdorff sense,
$\f1{t_{k,i}}(C'_{k-1}\times\R^{k-1},y^*_{k-1})$ converges in the induced Hausdorff sense to a metric cone $(C'_k\times\R^k,o_k)$ with $C'_k\subset C_k$ and
$\f1{t_{k,i}}(\G_{k-1},y^*_k)$ converges in the induced Hausdorff sense to $(\G_k,o_k)$ for some closed set $\G_k\subset\{o_k\}\times\R^k$ belonging to the $\ep_k$-singular set of $C_k\times\R^k$ for some $\ep_k\in(0,\ep_{k-1}]$.

By the above construction, for each integer $i\ge1$ there are a point $\e_i$, 4 compact metric spaces $Z_i^\G,Z_i,Y_i,X_i$, which are a finite times of scalings of $S_\infty^*\cap\G_\infty,S_\infty^*,M_\infty,\overline{B_1(\xi_\infty)}$  (w.r.t. $(k+1)$-points $y_0^*,\cdots,y_k^*$)  respectively, satisfying $\e_i\in Z_i^\G\subset Z_i\subset Y_i\subset X_i$ such that 
$Z_i^\G$ contains no points of $\f{\ep_k}2$-regular set of $X_i$, $\mathcal{S}_i\cap Y_i$ has the Hausdorff dimension $\le n-2$ (see Theorem 1.2 in \cite{D}) with $\mathcal{S}_i$ denoting the singular set of $X_i$, $(X_i,\e_i)\to(C_k\times\R^k,o_k)$ in the pointed Gromov-Hausdorff sense, and $(Y_i,\e_i)\to(C'_k\times\R^k,o_k)$, $(Z_i,\e_i)\to(Z_\infty,o_k)$, $(Z_i^\G,\e_i)\to(\G_k,o_k)$ all in the induced Hausdorff sense for some closed set $Z_\infty\subset C'_k\times\R^k$. 
On the other hand, there are a sequence $R_i\to\infty$, a sequence of smooth complete noncompact manifolds $\Si_i^*$ with $\mathrm{Ric}\ge-(n-1)\k^2R_i^{-2}$ on $B_{R_i}(p_i^*)\subset\Si_i^*$ and $\liminf_{i\to\infty}\mathcal{H}^n(B_1(p_i^*))>0$, a sequence of minimal graphs $M_i^*$ over $B_{R_i}(p_i^*)$ in $\Si_i^*\times\R$ with $\p M_i^*\subset\p B_{R_i}(p_i^*)\times\R$ and $\bar{p}_i^*=(p_i^*,0)\in M_i^*$,
such that $(\Si_i^*\times\R,\bar{p}_i^*)$ converges to $(C_k\times\R^k,o_k)$ in the Gromov-Hausdorff sense, and $(M_i^*,\bar{p}_i^*)$ converges to $(C'_k\times\R^k,o_k)$ in the induced Hausdorff sense. 
From the area-minimizing $M_i^*$, we conclude that the cone $C'_k$ has dimension $\ge2$ (see also the proof of Lemma 6.5 in \cite{D}).
Since the boundary of $Z_\infty$ in $C'_k\times\R^k$ satisfies $\p Z_\infty\subset\G_k\subset\{o_k\}\times\R^k$, we conclude that $\p Z_\infty=\emptyset$ if $Z_\infty\setminus\{o_k\}\times\R^k\neq\emptyset$. 
With \eqref{nondegenerate*V}, it follows that $Z_\infty\setminus\{o_k\}\times\R^k\neq\emptyset$.
From Theorem 6.6 in \cite{D1}, the cross section of $C'_k$ is connected in $C_k$. Thus we can deduce 
\begin{equation}\aligned\label{Zinftyis***}
Z_\infty=C_k'\times\R^k.
\endaligned
\end{equation}
Analogously, we consider $Y_i\setminus Z_i$ instead of $Z_i$, and can deduce $(Y_i\setminus Z_i,\e_i)\to(C'_k\times\R^k,o_k)$ using Theorem 6.6 in \cite{D1} and \eqref{nondegenerate*V}.
This contradicts to \eqref{Zinftyis***}. We complete the proof.
\end{proof}
\begin{remark}\label{Sobalmost}
Let $N$ be an $(n+1)$-dimensional smooth complete noncompact Riemannian manifold with
$\mathrm{Ric}\ge-n\ep^2$ on $B_2(q)\subset N$ and $\mathcal{H}^{n+1}(B_1(q))\ge(1-\ep)\omega_{n+1}$ for a constant $\ep\in(0,1)$.
Let $M$ be an area-minimizing hypersurface in $B_2(q)$ with $q\in M$ and $\p M\subset\p B_2(q)$.
With Remark \ref{Monoalmost}, we find that the proof of Theorem \ref{ISOM} works for every area-minimizing hypersurface $M$ in $N$ provided $\ep>0$ is suitably small. 
Namely, 
there are constants $\ep>0$ and $\Th_*>0$ depending only on $n$ so that if $S$ is an open set in $M\cap B_{1}(p)$ with $(n-1)$-rectifiable boundary $\p S$, then
\begin{equation}\aligned
\left(\mathcal{H}^n(S)\right)^{\f{n-1}n}\le \Th_* \mathcal{H}^{n-1}(\p S).
\endaligned
\end{equation}
\end{remark}

With the isoperimetric inequality (Theorem \ref{ISOM}), after a standard argument we can get the Sobolev inequality on minimal graphs as follows.
\begin{theorem}\label{SobMinG}
Let $\Si$ be an $n$-dimensional smooth complete noncompact manifold with \eqref{Ric} and \eqref{Volv}. If $M$ is a minimal graph over $B_{2}(p)$ with $\p M\subset\p B_{2}(p)\times\R$ and $\bar{p}=(p,0)\in M$, then for any function $f\in C_0^{1}(M\cap B_1(\bar{p}))$, one has
\begin{equation}\aligned\label{Sob}
\left(\int_{ M}|f|^{\f n{n-1}}d\mu\right)^{\f{n-1}n}\le\Th_{\k,v}\int_{M}|\na f|d\mu,
\endaligned
\end{equation}
where $\Th_{\k,v}$ is the constant in Theorem \ref{ISOM}, $d\mu$ is the volume element of $M$, $\na$ is the Levi-Civita connection on $M$ compatible to the induced metric from $\Si\times\R$.
\end{theorem}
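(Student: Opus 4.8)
The plan is to derive the Sobolev inequality \eqref{Sob} from the isoperimetric inequality \eqref{Iso*} via the standard coarea/level-set argument of Federer--Fleming.** First I would reduce to the case $f \ge 0$ by replacing $f$ with $|f|$, which is Lipschitz with compact support in $M \cap B_1(\bar p)$ and satisfies $|\na |f|| = |\na f|$ a.e. Then for $t > 0$ set $A_t = \{x \in M : f(x) > t\}$, an open set with compact support contained in $M \cap B_1(\bar p)$; by Sard's theorem together with the smoothness of $M$ (De Giorgi), for almost every $t$ the superlevel set $A_t$ has $(n-1)$-rectifiable (indeed smooth) boundary $\p A_t = \{f = t\}$ in $M$, so Theorem \ref{ISOM} applies and gives
\begin{equation*}
\left(\mathcal{H}^n(A_t)\right)^{\frac{n-1}{n}} \le \Th_{\k,v}\, \mathcal{H}^{n-1}(\{f = t\}) \qquad \text{for a.e. } t > 0.
\end{equation*}

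**Next I would integrate this inequality in $t$ and apply the coarea formula on $M$.** Writing $\|f\|_{n/(n-1)} = \left(\int_M |f|^{n/(n-1)}\, d\mu\right)^{(n-1)/n}$, the layer-cake representation gives the classical inequality $\|f\|_{n/(n-1)} \le \int_0^\infty \left(\mathcal{H}^n(A_t)\right)^{(n-1)/n}\, dt$ (this is exactly the statement that the $L^{n/(n-1)}$ norm of $f$ is bounded by the ``$L^1$ in $t$ of the $L^{n/(n-1)}$ norm of characteristic functions of level sets'', a one-line consequence of Minkowski's integral inequality since $n/(n-1) \ge 1$). Combining with the integrated isoperimetric bound and then the coarea formula $\int_0^\infty \mathcal{H}^{n-1}(\{f = t\})\, dt = \int_M |\na f|\, d\mu$, I obtain
\begin{equation*}
\left(\int_M |f|^{\frac{n}{n-1}}\, d\mu\right)^{\frac{n-1}{n}} \le \int_0^\infty \left(\mathcal{H}^n(A_t)\right)^{\frac{n-1}{n}}\, dt \le \Th_{\k,v} \int_0^\infty \mathcal{H}^{n-1}(\{f = t\})\, dt = \Th_{\k,v} \int_M |\na f|\, d\mu,
\end{equation*}
which is \eqref{Sob}.

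**I expect the only genuine subtlety to be the justification that Theorem \ref{ISOM} is applicable to $A_t$ for a.e. $t$.** Theorem \ref{ISOM} requires $S$ to be an \emph{open} subset of $M \cap B_1(\bar p)$ with $(n-1)$-rectifiable boundary; openness of $A_t$ is immediate from continuity of $f$, and for a.e. $t$ the implicit function theorem (valid since $M$ is smooth and $f \in C^1$) makes $\{f = t\}$ a smooth hypersurface in $M$, hence $(n-1)$-rectifiable, and by compact support of $f$ it is contained in $M \cap B_1(\bar p)$. One should also note $\mathcal{H}^{n-1}(\{f=t\}) < \infty$ for a.e. $t$ by the coarea formula, so the integrals above are finite and the manipulations are legitimate. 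The remaining steps --- the layer-cake/Minkowski inequality and the coarea formula on the Riemannian manifold $M$ --- are entirely standard and require no new input beyond what is already available.
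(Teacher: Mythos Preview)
Your proposal is correct and is precisely the ``standard argument'' the paper alludes to without writing out: the paper simply states that the Sobolev inequality follows from the isoperimetric inequality of Theorem \ref{ISOM} by a standard argument, and your Federer--Fleming coarea/layer-cake derivation is exactly that argument.
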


For showing Neumann-Poincar\'e inequality on minimal graphs, we need the following estimate.
\begin{lemma}\label{Cheeger***}
Let $\Si,M$ be as in Theorem \ref{SobMinG}.
There is a positive constant $\Th_{\k,v}'\ge1$ depending only on $n,\k,v$ such that
\begin{equation}\aligned
\mathcal{H}^{n-1}(\p S\cap B_1(\bar{p}))\ge\f1{2\Th_{\k,v}}\left(\min\{\mathcal{H}^n(S\cap B_{1/\Th_{\k,v}'}(\bar{p})),\mathcal{H}^n(M\cap B_{1/\Th_{\k,v}'}(\bar{p})\setminus S)\}\right)^{\f{n-1}n}
\endaligned
\end{equation}
for each $n$-rectifiable set $S\subset M\cap B_1(\bar{p})$ with $\mathcal{H}^n(S)>0$.
\end{lemma}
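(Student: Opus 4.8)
This is the relative (Cheeger--type) isoperimetric inequality on the minimal graph $M$, and it stands to the isoperimetric inequality of Theorem~\ref{ISOM} exactly as Theorem~\ref{BRpNf1RM} stands to Lemma~\ref{Rfiso}. The plan is to prove it by contradiction along the lines of the proof of Theorem~\ref{ISOM}, but carrying the complement $M\cap B_r(\bar p)\setminus S$ along with $S$. Besides the ingredients already used there, I would use: a scale-invariant form of the isoperimetric inequality of Theorem~\ref{ISOM}, valid on $M\cap B_t(z)$ for every $t\le1$ with a constant depending only on $n,\k,v$ (obtained by rescaling, since the rescaled metric still satisfies \eqref{Ric} and, by Bishop--Gromov, a volume lower bound of the type \eqref{Volv}); the ODE estimate \eqref{ODEEE} for outward-minimizing sets, now applied to both $S$ and to $M\setminus S$; Theorem~\ref{BRpNf1RM} (equivalently its consequence for area-minimizing cones in $\R^{n+1}$) and the Euclidean Neumann--Poincar\'e inequality of \cite{BG} as base cases; Theorem~\ref{CovconeC} on tangent cones of limits of minimal graphs; the connectedness of the cross-section of such a cone from Theorem~6.6 in \cite{D1}; and the bound $\dim(\mathcal S\cap M_\infty)\le n-2$ from Theorem~1.2 in \cite{D}.

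Suppose the conclusion fails. Then for each $i$ there are $\Si_i,M_i,\bar p_i$ satisfying \eqref{Ric} and \eqref{Volv} and an $n$-rectifiable $S_i\subset M_i\cap B_1(\bar p_i)$ with $\mathcal H^n(S_i)>0$ violating the inequality for $\Th'_{\k,v}=i$; in particular both terms of the minimum at scale $1/i$ are positive, and after possibly replacing $S_i$ by its complement we may assume $\mathcal H^n(S_i\cap B_{1/i}(\bar p_i))\le\mathcal H^n(M_i\cap B_{1/i}(\bar p_i)\setminus S_i)$. Using the scale-invariant isoperimetric inequality and the coarea/ODE argument of Lemma~\ref{Sob-Coarea}, I would replace $S_i$ by a set $V_i$ minimizing relative perimeter in $M_i\cap\overline{B_1(\bar p_i)}$ among sets containing $S_i\cap B_{1/i}(\bar p_i)$; such $V_i$ is both inward- and outward-minimizing away from the constraint, has no larger relative perimeter than $S_i$, and, applying Theorem~\ref{ISOM} to $V_i\setminus(S_i\cap B_{1/i}(\bar p_i))$, the masses of $V_i$ and of its complement inside $B_{1/i}(\bar p_i)$ stay comparable to those of $S_i$. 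Re-centering at a suitable boundary point of $V_i$ near $\bar p_i$ and rescaling so that the corresponding ball $B_{1/i}$ becomes the unit ball (the rescaled graph $M_i'$ has $\mathcal H^n(M_i'\cap B_1(\bar p_i'))\in[\be_\k v,\be_\k^{-1}]$ by \eqref{ben***} and Ricci curvature on $B_2(\bar p_i')$ tending to be $\ge0$), the estimate \eqref{ODEEE} applied to the outward-minimizing $V_i'$ \emph{and} to its complement forces both $\mathcal H^n(V_i'\cap B_t(\bar p_i'))$ and $\mathcal H^n(M_i'\cap B_t(\bar p_i')\setminus V_i')$ to be $\ge(t/2n\Th_{\k,v})^n$ for small $t$, while the relative perimeter of $V_i'$ in $B_1(\bar p_i')$ is still $<\tfrac1{2\Th_{\k,v}}$ times the $(n-1)/n$-th power of the minimum of those two masses.

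Passing to a subsequence, $(\Si_i'\times\R,\bar p_i')$ converges in the pointed Gromov--Hausdorff sense, $M_i'\to M_\infty$, $V_i'\to V_\infty$, $M_i'\setminus V_i'\to W_\infty$, $\partial V_i'\to\G_\infty$ in the induced Hausdorff sense. Exactly as in the proof of Theorem~\ref{ISOM}, the positive-density sets $V_\infty^*$ and $W_\infty^*$ both have positive $\mathcal H^n$-measure, $M_\infty=V_\infty^*\cup W_\infty^*$ off a null set, and $\partial V_\infty^*=\partial W_\infty^*\subset\G_\infty$, so this is a genuine separating interface; removing the relatively open null part gives a closed $V^*$ with $\partial V^*\subset\G_\infty$, $\mathcal H^n(V^*)>0$, $\mathcal H^n(M_\infty\setminus V^*)>0$. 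If a point of $\partial V^*$ were $\ep$-regular, then \eqref{ODEEE} applied to $V_i'$ and to its complement near a converging sequence in $\partial V_i'$ would produce, for all $\tau>0$, points $x_i^*\in\partial V_i'$ with $\limsup_i\min\{\mathcal H^n(V_i'\cap B_\tau(x_i^*)),\mathcal H^n(B_\tau(x_i^*)\cap M_i'\setminus V_i')\}>0$ while $\mathcal H^{n-1}(\partial V_i'\cap B_\tau(x_i^*))$ is $<\tfrac1{2\Th_{\k,v}}$ times the $(n-1)/n$-th power of that minimum; since we may take $\Th_{\k,v}\ge\Th$, this contradicts Theorem~\ref{BRpNf1RM}. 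Hence $\partial V^*\cap B_1(\xi_\infty)$ lies in the $\ep$-singular set, and one runs the iterated blow-up of Theorem~\ref{ISOM}: by Theorem~\ref{CovconeC} one blows up $B_1(\xi_\infty),M_\infty,V^*,\partial V^*$ along a chain $y_0^*,\dots,y_k^*\in\partial V^*$ until $\partial V^*$ collapses into a Euclidean factor $\{o_k\}\times\R^k$ of a cone $C_k\times\R^k$ arising as a limit of minimal graphs $M_i^*$, with the limit of $\partial V^*$ contained in $\{o_k\}\times\R^k$ and in an $\ep_k$-singular set. Since the cross-section of the limit cone is connected by Theorem~6.6 in \cite{D1}, the limit of the rescalings of $V^*$ must itself lie in $\{o_k\}\times\R^k$; but that set is $\mathcal H^n$-null while those rescalings carry positive $\mathcal H^n$-mass in every ball about $o_k$, and $\mathcal S\cap(C_k'\times\R^k)$ has dimension $\le n-2$ by Theorem~1.2 in \cite{D}, a contradiction.

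The step I expect to be the main obstacle is two-sidedness: one must guarantee that \emph{both} $V^*$ and its complement in $M_\infty$, and then both sides of every subsequent blow-up, stay nondegenerate, for otherwise $\partial V^*$ fails to separate and the connectedness statement of \cite{D1} is vacuous. This is exactly why $\bar p$ has to be placed on the boundary of the perimeter-minimizing hull and why \eqref{ODEEE} must be invoked for $M\setminus V$ as well as for $V$; the delicate points are that the hull neither overshoots past the smaller side nor collapses it, and that the smaller side survives each blow-up. Controlling all this uniformly, together with absorbing the shrinking ball $B_{1/\Th'_{\k,v}}(\bar p)$ by rescaling to a fixed scale while keeping a uniform volume lower bound via Bishop--Gromov, is the technical heart; the remaining steps transcribe the proof of Theorem~\ref{ISOM}.
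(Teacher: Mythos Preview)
Your plan is essentially the paper's own proof, and the key ingredients you identify (Sob-Coarea adapted with Theorem~\ref{ISOM} in place of Lemma~\ref{Rfiso}, then the contradiction machinery from \eqref{pSixiito0} onward in the proof of Theorem~\ref{ISOM}, culminating in the iterated blow-up against Theorem~\ref{BRpNf1RM}) are exactly right.

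However, you are making the argument considerably harder than necessary. The rescaling that sends $B_{1/i}(\bar p_i)$ to the unit ball, the re-centering at a boundary point, and the appeal to \eqref{ODEEE} for \emph{both} $V_i$ and its complement are all superfluous. The point you are missing is that the adaptation of Lemma~\ref{Sob-Coarea} (with Theorem~\ref{ISOM} replacing Lemma~\ref{Rfiso} and \eqref{ben***} replacing \eqref{1pmepHnM0}) already produces, at the \emph{original} scale, an outward-minimizing $V_i\subset M_i\cap B_1(\bar p_i)$ with
\[
(4n\Th_{\k,v})^{-n}\le\mathcal H^n(V_i)\le\tfrac23\,\mathcal H^n(M_i\cap B_1(\bar p_i))
\]
and $\mathcal H^{n-1}(\partial V_i\cap B_1(\bar p_i))\to 0$. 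Thus both $V_i$ and its complement already have uniformly positive mass in $B_1(\bar p_i)$, and the entire tail of the proof of Theorem~\ref{ISOM} from \eqref{pSixiito0} onward applies verbatim with $V_i$ in the role of $S_i'$. No rescaling, no inward-minimizing property, and no separate ODE estimate for the complement are needed; the ``two-sidedness'' you flag as the main obstacle is handled for free by the upper bound $\mathcal H^n(V_i)\le\tfrac23\mathcal H^n(M_i\cap B_1(\bar p_i))$ coming out of Sob-Coarea.
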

\begin{proof}
Let us prove it by contradiction. Assume there is a constant $\ep\in(0,\f12]$ such that
\begin{equation}\aligned\label{meapS***}
\mathcal{H}^{n-1}(\p S\cap B_1(\bar{p}))\le\f1{2\Th_{\k,v}}\left(\min\{\mathcal{H}^n(S\cap B_{\ep}(\bar{p})),\mathcal{H}^n(M\cap B_{\ep}(\bar{p})\setminus S)\}\right)^{\f{n-1}n}
\endaligned
\end{equation}
for some $n$-rectifiable set $S\subset M\cap B_1(p)$ with $\mathcal{H}^n(S)>0$. 
We follow the argument of Lemma \ref{Sob-Coarea} and its proof, where Lemma \ref{Rfiso} is replaced by Theorem \ref{ISOM}, and \eqref{1pmepHnM0} is replaced by \eqref{ben***}. We get
an outward minimizing set $V$ in $M\cap B_1(\bar{p})$ such that
\begin{equation}\aligned
\left(4n\Th_{\k,v}\right)^{-n}\le \mathcal{H}^{n}(V)\le\f23\mathcal{H}^{n}(M\cap B_1(\bar{p})),
\endaligned
\end{equation}
and
$$\mathcal{H}^{n-1}\left(\p V\cap B_1(\bar{p})\right)\le\f1{2\Th_{\k,v}}\left(\f12\mathcal{H}^n(M\cap B_{\ep}(\bar{p}))\right)^{\f{n-1}n}$$
for the suitably small $\ep>0$ (depending only on $n,\k,v$).
Then we follow the argument starting from (5.10) in the proof of Theorem \ref{ISOM}, and can deduce a contradiction for the sufficiently small $\ep>0$ (depending only on $n,\k,v$).
This completes the proof.
\end{proof}

Now we can use Lemma \ref{Cheeger***} to show a Neumann-Poincar\'e inequality on minimal graphs by a standard argument as follows.
\begin{theorem}\label{POINCARE}
Let $\Si$ be an $n$-dimensional smooth complete noncompact manifold with \eqref{Ric} and \eqref{Volv}.
There is a constant $\Th^*_{\k,v}$ depending only on $n,\k,v$, such that
if $M$ is a minimal graph over $B_{2}(p)\subset\Si$ with $\p M\subset\p B_{2}(p)\times\R$, $f$ is a $C^1$-function on $M$, then the following Neumann-Poincar\'e inequality
\begin{equation}\aligned\label{B1pPITh*}
\int_{M\cap B_{r}(x)}|f-\bar{f}_{x,r}|d\mu\le \Th^*_{\k,v}r \int_{M\cap B_{\Th^*_{\k,v}r}(x)}|\na f|d\mu
\endaligned
\end{equation}
holds on $B_{\Th^*_{\k,v}r}(x)\subset B_1(p)\times\R$ with $x\in M\subset\Si\times\R$, where $\bar{f}_{x,r}$ is the average of $f$ on $M\cap B_{r}(x)$, i.e., $\bar{f}_{x,r}=\fint_{M\cap B_{r}(x)}fd\mu$, and 
$\na$ is the Levi-Civita connection on $M$ compatible to the induced metric from $\Si\times\R$, $d\mu$ is the volume element of $M$.
\end{theorem}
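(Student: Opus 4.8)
The plan is to deduce \eqref{B1pPITh*} from a linear (Cheeger-type) relative isoperimetric inequality and then to run the classical coarea/median truncation argument used in this circle of ideas by Bombieri--Giusti \cite{BG}. First I would upgrade Lemma~\ref{Cheeger***}: since \eqref{ben***} bounds $\mathcal{H}^n(M\cap B_{1/\Th_{\k,v}'}(\bar p))$ from above by a constant depending only on $n,\k,v$, and $t^{(n-1)/n}\ge c\,t$ on a bounded range of $t$, Lemma~\ref{Cheeger***} becomes: there is $c_{\k,v}>0$ with $\mathcal{H}^{n-1}(\partial S\cap B_1(\bar p))\ge c_{\k,v}\min\{\mathcal{H}^n(S\cap B_{1/\Th_{\k,v}'}(\bar p)),\ \mathcal{H}^n(M\cap B_{1/\Th_{\k,v}'}(\bar p)\setminus S)\}$ for every $n$-rectifiable $S\subset M\cap B_1(\bar p)$ of positive measure. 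Then, writing $A=M\cap B_{1/\Th_{\k,v}'}(\bar p)$ and $B=M\cap B_1(\bar p)$ and choosing a median $m$ of $f$ on $A$ (so that both $\{f>m\}\cap A$ and $\{f<m\}\cap A$ have $\mathcal{H}^n$-measure $\le\tfrac12\mathcal{H}^n(A)$), the superlevel sets $S_t=\{f>m+t\}\cap B$, $t>0$, satisfy $\mathcal{H}^n(S_t\cap A)\le\mathcal{H}^n(A\setminus S_t)$, so the linearized inequality gives $\mathcal{H}^{n-1}(\partial S_t\cap B_1(\bar p))\ge c_{\k,v}\,\mathcal{H}^n(S_t\cap A)$; integrating in $t$ and using the coarea formula — the reduced boundary of $S_t$ inside $B_1(\bar p)$ is $\{f=m+t\}\cap M\cap B_1(\bar p)$, and $\partial M\subset\partial B_2(p)\times\R$ lies outside $B_1(\bar p)$ — yields $\int_A(f-m)^+\,d\mu\le c_{\k,v}^{-1}\int_B|\na f|\,d\mu$; applying this to $\pm f$ and replacing $m$ by $\bar f_A=\fint_A f$ (at the cost of one more factor $2$) gives \eqref{B1pPITh*} in the special case $x=\bar p$, $r=1/\Th_{\k,v}'$.

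For a general $x\in M$ with $B_{\Th^*_{\k,v}r}(x)\subset B_1(p)\times\R$ I would reduce to this case by a vertical translation together with a rescaling. Because the minimal-graph equation is invariant under translations in the $\R$-factor, $M-(0,u(\pi(x)))$ is again a minimal graph over $B_2(p)$, now through $(\pi(x),0)$; rescaling $\Sigma\times\R$ about this point by $1/(cr)$ with $c:=\Th'_{\tilde\k,v'}$ produces a minimal graph over $B_2(\pi(x))$ in $\tfrac1{cr}(\Sigma,\pi(x))\times\R$ with base point $(\pi(x),0)$, with $\mathrm{Ric}\ge-(n-1)\tilde\k^2$ on $B_2(\pi(x))$ for $\tilde\k=\k cr\le\k$ (taking $\Th^*_{\k,v}\ge 2c$), and with $\mathcal{H}^n(B_1(\pi(x)))\ge v'$ in the rescaled metric. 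The only point requiring real care is this volume bound: by Bishop--Gromov comparison at $\pi(x)$ (valid up to radius $1$ since $B_1(\pi(x))\subset B_2(p)$) it reduces to a lower bound on $\mathcal{H}^n_\Sigma(B_1(\pi(x)))$ depending only on $n,\k,v$, and this follows by passing through the midpoint $m$ of a minimizing geodesic from $p$ to $\pi(x)$: one has $B_{1/2}(m)\subset B_1(p)\cap B_1(\pi(x))$ and $B_{1/2}(p)\subset B_1(m)$, whence two applications of Bishop--Gromov together with $\mathcal{H}^n(B_1(p))\ge v$ give $\mathcal{H}^n_\Sigma(B_1(\pi(x)))\ge(V^n_\k(1/2)/V^n_\k(1))^2 v$.

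With the hypotheses normalized in this way, the first paragraph applies verbatim to the rescaled graph — after enlarging $\Th'_{\cdot,\cdot}$ and $c_{\cdot,\cdot}$ so that they depend monotonically on their parameters, hence are controlled by $n,\k,v$ — and undoing the translation and rescaling ($\mathcal{H}^n$ scales by $(cr)^n$, $|\na f|$ by $(cr)^{-1}$, $B_{1/c}(\pi(x))\leftrightarrow B_r(x)$, $B_1(\pi(x))\leftrightarrow B_{cr}(x)\subset B_{\Th^*_{\k,v}r}(x)$, and $\bar f$ becomes $\bar f_{x,r}$) yields \eqref{B1pPITh*} with $\Th^*_{\k,v}$ the maximum of $2c$ and a multiple of $c$ depending on $n,\k,v$. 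I expect the main obstacle to be purely organizational: the analytic heart — the implication from a relative isoperimetric inequality to a Neumann--Poincar\'e inequality — is classical once Lemma~\ref{Cheeger***} is in hand, and the only genuinely delicate point is making the scaling reduction uniform over all admissible $x$ and $r$, i.e.\ the midpoint volume comparison above.
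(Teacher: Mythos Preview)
Your proposal is correct and follows essentially the same route as the paper: linearize the relative isoperimetric inequality of Lemma~\ref{Cheeger***} (the paper does this inline via $(\mathcal{H}^n(U^+_{r,t}))^{(n-1)/n}\ge(\mathcal{H}^n(M\cap B_r(x)))^{-1/n}\mathcal{H}^n(U^+_{r,t})$ together with \eqref{ben***}), then run the coarea argument over superlevel sets. The only differences are cosmetic---the paper splits at the mean $\bar f_{x,r}$ and assumes WLOG $\mathcal{H}^n(U^+_{r,0})\le\mathcal{H}^n(U^-_{r,0})$ rather than using a median, and it invokes Lemma~\ref{Cheeger***} directly at the point $x$ and scale $\Th'_{\k,v}r$ without spelling out the translation/rescaling and the midpoint volume comparison you carefully justify; your treatment of that step is in fact more complete than the paper's.
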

\begin{proof}
Without loss of generality, let $f$ be not a constant. For simplicity, we always omit the volume element $d\mu$ for the integrations on the subsets of $M$ if no ambiguity. 
Let $\bar{f}_{x,r}$ be the average of $f$ on $M\cap B_r(x)$, i.e.,
$$\bar{f}_{x,r}=\fint_{M\cap B_r(x)}f=\f1{\mathcal{H}^n(M\cap B_r(x))}\int_{M\cap B_r(x)}f.$$
For any fixed $x\in M$,  $r>0$ with  $B_{\Th'_{\k,v}r}(x)\subset B_1(p)\times\R$, let
$U^+_{s,t}=\{y\in M\cap B_s(x)|\ f>\bar{f}_{x,r}+t\}$, $U^-_{s,t}=\{y\in M\cap B_s(x)|\ f<\bar{f}_{x,r}+t\}$ for all $s\in(0,\Th'_{\k,v}r)$ and $t\in\R$.
Without loss of generality, we assume $\mathcal{H}^n(U^+_{r,0})\le\mathcal{H}^n(U^-_{r,0})$. Then $\mathcal{H}^n(U^+_{r,t})\le\mathcal{H}^n(U^-_{r,t})$ for any $t\ge0$.
From Lemma \ref{Cheeger***}, we have
$$\mathcal{H}^{n-1}\left(\p U^+_{\Th'_{\k,v}r,t}\cap B_{\Th'_{\k,v}r}(x)\right)\ge\f1{2\Th_{\k,v}}\left(\mathcal{H}^n(U^+_{r,t})\right)^{\f{n-1}n}\ge\f{\left(\mathcal{H}^n(M\cap B_r(x))\right)^{-\f{1}n}}{2\Th_{\k,v}}\mathcal{H}^n(U^+_{r,t}).
$$
Combining co-area formula and \eqref{ben***}, we have
\begin{equation}\aligned
&\int_{U^+_{r,0}}(f-\bar{f}_{x,r})=\int_0^\infty\mathcal{H}^n(U^+_{r,t})dt\\
\le&2\Th_{\k,v}\left(r^n/\be_\k\right)^{\f1n}\int_0^\infty\mathcal{H}^{n-1}\left(\p U^+_{\Th'_{\k,v}r,t}\cap B_{\Th'_{\k,v}r}(x)\right)dt\\
\le& 2\Th_{\k,v}\be_\k^{-\f1n}r\int_{M\cap B_{\Th'_{\k,v}r}(x)}|\na f|.
\endaligned
\end{equation}
Then
\begin{equation}\aligned
&\int_{M\cap B_{r}(x)}|f-\bar{f}_{x,r}|=\int_{U^+_{r,0}}(f-\bar{f}_{x,r})-\int_{U^-_{r,0}}(f-\bar{f}_{x,r})\\
=&2\int_{U^+_{r,0}}(f-\bar{f}_{x,r})\le 2\Th_{\k,v}\be_\k^{-\f1n}r\int_{M\cap B_{\Th'_{\k,v}r}}|\na f|.
\endaligned
\end{equation}
This completes the proof by denoting $\Th^*_{\k,v}=\max\{2\Th_{\k,v}\be_\k^{-\f1n},\Th'_{\k,v}\}$.
\end{proof}

With Remark \ref{Sobalmost} and the proof of Lemma \ref{Cheeger***}, we can prove Theorem \ref{almostEB} by an argument in the proof of Theorem 3 in \cite{BG} as follows.
\begin{theorem}\label{PTh0varphi}
For each integer $n\ge2$, there are constants $\ep,\Th_0>0$ depending only on $n$ such that
if $N$ is an $(n+1)$-dimensional smooth complete noncompact Riemannian manifold with
$\mathrm{Ric}\ge-n\ep^2$ on $B_2(q)\subset N$ and $\mathcal{H}^{n+1}(B_1(q))\ge(1-\ep)\omega_{n+1}$, and $M$ is an area-minimizing hypersurface in $B_2(q)$ with $q\in M$ and $\p M\subset\p B_2(q)$, then
\begin{equation}\aligned
\min_{k\in\R}\left(\int_{M\cap B_{1/\Th_0}(q)}|\varphi-k|^{\f n{n-1}}\right)^{\f{n-1}n}\le\Th_0\int_{M\cap B_1(q)}|\na \varphi|
\endaligned
\end{equation}
for any function $\varphi\in C^1(B_1(q))$.
\end{theorem}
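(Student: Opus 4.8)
The plan is to deduce the inequality from two ingredients --- an isoperimetric (Sobolev) inequality on $M\cap B_1(q)$ and a relative isoperimetric inequality of Cheeger type --- and then to run the argument of Theorem~3 of \cite{BG}. Fix $n\ge2$ and let $\ep>0$ be a small constant depending only on $n$, to be pinned down below. For $\ep$ small, Remark~\ref{Sobalmost} gives a constant $\Th_*=\Th_*(n)$ such that $\left(\mathcal{H}^n(S)\right)^{\f{n-1}n}\le\Th_*\,\mathcal{H}^{n-1}(\p S)$ for every open $S\subset M\cap B_1(q)$ with $(n-1)$-rectifiable boundary, while Lemma~\ref{epdeHM} together with the monotonicity formula yields non-collapsing two-sided volume bounds $c_nr^n\le\mathcal{H}^n(M\cap B_r(z))\le c_n^{-1}r^n$ for $z\in M$, $0<r\le1$, with $c_n=c_n(n)>0$. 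Starting from these I would repeat almost verbatim the proofs of Lemma~\ref{Sob-Coarea} and Lemma~\ref{Cheeger***} --- replacing Lemma~\ref{Rfiso} and Theorem~\ref{ISOM} by the isoperimetric inequality above, replacing \eqref{1pmepHnM0} and \eqref{ben***} by the above volume bounds, and supplying the terminal contradiction (which ultimately rests on the Euclidean Neumann--Poincar\'e inequality for area-minimizing hypersurfaces of \cite{BG}) through Remark~\ref{Monoalmost} and Theorem~\ref{BRpNf1RM}. The outcome is: for $\ep$ small there exist $\La=\La(n)\ge1$ and $c_0=c_0(n)>0$ such that
\[
\mathcal{H}^{n-1}(\p S\cap B_1(q))\ \ge\ c_0\left(\min\left\{\mathcal{H}^n\big(S\cap B_{1/\La}(q)\big),\ \mathcal{H}^n\big((M\cap B_{1/\La}(q))\setminus S\big)\right\}\right)^{\f{n-1}n}
\]
for every $n$-rectifiable $S\subset M\cap B_1(q)$ with $\mathcal{H}^n(S)>0$, and I would fix such an $\ep$ once and for all.

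Granting this Cheeger-type inequality, the passage to the $L^{\f n{n-1}}$ Poincar\'e inequality is the soft part. Given $\varphi\in C^1(B_1(q))$, let $k\in\R$ be a median of $\varphi$ over $M\cap B_{1/\La}(q)$ and write $\varphi-k=w^+-w^-$ with $w^\pm=\max\{\pm(\varphi-k),0\}$, so that $w^\pm$ are continuous, locally Lipschitz, and $|\na w^\pm|=|\na\varphi|\,\chi_{\{\pm(\varphi-k)>0\}}$ $\mathcal{H}^n$-a.e.\ on $M$. For $t>0$ the superlevel set $S_t=\{w^+>t\}\cap M\cap B_1(q)$ has $\mathcal{H}^n(S_t\cap B_{1/\La}(q))\le\f12\mathcal{H}^n(M\cap B_{1/\La}(q))$ by the choice of median, so the Cheeger-type inequality gives, for a.e.\ $t>0$ by Sard's theorem applied to $\varphi$ on the regular part of $M$,
\[
c_0\,\mu(t)^{\f{n-1}n}\ \le\ \mathcal{H}^{n-1}\big(\{\varphi=k+t\}\cap M\cap B_1(q)\big),\qquad \mu(t):=\mathcal{H}^n\big(S_t\cap B_{1/\La}(q)\big),
\]
where $\mu$ is non-increasing. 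Integrating in $t$ and using the co-area formula on $M$ gives $c_0\int_0^\infty\mu(t)^{\f{n-1}n}dt\le\int_{\{\varphi>k\}\cap M\cap B_1(q)}|\na\varphi|$, while the layer-cake identity $\int_{M\cap B_{1/\La}(q)}(w^+)^{\f n{n-1}}=\f n{n-1}\int_0^\infty t^{\f1{n-1}}\mu(t)\,dt$ and the elementary one-dimensional inequality $\f n{n-1}\int_0^\infty t^{\f1{n-1}}\mu(t)\,dt\le\big(\int_0^\infty\mu(t)^{\f{n-1}n}dt\big)^{\f n{n-1}}$ for non-increasing $\mu\ge0$ (immediate from concavity of $F(t)=\int_0^t\mu(s)^{\f{n-1}n}ds$, whence $F'\le F/t$) together give $\big(\int_{M\cap B_{1/\La}(q)}(w^+)^{\f n{n-1}}\big)^{\f{n-1}n}\le c_0^{-1}\int_{M\cap B_1(q)}|\na\varphi|$, and symmetrically for $w^-$. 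Adding, using $|\varphi-k|=w^++w^-$ and the triangle inequality in $L^{\f n{n-1}}(M\cap B_{1/\La}(q))$, and finally restricting the integral to the smaller ball $B_{1/\Th_0}(q)\subset B_{1/\La}(q)$, I obtain
\[
\min_{k'\in\R}\left(\int_{M\cap B_{1/\Th_0}(q)}|\varphi-k'|^{\f n{n-1}}\right)^{\f{n-1}n}\ \le\ \f2{c_0}\int_{M\cap B_1(q)}|\na\varphi|,
\]
so the theorem holds with $\Th_0=\max\{\La,2c_0^{-1}\}$, depending only on $n$.

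I expect the main obstacle to be the first step: producing the Cheeger-type inequality with constants depending only on $n$. This requires re-running, in the almost-Euclidean regime, the compactness and dimension-reduction machinery already used for minimal graphs --- outward-minimizing sets and their differential inequalities, Federer--Fleming compactness together with the current-theoretic Gromov--Hausdorff approximations $\Phi_i$, the connectedness of cross-sections of tangent cones of limits from \cite{D1}, and the $\le n-7$ bound on the singular set of area-minimizing hypersurfaces in Euclidean space --- and checking that the Gromov--Hausdorff limits of counterexamples are flat Euclidean balls, so that the Euclidean Bombieri--Giusti inequality yields the contradiction, precisely as in the proofs of Lemma~\ref{Sob-Coarea}, Lemma~\ref{Cheeger***}, Theorem~\ref{ISOM}, Theorem~\ref{BRpNf1RM} and Remarks~\ref{Monoalmost} and \ref{Sobalmost}. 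Since all these components are already in place, the genuinely new input is modest; by contrast, the Bombieri--Giusti step of the second paragraph is essentially formal.
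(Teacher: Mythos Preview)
Your proposal is correct and follows essentially the same route as the paper: first obtain the Cheeger-type relative isoperimetric inequality \eqref{SB1qTh0} by combining Remark~\ref{Sobalmost} with the proof of Lemma~\ref{Cheeger***} (in the almost-Euclidean setting, using Remark~\ref{Monoalmost} and Theorem~\ref{BRpNf1RM} for the contradiction), then run the Bombieri--Giusti argument of Theorem~3 in \cite{BG} with a median value of $\varphi$, the co-area formula, and the Hardy--Littlewood--P\'olya-type inequality for non-increasing functions. The paper carries out exactly these two steps.
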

\begin{proof}
From Remark \ref{Sobalmost} and the proof of Lemma \ref{Cheeger***}, there is a positive constant $\Th_0\ge1$ depending only on $n$ such that
\begin{equation}\aligned\label{SB1qTh0}
\mathcal{H}^{n-1}(\p S\cap B_1(q))\ge\f1{\Th_0}\left(\min\{\mathcal{H}^n(S\cap B_{1/\Th_0}(q)),\mathcal{H}^n(M\cap B_{1/\Th_0}(q)\setminus S)\}\right)^{\f{n-1}n}
\endaligned
\end{equation}
for each $n$-rectifiable set $S\subset M\cap B_1(q)$ with $\mathcal{H}^n(S)>0$. 
Without loss of generality, let $\varphi$ be not a constant. 

Now we follow the argument in the proof of Theorem 3 in \cite{BG}.
For any fixed $k\in\R$, $x\in M$,  $r>0$ with  $B_{\Th_0r}(x)\subset B_1(q)$, let
$U^+_{s,t}=\{y\in M\cap B_s(x)|\ \varphi>k+t\}$, $U^-_{s,t}=\{y\in M\cap B_s(x)|\ \varphi<k+t\}$ for all $s\in(0,\Th_0r)$ and $t\in\R$.
We choose $k\in\R$ so that $\max\{\mathcal{H}^n(U^+_{r,0}),\mathcal{H}^n(U^-_{r,0})\}\le\f12\mathcal{H}^n(M\cap B_r(x))$. Then $\mathcal{H}^n(U^+_{r,t})\le\mathcal{H}^n(M\cap B_r(x)\setminus U^+_{r,t})$ for any $t\ge0$.
From \eqref{SB1qTh0}, we have
$$\mathcal{H}^{n-1}\left(\p U^+_{\Th_0r,t}\cap B_{\Th_0r}(x)\right)\ge\f1{\Th_0}\left(\mathcal{H}^n(U^+_{r,t})\right)^{\f{n-1}n}.$$
Combining co-area formula and an inequality from Hardy-Littlewood-P\'olya, we have
\begin{equation}\aligned
\Th_0&\int_{U^+_{\Th_0r,0}}|\na \varphi|\ge\Th_0\int_0^\infty\mathcal{H}^{n-1}\left(\p U^+_{\Th_0r,t}\cap B_{\Th_0r}(x)\right)dt
\ge \int_0^\infty\left(\mathcal{H}^n(U^+_{r,t})\right)^{\f{n-1}n}dt\\
\ge&\left(\f{n}{n-1}\int_0^\infty t^{1/(n-1)}\left(\mathcal{H}^n(U^+_{r,t})\right)dt\right)^{\f{n-1}n}
=\left(\int_{U^+_{r,0}}(\varphi-k)^{\f n{n-1}}\right)^{\f{n-1}n}.
\endaligned
\end{equation}
By considering $U^-_{r,t}$ for $t\le0$, we similarly get
\begin{equation}\aligned
\Th_0\int_{U^-_{\Th_0r,0}}|\na \varphi|\ge\left(\int_{U^-_{r,0}}(k-\varphi)^{\f n{n-1}}\right)^{\f{n-1}n}.
\endaligned
\end{equation}
Adding the above two inequalities implies
\begin{equation}\aligned
\left(\int_{M\cap B_{r}(x)}|\varphi-k|^{\f n{n-1}}\right)^{\f{n-1}n}\le \Th_0\int_{M\cap B_{\Th_0r}(x)}|\na \varphi|.
\endaligned
\end{equation}
This completes the proof.
\end{proof}

From Bishop-Gromov volume comparison and \eqref{ben***}, exterior balls of the minimal graph $M$ satisfy volume doubling property in the following sense: there is a positive constant $c_{n,\k,v}$ depending only on $n,\k,v$ such that
\begin{equation}\aligned\label{VDPmg}
\mathcal{H}^n\left(M\cap B_{2r}(x)\right)\le c_{n,\k,v}\mathcal{H}^n\left(M\cap B_{r}(x)\right)
\endaligned
\end{equation}
for any $B_{2r}(x)\subset B_{\f74}(p)\times\R$ with $x\in M\subset\Si\times\R$.
Suppose $\bar{p}=(p,0)\in M$.
\begin{lemma}\label{MVI}
Let $\phi$ be a nonnegative subharmonic function on $M\cap B_{2}(\bar{p})$ and $f$ be a nonnegative superharmonic function on $M\cap B_{2}(\bar{p})$, then there is a constant $\vartheta_{\k,v}$ depending only on $n,\k,v$ such that
\begin{equation}\aligned\label{subMVI}
\sup_{M\cap B_\f12(\bar{p})}\phi\le\vartheta_{\k,v}\int_{M\cap B_{1}(\bar{p})}\phi,
\endaligned
\end{equation}
and
\begin{equation}\aligned\label{supMVI}
\int_{M\cap B_{1}(\bar{p})}f\le\vartheta_{\k,v}\inf_{M\cap B_1(\bar{p})}f.
\endaligned
\end{equation}
\end{lemma}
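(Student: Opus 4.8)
The plan is to run a De Giorgi--Nash--Moser iteration on the smooth hypersurface $M$ (smoothness coming from De Giorgi \cite{Dg}), using as inputs the Sobolev inequality of Theorem \ref{SobMinG}, the volume doubling property \eqref{VDPmg}, and the two-sided volume bound \eqref{ben***}; for the superharmonic estimate the Neumann--Poincar\'e inequality of Theorem \ref{POINCARE} will also be needed. After a routine rescaling of the ambient $\Si\times\R$ (which, by Bishop--Gromov and \eqref{ben***}, preserves \eqref{Ric}, \eqref{Volv} and hence the conclusions of Theorems \ref{SobMinG}, \ref{POINCARE} with uniform constants), all three facts hold with a common constant $c=c(n,\k,v)$ on geodesic balls $B_\rho(z)$ of $M$ with $B_{2\rho}(z)\subset\subset B_2(p)\times\R$, so the iteration is purely local. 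The first preparatory step is to record the functional form of \eqref{Sob}: applying it to $|g|^{2(n-1)/(n-2)}$ and using H\"older's inequality yields, for $g\in C^1_0(M\cap B_\rho(z))$, an inequality $\big(\int_M|g|^{2\chi}\big)^{1/\chi}\le c\int_M|\na g|^2$, where $\chi=\f n{n-2}$ if $n\ge3$ and $\chi$ is any fixed number $>1$ if $n=2$.

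For the subharmonic estimate, given $\phi\ge0$ with $\De_M\phi\ge0$ ($\De_M$ the Laplace--Beltrami operator of $M$) and $\b\ge1$, integrating $\z^2\phi^{2\b-1}\De_M\phi\ge0$ by parts against a Lipschitz cutoff $\z$ and absorbing gives the Caccioppoli estimate $\int_M\z^2|\na(\phi^\b)|^2\le c\int_M|\na\z|^2\phi^{2\b}$; feeding $\z\phi^\b$ into the Sobolev inequality above produces the reverse H\"older inequality $\|\phi\|_{L^{2\b\chi}(M\cap B_{s'}(z))}\le\big(c(s-s')^{-2}\big)^{1/(2\b)}\|\phi\|_{L^{2\b}(M\cap B_s(z))}$ for $\tfrac12\rho\le s'<s\le\rho$. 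Iterating over $\b=\chi^k$ and $s_k=\tfrac12\rho(1+2^{-k})$, and using \eqref{ben***} to normalize, gives $\sup_{M\cap B_{\rho/2}(z)}\phi\le c\big(\rho^{-n}\int_{M\cap B_\rho(z)}\phi^2\big)^{1/2}$. To replace the $L^2$ average by the $L^1$ average I would use the standard Li--Schoen trick: carrying out the same iteration between an arbitrary pair of concentric balls and invoking the doubling property \eqref{VDPmg} gives $\sup_{M\cap B_{\th r}(z)}\phi\le c(1-\th)^{-\mu}\big(\fint_{M\cap B_r(z)}\phi^2\big)^{1/2}$ for $0<\th<1$ and $\mu=\mu(n,\k,v)$; then bounding $\fint\phi^2\le(\sup\phi)\fint\phi$ and absorbing the supremum through the elementary iteration lemma over a geometric sequence of radii upgrades this to $\sup_{M\cap B_{1/2}(\bar{p})}\phi\le c\fint_{M\cap B_1(\bar{p})}\phi$, which together with \eqref{ben***} is exactly \eqref{subMVI}.

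For the superharmonic estimate, if $f>0$ and $\De_Mf\le0$ then $\De_M(f^{-q})\ge0$ for every $q>0$, so the argument above applied to $f^{-q_0}$ yields $\big(\inf_{M\cap B_1(\bar{p})}f\big)^{-q_0}\le c\fint_{M\cap B_{3/2}(\bar{p})}f^{-q_0}$ for any fixed $q_0\in(0,1]$. Separately, $\log f$ is superharmonic, and testing $\De_M\log f\le0$ against $\z^2$ together with $\De_M\log f=\De_Mf/f-|\na\log f|^2$ gives $\int_M\z^2|\na\log f|^2\le c\int_M|\na\z|^2$; combined with the Neumann--Poincar\'e inequality \eqref{B1pPITh*} this shows $\log f\in\mathrm{BMO}(M\cap B_{3/2}(\bar{p}))$ with norm $\le c(n,\k,v)$, whence by the John--Nirenberg inequality there is $q_0=q_0(n,\k,v)\in(0,1]$ with $\big(\fint_{M\cap B_{3/2}(\bar{p})}f^{q_0}\big)\big(\fint_{M\cap B_{3/2}(\bar{p})}f^{-q_0}\big)\le c$. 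Chaining the last three facts with Jensen's inequality $\fint f\le\big(\fint f^{q_0}\big)^{1/q_0}$ gives $\fint_{M\cap B_{3/2}(\bar{p})}f\le c\inf_{M\cap B_1(\bar{p})}f$; since $M\cap B_1(\bar{p})\subset M\cap B_{3/2}(\bar{p})$ and $f\ge0$, \eqref{ben***} turns this into \eqref{supMVI}. (Alternatively, Moser's crossing-over-exponent-zero argument may be used in place of John--Nirenberg.)

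The routine but somewhat delicate points are the passage from the $L^2$ to the $L^1$ mean value inequality, where the doubling property \eqref{VDPmg} is indispensable, and the passage through exponent zero in the superharmonic case, where the Neumann--Poincar\'e inequality \eqref{B1pPITh*} enters through a John--Nirenberg estimate on $\log f$; neither presents a genuine obstacle once Theorems \ref{SobMinG}, \ref{POINCARE} and the bounds \eqref{ben***}, \eqref{VDPmg} are in hand, which is why the proof reduces to bookkeeping the standard iteration.
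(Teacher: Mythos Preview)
Your treatment of the subharmonic inequality \eqref{subMVI} is correct and matches the paper: Sobolev plus volume doubling feeds the standard Moser iteration, and the $L^2$-to-$L^1$ upgrade via the Li--Schoen absorption trick is exactly what the brief reference to De Giorgi--Nash--Moser in the paper is pointing to.

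There is, however, a genuine gap in your proof of \eqref{supMVI}. After obtaining $(\fint f^{q_0})(\fint f^{-q_0})\le c$ from John--Nirenberg and $(\inf f)^{-q_0}\le c\fint f^{-q_0}$ from the subharmonic estimate applied to $f^{-q_0}$, you invoke ``Jensen's inequality $\fint f\le(\fint f^{q_0})^{1/q_0}$''. For $q_0\in(0,1)$ this inequality goes the other way: since $t\mapsto t^{q_0}$ is concave, the power mean inequality gives $(\fint f^{q_0})^{1/q_0}\le\fint f$, not the reverse. So what your chain actually yields is only the weaker estimate $(\fint f^{q_0})^{1/q_0}\le c\inf f$, i.e.\ precisely the paper's intermediate inequality \eqref{supMVIde}, and the passage from exponent $q_0<1$ up to exponent $1$ is still missing.

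The paper fills this gap (following Bombieri--Giusti \cite{BG}) by a second Moser-type iteration in the \emph{positive} exponent range: for $\a\in(0,\tfrac12)$ one has $\De(f^\a)\le\f{\a-1}{\a}f^{-\a}|\na(f^\a)|^2$, which after testing against a cutoff gives the Caccioppoli estimate $\int\eta^2|\na(f^\a)|^2\le(\tfrac{2\a}{1-2\a})^2\int|\na\eta|^2 f^{2\a}$. Feeding $\eta f^\a$ into the Sobolev inequality then produces $(\fint f^{2\a n/(n-2)})^{(n-2)/n}\le c\fint f^{2\a}$, which lets one iterate the exponent up from $\de_n$ to any value below $\tfrac{n}{n-2}$, in particular to $1$. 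Your parenthetical alternative (``Moser's crossing-over-exponent-zero argument'') is in fact this missing step, not a replacement for John--Nirenberg; both the log-BMO estimate \emph{and} the positive-exponent Caccioppoli iteration are needed to reach \eqref{supMVI}.
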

\begin{proof}
We can carry out the De Giorgi-Nash-Moser iteration with the help of volume doubling property \eqref{VDPmg} and Theorem \ref{SobMinG} (see also $\S$ 4 in \cite{D0}), and deduce the mean value inequality \eqref{subMVI} on $M$.
Moreover, combining Theorem \ref{POINCARE} there is  a constant $\de_n>0$ depending only on $n$ such that (see also $\S$ 4 in \cite{D0})
\begin{equation}\aligned\label{supMVIde}
\int_{M\cap B_{\f32}(\bar{p})}f^{\de_n}\le\vartheta_{\k,v}\inf_{M\cap B_{1}(\bar{p})}f^{\de_n}
\endaligned
\end{equation}
up to a choice of the constant $\vartheta_{\k,v}$. 

Now we prove \eqref{supMVI} using \eqref{supMVIde} with Sobolev inequalities several times (following the idea of the proof of Theorem 6 in \cite{BG}).
Without loss of generality, we may assume $f>0$ on $M\cap B_{2}(\bar{p})$, or else $f\equiv0$ by the mean value inequality similarly as \eqref{supMVIde}.
Let $\De,\mathrm{div}_M$ denote the Laplacian and divergence of $M$, respectively.
For any fixed constant $\a\in(0,1/2)$, let $\psi=f^\a$, then (compared with (5.13) in \cite{BG}) one has
\begin{equation}\aligned\label{Depsia}
\De \psi=\mathrm{div}_M(\a f^{\a-1}\na f)=\a f^{\a-1}\De f+\a(\a-1)f^{\a-2}|\na f|^2\le\f{\a-1}{\a}\psi^{-1}|\na\psi|^2.
\endaligned
\end{equation}
Let $\e$ be a Lipschitz function on $M\cap B_{2}(\bar{p})$ with compact support in $M\cap B_{2}(\bar{p})$. Integrating by parts for \eqref{Depsia}, then with Cauchy inequality we get
\begin{equation}\aligned
&\f{\a-1}{\a}\int|\na\psi|^2\e^2\ge\int\psi\e^2\De \psi=-\int|\na\psi|^2\e^2-2\int\psi\e\lan\na\psi,\na\e\ran\\
\ge&-\int|\na\psi|^2\e^2-\f{1-2\a}{2\a}\int|\na\psi|^2\e^2-\f{2\a}{1-2\a}\int|\na\e|^2\psi^2,
\endaligned
\end{equation}
which implies
\begin{equation}\aligned
\int|\na\psi|^2\e^2\le\left(\f{2\a}{1-2\a}\right)^2\int|\na\e|^2\psi^2.
\endaligned
\end{equation}
Hence, with Cauchy inequality again we have
\begin{equation}\aligned
\int|\na(\psi\e)|^2\le2\int|\na\psi|^2\e^2+2\int|\na\e|^2\psi^2\le2\left(\left(\f{2\a}{1-2\a}\right)^2+1\right)\int|\na\e|^2\psi^2.
\endaligned
\end{equation}
Combining Sobolev inequality and H\"older inequality, from the above inequality (up to choosing cut-off functions) we have
\begin{equation}\aligned\label{f2an-2n}
\left(\int_{M\cap B_{1}(\bar{p})} f^{\f{2\a n}{n-2}}\right)^{\f{n-2}n}\le \vartheta_{\a,\k,v}\left(\int_{M\cap B_{3/2}(\bar{p})}f^{\de_n}\right)^{\f{2\a}{\de_n}},
\endaligned
\end{equation}
where $\vartheta_{\a,\k,v}$ is a constant depending only on $n,\a,\k,v$ for all $\a\in(0,1/2)$. Combining \eqref{supMVIde} and \eqref{f2an-2n}, we can get \eqref{supMVI} up to a choice of the constant $\vartheta_{\k,v}$.
\end{proof}

\section{Gradient estimates for minimal graphs}

Let $\Si$ be an $n$-dimensional complete non-compact manifold with Levi-Civita connection $D$, and $M$ be a smooth minimal graph over an open set $\Om\subset\Si$ in $\Si\times\R$ with the graphic function $u$ on $\Om$.
Denote $v_u=\sqrt{1+|Du|^2}$. 
Let $\De,\na$ denote the Laplacian and Levi-Civita connection of $M$ (for simplicity), respectively,
and $A$ denotes the second fundamental form of $M$ in $\Si\times\R$.
For a function $f$ on $\Om$,
we also see $f$ being the function on $M$ by letting $f(x,u(x))=f(x)$ for each $x\in\Om$, which will not cause confusion from the context in general. We still use $d\mu$ denote the volume element of $M$ as before, which can also be seen as an $n$-form on $\Si$. 
Let $d\mu_\Si$ denote the volume element of $\Si$. Then
\begin{equation}\aligned\label{dmudmuSi}
d\mu=v_ud\mu_\Si.
\endaligned
\end{equation}
Let us recall a Bochner type formula as follows (see \cite{DJX2} for instance):
\begin{equation}\aligned\label{De1/v}
\De\left(\f1{v_u}\right)=-\f{|A|^2}{v_u}-\f{\mathrm{Ric}(Du,Du)}{v_u^3},
\endaligned
\end{equation}
where $'\mathrm{Ric}'$ denotes the Ricci curvature of $\Si$.
Equivalently,
\begin{equation}\aligned\label{Logv}
\De\log v_u=|A|^2+\f{\mathrm{Ric}(Du,Du)}{1+|Du|^2}+|\na\log v_u|^2.
\endaligned
\end{equation}
For $\bar{q}=(q,t_q)\in \Si\times\R$, let $B_r(\bar{q})$ be a geodesic ball in $\Si\times\R$ with radius $r$ and centered at $\bar{q}$. Namely,
\begin{equation}\nonumber
B_r(\bar{q})=\left\{(x,t)\in\Si\times\R\big|\ d^2(x,q)+(t-t_q)^2<r^2\right\}.
\end{equation}
Let $\pi$ be the projection from $\Si\times\R$ into $\Si$ by $\pi(x,t)=x$ for all $(x,t)\in\Si\times\R$ as before.
\begin{theorem}\label{GEu}
Let $\Si$ be an $n$-dimensional complete noncompact manifold with Ricci curvature $\ge-(n-1)\k^2R^{-2}$ on the geodesic ball $B_R(p)\subset\Si$ for some constant $\k\ge0$. Assume $\mathcal{H}^n(B_R(p))\ge vR^n$ for some positive constant $v$.
If $M$ is a minimal graph over $B_{R}(p)$ with the graphic function $u$, then we have the gradient estimate
\begin{equation}\aligned\label{GradEstu}
|Du(p)|\le e^{c_{\k,v} R^{-1}\left(u(p)-\inf_{x\in B_{R}(p)}u(x)\right)},
\endaligned
\end{equation}
where $c_{\k,v}$ is a positive constant depending only on $n,\k,v$.
\end{theorem}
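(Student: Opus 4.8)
The plan is to follow Bombieri--De Giorgi--Miranda, with the Euclidean Michael--Simon/Sobolev machinery replaced by Theorem \ref{SobMinG}, the exterior-ball doubling property \eqref{VDPmg} and Lemma \ref{MVI}, and with the role of ambient flatness played by the Bochner identity \eqref{Logv}. First I would normalize: rescaling the product metric on $\Si\times\R$ by $(8/R)^{2}$ replaces $u$ by $(8/R)u$ but leaves $|Du|$, the minimal graph equation, and geodesic balls (up to the obvious rescaling) unchanged, while Bishop--Gromov propagates $\mathcal H^{n}(B_{R}(p))\ge vR^{n}$ to a uniform volume-density lower bound on smaller balls. Hence it suffices to prove: if $\mathrm{Ric}\ge-(n-1)\k^{2}$ on $B_{2}(p)$, $\mathcal H^{n}(B_{1}(p))\ge v$, and $M$ is a minimal graph over $B_{2}(p)$ with $\bar p=(p,u(p))\in M$, then
\[
\log v_{u}(p)\ \le\ c_{\k,v}\Big(u(p)-\inf_{B_{1}(p)}u\Big),\qquad v_{u}=\sqrt{1+|Du|^{2}};
\]
exponentiating and undoing the scaling (using $\inf_{B_{R/8}(p)}u\ge\inf_{B_{R}(p)}u$) then gives \eqref{GradEstu}.

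Set $w:=\log v_{u}\ge0$ on $M$. By the Bochner formula \eqref{Logv} and $\mathrm{Ric}\ge-(n-1)\k^{2}$,
\[
\De w=|A|^{2}+\frac{\mathrm{Ric}(Du,Du)}{1+|Du|^{2}}+|\na w|^{2}\ \ge\ -(n-1)\k^{2},
\]
so $w$ is a nonnegative subsolution of $\De$ on $M$ up to the bounded inhomogeneity $(n-1)\k^{2}$ (for $\k=0$ it is genuinely subharmonic). Since $M$ satisfies the Sobolev inequality of Theorem \ref{SobMinG} and the doubling property \eqref{VDPmg}, the De Giorgi--Nash--Moser iteration behind Lemma \ref{MVI} (see also $\S4$ of \cite{D0}) applies to $w$ and yields local boundedness,
\[
\log v_{u}(p)\ \le\ \sup_{M\cap B_{1/4}(\bar p)}w\ \le\ C_{\k,v}\Big(\,\fint_{M\cap B_{1/2}(\bar p)}\log v_{u}\,d\mu\ +\ 1\Big).
\]

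The remaining analytic input is the control of $\fint_{M\cap B_{1/2}(\bar p)}\log v_{u}\,d\mu$, and this is where \cite{DJX2} enters: testing the Bochner identity \eqref{Logv} against a weight of the form $\zeta^{2}e^{\la(\bar u-u)}$ (with $\bar u=\sup_{B_{1}(p)}u$ and $\zeta$ a cut-off for $B_{1/2}(\bar p)$ in $B_{1}(\bar p)$), discarding the nonnegative terms $|A|^{2},|\na w|^{2}$ and absorbing the Ricci term via $\mathrm{Ric}\ge-(n-1)\k^{2}$, yields
\[
\fint_{M\cap B_{1/2}(\bar p)}\log v_{u}\,d\mu\ \le\ C_{\k,v}\Big(1+\bar u-u(p)\Big)\ \le\ C_{\k,v}\Big(1+u(p)-\inf_{B_{1}(p)}u\Big),
\]
the last step using the symmetry $u\mapsto-u$ of the minimal graph equation. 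Combining with the previous display gives $\log v_{u}(p)\le C'_{\k,v}\big(1+u(p)-\inf_{B_{1}(p)}u\big)$. The additive constant is removed by a dichotomy: if $u(p)-\inf_{B_{1}(p)}u\ge1$ it is absorbed; if $\rho:=u(p)-\inf_{B_{1}(p)}u<1$, then $u-\inf_{B_{1}(p)}u$ is a nonnegative harmonic function on $M\cap B_{1}(\bar p)$ (the vertical coordinate restricted to the minimal hypersurface $M$ is harmonic), so Moser's Harnack inequality --- available from \eqref{VDPmg} and the Neumann--Poincar\'e inequality of Theorem \ref{POINCARE} --- forces $\mathrm{osc}_{M\cap B_{1/2}(\bar p)}u\le C_{\k,v}\rho$, which together with the mean value iteration for $w$ (now closed in the regime where $|\na_{M}u|$ stays away from $1$) gives $\log v_{u}(p)\le C_{\k,v}\rho$; either way $\log v_{u}(p)\le c_{\k,v}\big(u(p)-\inf_{B_{1}(p)}u\big)$.

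I expect the integral estimate of the third step to be the main obstacle: it is the only place where the precise structure of \eqref{Logv} must be used --- in particular that only $\mathrm{Ric}_{\Si}$, and not the sectional curvature of $\Si$, appears --- which is exactly what makes a gradient estimate independent of sectional curvature bounds possible, and here it is imported from \cite{DJX2}. A secondary technical point is removing the additive constant when $\k>0$: the absence of an a priori Ricci lower bound on $M$ (its Ricci curvature involves $|A|^{2}$ and the sectional curvature of $\Si$) rules out a direct gradient estimate for the harmonic function $u$, forcing the Harnack/oscillation route above.
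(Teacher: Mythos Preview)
Your approach is essentially the paper's: rescale to unit scale, use the Bochner identity \eqref{Logv} together with the Ricci lower bound to obtain $\Delta\log v_u\ge|\nabla\log v_u|^2-(n-1)\kappa^2$, derive an integral bound on $\log v_u$ via the computation in \cite{DJX2}, and finish with the mean value inequality \eqref{subMVI} from Lemma \ref{MVI}. The paper frames the energy step slightly differently --- it tests against a product cutoff $\xi(x,t)=\eta(x)\tau(t)$ rather than the exponential weight $\zeta^2 e^{\lambda(\bar u-u)}$ you describe, and then invokes Lemma~3.1 of \cite{DJX2} (with $\beta=1$) for the bound
\[
\int_{B_1(p)\cap\{|u|<1\}}v_u\log v_u\,d\mu_\Sigma\ \le\ c_{n,\kappa}'\bigl(1+\sup_{B_3(p)}u\bigr)\,\mathcal{H}^n(B_3(p))
\]
--- but the content is the same.

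The dichotomy you add to eliminate the additive ``$+1$'' is not in the paper and, as written, does not close. Harnack for the nonnegative $M$-harmonic function $u-\inf_{B_1}u$ does give $\mathrm{osc}_{M\cap B_{1/2}(\bar p)}u\le C\rho$, but this oscillation bound does not yield $\log v_u(p)\le C\rho$: rerunning the integral estimate and Moser iteration at scale $\rho$ still produces $\log v_u(p)\le C(1+\rho^{-1}\cdot\rho)=2C$, a fixed constant, and further iteration does not improve it. The parenthetical ``closed in the regime where $|\nabla_M u|$ stays away from $1$'' names no mechanism (recall $|\nabla_M u|=|Du|/v_u<1$ identically, so this carries no information about $|Du|$). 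The paper simply does not attempt this step; its proof yields $\log v_u(p)\le C_{\kappa,v}\bigl(1+R^{-1}(u(p)-\inf u)\bigr)$, and the version of the estimate displayed in the introduction, \eqref{0GEM}, accordingly carries a multiplicative constant $c_{\kappa,v}$ in front of the exponential. You should drop the dichotomy.
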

\begin{proof}
By scaling, we only need to prove the case $R=3$.
We follow the proof of Lemma 3.1 in \cite{DJX2} with nonnegative Ricci curvature of $\Si$ in \cite{DJX2} instead by Ricci curvature $\ge-\f19(n-1)\k^{2}$ on $B_3(p)$. 
We define a Lipschitz function $\z$ on $[0,\infty)$ with $\mathrm{supp}\z\subset [0,2]$, $\z\big|_{[0,1)}\equiv1$, $|\z'|\le1$, and a Lipschitz function $\tau$ on $\R$ with $0\le\tau\le1$, $\tau\equiv1$ in $(-1,\sup_{B_{2}}u)$, $\tau\equiv0$ outside $(-2,1+\sup_{B_{2}}u)$, $|\tau'|<1$.
Set $\eta(x)=\z(d(x,p))$ and $\xi(x,t)=\eta(x)\tau(t)$ for each $(x,t)\in\Si\times\R$. From \eqref{Logv} and the Ricci curvature condition of $\Si$, we have
\begin{equation}\aligned
\De\log v_u\ge|\na\log v_u|^2-\f{n-1}9\k^2.
\endaligned
\end{equation}
For simplicity, we will omit the volume element $d\mu$ in the following integrations on $M$. 
From integrating by parts and Cauchy inequality,
\begin{equation}\aligned
\int_M|\na\log v_u|^2\xi^2\le&\int_M\xi^2\left(\De\log v_u+ \f{n-1}9\k^2\right)\\
=&-2\int_M\xi\na\xi\cdot\na\log v_u +\f{n-1}{9}\k^2\int_M\xi^2\\
\le&\f12\int_M|\na\log v|^2\xi^2+2\int_M|\na\xi|^2+\f{n-1}{9}\k^2\int_M\xi^2.
\endaligned
\end{equation}
With \eqref{dmudmuSi}, we get
\begin{equation}\aligned\label{nalogv}
\int_M|\na\log v_u|^2\xi^2d\mu\le&4\int_M|\na\xi|^2+\f{2(n-1)}{9}\k^2\int_M\xi^2\\
\le&8\int_M\left(|\na\eta|^2\tau^2+|\na\tau|^2\eta^2\right)+\f{2(n-1)}{9}\k^2\int_M\e^2\tau^2\\
\le&\left(16+\f{2(n-1)}{9}\k^2\right)\int_{B_{2}(p)\cap\{u>-2\}}v_u d\mu_\Si.
\endaligned
\end{equation}
Now we follow the proof of Lemma 3.1 in \cite{DJX2} by choosing $\be=1$ there, and obtain
\begin{equation}\aligned\label{DuK}
\int_{B_1(p)\cap\{|u|<1\}}v_u\log v_u d\mu_\Si\le c_{n,\k}'\left(1+\sup_{B_{3}(p)}u\right)\mathcal{H}^n(B_{3}(p)),
\endaligned
\end{equation}
where $c_{n,\k}'>0$ is a constant depending only on $n,\k$.
Combining Bishop-Gromov volume comparison and \eqref{ben***}, one has
\begin{equation}\aligned\label{ambBallMball}
\mathcal{H}^n(B_{3r}(p))\le c_{n,\k}''\mathcal{H}^n(M\cap B_r(\bar{p}))\qquad \mathrm{for\ any}\ r\in(0,1]
\endaligned
\end{equation}
with some constant $c_{n,\k}''>0$ depending only on $n,\k$.
Combining \eqref{DuK}\eqref{ambBallMball} and the mean value inequality \eqref{subMVI}, we get the gradient estimate \eqref{GradEstu}.
\end{proof}

The above gradient estimate is local. If we assume a global condition on $\Si$, and define the minimal graphic function $u$ on $\Si$, then using the idea of Bombieri-Giusti in \cite{BG} we get a gradient estimate in terms of polynomial growth of $u$ as follows (compared (6.8) in \cite{BG}).
\begin{theorem}\label{GEuglobal}
Let $\Si$ be an $n$-dimensional complete noncompact manifold with nonnegative Ricci curvature and Euclidean volume growth
\begin{equation}\aligned\label{EVol}
\lim_{r\rightarrow\infty}\f{\mathcal{H}^{n}(B_r(x))}{r^{n}}\ge v
\endaligned
\end{equation}
for some constant $v>0$.
If $M$ is a minimal graph over $\Si$ with the graphic function $u$, then
\begin{equation}\aligned\label{GradEstu*}
\sup_{B_r(x)}|Du|\le c_v \left(1+r^{-1}\sup_{y\in B_{r}(x)}|u(y)-u(x)|\right)^n
\endaligned
\end{equation}
for any geodesic ball $B_r(x)\subset\Si$, where $c_v$ is a positive constant depending only on $n$, $v$.
\end{theorem}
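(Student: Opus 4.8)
The plan is to combine the Bochner-type identities \eqref{De1/v}--\eqref{Logv} with the mean value inequalities of Lemma \ref{MVI} and with an iteration over scales, in the spirit of Bombieri--Giusti, that upgrades the exponential estimate of Theorem \ref{GEu} to polynomial dependence on the oscillation. First I would record the scale-invariant form of the hypotheses. Since $\mathrm{Ric}\ge0$ and $\lim_{r\to\infty}r^{-n}\mathcal{H}^n(B_r(x))\ge v$, Bishop--Gromov monotonicity gives $v\rho^n\le\mathcal{H}^n(B_\rho(z))\le\omega_n\rho^n$ for all $z\in\Si$ and all $\rho>0$; then \eqref{ben***} with $\k=0$ gives scale-invariant two-sided volume bounds for $M$ on every ball $B_\rho(\bar z)\subset\Si\times\R$; and, restricting the entire graph $M$ to an arbitrary ball $B_{2\rho}(z)\subset\Si$ and rescaling, Theorem \ref{SobMinG}, Theorem \ref{POINCARE} and Lemma \ref{MVI} hold at every scale with constants depending only on $n,v$. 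By the invariance of \eqref{u0} under vertical translation we may assume $u(x)=0$, and by a dilation we may reduce to the unit ball, so that it suffices to bound $v_u(y,u(y))$ at each $y\in B_1(x)$ in terms of $\Lambda:=\sup_{B_1(x)}|u|$.

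Next I would assemble the analytic input. From \eqref{Logv} and $\mathrm{Ric}\ge0$ one has $\De\log v_u\ge|\na\log v_u|^2\ge0$, so $\log v_u$ is subharmonic on $M$; from \eqref{De1/v}, $\De(1/v_u)\le0$, so $1/v_u$ is a positive superharmonic function on $M$. Testing \eqref{u0} against $(u-u(z))\phi^2$, with $\phi$ a cutoff equal to $1$ on $B_\rho(z)$ and supported in $B_{2\rho}(z)$, and using $|Du|/v_u\le1$ together with $|Du|^2/v_u\ge v_u-1$, yields the Caccioppoli bound
\begin{equation*}
\int_{B_\rho(z)}(v_u-1)\,d\mu_\Si\le\frac{c_n}{\rho}\,\mathrm{osc}_{B_{2\rho}(z)}u\cdot\mathcal{H}^n(B_{2\rho}(z)),
\end{equation*}
and hence $\int_{B_\rho(z)}v_u\,d\mu_\Si\le c_v\big(1+\rho^{-1}\mathrm{osc}_{B_{2\rho}(z)}u\big)\rho^n$; that is, the area of $M$ over $B_\rho(z)$ is linear in the oscillation of $u$.

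The heart of the matter is the iteration. Fix $y\in B_1(x)$ and set $\bar y=(y,u(y))\in M$; since $M$ is a graph, $\pi(M\cap B_\rho(\bar y))\subset B_\rho(y)$, $\mathcal{H}^n(M\cap B_\rho(\bar y))=\int_{\pi(M\cap B_\rho(\bar y))}v_u\,d\mu_\Si$, and $\int_{M\cap B_\rho(\bar y)}(1/v_u)\,d\mu=\mathcal{H}^n(\pi(M\cap B_\rho(\bar y)))$. Feeding the subharmonicity of $\log v_u$, the Bochner inequality \eqref{Logv} and the area bound above into the mean value inequality, exactly as in the proof of Theorem \ref{GEu}, only reproduces the exponential estimate \eqref{GradEstu} in one step. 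To extract polynomial growth I would instead apply the weak Harnack inequality \eqref{supMVI} to the positive superharmonic function $1/v_u$, obtaining
\begin{equation*}
\sup_{M\cap B_\rho(\bar y)}v_u\le c_v\,\frac{\mathcal{H}^n(M\cap B_\rho(\bar y))}{\mathcal{H}^n(\pi(M\cap B_\rho(\bar y)))}\le c_v\,\big(1+\rho^{-1}\mathrm{osc}_{B_{2\rho}(y)}u\big)\,\frac{\rho^n}{\mathcal{H}^n(\pi(M\cap B_\rho(\bar y)))},
\end{equation*}
and then bound the projected area from below by discarding the set where $u$ deviates from $u(y)$ by more than $c\rho$, whose measure is controlled through the Neumann--Poincar\'e inequality on $\Si$ and the Caccioppoli bound. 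Iterating this recursion along a geometric sequence of radii, down to the first scale at which the projected ball is essentially the full ball (where the right-hand side reduces to $c_v$), and multiplying out the telescoping product of constants --- of which there are only $O(\log(1+\Lambda))$, so the product is a fixed power of $1+\Lambda$ --- would give $v_u(y,u(y))\le c_v(1+\Lambda)^n$, as in Bombieri--Giusti (cf. (6.8) of \cite{BG}), and then a routine localization recovers the stated form.

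The main obstacle is precisely this passage from one scale to many: a single application of a mean value inequality only sees right-hand sides linear in the oscillation, and after exponentiating this returns the exponential dependence of \eqref{GradEstu}; the polynomial bound forces one to track how the quotient $\mathcal{H}^n(M\cap B_\rho(\bar y))/\mathcal{H}^n(\pi(M\cap B_\rho(\bar y)))$ improves as $\rho$ shrinks, and in particular to bound $\mathcal{H}^n(\pi(M\cap B_\rho(\bar y)))$ from below \emph{without} an a priori gradient bound --- the delicate point, handled by feeding the previous iterate back into the oscillation estimate. The uniform two-sided volume bounds $v\rho^n\le\mathcal{H}^n(B_\rho(z))\le\omega_n\rho^n$ are what keep the accumulated constants from blowing up over the $O(\log(1+\Lambda))$ steps.
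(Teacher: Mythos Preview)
You have correctly isolated the decisive inequality: applying the superharmonic mean value inequality \eqref{supMVI} to $1/v_u$ gives
\[
\sup_{M\cap B_R(\bar x)}v_u\le c_v\,\frac{\mathcal{H}^n(M\cap B_R(\bar x))}{\mathcal{H}^n(\pi(M\cap B_R(\bar x)))},
\]
using $\int_{M\cap B_R(\bar x)}v_u^{-1}\,d\mu=\mathcal{H}^n(\pi(M\cap B_R(\bar x)))$. From here, however, the paper proceeds in \emph{one step}, with no iteration at all, and your proposed multi-scale recursion is both unnecessary and, as written, not closed.

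The paper's device is this. Work at the point $\bar x=(x,u(x))$ and simply \emph{choose} the extrinsic radius $R$ so that $R^2=r^2+\big(\sup_{B_r(x)}|u-u(x)|\big)^2$. Then for every $z\in B_r(x)$ one has $d(z,x)^2+(u(z)-u(x))^2<R^2$, i.e.\ $(z,u(z))\in M\cap B_R(\bar x)$, and hence $B_r(x)\subset\pi(M\cap B_R(\bar x))$. This gives the denominator bound $\mathcal{H}^n(\pi(M\cap B_R(\bar x)))\ge\mathcal{H}^n(B_r(x))\ge v r^n$ for free, with no appeal to Poincar\'e on $\Si$ or to any ``first good scale''. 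For the numerator the paper does not use your Caccioppoli bound (which would require controlling $\mathrm{osc}\,u$ on the larger ball $B_{2R}(x)$, a quantity you do not yet control); it uses the area-minimizing property of the graph directly: $\mathcal{H}^n(M\cap B_R(\bar x))\le\tfrac12\mathcal{H}^n(\partial B_R(\bar x))\le\tfrac{n+1}{2}\omega_{n+1}R^n$. Dividing gives $\sup_{B_r(x)}|Du|\le c_v(R/r)^n=c_v\big(1+r^{-2}(\sup_{B_r(x)}|u-u(x)|)^2\big)^{n/2}$, which is \eqref{GradEstu*}.

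Your iteration sketch, by contrast, leaves the actual recursion unspecified: you assert $O(\log(1+\Lambda))$ steps and a telescoping product, but it is not clear which quantity is improving from one scale to the next, nor why the lower bound on $\mathcal{H}^n(\pi(M\cap B_\rho(\bar y)))$ via Poincar\'e/Chebyshev does not reintroduce $|Du|$ on the right. The clean way to bound the projection from below is precisely the paper's: enlarge the ambient ball until the projection swallows $B_r(x)$.
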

\begin{proof}
From \eqref{supMVI} and \eqref{De1/v}, 
\begin{equation}\aligned\label{supmvivu}
\f1{\mathcal{H}^n(M\cap B_{R}(\bar{x}))}\int_{M\cap B_R(\bar{x})}\f1{v_u}\le\vartheta_{0,v}\inf_{M\cap B_{R}(\bar{x})}\f1{v_u}
\endaligned
\end{equation}
for any $\bar{x}=(x,u(x))$ and any $R>0$, 
which implies
\begin{equation}\aligned
\f{\mathcal{H}^n(\pi(M\cap B_{R}(\bar{x})))}{\mathcal{H}^n(M\cap B_{R}(\bar{x}))}\le\f{\vartheta_{0,v}}{\sup_{M\cap B_{R}(\bar{x})}v_u}.
\endaligned
\end{equation}
Namely,
\begin{equation}\aligned
\sup_{\pi(M\cap B_{R}(\bar{x}))}\sqrt{1+|Du|^2}\le\vartheta_{0,v}\f{\mathcal{H}^n(M\cap B_{R}(\bar{x}))}{\mathcal{H}^n(\pi(M\cap B_{R}(\bar{x})))}.
\endaligned
\end{equation}
Since $M$ is area-minimizing in $\Si\times\R$, then
\begin{equation}\aligned
\mathcal{H}^n(M\cap B_{R}(\bar{x}))\le\f12\mathcal{H}^n(\p B_{R}(\bar{x}))\le \f{n+1}2\omega_{n+1}R^n,
\endaligned
\end{equation}
where $\omega_{n+1}$ is the volume of $(n+1)$-dimensional unit Euclidean ball.
Combining the above two inequalities, we can obtain
\begin{equation}\aligned\label{SLOPEBD}
\sup_{\pi(M\cap B_{R}(\bar{x}))}|Du|\le \f{(n+1)\omega_{n+1}\vartheta_{0,v} R^n}{2\mathcal{H}^n(\pi(M\cap B_{R}(\bar{x})))}.
\endaligned
\end{equation}
Denote $\bar{x}=(x,u(x))$. There exists a unique $r>0$ (depending on $R$) such that
$$R^2=r^2+\left(\sup_{B_r(x)}|u-u(x)|\right)^2.$$
Clearly, $B_r(x)\subset \pi(M\cap B_{R}(\bar{x}))$. With \eqref{EVol} and Bishop-Gromov volume comparison, it follows that $\mathcal{H}^n(B_r(x))\ge vr^n$ for every $x\in\Si$ and every $r>0$. From \eqref{SLOPEBD}, we have
\begin{equation}\aligned
&\sup_{B_r(x)}|Du|\le\sup_{\pi(M\cap B_{R}(\bar{x}))}|Du|\le\f{(n+1)\omega_{n+1}\vartheta_{0,v} R^n}{2\mathcal{H}^n(B_r(x))}\\
\le&\f{(n+1)\omega_{n+1}\vartheta_{0,v}}{2v}\left(\f Rr\right)^n=\f{(n+1)\omega_{n+1}\vartheta_{0,v}}{2v}\left(1+\f1{r^2}\left(\sup_{B_r(x)}|u-u(x)|\right)^2\right)^{\f n2},
\endaligned
\end{equation}
which implies \eqref{GradEstu*}.
\end{proof}

Let $\Si$ be a smooth complete manifold of nonnegative Ricci curvature and Euclidean volume growth.
Let $M$ be an entire smooth minimal graph over $\Si$ with the graphic function $u$. If $u$ has linear growth for its negative part, i.e.,
\begin{equation}\aligned\label{LinearGrowthos}
\limsup_{\Si\ni x\rightarrow\infty}\f{\max\{-u(x),0\}}{d(x,p)}<\infty
\endaligned
\end{equation}
for a fixed point $p\in\Si$, then $|Du|$ is uniformly bounded on $\Si$ from Theorem \ref{GEu}.
Put $|Du|_0=\sup_{\Si}|Du|<\infty$, then $e^{2\log v_u}-1=|Du|^2$ is a bounded subharmonic function on $M$ from \eqref{Logv}.
From \eqref{supMVI}, for any fixed point $\bar{x}=(x,u(x))$ we have
\begin{equation}\aligned
\fint_{M\cap B_r(\bar{x})}\left(|Du|^2_0-|Du|^2\right)\le \vartheta_{0,v}\left(|Du|^2_0-|Du|^2(x)\right),
\endaligned
\end{equation}
which implies
\begin{equation}\aligned\label{meanlimDu1}
\sup_{\Si}|Du|^2=\lim_{r\rightarrow \infty}\fint_{M\cap B_r(\bar{x})}|Du|^2.
\endaligned
\end{equation}
By following the proof of Theorem 3.6 in \cite{DJX2} with \eqref{meanlimDu1}, we can prove Theorem \ref{hlcMG}.

\section{Splitting theorems for minimal graphs}

Let $\Si$ be an $n$-dimensional smooth complete manifold of nonnegative Ricci curvature for $n\ge3$. Suppose $\Si$ has Euclidean volume growth
\begin{equation}\aligned
\lim_{r\to\infty}r^{-n}\mathcal{H}^n(B_r(p))=v
\endaligned
\end{equation}
for some constant $v>0$.
Let $\De_\Si$, $\mathrm{Hess}$ denote the Laplacian and Hessian on $\Si$, respectively.
For $p\in\Si$, let $G(p,\cdot)$ be the Green function on $\Si$ with $\lim_{r\rightarrow0}\sup_{\p B_r(p)}\left|Gr^{n-2}-1\right|=0$. We fix the point $p$, and denote $b=G^{\f1{2-n}}$, then
\begin{equation}\aligned\label{Deb}
\De_{\Si}b^2=2n|D b|^2,\qquad \mathrm{and} \qquad b\De_{\Si}b=(n-1)|D b|^2.
\endaligned
\end{equation}
From Mok-Siu-Yau \cite{MSY}, there is a constant $\de_0\in(0,1]$ such that
\begin{equation}\aligned\label{de0dbf}
\de_0d(x,p)\le b(x)\le d(x,p)\qquad\qquad\mathrm{ for\ large}\ d(x,p).
\endaligned
\end{equation}
From Colding \cite{C2} and Colding-Minicozzi \cite{CM1}, we have $|Db|\le1$ on $\Si$, and
\begin{equation}\aligned\label{Estb000}
\lim_{\Si\ni x\rightarrow\infty}\f{b(x)}{d(p,x)}=\lim_{r\rightarrow\infty}\sup_{\Si\setminus B_r(p)}|Db|=\left(\f v{\omega_n}\right)^{\f1{n-2}}.
\endaligned
\end{equation}
Denote $v_*=\left(\f v{\omega_n}\right)^{\f1{n-2}}$. Then we can let $\de_0=\f12v_*$ in \eqref{de0dbf} from \eqref{Estb000}.
For simplicity, we will omit the volume element $d\mu_\Si$ on integrations over subsets of $\Si$ if no ambiguity.
From \cite{CC}\cite{CM1} (see also \cite{C2}), for any constant $\La>1$ we have
\begin{equation}\aligned\label{Estb}
\lim_{r\rightarrow\infty}r^{-n}\int_{B_{\La r}(p)\setminus B_r(p)}\left(\left||Db|^2-v_*^2\right|+\left|\mathrm{Hess}_{b^2}-2v_*^2\si\right|\right)=0,
\endaligned
\end{equation}
where $\si$ is the metric of $\Si$.

Let $M\subset\Si\times\R$ be a minimal graph over $\Si$ with the graphic function $u$.
Suppose that $u$ has linear growth for its negative part, i.e., \eqref{LinearGrowthos} holds. Then $|Du|$ is uniformly bounded on $\Si$ from Theorem \ref{GEu}. Suppose $\sup_\Si|Du|>0$.
\begin{lemma}
For any $x\in\Si$, one has
\begin{equation}\aligned\label{Hessu2}
0=\lim_{r\rightarrow \infty}\f1{r^{n-2}}\int_{B_r(x)}|\mathrm{Hess}_u|^2.
\endaligned
\end{equation}
\end{lemma}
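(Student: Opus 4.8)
The plan is to reduce \eqref{Hessu2} to a decay estimate for the second fundamental form $A$ of $M$, and then to obtain that decay by combining the fact that $1/v_u$ is a bounded positive superharmonic function on $M$ with a Caccioppoli‑type argument in which every derivative is transferred onto $1/v_u$. \emph{Step 1 (reduction to $|A|^2$).} Since $u$ has linear growth for its negative part, Theorem~\ref{GEu} gives $|Du|_0:=\sup_\Si|Du|<\infty$, so $1\le v_u\le v_0:=\sqrt{1+|Du|_0^2}$. For a minimal graph the induced metric is $g_{ij}=\si_{ij}+u_iu_j$, its inverse satisfies $g^{-1}\ge v_u^{-2}\si$ as a bilinear form, and $h_{ij}=v_u^{-1}(\mathrm{Hess}_u)_{ij}$; hence $|A|^2\ge v_u^{-6}|\mathrm{Hess}_u|^2\ge v_0^{-6}|\mathrm{Hess}_u|^2$ pointwise, while $d\mu=v_u\,d\mu_\Si\ge d\mu_\Si$. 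So it suffices to prove $\lim_{r\to\infty}r^{-(n-2)}\int_{M\cap B_r(\bar{p})}|A|^2\,d\mu=0$ for $\bar{p}=(p,u(p))$; the case of an arbitrary ball $B_r(x)\subset\Si$ then follows from $B_r(x)\subset B_{r+d(x,p)}(p)$.

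\emph{Step 2 (averaged decay of $1/v_u$).} Put $\psi=\f1{v_u}-\inf_\Si\f1{v_u}\ge0$. By \eqref{De1/v} and $\mathrm{Ric}\ge0$, $\De\psi=-\f{|A|^2}{v_u}-\f{\mathrm{Ric}(Du,Du)}{v_u^3}\le-v_0^{-1}|A|^2\le0$, so $\psi$ is a bounded nonnegative superharmonic function on $M$ with $-\De\psi\ge v_0^{-1}|A|^2$. Applying Lemma~\ref{MVI} (rescaled to $B_\rho(\bar{p})$; this is legitimate since scaling preserves $\mathrm{Ric}\ge0$ and, by Bishop--Gromov, $\mathcal{H}^n(B_\rho(p))\ge v\rho^n$) yields $\int_{M\cap B_\rho(\bar{p})}\psi\le\vartheta_{0,v}\,\rho^n\inf_{M\cap B_\rho(\bar{p})}\psi$. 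Since $\inf_{M\cap B_\rho(\bar{p})}\psi=\big(\sup_{M\cap B_\rho(\bar{p})}v_u\big)^{-1}-v_0^{-1}\to0$ as $\rho\to\infty$, we conclude $\int_{M\cap B_\rho(\bar{p})}\psi=o(\rho^n)$.

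\emph{Step 3 (cutoff built from $b$, two integrations by parts).} Fix $\chi\in C^\infty([0,\infty))$ nonincreasing with $\chi\equiv1$ on $[0,1]$ and $\chi\equiv0$ on $[2,\infty)$, and set $\varphi^2=\chi(b/R)$ with $b=G^{1/(2-n)}$ pulled back to $M$ by $\pi$; near $\pi^{-1}(p)$ one has $\varphi^2\equiv1$, so the non‑smoothness of $b$ at $p$ is irrelevant. A standard computation using the minimality of $M$, together with \eqref{Deb}, gives $|\na b|\le|Db|\le1$ and $\De b=\f{(n-1)|Db|^2}{b}-v_u^{-2}\mathrm{Hess}_b(Du,Du)$ on $M$. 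From $-\De\psi\ge v_0^{-1}|A|^2$ and two integrations by parts (boundary terms vanish since $\varphi$ is compactly supported),
\[
v_0^{-1}\int_M|A|^2\varphi^2\le-\int_M\psi\,\De(\varphi^2)=-\int_M\psi\left(\chi'(b/R)\f{\De b}{R}+\chi''(b/R)\f{|\na b|^2}{R^2}\right),
\]
and every term on the right is supported in $M\cap\{R\le b\le2R\}\subset M\cap B_{cR}(\bar{p})$ (by \eqref{de0dbf} and the Lipschitz bound on $u$). The $\chi''$‑term, the $\f{(n-1)|Db|^2}{b}$‑part of the $\chi'$‑term, and — after writing $\mathrm{Hess}_b=\f1{2b}\mathrm{Hess}_{b^2}-\f1b\,db\otimes db$ and $\mathrm{Hess}_{b^2}=2v_*^2\si+E$ — the $db\otimes db$‑part and the $E$‑part are all bounded in absolute value by $\f{C}{R^2}\int_{M\cap B_{cR}(\bar{p})}\psi=o(R^{n-2})$, using $|Du|\le|Du|_0$, $b\ge R$, the boundedness of $\psi$, Step~2, and \eqref{Estb} (for the $E$‑part, $\int_{M\cap\{R\le b\le2R\}}|E|\le v_0\int_{B_{cR}(p)\setminus B_R(p)}|E|=o(R^n)$). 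The only leftover contribution is $-\f{v_*^2}{R}\int_M\psi\,|\chi'(b/R)|\,v_u^{-2}b^{-1}|Du|^2\le0$, which only helps the estimate. Hence $\int_{M\cap\{b\le R\}}|A|^2\,d\mu\le\int_M|A|^2\varphi^2\,d\mu=o(R^{n-2})$, and since $B_R(p)\subset\{b\le R\}$, Step~1 gives \eqref{Hessu2}.

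The main obstacle is Step~3: because $M$ carries no Ricci lower bound, one cannot directly build a cutoff on $M$ with $|\De\varphi^2|\le C/R^2$ pointwise, yet the double integration by parts is unavoidable if one wants to use the smallness of $\psi$ rather than merely a Caccioppoli bound $\int_{B_R}|A|^2=O(R^{n-2})$. The device is to build $\varphi$ from the globally defined function $b$, to push the exact elliptic identities \eqref{Deb} for $b$ onto $M$ through minimality, and to observe that after the two integrations by parts the unique term of borderline size $R^n$ on the annulus — namely $\int_M\psi\,|\chi'|\,v_u^{-2}b^{-1}\mathrm{Hess}_{b^2}(Du,Du)$, with $\mathrm{Hess}_{b^2}$ asymptotic to $2v_*^2\si\ge0$ by \eqref{Estb} — enters with the sign that kills it; all remaining terms are $o(R^{n-2})$ exactly because of the averaged decay $\fint_{M\cap B_\rho(\bar{p})}\psi\to0$ from Step~2.
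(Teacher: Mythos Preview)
Your proof is correct and follows essentially the same route as the paper: build the cutoff from $b=G^{1/(2-n)}$, move $\De$ onto the cutoff by integration by parts, and control the resulting annular integral using the averaged smallness of the defect function (your $\psi=1/v_u-1/v_0$, versus the paper's $\La_0-\log v_u$ from \eqref{meanlimDu1}) together with the asymptotics \eqref{Estb}. Your sign observation for the leftover $v_*^2$-term is pleasant but unnecessary, since $b\ge R$ on the support of $\chi'$ already bounds that term by $CR^{-2}\int\psi=o(R^{n-2})$; this is exactly how the paper proceeds, simply taking absolute values on $\De\eta$ and splitting $|Db|^2+|\mathrm{Hess}_{b^2}|$ into its asymptotic constant and its $o$-error.
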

\begin{proof}
At an considered point in $M$, we choose an orthonormal basis $\{e_i\}_{i=1}^n$, and let $\nu$ denote the unit normal vector to $M$. Set $\overline{\na}$ and $\overline{\mathrm{Hess}}$ be the Levi-Civita connection and Hessian matrix of $\Si\times\R$ with respect to the standard product metric $dt^2+\si$, respectively.
Since $M$ is minimal, then $\sum_{i=1}^n\left({\bn_{e_i}e_i}-\na_{e_i}e_i\right)=0$.
For any smooth function $f$ on $\Si$, we define a function $F(x,t)=f(x)$ for each $x\in\Si$ and $t\in\R$. Then
\begin{equation}\aligned\label{DeFDef}
\De F=&\sum_{i=1}^n\left(\na_{e_i}\na_{e_i}F-\left({\na_{e_i}e_i}\right)F\right)\\
=&\sum_{i=1}^n\left(\bn_{e_i}\bn_{e_i}F-\left({\bn_{e_i}e_i}\right)F\right)
+\sum_{i=1}^n\left({\bn_{e_i}e_i}-\na_{e_i}e_i\right)F\\
=&\De_{\Si\times\R}F-\overline{\mathrm{Hess}}_{F}(\nu,\nu)=\De_{\Si}f-\f1{v_u^2}\mathrm{Hess}_{f}(Du,Du).
\endaligned
\end{equation}
Let $\z$ be a nonnegative smooth function on $[0,\infty)$ with $\mathrm{supp}\z\subset [0,2r]$, $\z\big|_{[0,r]}\equiv1$, $|\z'|\le 10r^{-1}$ and $|\z''|\le 10r^{-2}$. Set
$$\eta(x,t)=\z(b(x))\qquad \mathrm{for\ any}\ (x,t)\in\Si\times\R.$$
With \eqref{de0dbf} and $\de_0=\f12v_*$, it follows that supp$\e\subset \overline{B_{4v_*^{-1}r}(p)}\times\R$ for large $r$.

From \eqref{DeFDef},
\begin{equation}\aligned
\De\e=\z'\De_\Si b+\z''|D b|^2-\f{\z'}{v_u^2}\mathrm{Hess}_{b}(Du,Du)-\f{\z''}{v_u^2}\left|\lan D b,Du\ran\right|^2,
\endaligned
\end{equation}
and then
\begin{equation}\aligned\label{Deeta}
|\De\eta|\le\f c{r^2}\left(|Db|^2+\left|\mathrm{Hess}_{b^2}\right|\right),
\endaligned
\end{equation}
where $c$ is a generic positive constant depending only on $n$. 
Denote $\bar{p}=(p,u(p))\in\Si\times\R$ and
$$\La_0=\f12\log\left(1+\sup_\Si|Du|^2\right).$$
Noting $\e=1$ on $M\cap(B_{r}(p)\times\R)$ since $\z\big|_{[0,r]}\equiv1$. Recalling that $d\mu$ is the volume element of $M$.
Combining \eqref{Logv} and \eqref{Deeta}, we have
\begin{equation}\aligned\label{|A|2DbHessb}
\int_{M\cap B_r(\bar{p})}|A|^2d\mu\le&\int_{M}|A|^2\eta d\mu\le\int_{M}\eta\De\log v_u d\mu=\int_{M}(\log v_u-\La_0)\De\eta d\mu\\
\le&\f c{r^2}e^{\La_0}\int_{B_{4v_*^{-1}r}(p)\setminus B_r(p)}\left(\La_0-\log v_u\right)\left(|Db|^2+\left|\mathrm{Hess}_{b^2}\right|\right)\\
\le&\f c{r^2}e^{\La_0}\La_0\int_{B_{4v_*^{-1}r}(p)\setminus B_r(p)}\left(|Db|^2-v_*^2+\left|\mathrm{Hess}_{b^2}-2v_*^2\si\right|\right)\\
&+\f c{r^2}e^{\La_0}\int_{B_{4v_*^{-1}r}(p)\setminus B_r(p)}(\La_0-\log v_u).
\endaligned
\end{equation}
From \eqref{meanlimDu1}, it follows that
\begin{equation}\aligned
\lim_{r\to\infty}\f1{r^n}\int_{B_{4v_*^{-1}r}(p)\setminus B_r(p)}(\La_0-\log v_u)=0.
\endaligned
\end{equation}
Combining \eqref{Estb} and \eqref{|A|2DbHessb}, we obtain
\begin{equation}\aligned
\lim_{r\rightarrow \infty}\f1{r^{n-2}}\int_{M\cap B_r(\bar{p})}|A|^2d\mu=0,
\endaligned
\end{equation}
which implies the equality \eqref{Hessu2}.
\end{proof}

Suppose $u(p)=0$.
Let  $\bar{p}=(p,0)\in M\subset\Si\times\R$.
For each $r>0$, let $(\Si_r,p_r)=\f1r(\Si,p)$, and $(M_r,\bar{p}_r)=\f1r(M,\bar{p})$. Then $M_r\subset\Si_r\times\R$ is a minimal graph over $\Si_r$.
Let $\tilde{u}_r$ denote the graphic function of $M_r$ on $\Si_r$.
For convenience, we still denote the inner product, norm, gradient and Hessian for this re-scaled
metric $r^{-2}\si$ by $\lan\cdot,\cdot\ran$, $|\cdot|$, $D$ and $\mathrm{Hess}$, respectively.
Put $|Du|_0=\sup_\Si|Du|>0$ as before. From \eqref{meanlimDu1}\eqref{Hessu2}, we get
\begin{equation}\aligned\label{supDur}
|Du|_0^2=\lim_{r\rightarrow \infty}\fint_{B_R(p_r)}|D\tilde{u}_r|^2
\endaligned
\end{equation}
and
\begin{equation}\aligned\label{Hessur}
0=\lim_{r\rightarrow \infty}\int_{B_R(p_r)}\left|\mathrm{Hess}_{\tilde{u}_r}\right|^2
\endaligned
\end{equation}
for any $R>0$.
With Cauchy inequality, it follows that
\begin{equation}\aligned\label{Hessur1}
0=\lim_{r\rightarrow \infty}\int_{B_R(p_r)}\left|\mathrm{Hess}_{\tilde{u}_r}\right|.
\endaligned
\end{equation}
If $F_r$ is a subset in $B_R(p_r)$ with $\mathcal{H}^n(F_r)\ge\de$ for some constant $\de>0$, then we have
\begin{equation}\aligned
0\le&\fint_{F_r}\left(|Du|_0-|D\tilde{u}_r|\right)\le\f1{\de}\int_{F_r}\left(|Du|_0-|D\tilde{u}_r|\right)\\
\le&\f1{\de}\int_{B_R(p_r)}\left(|Du|_0-|D\tilde{u}_r|\right)\le\f1{\de|Du|_0}\int_{B_R(p_r)}\left(|Du|_0^2-|D\tilde{u}_r|^2\right).
\endaligned
\end{equation}
With \eqref{supDur}, it follows that
\begin{equation}\aligned\label{supDur1}
0=\lim_{r\rightarrow \infty}\fint_{F_r}\left(|Du|_0-|D\tilde{u}_r|\right).
\endaligned
\end{equation}

For almost all $x,y\in \Si_r$ there exist a unit vector $\mathrm{v}_{xy}\in\R^n$ and a unique minimal geodesic
$\g_{xy}$ connecting $x$ and $y$ such that $\left|\dot{\g}_{xy}\right|=d(x,y)$, $\g_{xy}(0)=x$ and $\g_{xy}(1)=y$.
Denote $\mathrm{v}_{xy}=\f{\dot{\g}_{xy}(0)}{|\dot{\g}_{xy}(0)|}$.
Parallel translating the vector $D\tilde{u}_r(x)$ along $\g_{xy}(t)$ gets a vector at $y$, denoted by $P_{y}(D\tilde{u}_r(x))$. Then we have
\begin{equation}\aligned\label{Dpixyu}
\left|P_{y}(D\tilde{u}_r(x))-D\tilde{u}_r(y)\right|=\left|D\tilde{u}_r(x)-P_{x}(D\tilde{u}_r(y))\right|
\le d(x,y)\int_0^{1}\left|\mathrm{Hess}_{\tilde{u}_r}\right|(\g_{xy}(t))dt.
\endaligned
\end{equation}
For any unit vector $\mathrm{v}\in\R^n$, $r>0$, $\ep\in(0,1)$, $x\in\Si_r$, we define a cone-like region $C^{r,\ep}_{x,\mathrm{v}}$ in $\Si_r$ by
\begin{equation}\aligned
C^{r,\ep}_{x,\mathrm{v}}=\{\mathrm{exp}_x(t\th)\in\Si_r|\ t>0,\ \lan \mathrm{v},\th\ran>1-\ep\}.
\endaligned
\end{equation}
Note that $\Si_r$ is an $n$-dimensional smooth complete manifold of nonnegative Ricci curvature and Euclidean volume growth $\lim_{s\to\infty}s^{-n}\mathcal{H}^n(B_s(p_r))=v>0$.
By an argument of contradiction, there is a function $\psi_{\de,l}$ (depending on $v$) on $\{(\de,l)\in\R^2|\,\de,l>0\}$ satisfying $\lim_{\de\to0}\psi_{\de,l}=0$ so that
\begin{equation}\aligned\label{BdeyCxyr}
B_\de(y)\subset C^{r,\psi_{\de,l}}_{x,\mathrm{v}_{xy}}\qquad \mathrm{for\ each}\ y\in B_{2l}(x)\setminus B_{l/2}(x),\ x\in\Si_r, \ r>0.
\endaligned
\end{equation}
Set $\G_r$ be the level set of $\tilde{u}_r$ at the value 0 in $\Si_r$. Namely,
$$\G_{r}=\{x\in\Si_r|\ \tilde{u}_r(x)=0\}.$$
\begin{lemma}\label{C0est}
For any fixed $R>0$, we have
\begin{equation}\aligned
\lim_{r\rightarrow\infty}\sup_{x_r\in B_R(p_r)}\left|\tilde{u}_r(x_r)-d(x_r,\G_r)|Du|_0\right|=0.
\endaligned
\end{equation}
\end{lemma}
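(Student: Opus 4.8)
The plan is to prove two matching bounds. The upper one, $\tilde u_r(x_r)\le|Du|_0\,d(x_r,\G_r)$, is immediate: in the rescaled metric on $\Si_r$ one has $|D\tilde u_r|=|Du|\le|Du|_0$ pointwise (graphical rescaling preserves the slope), so $\tilde u_r$ is $|Du|_0$--Lipschitz, and since $\tilde u_r\equiv0$ on $\G_r$, integrating along a minimal geodesic from $x_r$ to a nearest point of $\G_r$ gives $|\tilde u_r(x_r)|\le|Du|_0\,d(x_r,\G_r)$; moreover $\tilde u_r(p_r)=u(p)/\cdots=0$, so $p_r\in\G_r$ and $d(x_r,\G_r)<R$. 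Thus the statement reduces to the reverse estimate $d(x_r,\G_r)\le|Du|_0^{-1}|\tilde u_r(x_r)|+o(1)$ as $r\to\infty$, uniformly for $x_r\in B_R(p_r)$, which together with the Lipschitz bound pins down $\tilde u_r(x_r)$ up to $o(1)$.

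For the reverse estimate I would fix $\ep>0$ and, given $x_r$ with $a_r:=\tilde u_r(x_r)\ge0$ (the case $a_r\le0$ being the mirror argument, ascending along $+D\tilde u_r$), construct a finite steepest--descent chain $x_r=y_0,y_1,\dots,y_N$ inside a fixed ball $B_{4R}(p_r)$ with $d(y_j,y_{j+1})=\de$ and $\tilde u_r(y_j)-\tilde u_r(y_{j+1})\ge(|Du|_0-\eta)\de$, stopped as soon as $\tilde u_r$ first reaches a value $\le0$. An intermediate point $z$ of the last segment then satisfies $\tilde u_r(z)=0$, i.e. $z\in\G_r$, while $d(x_r,z)\le N\de\le a_r/(|Du|_0-\eta)+O(\de)$; letting $\de,\eta\to0$ yields $d(x_r,\G_r)\le a_r/|Du|_0+\ep$. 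Each $y_{j+1}$ is obtained by moving a distance $\de$ from $y_j$ along the geodesic $\g_j$ in the direction $-D\tilde u_r(y_j)/|D\tilde u_r(y_j)|$; since $\dot\g_j$ is parallel, $\lan D\tilde u_r,\dot\g_j\ran=-|D\tilde u_r(y_j)|$ at $y_j$, and the parallel--transport inequality \eqref{Dpixyu} bounds its variation along $\g_j$ by $d(\cdot,y_j)$ times the $\mathrm{Hess}_{\tilde u_r}$--integral over $\g_j$. Consequently $\tilde u_r(y_j)-\tilde u_r(y_{j+1})\ge\de\,|D\tilde u_r(y_j)|-C\de^2 I_j$, where $I_j$ is that Hessian integral, so the step works provided $|D\tilde u_r(y_j)|\ge|Du|_0-\eta/2$ and $I_j$ is small.

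It remains to guarantee that a chain like this exists with every $y_j$ ``good''. By \eqref{Hessur1}, $\int_{B_{4R}(p_r)}|\mathrm{Hess}_{\tilde u_r}|\to0$, and by \eqref{supDur1} the average of $|Du|_0-|D\tilde u_r|$ over any subset of $B_{4R}(p_r)$ of definite $\mathcal H^n$--measure tends to $0$. Because $\mathrm{Hess}_{\tilde u_r}$ is small in $L^1$, the normalized gradient $-D\tilde u_r/|D\tilde u_r|$ varies slowly (again via \eqref{Dpixyu}), so the whole chain stays inside a thin cone--like region issued from $x_r$ in the initial descent direction; by \eqref{BdeyCxyr} the $\de$--neighborhood of such a cone inside $B_{4R}(p_r)$ has measure controlled by the cone's opening. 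One then selects the cone so that on it $|D\tilde u_r|\ge|Du|_0-\eta/2$ off a set of relatively small measure, while the total $\mathrm{Hess}_{\tilde u_r}$--mass there is $o(1)$; a Fubini/averaging argument over directions inside the cone produces a concrete chain meeting the two conditions at every one of its $O(R/\de)$ steps, the discarded exceptional measure accumulating in a controlled way.

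The main difficulty is precisely this routing step: \eqref{supDur1} and \eqref{Hessur1} are only averaged statements, so neither $|D\tilde u_r|$ nor $\mathrm{Hess}_{\tilde u_r}$ can be controlled pointwise along the descent, and one must tune the step length $\de$, the tolerance $\eta$ and the number of steps so that both the accumulated exceptional measure and the $O(\de^2)$ Hessian errors remain negligible; in particular the near--critical points of $\tilde u_r$, where $-D\tilde u_r/|D\tilde u_r|$ degenerates, must be dodged by the same averaging. Once the chain is in place the conclusion is immediate, and the uniformity over $x_r\in B_R(p_r)$ is automatic because every estimate is carried out on the single ball $B_{4R}(p_r)$.
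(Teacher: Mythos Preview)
Your upper bound $|\tilde u_r(x_r)|\le|Du|_0\,d(x_r,\G_r)$ is correct and is also the easy half in the paper. The problem is the reverse inequality, and the gap is exactly where you locate it: the routing step.

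A steepest-descent chain is \emph{determined} by the field $D\tilde u_r$: once $y_j$ is fixed, so is $y_{j+1}$. There is no free direction to Fubini over, so the phrase ``a Fubini/averaging argument over directions inside the cone produces a concrete chain'' has no content as written. If instead you perturb the starting point over $B_\de(x_r)$, you must control how the full chain depends on that start, which means iterating the parallel-transport bound \eqref{Dpixyu} through $N=O(R/\de)$ steps; the accumulated deviation is not small as $\de\to0$. If you decouple the step direction from the actual gradient at $y_j$, the drop $\tilde u_r(y_j)-\tilde u_r(y_{j+1})\ge(|Du|_0-\eta)\de$ no longer holds. In short, the integral controls \eqref{supDur1} and \eqref{Hessur1} cannot be transferred to a path that is rigidly prescribed by the function itself, and your proposal does not explain how to break this rigidity.

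The paper bypasses the chain entirely. It takes $x'\in\G_r$ \emph{nearest} to $x$ and integrates $\tilde u_r$ along the one straight geodesic $\g_{xy}$ from $x$ to a point $y\in B_\de(x')$. Since $x'$ realizes $d(x,\G_r)$, the level set $\G_r$ touches $\p B_{l_x}(x)$ tangentially there, so the geodesic direction is already (up to $\psi_{\de,l_r}$, via \eqref{BdeyCxyr}) aligned with $-D\tilde u_r$; this replaces your multi-step descent by a single segment of the correct length $l_x$ from the outset. The Hessian error along $\g_{xy}$ is then honestly averaged: first over $y\in B_\de(x')$, where polar coordinates convert the line integral into a ball integral $\int_{B_{l_x+\de}(x)}|\mathrm{Hess}_{\tilde u_r}|$, and then over $x\in B_\de(x_r)$ by a second Fubini. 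Both variables are genuinely free, so \eqref{Hessur1} and \eqref{supDur1} apply directly without any pointwise control along a trajectory.
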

\begin{proof}
We fix a point $x_r\in B_R(p_r)$ and a constant $R>0$, and denote $l_r=d(x_r,p_r)$.
For any $x\in B_\de(x_r)$, there are a point $x'\in\G_{r}$ and a minimal geodesic $\g_{xx'}$ connecting $x,x'$ with $\g_{xx'}(0)=x$, $\g_{xx'}(1)=x'$ and $|\dot{\g}_{xx'}|=l_x=d(x,\G_r).$ Denote $\mathrm{v}_{xx'}=\f{\dot{\g}_{xx'}(0)}{|\dot{\g}_{xx'}(0)|}$. 
For each small $0<\de<l_r/4$ and almost all $y\in B_\de(x')$, there is a unique minimal geodesic $\g_{xy}$ connecting $x,y$ with $\g_{xy}(0)=x$, $\g_{xy}(1)=y$ and $|\dot{\g}_{xy}|=l_{xy}\in(l_x-\de,l_x+\de)\subset(l_r-2\de,l_r+2\de)$.

If $|D\tilde{u}_r(x)|>0$, then $D\tilde{u}_r(x)$ is the normal vector to $\G_{r,x}$ at $x$, where $\G_{r,x}$ is the level set of $\tilde{u}_r$ containing $x$.
Since $B_{l_x}(x)\cap\G_r=\emptyset$ and $x'\in \overline{B_{l_x}(x)}\cap\G_r$, then
\begin{equation}\aligned
\left\lan D\tilde{u}_r(x),\mathrm{v}_{xx'}\right\ran=-|D\tilde{u}_r(x)|.
\endaligned
\end{equation}
The above equality is clearly true for $|D\tilde{u}_r(x)|=0$. 
Since $B_\de(x')\subset C^{r,\psi_{\de,l_r}}_{x,\mathrm{v}_{xx'}}$ for any $x\in B_\de(x_r)$ from \eqref{BdeyCxyr}, one has
\begin{equation}\aligned
\left\lan D\tilde{u}_r(x),\dot{\g}_{xy}(0)\right\ran=\left\lan D\tilde{u}_r(x),\mathrm{v}_{xx'}\right\ran\left\lan \dot{\g}_{xy}(0),\mathrm{v}_{xx'}\right\ran\le-(1-\psi_{\de,l_r})|D\tilde{u}_r(x)|l_{xy}
\endaligned
\end{equation}
for almost all $y\in B_\de(x')$. From $\tilde{u}_r(x')=0$, one gets
\begin{equation}\aligned
\left|\tilde{u}_r(y)\right|=\left|\tilde{u}_r(y)-\tilde{u}_r(x')\right|\le|Du|_0\de.
\endaligned
\end{equation}
Noting that $\g_{xy}$ is a geodesic with $|\dot{\g}_{xy}|=l_{xy}\le l_r+2\de$.
Combining Newton-Leibniz formula, \eqref{Dpixyu} and Fubini's theorem, we have
\begin{equation}\aligned\label{urdeDH}
&\tilde{u}_r(x)+|Du|_0\de\ge\tilde{u}_r(x)-\tilde{u}_r(y)\\
=&-\int_0^{1}\f{d}{dt}\tilde{u}_r(\g_{xy}(t))dt
=-\int_0^{1}\left\lan D\tilde{u}_r(\g_{xy}(t)),\dot{\g}_{xy}(t)\right\ran dt\\
=&-\int_0^{1}\left\lan D\tilde{u}_r(\g_{xy}(t))-P_{\g_{xy}(t)}(D\tilde{u}_r(x)),\dot{\g}_{xy}(t)\right\ran dt-\int_0^{1}\left\lan D\tilde{u}_r(x),\dot{\g}_{xy}(0)\right\ran dt\\
\ge&-l_{xy}\int_0^{1}\left|P_{\g_{xy}(t)}(D\tilde{u}_r(x))-D\tilde{u}_r(\g_{xy}(t))\right|dt+(1-\psi_{\de,l_r})l_{xy}|D\tilde{u}_r(x)|\\
\ge&-l_{xy}^2\int_0^{1}\left(\int_0^t\left|\mathrm{Hess}_{\tilde{u}_r}\right|(\g_{xy}(s))ds\right)dt+(1-\psi_{\de,l_r})(l_r-2\de)|D\tilde{u}_r(x)|\\
=&-l_{xy}^2\int_0^{1}\left(\int_s^1\left|\mathrm{Hess}_{\tilde{u}_r}\right|(\g_{xy}(s))dt\right)ds+(1-\psi_{\de,l_r})(l_r-2\de)|D\tilde{u}_r(x)|\\
\ge&-(l_r+2\de)^2\int_0^{1}(1-s)\left|\mathrm{Hess}_{\tilde{u}_r}\right|(\g_{xy}(s))ds+(1-\psi_{\de,l_r})(l_r-2\de)|D\tilde{u}_r(x)|.
\endaligned
\end{equation}
By Bishop-Gromov volume comparison, for any nonnegative measurable function $f$ on $\Si_r$ one has
\begin{equation}\aligned
\int_{y\in B_R(x)}f(\g_{xy}(t))\le \f1{t^n}\int_{B_{tR}(x)}f.
\endaligned
\end{equation}
Integrating \eqref{urdeDH} for the variable $y$ on $B_\de(x')$ infers
\begin{equation}\aligned\label{urDHg}
&\left(\tilde{u}_r(x)+|Du|_0\de-(1-\psi_{\de,l_r})(l_r-2\de)|D\tilde{u}_r(x)|\right)\mathcal{H}^n\left(B_\de(x')\right)\\
\ge&-(l_r+2\de)^2\int_{y\in B_\de(x')}\int_0^{1}\left|\mathrm{Hess}_{\tilde{u}_r}\right|(\g_{xy}(t))dt\\
\ge&-(l_r+2\de)^2\int_{y\in B_{l_x+\de}(x)}\int_0^{1}\left|\mathrm{Hess}_{\tilde{u}_r}\right|(\g_{xy}(t))dt\\
\ge&-(l_r+2\de)^2\int_0^{1}\left(\f1{t^n}\int_{B_{t(l_x+\de)}(x)}\left|\mathrm{Hess}_{\tilde{u}_r}\right|\right)dt.
\endaligned
\end{equation}
By Bishop-Gromov volume comparison, 
\begin{equation}\aligned
\mathcal{H}^n\left(B_\de(x')\right)\ge\lim_{r\to\infty}\left(\f{\de}{r}\right)^n\mathcal{H}^n\left(B_{r}(x')\right)=\de^nv.
\endaligned
\end{equation}
From Fubini's theorem
\begin{equation}\aligned
\int_{x\in B_{R_2}(z)}\int_{B_{R_1}(x)}f\le\int_{y\in B_{R_1+R_2}(z)}\mathcal{H}^n(B_{R_1}(y))f(y)
\le\omega_n R_1^n\int_{B_{R_1+R_2}(z)}f
\endaligned
\end{equation}
for any nonnegative measurable function $f$ on $\overline{B_{R_1+R_2}(z)}$, one has
\begin{equation}\aligned\label{urHess}
&\int_{x\in B_\de(x_r)}\left(\f{(l_r+2\de)^2}{\mathcal{H}^n\left(B_\de(x')\right)}\int_0^{1}\left(\f1{t^n}\int_{B_{t(l_x+\de)}(x)}\left|\mathrm{Hess}_{\tilde{u}_r}\right|\right)dt\right)\\
\le&\f{(l_r+2\de)^2}{\de^nv}\int_0^{1}\left(\f1{t^n}\int_{x\in B_\de(x_r)}\int_{B_{t(l_r+2\de)}(x)}\left|\mathrm{Hess}_{\tilde{u}_r}\right|\right)dt\\
\le&\f{(l_r+2\de)^2}{\de^nv}\int_0^{1}\left(\omega_n(l_r+2\de)^n\int_{B_{l_r+3\de}(x_r)}\left|\mathrm{Hess}_{\tilde{u}_r}\right|\right)dt\\
=&\omega_n\f{(l_r+2\de)^{n+2}}{\de^nv}\int_{B_{l_r+3\de}(x_r)}\left|\mathrm{Hess}_{\tilde{u}_r}\right|.
\endaligned
\end{equation}
Note that $l_r= d(x_r,\G_r)\le d(x_r,p_r)\le R$.
Combining  \eqref{Hessur1}\eqref{urDHg} and \eqref{urHess}, there exists a constant $r_0>0$ such that
\begin{equation}\aligned\label{BdeFderv}
-\de\le&\fint_{B_\de(x_r)}\left(\tilde{u}_r(x)+|Du|_0\de-(1-\psi_{\de,l_r})(l_r-2\de)|D\tilde{u}_r(x)|\right)
\endaligned
\end{equation}
for all $r\ge r_0$.
Combining \eqref{supDur1}\eqref{BdeFderv}, for large $r_0$ we have
\begin{equation}\aligned\nonumber
-\de\le&\fint_{B_\de(x_r)}\tilde{u}_r(x)+|Du|_0\de-(1-\psi_{\de,l_r})(l_r-2\de)\left(|Du|_0-\de\right).
\endaligned
\end{equation}
Therefore, there is a point $x_*\in B_\de(x_r)$ such that
$$\tilde{u}_r(x_*)\ge(1-\psi_{\de,l_r})(l_r-2\de)\left(|Du|_0-\de\right)-(1+|Du|_0)\de.$$
Then
\begin{equation}\aligned
\tilde{u}_r(x_r)\ge\tilde{u}_r(x_*)-|Du|_0\de\ge(1-\psi_{\de,l_r})(l_r-2\de)\left(|Du|_0-\de\right)-(2+|Du|_0)\de.
\endaligned
\end{equation}
Letting $r\to\infty$ first and $\de\to0$ second in the above inequality implies 
$$\liminf_{r\to\infty}\left(\tilde{u}_r(x_r)- l_r|Du|_0\right)\ge0.$$
On the other hand, it's clear that $\tilde{u}_r(x_r)\le l_r|Du|_0$ by Newton-Leibniz formula. So we complete the proof.
\end{proof}

Combining Lemma \ref{C0est} and \eqref{supDur}\eqref{Hessur1}, integrating by parts implies (see the proof of Lemma 4.6 in \cite{DJX2})
\begin{equation}\aligned\label{C1est}
\lim_{r\rightarrow\infty}\int_{B_R(p_r)}\big|D\left(\tilde{u}_r(x)-d(x,\G_r)|Du|_0\right)\big|^2=0.
\endaligned
\end{equation}
For any sequence $R_i\to\infty$,
there is a subsequence $\{r_i\}$ of $\{R_i\}$ such that $\f1{r_i}(\Si,p)=(\Si_{r_i},p_{r_i})$ converges in the pointed Gromov-Hausdorff sense to a metric cone $(C,o)$ with the vertex at $o$, and $(\G_{r_i},p_{r_i})$ converges in the induced Hausdorff sense to $(X,o)$, where $X$ is a closed subset of $C$.
With Lemma \ref{C0est}, \eqref{C1est} and \eqref{Hessur1}, we can use Theorem 3.6 in \cite{CC} by Cheeger-Colding and deduce $C=X\times\R$, i.e., the tangent cone $C$ of $\Si$ at infinity splits off a line isometrically.
This completes the proof of Theorem \ref{splitting}.

Let $M_r$ denote the graph of $\tilde{u}_r$ in $\Si_r\times\R$. Without loss of generality, we assume $(M_{r_i},\bar{p}_{r_i})=\f1{r_i}(M,\bar{p})$ converges in the induced Hausdorff sense to $(C',o)$, where $C'$ is a closed subset in $C\times\R$. We call $C'$ \emph{a tangent cone of $M$ at infinity}.
From \eqref{meanlimDu1}\eqref{Hessu2}, Lemma \ref{C0est} and $(\G_{r_i},p_{r_i})\to(X,o)$, we conclude that
\begin{equation}\aligned
t_2^{-n}\lim_{i\to\infty}\mathcal{H}^n(M_{r_i}\cap B_{t_2}(\bar{p}_{r_i}))=t_1^{-n}\lim_{i\to\infty}\mathcal{H}^n(M_{r_i}\cap B_{t_1}(\bar{p}_{r_i}))
\endaligned
\end{equation}
for any $t_1,t_2>0$.
From Theorem 5.4 in \cite{D} and the argument in Theorem \ref{CovconeC}, we conclude that $C'$ is a metric cone in $C\times\R$ (the case $r_j\to0$ is similar to the case $r_j\to\infty$). From the above argument, $C'$ also splits off a line isometrically, i.e., the tangent cone $C'$ of the minimal graph $M$ at infinity splits off a line isometrically.
Furthermore, the splitting holds even independent of the linear growth of $u$ as follows.
\begin{theorem}
Let $\Si$ be a complete noncompact Riemannian manifold of nonnegative Ricci curvature and Euclidean volume growth. If $M$ is a minimal graph over $\Si$ with the non-constant smooth graphic function $u$, then there is a tangent cone of $M$ at infinity, which is a metric cone and splits off a line isometrically.
\end{theorem}
\begin{proof}
From Theorem \ref{hlcMG} and the above argument, 
we only need to consider the case $\sup_\Si|Du|=\infty$.
From Lemma \ref{MVI} and \eqref{De1/v}, we have 
\begin{equation}\aligned\label{En+1nuM}
\lim_{r\rightarrow\infty}\f1{r^n}\int_{M\cap B_r(p)}\f1{v_u}=0.
\endaligned
\end{equation}
From Theorem \ref{CovconeC}, there is a sequence $r_i\to\infty$ so that 
$\f1{r_i}(\Si,p)=(\Si_{r_i},p_{r_i})$ converges in the pointed Gromov-Hausdorff sense to a metric cone $(C,o)$ with the vertex at $o$, and
$\f1{r_i}(M_{r_i},\bar{p}_{r_i})$ converges in the induced Hausdorff sense to $(C',o)$, where $C'$ is a tangent cone of $M$ at infinity in $C\times\R$. Here, $C'$ is a metric cone.
If there are no closed sets $Y\subset C$ with $C'=Y\times\R$, then there is a point $z=(z',t_z)\in C'$ such that $\{(z',t)\in C\times\R|\ t>t_z\}\cap C'=\emptyset$. Therefore, there is a constant $0<\ep<<1$ such that
\begin{equation}\aligned\label{BepzC'tzk}
C'\cap B_{\ep}(z_*)=\emptyset\qquad \mathrm{with}\ z_*=(z',t_z+1).
\endaligned
\end{equation}
Let $z_i=(z_i',t_z)\in M_{r_i}$ be a sequence of points in $\Si_{r_i}\times\R$ with $z_i'\to z'$. Since $M_{r_i}\cap B_2(z_i)\to C'\cap B_2(z)$ in the induced Hausdorff sense, from \eqref{BepzC'tzk} it follows that
\begin{equation}\aligned
M_{r_i}\cap B_{\ep/2}(z_{i,*})=\emptyset\qquad \mathrm{with}\ z_{i,*}=(z_i',t_z+1)
\endaligned
\end{equation}
for suitably large $i$.
From Theorem \ref{GEu}, there is a constant $\de\in(0,\ep/2)$ (depending on $n,\ep$) so that the graphic function $\tilde{u}_{r_i}$ satisfies 
$$\sup_{B_\de(z_i')}|D\tilde{u}_{r_i}|<\f1\de\qquad \mathrm{for\ suitably\ large}\ i.$$ 
However, this contradicts to \eqref{En+1nuM} as $i\to\infty$. 
We complete the proof.
\end{proof}

\section{Appendix}

\begin{lemma}\label{Mollifybij}
Let $\Si$ be an $n$-dimensional complete Riemannian manifold, and $\r_p$ be the distance function from a point $p\in\Si$. Let $\mathcal{C}_p$ denote the cut locus w.r.t. p. There is a constant $\ep_*$ (depending only on $n$ and the geometry of $B_2(p)$) such that for any $\ep\in(0,\ep_*]$, there is a smooth function $f_\ep$ on $\overline{B_1(p)}$ satisfying $f_\ep=\r_p$ on $\overline{B_1(p)}\setminus B_\ep(\mathcal{C}_p)$ and $|Df_\ep |\le1+\ep$. 
\end{lemma}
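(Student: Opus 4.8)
The plan is to leave $\r_p$ untouched away from the cut locus, where it is already smooth, and to patch it only inside a thin tube around $\mathcal{C}_p$ using a cut-off function. Recall that $\r_p$ is $1$-Lipschitz on $\Si$ by the triangle inequality, and is smooth with $|D\r_p|=1$ on $\overline{B_1(p)}\setminus\mathcal{C}_p$ (away from the cone vertex at $p$, which is irrelevant here: in the applications the base point of the distance function lies at distance $>2$ from the ball under consideration, so no interior vertex occurs). Fix $\ep\in(0,\ep_*]$ with $\ep_*$ small. The set $\mathcal{C}_p\cap\overline{B_1(p)}$ is compact, and for $\ep_*$ small the tube $B_{2\ep}(\mathcal{C}_p\cap\overline{B_1(p)})$ lies inside $B_{3/2}(p)$, on whose closure all metric quantities are controlled by $n$ and the geometry of $B_2(p)$; this is the only place the hypothesis on $B_2(p)$ enters.

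The first step is to produce a smooth $h$ on $B_{2\ep}(\mathcal{C}_p\cap\overline{B_1(p)})$ with $\|h-\r_p\|_{C^0}\le\de$ and $|Dh|\le 1+\ep/5$, where $\de=\de(\ep)>0$ is a free small parameter fixed at the end. This is a quantitative Lipschitz smoothing with almost no loss in the Lipschitz constant; one may either quote the Greene--Wu smoothing theorem, or build $h$ by hand: cover $\mathcal{C}_p\cap\overline{B_1(p)}$ by finitely many geodesic balls $B_s(z_i)$ with $s\sim\sqrt{\ep}$, chosen so that normal coordinates at $z_i$ make the metric $(1+\ep/30)$-close to the Euclidean one on $B_{2s}(z_i)$ (possible since $s^2$ times the curvature bound on $B_2(p)$ is then small); Euclidean-mollify $\r_p$ at scale $\de$ in each chart to get $h_i$ with $|Dh_i|\le 1+\ep/10$ and $|h_i-\r_p|\le 2\de$ on $B_s(z_i)$ (mollification does not increase the Euclidean Lipschitz constant, and the near-isometry transfers this to the metric); then patch via a partition of unity $\{\rho_i\}$ with bounded overlap and $|D\rho_i|\le C/s$. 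Using $\sum D\rho_i=0$ one writes $Dh=\sum\rho_i Dh_i+\sum(D\rho_i)(h_i-\r_p)$, whose first term has norm $\le\max_i|Dh_i|\le 1+\ep/10$ and whose second has norm $\le C\cdot(C/s)\cdot 2\de\le C'\de/\sqrt{\ep}$; choosing $\de\le\ep^{3/2}/(10C')$ gives $|Dh|\le 1+\ep/5$.

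The second step is the cut-off gluing. Let $\z$ be smooth on $\overline{B_1(p)}$ with $\z\equiv 1$ outside $B_\ep(\mathcal{C}_p)$, $\z\equiv 0$ on $B_{\ep/2}(\mathcal{C}_p)$, and $|D\z|\le C/\ep$, and set $f_\ep:=\z\,\r_p+(1-\z)\,h$. Then $f_\ep=\r_p$ on $\overline{B_1(p)}\setminus B_\ep(\mathcal{C}_p)$ as required; on $B_{\ep/2}(\mathcal{C}_p)$ it equals the smooth $h$; and on the transition annulus $B_\ep(\mathcal{C}_p)\setminus B_{\ep/2}(\mathcal{C}_p)$ both $\r_p$ (which is smooth there, that region being at distance $\ge\ep/2$ from $\mathcal{C}_p$) and $h$ are smooth, so $f_\ep\in C^\infty(\overline{B_1(p)})$. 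On the annulus, $Df_\ep=\big(\z D\r_p+(1-\z)Dh\big)+(\r_p-h)D\z$, where the bracket has norm $\le\max(|D\r_p|,|Dh|)\le 1+\ep/5$ and the last term has norm $\le\de\cdot C/\ep$; choosing in addition $\de\le\ep^2/(10C)$ gives $|Df_\ep|\le 1+\ep/5+\ep/10\le 1+\ep$ there, while $|Df_\ep|\le 1+\ep/5$ on $B_{\ep/2}(\mathcal{C}_p)$ and $|Df_\ep|\le 1$ elsewhere. Taking $\de(\ep)$ to be the minimum of the two constraints completes the construction; $\ep_*$ is whatever smallness (depending only on $n$ and the geometry of $B_2(p)$) is needed for the tube inclusion, the normal-coordinate estimates, and the elementary inequalities.

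The main obstacle is the middle step: keeping the Lipschitz constant of the smoothing within $1+o(1)$ rather than letting it blow up by the fixed factor $\sqrt{\mathrm{cond}(g)}$ that a naive chart mollification produces. This forces the three-scale hierarchy $\de\ll\sqrt{\ep}\sim s\ll 1$ — the mollification scale $\de$ must beat the partition-of-unity derivative $\sim 1/s$, the chart radius $s$ must be small enough (relative to the curvature scale) that normal coordinates are almost isometric, and both must be small enough that the outer cut-off at scale $\ep$ can absorb the error $(\r_p-h)D\z$. Everything else — the $C^0$-bound, the smoothness of the glued function, and the bookkeeping of constants — is routine once this hierarchy is arranged; and if one simply cites the Greene--Wu smoothing theorem for $h$, only the elementary cut-off estimate of the third paragraph remains.
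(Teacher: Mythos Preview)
Your proof is correct and follows the same overall architecture as the paper's: mollify $\r_p$ in a tube around $\mathcal{C}_p$ and blend back to $\r_p$ with a cut-off whose gradient is $O(1/\ep)$.  The technical implementation differs.  The paper constructs the cut-off $\phi_\ep$ first (equal to $1$ near $\mathcal{C}_p$, supported in $B_\ep(\mathcal{C}_p)$), then applies a single \emph{intrinsic} Riemannian convolution $\e_{\ep,t}(x)=\int_{\Si}\r_p(y)\phi_\ep(y)\,\xi_t(x,y)$ to the product $\r_p\phi_\ep$, and finally sets $f_\ep=\r_p(1-\phi_\ep)+\e_{\ep,t_*}$.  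Because the mollification is intrinsic (via the geodesic kernel in polar coordinates), the Lipschitz constant is preserved up to the Jacobian defect of the exponential map, which is $O(t_*)$; and because the cut-off sits \emph{inside} the convolution, on the transition annulus one estimates $|D\e_{\ep,t_*}-D(\r_p\phi_\ep)|\le\ep$ directly, so no separate gluing-error term $(\r_p-h)D\z$ arises and no three-scale hierarchy is needed---a single small parameter $t_*$ suffices.

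Your route instead mollifies $\r_p$ itself in local normal-coordinate charts, patches with a partition of unity at scale $s\sim\sqrt{\ep}$, and only then applies the cut-off; this forces the explicit hierarchy $\de\ll s\ll 1$ to beat both the partition-of-unity derivatives and the $D\z$ term.  The payoff is that your argument is more elementary (no intrinsic convolution or Greene--Wu machinery beyond normal coordinates) and makes the scale dependence fully transparent; the cost is the extra bookkeeping.  Both arguments yield the same conclusion with constants depending on the same data.  Your side remark that in the actual application the base point of $\r$ lies well outside $\overline{B_1}$, so the cone point at the origin is absent, is apt and worth keeping.
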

\begin{proof}
The idea is using cut-off functions and smoothers of distance functions in a small tubular neighborhood of $\mathcal{C}_p$. We give the detailed proof here.
Let $\r_{x}$ denote the distance function from any considered point $x$ in $\Si$. There is a constant $\ep_*\in(0,1]$ depending only on $n$ and the geometry of $B_2(p)$ such that $\r_{x}$ is smooth on $B_{\ep_*}(x)\setminus\{x\}$ for any $x\in\overline{B_1(p)}$. For any $0<\ep<\ep_*$,
let $\{x_k\}_{k=1}^{n_\ep}\subset\overline{B_1(p)}$ be a finite collection of points such that $\overline{B_1(p)}\subset\cup_{k=1}^{n_\ep}B_{\ep/4}(x_{k})$. With Besicovitch covering lemma, we can assume $\sharp\{k\in\{1,\cdots,n_{\ep}\}|\, x\in B_{\ep/4}(x_{k})\}\le c_n$ for any $x\in\cup_{k=1}^{n_\ep}B_{\ep/4}(x_{k})$, where $c_n\ge1$ is a constant depending only on $n$. 

Now we adopt the idea from the proof of partition of unit.
Let $\sigma$ be a symmetric smooth function on $\R$ with support in $(-1,1)$, $0\le\si\le1$ on $\R$ and $\si\equiv1$ on $[-1/2,1/2]$. Up to choosing the constant $c_n$, we assume $|\si'|\le c_n$ on $\R$.
Let $\si_{k,\ep}(y)=\si(2\r_{x_{k}}(y)/\ep)$ and
$$\phi_{k,\ep}(y)=\left(\sum_{l=1}^{n_{\ep}}\si_{l,\ep}(y)\right)^{-1}\si_{k,\ep}(y)\qquad \mathrm{for\ any}\ y\in \Si.$$
Then spt$\si_{k,\ep}\subset B_{\ep/2}(x_{k})$, $1\le\sum_{l=1}^{n_{\ep}}\si_{l,\ep}\le c_n$, and $\phi_{k,\ep}$ is smooth on $\overline{B_1(p)}$.
Let $\mathcal{I}_{\ep}$ be a discrete set defined by
 $$\mathcal{I}_{\ep}=\left\{k\in\{1,\cdots,n_{\ep}\}|\, B_{\ep/2}(x_{k})\cap \mathcal{C}_{p}\neq\emptyset\right\},$$
and $\phi_{\ep}$ be a smooth (cut-off) function with support in $B_\ep(\mathcal{C}_{p})$ defined by
\begin{equation}\aligned
\phi_{\ep}=\sum_{k\in\mathcal{I}_{\ep}}\phi_{k,\ep}.
\endaligned
\end{equation}
By the definition of $\mathcal{I}_{\ep}$, $\phi_\ep=1$ on $\mathcal{C}_{p}\cap\overline{B_1(p)}$.
In particular, there is a small constant $\ep'\in(0,\ep)$ so that $\phi_{\ep}=1$ on $B_{\ep'}\left(\mathcal{C}_{p}\cap\overline{B_1(p)}\right)$.

Put $\si_{t}(s)=\left(\int_{\R^n}\si(|z|^2)dz\right)^{-1}t^{-n}\si(t^{-2}s^2)$ and $\xi_{t}(x,y)=\si_{t}(d(x,y))$ for all $x,y\in \Si$ and $t>0$. Then by the definition of $\si$
\begin{equation}\aligned\label{sits=1}
n\omega_n\int_0^t s^{n-1}\si_{t}(s)ds=n\omega_n\int_0^\infty s^{n-1}\si_{t}(s)ds=\int_{\R^n}\si_t(|z|)dz=1.
\endaligned
\end{equation}
For small $t>0$, let $\e_{\ep,t}$ be a convolution of $\r_{p}\phi_{\ep}$ and $\si_{t}$ defined by
\begin{equation}\aligned\label{wep}
\e_{\ep,t}(x)=((\r_{p}\phi_{\ep})*\xi_{t})(x)=\int_{y\in\Si} \r_{p}(y)\phi_{\ep}(y)\xi_{t}(x,y).
\endaligned
\end{equation}
Then $\e_{\ep,t}$ is a smooth function on $\overline{B_1(p)}$ for any $t\in(0,\ep_*)$ by the definition of $\ep_*$.
Let $ds^2+g_{\a\be,x}(s,\vartheta)d\vartheta_\a d\vartheta_\be$ denote the metric of $B_t(x)$ in the polar coordinate w.r.t. $x$, where $\vartheta=(\vartheta_1,\cdots,\vartheta_n)\in\mathbb{S}^{n-1}$ satisfies $|\vartheta|=1$.
Let $\mathrm{exp}_x(s\vartheta)$ denote the exponential map in $\Si$ starting from $x$, and $J_x(s,\vartheta)=\sqrt{\det(g_{\a\be,x}(s,\vartheta))}$. Let $D$ denote Levi-Civita connection of $\Si$. Then
\begin{equation}\aligned\label{Dexpx}
\lim_{t\to0}\sup_{\vartheta\in\mathbb{S}^{n-1},0<s<t}|D\mathrm{exp}_x(s\vartheta)|=1,
\endaligned
\end{equation}
and
\begin{equation}\aligned\label{DJx}
\lim_{t\to0}\sup_{\vartheta\in\mathbb{S}^{n-1},0<s<t}|DJ_x(s,\vartheta)|=0.
\endaligned
\end{equation}
From \eqref{wep},
\begin{equation}\aligned
\e_{\ep,t}(x)=\int_0^t\int_{\mathbb{S}^{n-1}} \r_{p}(\mathrm{exp}_x(s\vartheta))\phi_{\ep}(\mathrm{exp}_x(s\vartheta))\si_t(s)J_x(s,\vartheta)d\vartheta ds.
\endaligned
\end{equation}
For any $x\in B_{\ep'/2}\left(\mathcal{C}_{p}\cap\overline{B_1(p)}\right)$, $\phi_{\ep}(\mathrm{exp}_x(s\vartheta))=1$ for any $0<s\le t$ with $t<\ep'/2$.
Then with \eqref{sits=1}\eqref{Dexpx}\eqref{DJx} there is a positive constant $t_*\le\ep'/2$ such that
\begin{equation}\aligned\label{Deept*}
|D\e_{\ep,t_*}|(x)\le\int_0^{t_*}\int_{\mathbb{S}^{n-1}} (1+\ep)\si_{t_*}(s)s^{n-1}d\vartheta ds=(1+\ep)n\omega_n\int_0^{t_*}\si_{t_*}(s)s^{n-1}ds=1+\ep.
\endaligned
\end{equation}
For any $x\in B_\ep(\mathcal{C}_{p})\cap\overline{B_1(p)}\setminus B_{\ep'/2}\left(\mathcal{C}_{p}\cap\overline{B_1(p)}\right)$, and any tangent field $\xi$ on $\Si$ around $x$ with $|\xi|(x)=1$, with \eqref{sits=1}\eqref{Dexpx}\eqref{DJx} we have
\begin{equation}\aligned\label{Dxieet*}
\left|D_\xi\e_{\ep,t_*}(x)-D_\xi(\r_{p}\phi_{\ep})(x)\right|\le\int_0^{t_*}\int_{\mathbb{S}^{n-1}} \ep\si_{t_*}(s)s^{n-1}d\vartheta ds=\ep
\endaligned
\end{equation}
up to a choice of the constant $\ep_*$.

Let $f_\ep$ be a smooth function on $\overline{B_1(p)}$ defined by
\begin{equation}\aligned
f_\ep=\r_p(1-\phi_\ep)+\e_{\ep,t_*}.
\endaligned
\end{equation}
Since supp$\phi_{\ep}\subset B_\ep(\mathcal{C}_{p})$, then $f_\ep=\r_p$ on $\overline{B_1(p)}\setminus B_\ep(\mathcal{C}_p)$. On $B_{\ep'/2}\left(\mathcal{C}_{p}\cap\overline{B_1(p)}\right)$, we have $|Df_\ep|=|D\e_{\ep,t_*}|\le1+\ep$ from $\phi_\ep=1$ and \eqref{Deept*}. On $B_\ep(\mathcal{C}_{p})\cap\overline{B_1(p)}\setminus B_{\ep'/2}\left(\mathcal{C}_{p}\cap\overline{B_1(p)}\right)$, from \eqref{Dxieet*} we have
\begin{equation}\aligned
|Df_\ep|\le|D\r_p|+|D\left(\r_p\phi_\ep-\e_{\ep,t_*}\right)|\le1+\ep.
\endaligned
\end{equation}
In all, we have $|Df_\ep|\le1+\ep$ on $B_\ep(\mathcal{C}_{p})\cap\overline{B_1(p)}$. This completes the proof.
\end{proof}

\bibliographystyle{amsplain}

\end{document}